\newtheorem{theorem}{Theorem}
\newtheorem{conjecture}{Conjecture}
\newtheorem{problem}{Problem}
\newtheorem{corollary}{Corollary}
\newtheorem{proposition}{Proposition}
\newtheorem{remark}{Remark}
\newcommand{\F}{\mathbb F}
\newcommand{\R}{\mathbb R}
\newcommand{\N}{\mathbb N}
\newcommand{\Q}{\mathbb Q}
\newcommand{\ex}{{\rm ex}}
\def\to{\rightarrow}
\def\cH{{\mathcal H}}
\def\cK{{\mathcal K}}
\def\cT{{\mathcal T}}
\def\girth{{\rm girth}}
\def\d{{\rm diam}}
\begin{document}

\title{\bf Some families of graphs, hypergraphs and digraphs
defined by systems of equations}
\author{
Felix Lazebnik \footnote{Department of Mathematical Sciences, University of Delaware, Newark, DE 19716, USA (\tt fellaz@udel.edu)} \;\;\;\;
Ye Wang \footnote{College of Mathematical Sciences, Harbin Engineering University, Harbin 150001, China (\tt ywang@hrbeu.edu.cn)}
}
\date{}
\maketitle
\begin{center}
{\it Dedicated to the memory of Pamela G. Irwin (1962 - 2023)}
\end{center}
\bigskip

\begin{abstract}
The families of graphs defined by a certain type of system of equations over commutative rings have been studied and used since the 1990s.
This survey presents these families and their applications related to graphs, digraphs, and hypergraphs. Some open problems and conjectures are mentioned.
\bigskip

\noindent {\bf Key Words:} Girth,
embedded spectra, lift of a graph, cover of a graph,
edge-decomposition, isomorphism, generalized
polygons, digraph, hypergraph, degenerate Tur\'{a}n-type problems.
\medskip

\noindent {\bf Mathematics Subject Classifications:}
05C35, 05C38, 05C75, 11T06, 11T55, 51E12, 51E15.
\end{abstract}

\newpage
\tableofcontents
\newpage
\section{Introduction}\label{S:intro}
\bigskip

One goal of this survey is to summarize results  concerning families of graphs, hypergraphs and digraphs defined by certain systems of equations.
Another goal is to provide  a comprehensive treatment of, probably,  the best known family of such graphs, denoted by $D(k,q)$. The original results on  these graphs  were scattered among many  papers,  with  the notations not necessarily consistent and reflecting  the origins of these graphs in Lie algebras.  It is our hope that this new exposition will make it easier for those who wish to understand the methods to continue research in the area or find new applications.

For a summary of related results which appeared before 2001,  see Lazebnik and Woldar \cite{LW01}.  One important feature of that article was an attempt of setting simpler notation and presenting results in greater generality.   Let us begin with a quote from   \cite{LW01} (with updated reference labels):

\begin{quotation}
In the last several years some
algebraic constructions of graphs
have appeared in the literature.
 Many of these
constructions  were motivated
by problems from extremal graph theory,
and, as a consequence, the graphs
obtained were primarily of interest
in the context of a
particular extremal problem.
In the case of the graphs
appearing in \cite{Wen91,LU93,LUW94,LUW94a,FLSUW95,LU95,LUW95,LUW96,LUW97,LUW99},
the authors discovered that
they exhibit
many interesting properties {\em beyond}
those which motivated their construction.
Moreover, these
properties tend to
remain present
even when the constructions
are made  far more general.
This latter observation forms the
motivation for our survey.
\end{quotation}

The research conducted since the appearance of  \cite{LW01}  was done in two directions:  attempting  to apply specializations of general constructions to new problems,  and trying to strengthen some old results. Those were collected in a 2017 survey by Lazebnik, Sun and Wang
\cite{LSW17}. Here we make use of both \cite{LW01} and \cite{LSW17}, often following them closely, and include results that were not covered in those surveys.  We also  thought that it is better to make the survey dynamic, and update it regularly.

Before proceeding,
we establish some notations; more will be introduced later.  The missing graph-theoretic definitions can be found in Bollob\'as \cite{Bol98}.
Most  graphs we consider in this survey are undirected,  and without loops or multiple edges.  Sometimes loops will be allowed,  in which case we will state it.  Given a
graph $\Gamma$, we denote the vertex set of $\Gamma$ by $V(\Gamma)$ and the edge set by $E(\Gamma)$. Elements of $E(\Gamma)$ will be written as $xy$, where $x,y\in V(\Gamma)$ are the corresponding adjacent vertices. For a vertex $v$ of $\Gamma$, let $N(v) = N_{\Gamma}(v)$ denote its neighborhood in $\Gamma$.

Though most of the graphs we plan to discuss are defined over  finite fields, many of their  properties hold over commutative rings,  and this is how we proceed.   Let  $R$ be  an arbitrary
commutative ring, different from the zero ring, and with multiplicative identity.
We write $R^n$ to denote the
Cartesian product of $n$ copies of $R$,
and we refer to its elements as {\em vectors}.  For $q=p^e$ with prime $p\ge 2$, let $\F_{q}$  denote the field of $q$ elements.

The survey is organized as follows. In Section \ref{S:mc} we go over the main constructions, and their general properties are discussed in Section \ref{S:prop}. In Section \ref{S:exandappl} we discuss various applications of the specialization of constructions from Section \ref{S:mc}, including recent results on similarly constructed digraphs.
Section 5 deals with constructions for hypergraphs.  In Section \ref{SS:cdkq} we present   a comprehensive treatment of the graphs  $D(k,q)$ and $C\!D(k,q)$,
and survey  new results. In Section \ref{appldkq}   we mention  some applications of the graphs $D(k,q)$ and $C\!D(k,q)$,  and we conclude with a brief discussion on the related work in coding theory and cryptography in Section~\ref{S:codecryp}.

\section{Main constructions}\label{S:mc}

\subsection{Bipartite version }\label{SS:bip}
\bigskip

Let $f_i: R^{2i-2}\to R$, $2\le i\le n$,  be
arbitrary functions on $R$ of 2,4, $\ldots$, $2n-2$ variables.
We define the bipartite graph
$B\Gamma_n =
B\Gamma (R;f_2,\ldots, f_n)$, $n\ge 2$,
as follows.
The set of vertices $V(B\Gamma_n)$ is the
disjoint union of two copies of
$R^n$, one denoted by $P_n$ and the other by $L_n$.
Elements of $P_n$ will be called {\em points}
and those of $L_n$
{\em lines}.
In order to distinguish
points from lines
we introduce the use of parentheses and brackets:
if $a\in R^n$, then $(a)\in P_n$ and
$[a]\in L_n$.
We define edges of $B\Gamma_n$ by declaring a point
$(p)=(p_1,p_2,\ldots,p_n)$ and a line
$[l]=[l_1,l_2,\ldots,l_n]$   adjacent
if and only if the following
$n-1$ relations on their coordinates hold:

\begin{equation}\label{Bipmain0}
\begin{aligned}
p_2 + l_2 &= f_2(p_1,l_1),\\
p_3 + l_3 &= f_3(p_1,l_1,p_2,l_2),\\
\quad \ldots & \quad\ldots \\
p_n + l_n &= f_n(p_1,l_1,p_2,l_2, \ldots, p_{n-1},l_{n-1}).
\end{aligned}
\end{equation}

\smallskip
\noindent For a function $f_i: R^{2i-2}\to R$,
we define $\overline{f_i}: R^{2i-2}\to R$
by the rule
\[
\overline{f_i}(x_1,y_1, \ldots, x_{i-1}, y_{i-1})=
f_i(y_1,x_1, \ldots, y_{i-1}, x_{i-1}).
\]
We call $f_i$ {\em symmetric}
if the functions $f_i$ and $\overline{f_i}$ coincide.
The following is trivial to prove.

\begin{proposition}\label{P:dualiso}
The graphs
$B\Gamma(R;f_2,\ldots, f_n)$
and
$B\Gamma(R;\overline{f_2},\ldots, \overline{f_n})$
are isomorphic, an explicit isomorphism being
given by
$\varphi: (a)\leftrightarrow [a]$.
\end{proposition}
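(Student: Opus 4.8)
The plan is to extend $\varphi$ to all of $V(B\Gamma_n)$ by sending each point $(a)$ to the line $[a]$ and each line $[a]$ to the point $(a)$, and then to verify that this is an adjacency-preserving bijection between $B\Gamma(R;f_2,\ldots,f_n)$ and $B\Gamma(R;\overline{f_2},\ldots,\overline{f_n})$. That $\varphi$ is a bijection is immediate: on each copy of $R^n$ it acts as the identity on coordinate vectors and merely interchanges the point/line labels, so it carries the point set of the first graph onto the line set of the second and the line set of the first onto the point set of the second. Because both graphs are bipartite with every edge joining a point to a line, adjacency is preserved automatically on same-type pairs (point--point and line--line pairs carry no edge in either graph and are sent by $\varphi$ to same-type pairs). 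Hence everything reduces to establishing the biconditional on adjacency for a single point--line pair.

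The heart of the argument is one substitution. Fix a point $(p)$ and a line $[l]$ in $B\Gamma(R;f_2,\ldots,f_n)$. By definition they are adjacent there precisely when
\[
p_i + l_i = f_i(p_1,l_1,\ldots,p_{i-1},l_{i-1}), \qquad 2\le i\le n.
\]
Under $\varphi$ these vertices become the line $[p]$ and the point $(l)$ of the second graph, and by the defining relations of $B\Gamma(R;\overline{f_2},\ldots,\overline{f_n})$ the point $(l)$ and the line $[p]$ are adjacent exactly when
\[
l_i + p_i = \overline{f_i}(l_1,p_1,\ldots,l_{i-1},p_{i-1}), \qquad 2\le i\le n.
\]
Applying the definition $\overline{f_i}(x_1,y_1,\ldots,x_{i-1},y_{i-1}) = f_i(y_1,x_1,\ldots,y_{i-1},x_{i-1})$ with $x_j = l_j$ and $y_j = p_j$ converts the right-hand side into $f_i(p_1,l_1,\ldots,p_{i-1},l_{i-1})$, and since addition in $R$ is commutative the left-hand side $l_i + p_i$ equals $p_i + l_i$. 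The two systems of $n-1$ equations therefore coincide term by term, so $(p)$ and $[l]$ are adjacent in the first graph if and only if $\varphi((p)) = [p]$ and $\varphi([l]) = (l)$ are adjacent in the second.

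The only points demanding care are bookkeeping rather than mathematical depth: one must keep straight which graph's defining relations are in force at each stage (those of $B\Gamma(R;f_2,\ldots,f_n)$ before applying $\varphi$, and those of $B\Gamma(R;\overline{f_2},\ldots,\overline{f_n})$ afterward), and one must match the arguments of $\overline{f_i}$ to the ordering in its definition so that the substitution $x_j \mapsto l_j$, $y_j \mapsto p_j$ is correct. There is no genuine obstacle: commutativity of addition in $R$ together with the definition of $\overline{f_i}$ does all the work, which is why the statement is labeled trivial. One may note in passing that $\overline{\overline{f_i}} = f_i$, so the construction is symmetric and applying the correspondence twice recovers the original graph.
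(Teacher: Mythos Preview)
Your proof is correct and is exactly the verification one would give; the paper itself omits the argument entirely, simply declaring the proposition trivial, so there is no alternative approach to compare against. Your careful tracking of which graph's relations are in play and the substitution into the definition of $\overline{f_i}$ is precisely what is needed.
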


We now define our second fundamental family of graphs
for which we require
that all functions are symmetric.

\subsection{Ordinary version }\label{SS:ord}
\bigskip

Let $f_i:R^{2i-2}\to R$ be symmetric for all $2\le i\le n$.
We define $\Gamma_n = \Gamma(R;f_2,\ldots, f_n)$ to be the graph with vertex set
$V(\Gamma_n)=R^n$, where distinct vertices (vectors) $a =\langle a_1,a_2,\ldots,a_n\rangle$ and $b =\langle b_1,b_2,\ldots,b_n\rangle$ are adjacent if and only
if the following $n-1$ relations on their coordinates hold:
\begin{equation}\label{E:eqn12}
\begin{aligned}
a_2 + b_2 &= f_2(a_1,b_1),\\
a_3 + b_3 &= f_3(a_1,b_1,a_2,b_2),\\
\quad\ldots & \quad\ldots  \\
a_n + b_n &= f_n(a_1,b_1,a_2,b_2,\ldots, a_{n-1},b_{n-1}).
\end{aligned}
\end{equation}
For the graphs $\Gamma_n$ our requirement that all functions $f_i$ are symmetric is necessary to ensure that adjacency is  symmetric. Without this condition one obtains not graphs, but digraphs.
It is sometimes beneficial to allow loops in~$\Gamma_n$, i.e., considering $a_i=b_i$ for all $i$ and satisfying (\ref{E:eqn12}).
\medskip

\subsection{{Uniform $k$-partite hypergraphs}}  \label{hyper}
\bigskip

In this survey, a {\it hypergraph} ${\mathcal H}$ is a family of
distinct subsets of a finite set. The members of ${\mathcal H}$ are
called {\it edges}, and the elements of $V({\mathcal H}) = \bigcup_{E
\in {\mathcal H}} E$ are called {\it vertices}. If all edges in ${\mathcal H}$ have size $r$, then ${\mathcal H}$ is called an {\it $r$-uniform}
hypergraph or, simply, {\it $r$-graph}. For example, a 2-graph is
a graph in the usual sense. A vertex $v$ and an edge $E$ are
called {\it incident} if $v\in E$. The {\it degree} of a vertex
$v$ of ${\mathcal H}$ is the number of edges of
${\mathcal H}$ incident with $v$. An $r$-graph ${\mathcal H}$ is {\it
$r$-partite} if its vertex set $V({\mathcal H})$ can be colored in $r$
colors in such a way that no edge of ${\mathcal H}$ contains two
vertices of the same color. In such a coloring, the color classes
of $V({\mathcal H})$ -- the sets of all vertices of the same color --
are called {\it parts} of ${\mathcal H}$. We refer the reader to
Berge \cite{Ber89,Ber98} for additional background on
hypergraphs.

In \cite{LM02}, Lazebnik and Mubayi generalized Theorem \ref{T:decomp}
to edge-decompositions of complete uniform $r$-partite
hypergraphs and complete uniform hypergraphs, respectively. The following comment is from \cite{LM02}.
\begin{quotation}
Looking back, it
is fair to say that most of these generalizations turned out to
be rather straightforward and natural. Nevertheless it took us
much longer to see this than we originally expected: some ``clear"
paths led eventually to nowhere, and  several technical steps
presented considerable challenge even after the ``right"
definitions had been found.
\end{quotation}

Though the following definitions can be made over an arbitrary commutative ring $R$, different from the zero, and with multiplicative identity, we will restrict ourselves to the case where $R$ is a finite field.

As before, let ${\F_q}$ be the
field of $q$ elements. For integers $d, i,r\ge 2$,  let $f_i: {\F_q^{(i-1)r}}\to {\F_q}$ be a function. For $x^i = (x_1^i,
\ldots, x_d^i)\in \F_q^d$,  let $(x^1,\ldots ,x^i)$ stand
for $(x_1^1, \ldots , x_d^1, x_1^2, \ldots , x_d^2, \ldots ,
x_1^i, \ldots , x_d^i)$.

Suppose $d,k,r$ are integers and $2\le r\le k$, $d\ge 2$. First we
define a $k$-partite $r$-graph
 $\cT = \cT (q,d,k,r, f_2,f_3,\ldots , f_d)$.
Let the vertex set $V(\cT)$ be a disjoint union of sets, or color
classes,  $V^1, \ldots, V^k$, where each $V^j$ is a copy of
$\F_q^d$. By $a^j = (a_1^j,a_2^j, \ldots , a_d^j)$ we
denote an arbitrary vertex from $V^j$. The edge set $E(\cT)$ is
defined as follows: for every $r$-subset $\{i_1,\ldots , i_r\}$ of
$\{1,\ldots, k\}$ (the set of colors), we consider the family of
all $r$-sets of vertices $\{a^{i_1}, \ldots ,a^{i_r} \}$, where
each $a^j\in V^j$, and such that the following system of $d-1$
equalities hold:

\begin{equation}\label{Bipmain}
\begin{aligned}
\sum_{j=1}^r a_2^{i_j}  &= f_2(a_1^{i_1},\ldots ,a_1^{i_r}),\\
\sum_{j=1}^r a_3^{i_j}  &= f_3(a_1^{i_1},\ldots ,a_1^{i_r}, a_2^{i_1},\ldots ,a_2^{i_r}),\\
\quad \ldots & \quad\ldots \quad\ldots \quad\ldots \quad\ldots \\
\sum_{j=1}^r a_d^{i_j}  &= f_d(a_1^{i_1},\ldots ,a_1^{i_r},
a_2^{i_1},\ldots ,a_2^{i_r},
\dots ,a_{d-1}^{i_1},\ldots ,a_{d-1}^{i_r}).\\
\end{aligned}
\end{equation}

The  system (\ref{Bipmain}) can also be used to define another
class of $r$-graphs, $\cK = \cK (q,d,r,f_2,f_3,\ldots , f_d)$, but
in order to do this, we have to restrict the definition to only
those functions $f_i$ which satisfy the following symmetry
property: for every permutation $\pi $ of $\{1,2,\ldots, i-1\}$,
$$f_i(x^{\pi(1)},\ldots, x^{\pi(i-1)}) = f_i(x^1,\ldots ,x^{i-1}).$$

Then let the vertex set $V(\cK)= \F_q^d$, and let the edge
set $E(\cK)$ be the family of all $r$-subsets $\{a^{i_1},\ldots ,
a^{i_r} \}$ of vertices which satisfy system (\ref{Bipmain}). We
impose the symmetry condition on the $f_i$ to make the definition
of an edge independent of the order in which its vertices are listed.
Note that $\cK$ can be also viewed as a $q^d$-partite $r$-graph, each
partition having one vertex only. If $d=r$, then $\{i_1,\ldots ,
i_r\}= \{1,\ldots , d\}$.
\medskip

\section{General properties of graphs $B\Gamma_n$, $\Gamma_n$ and hypergraphs $\cT$, $\cK$ } \label{S:prop}
\bigskip

The goal of this section is to state the properties of  $B\Gamma_n= B\Gamma(R; f_2, \ldots, f_n)$ and
$\Gamma_n = \Gamma(R; f_2, \ldots, f_n)$,  which are {\it independent} of the choice of $n$, $R$, and the functions $f_2, \ldots, f_n$. Specializing these parameters, one can obtain some additional properties of the graphs. All proofs can be found in \cite{LW01} or references therein, and we omit them, with the exception of Theorem \ref{T:ncc} below. Though trivial, it is of  utmost  importance for understanding the graphs.

\subsection{Degrees and neighbor-complete colorings }\label{SS:ncc}
\bigskip

One of the most important properties of the graphs
$B\Gamma_n$ and $\Gamma_n$
defined in the previous section is
the following.  In the case of $\Gamma_n$ we do allow loops, and assume that a loop on a vertex  adds 1 to the degree of the vertex.

\begin{theorem}\label{T:ncc}
For every vertex $v$ of $B\Gamma_n$ or of $\Gamma_n$,  and every
$\alpha \in R$,  there exists a unique
neighbor of $v$  whose first coordinate is $\alpha$.

If $|R|=r$,  all graphs $B\Gamma_n$ or $\Gamma_n$ are $r$-regular.
If 2 is a unit in  $R$, then $\Gamma_n$ contains exactly $r$ loops.
\end{theorem}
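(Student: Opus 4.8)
The plan is to exploit the \emph{triangular} (recursive) structure of the defining systems (\ref{Bipmain0}) and (\ref{E:eqn12}): in each, the $i$-th equation expresses a sum of the $i$-th coordinates in terms of a function $f_i$ whose arguments involve only coordinates of index strictly less than $i$. Recognizing this is essentially the whole proof; everything else is forced.

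First I would prove the existence-and-uniqueness claim for $B\Gamma_n$. Fix a point $(p)$ and a value $\alpha \in R$, and look for lines $[l]$ with $l_1 = \alpha$ adjacent to $(p)$. Putting $l_1 = \alpha$, the first equation of (\ref{Bipmain0}) forces $l_2 = f_2(p_1, \alpha) - p_2$; with $l_1, l_2$ known the next equation forces $l_3$, and inductively each $l_i = f_i(p_1, \alpha, \ldots, p_{i-1}, l_{i-1}) - p_i$ is determined because every argument of $f_i$ has already been computed. So exactly one such line exists. The case of a line seeking a point is symmetric (or follows from Proposition \ref{P:dualiso}). For $\Gamma_n$ the same recursion runs on (\ref{E:eqn12}) with $a, b$ replacing $p, l$; because loops are permitted there is no need to forbid $b = a$, and a unique $b$ with $b_1 = \alpha$ results.

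Regularity then falls out: the map sending $\alpha$ to the unique neighbor of $v$ with first coordinate $\alpha$ is injective (distinct first coordinates give distinct neighbors), hence a bijection from $R$ onto $N(v)$, so $|N(v)| = r$. In $\Gamma_n$ the neighbor attached to $\alpha = v_1$ may be $v$ itself, which is exactly the loop counted once under the stated convention.

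For the loop count, a loop at $a$ is the assertion that $a$ satisfies (\ref{E:eqn12}) with $b = a$, i.e. $2a_i = f_i(a_1, a_1, \ldots, a_{i-1}, a_{i-1})$ for $2 \le i \le n$. If $2$ is a unit I would again recurse: $a_1$ is free, and then each $a_i = 2^{-1} f_i(a_1, a_1, \ldots, a_{i-1}, a_{i-1})$ is uniquely forced, giving one loop per choice of $a_1$ and hence exactly $r$ loops. The only delicate point — and it is where the whole argument rests — is that $f_i$ is independent of the $i$-th and later coordinates; without this the equations would not decouple and neither the recursion nor the uniqueness would survive. I anticipate no real obstacle beyond stating that cleanly.
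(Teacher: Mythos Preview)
Your proof is correct and follows essentially the same approach as the paper: both exploit the triangular structure of (\ref{Bipmain0}) and (\ref{E:eqn12}) to recursively determine the coordinates of the neighbor from the first coordinate, then read off regularity and the loop count when $2$ is a unit. Your version is slightly more explicit (writing out the recursion and the bijection $R \to N(v)$), but the argument is the same.
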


\begin{proof}
Fix a vertex $v\in V(B\Gamma_n)$, which
we may assume is a point $v=(a)\in P_n$ (if $v\in L_n$, the argument is similar).
Then for any $\alpha\in R$, there is a unique line
$[b]\in L_n$ which is adjacent to $(a)$ and for which $b_1=\alpha$.
Indeed, with respect to the unknowns $b_i$ the system (\ref{Bipmain0}) is
triangular, and each $b_i$ is uniquely determined from the values of
$a_1,\dots,a_i,b_1,\dots b_{i-1}$, $2\le i\le n$.

This implies that if $|R|=r$, then both $B\Gamma_n$ and $\Gamma_n$ are $r$-regular.
A vertex
$a\in V(\Gamma_n)$ has a loop on it if and only if
it is  of the form
$\langle a_1,a_2,\dots,a_n\rangle$,
where
\[
a_i= \frac{1}{2}
f_i(a_1,a_1,\ldots,a_{i-1},a_{i-1}),
\;\; 2\le i\le n.
\]
Hence,  there are exactly $r$ loops. Erasing them we obtain a simple graph with $r$ vertices of degree $r-1$ and $r^n-r$ vertices of degree $r$.
\end{proof}

Based on this theorem,  it is clear that each of the graphs $B\Gamma_n$ and $\Gamma_n$ allows a vertex coloring by all elements of $R$ such that
the neighbors of every vertex are colored in all possible colors: just color every vertex by its first coordinate.  These colorings are never proper,  as the color of a vertex is the same as the color of exactly one of its neighbors.
Such  colorings  were introduced by
Ustimenko in \cite{Ust98} under the name of ``parallelotopic" and
further explored by Woldar \cite{Wol02} under the name of
``rainbow",  and in \cite{LW01} under the name of ``neighbor-complete colorings", which we adopt  here.
In \cite{Ust98},  some group theoretic constructions
of graphs possessing neighbor-complete colorings are given; in
\cite{Wol02} purely combinatorial aspects of such colorings
are considered.  Non-trivial examples of graphs possessing
neighbor-complete colorings  are not easy to construct.
Remarkably, $B\Gamma_n$ and $\Gamma_n$ always admit them.

Similar statements,
with obvious modifications, hold for
$B\Gamma_n[A,B]$ and $\Gamma_n[A]$, and we
leave such verification to the reader.
\medskip

\subsection{Special induced subgraphs}\label{SS:indsub}
\bigskip

Let $B\Gamma_n$  be the bipartite graph defined in Section
\ref{SS:bip}, and let
$A$ and $B$ be arbitrary subsets of $R$.
We set
\begin{align}
P_{n,A} &=\{(p) =(p_1,p_2,\ldots,p_n) \in P_n\mid p_1\in A\}\notag, \\
L_{n,B} &=\{[l] = [l_1,l_2,\ldots,l_n]\in L_n\mid l_1\in B\}\notag,
\end{align}
and define $B\Gamma_n[A,B]$
to be the subgraph of $B\Gamma_n$
induced on the set of vertices
$P_{n,A}\cup L_{n,B}$.
Since we restrict the range of only
the first coordinates of vertices of
$B\Gamma_n$, $B\Gamma_n[A,B]$
can alternately be described as the
bipartite graph with bipartition
$P_{n,A}\cup L_{n,B}$
and adjacency relation
as given in (\ref{Bipmain0}).
This is a valuable observation
as it enables one
to ``grow'' the graph
$B\Gamma_n[A,B]$ directly, without
ever having to construct
$B\Gamma_n$.
In the case where $A=B$,
we shall abbreviate
$B\Gamma_n[A,A]$
 by
$B\Gamma_n[A]$.

Similarly, for arbitrary $A\subseteq R$ we
define $\Gamma_n[A]$ to be the subgraph of $\Gamma_n$
induced on the set $V_{n,A}$ of all vertices having
respective first coordinate
from $A$.
Again, explicit construction
of $\Gamma_n$ is not essential
in constructing $\Gamma_n[A]$;
the latter graph is obtained
by applying the adjacency relations
in (\ref{E:eqn12}) directly
to $V_{n,A}$.
(Note that when $A=R$ one has
$B\Gamma_n[R] = B\Gamma_n$ and
$\Gamma_n[R] = \Gamma_n$.)
\medskip

\subsection{Covers and lifts}\label{SS:cover}
\bigskip

The notion of a covering for graphs is
analogous to the one in topology.
We call $\overline{\Gamma}$ a
{\em cover}
of $\Gamma$
(and we write $\overline{\Gamma}\to \Gamma$)
if there exists a surjective mapping
$\theta:V(\overline{\Gamma}) \to V(\Gamma),
\; \overline{v}\mapsto v$,
which satisfies the two
conditions:

\begin{enumerate}
\item[(i)]
$\theta$ preserves adjacencies,
i.e., $uv\in E(\Gamma)$ whenever $\overline{u}\,
\overline{v}\in E(\overline{\Gamma})$;

\item[(ii)]
For any vertex $\overline{v}\in V(\overline{\Gamma})$,
the restriction of
$\theta$ to $\overline{N}(\overline{v})$
is a bijection between $\overline{N}(\overline{v})$
and $N(v)$.
\end{enumerate}

Note that our condition (ii)
ensures that $\theta$ is degree-preserving;
in particular,
any cover of an $r$-regular graph  is again
$r$-regular.  If $\overline{\Gamma}$ is a cover of $\Gamma$, we also say that $\overline{\Gamma}$ is a {\em lift} of $\Gamma$.

For $k< n$,
denote by $\eta=\eta(n,k)$
the mapping
$R^n\to R^k$
 which projects
$v\in R^n$ onto its
$k$ initial coordinates,
viz.
\[
v=\langle v_1, v_2, \dots, v_k,
\dots v_n\rangle
\mapsto v=
\langle v_1, v_2, \dots, v_k\rangle,
\]

Clearly, $\eta$ provides a mapping
$V(\Gamma_n)\to V(\Gamma_k)$,
and its restriction to
$V_{n,A}=A\times R^{n-1}$
gives mappings
$V(\Gamma_n [A]) \to V(\Gamma_k[A])$.
In the bipartite case, we further
impose that $\eta$ preserves
vertex type, i.e.~that
\[
\begin{gathered}
(p)= (p_1, p_2, \dots, p_k,
\dots p_n)
\mapsto (p)=
(p_1, p_2, \dots, p_k),\\
[l]= [l_1, l_2, \dots, l_k,
\dots l_n]
\mapsto [l]=
[l_1, l_2, \dots, l_k].
\end{gathered}
\]

Here, $\eta$ induces, in obvious fashion,
the mappings
$V(B\Gamma_n [A])\to V(B\Gamma_k[A])$.

In what follows,
the functions
$f_i$ ($2\le i\le n$)
for the graphs
$B\Gamma_n[A]$
are assumed to
be arbitrary, while those
for
$\Gamma_n[A]$,
continue, out of necessity,
to be assumed symmetric. The proof of the following theorem is easy and can be found in \cite{LW01}.

\begin{theorem}  \label{T:cover}
For every $A\subseteq R$, and every $k,n$,
$2\le k < n$,
$$B\Gamma_n[A] \rightarrow B\Gamma_k[A].$$
No edge of
$\Gamma_n[A]$
projects to a loop of
$\Gamma_k[A]$ if and only if
$$\Gamma_n[A]\rightarrow \Gamma_k[A].$$
\end{theorem}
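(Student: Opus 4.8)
The plan is to verify the covering conditions (i) and (ii) directly for the projection map $\eta = \eta(n,k)$ defined in the excerpt, treating the bipartite and ordinary cases together wherever possible. For the bipartite statement $B\Gamma_n[A] \to B\Gamma_k[A]$, I would first check condition (i): suppose $\overline u\,\overline v \in E(B\Gamma_n[A])$, say $\overline u = (p)$ and $\overline v = [l]$ with the full $n$-coordinate vectors satisfying the first $n-1$ equations of (\ref{Bipmain0}). The key observation is that the system (\ref{Bipmain0}) is triangular and self-consistent under truncation: the first $k-1$ equations involve only the coordinates $p_1,\dots,p_k$ and $l_1,\dots,l_k$, so they continue to hold for the projected images $(p_1,\dots,p_k)$ and $[l_1,\dots,l_k]$. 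Hence $\eta(\overline u)\,\eta(\overline v) \in E(B\Gamma_k[A])$, and adjacency is preserved. Note also that $\eta$ respects vertex type and the restriction on the first coordinate, so the image genuinely lands in $B\Gamma_k[A]$.

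The heart of the argument is condition (ii), and this is where Theorem \ref{T:ncc} does the work. Fix $\overline v \in V(B\Gamma_n[A])$, say a point $\overline v = (p)$ with projection $v = \eta(\overline v)$. I would show that $\eta$ restricts to a bijection $\overline N(\overline v) \to N(v)$. By Theorem \ref{T:ncc}, applied in $B\Gamma_k[A]$, the neighbors of $v$ are in bijective correspondence with the admissible choices of their first coordinate $\alpha \in A$; likewise in $B\Gamma_n[A]$ the neighbors of $\overline v$ correspond bijectively to choices of first coordinate $\alpha \in A$. Since $\eta$ sends a neighbor of $\overline v$ with first coordinate $\alpha$ to a neighbor of $v$ with the same first coordinate $\alpha$, and each value $\alpha$ is realized exactly once on each side, the map on neighborhoods is a bijection. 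This is essentially the statement that the neighbor-complete coloring by first coordinate is preserved by truncation, so the local structure around every vertex is faithfully reproduced downstairs.

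For the ordinary case $\Gamma_n[A] \to \Gamma_k[A]$, conditions (i) and (ii) follow by the identical triangular-truncation and first-coordinate-counting arguments, with symmetry of the $f_i$ guaranteeing that everything remains a genuine graph. The only new subtlety, and the reason the statement becomes an equivalence rather than an implication, is the loop issue. An edge $\overline u\,\overline v$ of $\Gamma_n[A]$ with $\overline u \neq \overline v$ could project to a pair $\eta(\overline u) = \eta(\overline v)$, i.e.\ to a loop of $\Gamma_k[A]$; when this happens $\eta$ fails to be adjacency-preserving into a simple graph, or more precisely fails condition (ii) because two distinct neighbors of $\overline v$ collapse to a single (looped) neighbor of $v$. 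I would argue that $\eta$ is a covering map precisely when no such collapse occurs, which is exactly the hypothesis that no edge of $\Gamma_n[A]$ projects to a loop of $\Gamma_k[A]$: under this hypothesis distinct upstairs neighbors have distinct first $k$ coordinates, restoring the bijection in (ii), while conversely a projected loop immediately violates injectivity on a neighborhood.

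The main obstacle is bookkeeping the loop condition cleanly in the ``only if'' direction and confirming it is both necessary and sufficient for (ii); the triangularity arguments for (i) and for the surjectivity half of (ii) are routine once Theorem \ref{T:ncc} is invoked. I would therefore spend most of the writeup isolating the equivalence ``$\eta$ is injective on each neighborhood $\overline N(\overline v)$ $\iff$ no edge projects to a loop,'' since that single equivalence captures the entire difference between the bipartite (always a cover) and ordinary (conditional) cases.
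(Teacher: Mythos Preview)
Your approach is correct and is exactly what the paper intends (it calls the proof ``easy'' and defers to \cite{LW01}): triangular truncation gives condition~(i), and the first-coordinate parameterization of neighborhoods from Theorem~\ref{T:ncc} gives condition~(ii).

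One small correction in your analysis of the loop condition. You write that a projected loop ``violates injectivity on a neighborhood'' and that ``two distinct neighbors of $\overline v$ collapse.'' Neither is the actual failure. By Theorem~\ref{T:ncc}, distinct neighbors of $\overline v$ already have distinct first coordinates, hence distinct images under $\eta$; injectivity of $\eta$ on $\overline N(\overline v)$ is automatic and never the obstruction. What actually goes wrong when an edge $\overline u\,\overline v$ projects to a loop is that $\overline u\in\overline N(\overline v)$ has image $\eta(\overline u)=v$ itself, which (in the simple-graph setting) is not an element of $N(v)$; equivalently, condition~(i) fails outright since the edge does not map to an edge. Conversely, when no edge projects to a loop, the bijection $\alpha\mapsto\overline u_\alpha$ upstairs and $\alpha\mapsto u_\alpha$ downstairs agree on which value of $\alpha$ (namely $\alpha=v_1$) yields a loop rather than a genuine neighbor, and the remaining values give matching simple neighborhoods. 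This is the clean way to phrase the equivalence you are after in your final paragraph.
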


\begin{remark}\label{rem10.1}
{\rm If a  graph $\Gamma$ contains cycles, its girth, denoted by $girth (\Gamma)$, is  the length of its shortest cycle.
One important consequence of Theorem~\ref{T:cover},
particularly amenable to
girth related Tur\'an-type problems in extremal graph theory,
is that the girth of a graph is not greater than the girth of its cover.  In particular, the girth of
$B\Gamma_n$ or $\Gamma_n$ is a non-decreasing
function of $n$. More precisely, for $2\le k\le n$,
$$girth (B\Gamma(R; f_2, \ldots, f_k)) \le girth (B\Gamma(R; f_2, \ldots, f_k\ldots, f_n)),$$
and similarly for $B\Gamma_n[A]$ or $\Gamma_n[A]$.
}
\end{remark}

A relation of $B\Gamma_n$ to voltage lifts will be discussed in Section \ref{voltagelift}.
\medskip

\subsection{Embedded spectra}\label{SS:embed}
\bigskip

The spectrum
$spec(\Gamma)$ of a graph $\Gamma$
is defined to be
the multiset of eigenvalues of
its adjacency matrix.
One important property of covers
discussed in Section~\ref{SS:cover} is that
the spectrum of any graph embeds
(as a multiset, i.e., taking into
account also the multiplicities of
the eigenvalues) in the
spectrum of its cover.
This result can be proven in
many ways, for example as a consequence
of either Theorem 0.12 or Theorem 4.7, both of Cvetkovi\'c, Doob and Sachs \cite{CDS80}.
As an immediate consequence of this fact
and Theorem~\ref{T:cover}, we obtain the following result.

\begin{theorem}\label{T:spec}
Assume $R$ is finite and let
$A\subseteq R$. Then for each $k,n$, $2\le k< n$,
\[
spec(B\Gamma_k[A])\subseteq
spec(B\Gamma_n[A]).
\]
For the graphs $\Gamma_n[A]$, one has $spec(\Gamma_k[A])\subseteq
spec(\Gamma_n[A])$ provided
no edge of
$\Gamma_n[A]$
projects to a loop of
$\Gamma_k[A]$.
\end{theorem}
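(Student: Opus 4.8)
The plan is to derive the statement directly from Theorem~\ref{T:cover} together with the general principle that the spectrum of a finite graph embeds, as a multiset, into the spectrum of any of its covers. First I would record the covering relations that Theorem~\ref{T:cover} already supplies: for $2\le k<n$ one always has $B\Gamma_n[A]\to B\Gamma_k[A]$, while $\Gamma_n[A]\to\Gamma_k[A]$ holds precisely when no edge of $\Gamma_n[A]$ projects to a loop of $\Gamma_k[A]$. Thus both assertions reduce to a single lemma: if $\overline{\Gamma}\to\Gamma$ (both finite, as guaranteed by $R$ being finite), then $spec(\Gamma)\subseteq spec(\overline{\Gamma})$ as multisets.

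To prove this lemma I would use the covering map $\theta:V(\overline{\Gamma})\to V(\Gamma)$ to pull back eigenvectors. Denote by $M(\cdot)$ the adjacency matrix, acting by $(M(\Gamma)x)(v)=\sum_{u\in N(v)}x(u)$. Given an eigenvector $x$ of $M(\Gamma)$ with eigenvalue $\lambda$, set $\overline{x}=x\circ\theta$, that is $\overline{x}(\overline{v})=x(\theta(\overline{v}))$. For any $\overline{v}$, writing $v=\theta(\overline{v})$, condition~(ii) of the definition of a cover says that $\theta$ restricts to a bijection $\overline{N}(\overline{v})\to N(v)$, whence
\[
(M(\overline{\Gamma})\,\overline{x})(\overline{v})
=\sum_{\overline{u}\in\overline{N}(\overline{v})} x(\theta(\overline{u}))
=\sum_{u\in N(v)} x(u)
=\lambda\,x(v)
=\lambda\,\overline{x}(\overline{v}).
\]
Hence $\overline{x}$ is an eigenvector of $\overline{\Gamma}$ with the same eigenvalue $\lambda$. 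When loops are present in $\Gamma_k[A]$, the convention that a loop contributes $1$ to the diagonal is consistent with this computation, and the hypothesis that no edge of $\Gamma_n[A]$ projects to a loop of $\Gamma_k[A]$ is exactly what makes $\theta$ a genuine cover, so the neighborhood bijection invoked above remains valid.

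The remaining point, and the one requiring the most care, is to upgrade ``every eigenvalue of $\Gamma$ is an eigenvalue of $\overline{\Gamma}$'' to the multiset containment, i.e.\ to control multiplicities. For this I would note that the pullback $x\mapsto x\circ\theta$ is linear and, because $\theta$ is surjective, injective; it therefore carries a basis of the $\lambda$-eigenspace of $\Gamma$ to a linearly independent family of $\lambda$-eigenvectors of $\overline{\Gamma}$. Consequently the multiplicity of $\lambda$ in $spec(\Gamma)$ is at most its multiplicity in $spec(\overline{\Gamma})$ for every $\lambda$, which is precisely $spec(\Gamma)\subseteq spec(\overline{\Gamma})$. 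Taking $(\overline{\Gamma},\Gamma)=(B\Gamma_n[A],B\Gamma_k[A])$ and, under the loop hypothesis, $(\Gamma_n[A],\Gamma_k[A])$ gives both displayed inclusions.

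Alternatively, as the surrounding text indicates, one may simply cite the embedding results of Cvetkovi\'c, Doob and Sachs \cite{CDS80}; the eigenvector pullback above is in effect their proof specialized to coverings. I expect the only genuinely delicate step to be this multiplicity bookkeeping, since the equality of eigenvalue \emph{sets} is immediate, whereas the multiset statement rests on the injectivity of the pullback map.
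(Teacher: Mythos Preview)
Your proposal is correct and follows exactly the same route as the paper: reduce to Theorem~\ref{T:cover} and invoke the general fact that the spectrum of a finite graph embeds as a multiset in that of any cover, which the paper simply cites from \cite{CDS80}. Your eigenvector-pullback argument is a standard self-contained proof of that cited fact, so you have supplied more detail than the paper but not a different argument.
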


\subsection{Edge-decomposition of $K_n$  and $K_{m,m}$}
\label{SS:edgedec}
\bigskip

The results of this subsection  were motivated by a question of Thomason \cite{Tho02},  who asked whether
$D(k,q)$
(which will be defined later in this survey) edge-decomposes $K_{q^k,q^k}$.

Let $\Gamma$ and $\Gamma'$ be graphs.
An {\em edge-decomposition of
$\Gamma$ by $\Gamma'$}
is a collection ${\mathcal C}$ of subgraphs
of $\Gamma$, each isomorphic to $\Gamma'$,
such that
$\{E(\Lambda)\mid \Lambda\in
{\mathcal C}\}$ is a partition of
$E(\Gamma)$.

We also say in this case that
{\em $\Gamma'$ decomposes $\Gamma$}.
It is customary to refer to the
subgraphs $\Lambda$ in ${\mathcal C}$ as
{\em copies} of $\Gamma'$, in which case
one may envision an
edge-decomposition of
$\Gamma$ by $\Gamma'$
as a decomposition of
$\Gamma$ into
edge-disjoint copies of
$\Gamma'$.

As usual, let $K_n$  denote
the complete graph
on $n$ vertices, and
$K_{m,n}$ the complete bipartite
graph with partitions of sizes $m$ and $n$.
The questions of decomposition of $K_n$ or  $K_{m,n}$ into copies of a graph $\Gamma'$ are classical in graph theory and have been of interest for many years. In many studied cases $\Gamma'$ is a matching, or a cycle, or a complete graph,  or a complete bipartite graph, i.e.,  a graph with a rather simple structure.
In contrast, the structure of $B\Gamma_n$ or $\Gamma_n$, is usually far from simple.
In this light the  following theorem from \cite{LW01} is a bit surprising.

\begin{theorem}\label{T:decomp}{\rm(\cite{LW01})}
Let $|R|=r$.  Then
\begin{enumerate}
\item
$B\Gamma_n$ decomposes
$K_{r^n,r^n}$.

\item
$\Gamma_n$ (with no loops) decomposes
$K_{r^n}$ if $2$  is a unit in $R$.
\end{enumerate}
\end{theorem}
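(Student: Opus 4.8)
The plan is to exhibit, in each case, a group $G$ of order $r^{n-1}$ acting on the vertex set of the complete (bipartite) graph by translations in the higher coordinates, and to show that the $r^{n-1}$ images of $B\Gamma_n$ (respectively $\Gamma_n$) under $G$ partition the edge set. Since each element of $G$ is merely a relabeling of vertices preserving the bipartition (respectively the whole vertex set), every image will automatically be isomorphic to the original graph, so the only real content is that the images are pairwise edge-disjoint and together cover all edges. To organize this, for $u,v\in R^n$ I would define the \emph{discrepancy} $d(u,v)=(d_2,\ldots,d_n)\in R^{n-1}$ by $d_i=u_i+v_i-f_i(u_1,v_1,\ldots,u_{i-1},v_{i-1})$; by construction $(p)[l]$ (respectively an unordered pair $\{a,b\}$) is an edge of $B\Gamma_n$ (respectively $\Gamma_n$) precisely when $d=0$ (and, for $\Gamma_n$, $a\neq b$). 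Symmetry of the $f_i$ makes $d$ well defined on unordered pairs in the ordinary case.

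For part~1 I would take $G=\{g_t:t\in R^{n-1}\}$, where $g_t$ fixes every point and sends $[l_1,l_2,\ldots,l_n]$ to $[l_1,l_2+t_2,\ldots,l_n+t_n]$, so $G\cong(R^{n-1},+)$. A point--line pair $(x)[y]$ lies in $g_t\cdot E(B\Gamma_n)$ if and only if $d(x,y-\tilde t)=0$, where $\tilde t=(0,t_2,\ldots,t_n)$. Reading off coordinates, $D_i(t):=d_i(x,y-\tilde t)=x_i+y_i-t_i-f_i(x_1,y_1,\ldots,x_{i-1},y_{i-1}-t_{i-1})$ depends on $t_i$ only through the single term $-t_i$, and otherwise only on $t_2,\ldots,t_{i-1}$. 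Hence $t\mapsto D(t)$ is a triangular self-map of $R^{n-1}$ whose diagonal coefficients are all the unit $-1$, so it is a bijection; this is exactly the triangularity already used in the proof of Theorem~\ref{T:ncc}. Consequently each edge of $K_{r^n,r^n}$ lies in $g_t\,E(B\Gamma_n)$ for exactly one $t$, and the $r^{n-1}$ copies partition its edges.

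For part~2 I would use the same group acting now on all of $R^n$ by $a\mapsto a+\tilde t$. An unordered pair $\{x,y\}$ lies in $g_t\,E(\Gamma_n)$ if and only if $d(x-\tilde t,\,y-\tilde t)=0$, and the corresponding coordinate now reads $D_i(t)=x_i+y_i-2t_i-f_i(\ldots)$, again triangular in $t$ but with diagonal coefficient $-2$. This is precisely where the hypothesis that $2$ be a unit enters: it guarantees $-2$ is invertible, so the triangular map remains a bijection and each edge of $K_{r^n}$ is covered exactly once. Loops never intervene, since $x\neq y$ forces $x-\tilde t\neq y-\tilde t$.

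The main obstacle in both parts is exactly this verification that the discrepancy-shift map $t\mapsto D(t)$ is triangular with unit diagonal; everything else is bookkeeping. Once it is in hand, disjointness and covering—hence the decomposition—follow immediately, and the counts $r^{2n}/r^{n+1}=r^{n-1}$ and $\binom{r^n}{2}\big/\big(r(r^n-1)/2\big)=r^{n-1}$ are consistent with having exactly $r^{n-1}$ copies.
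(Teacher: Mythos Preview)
Your argument is correct; the translation action of $(R^{n-1},+)$ on the higher coordinates, together with the triangular solvability already exploited in Theorem~\ref{T:ncc}, gives exactly one copy through each edge, and the diagonal factor $-2$ in the ordinary case is precisely where the hypothesis that $2\in R^\times$ is used. The paper itself omits the proof and simply cites \cite{LW01}; the method there is the same translation/triangularity idea you have written out, so your approach matches the intended one.
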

\medskip

\subsection{{Edge-decomposition of complete $k$-partite $r$-graphs
and complete $r$-graphs}}
\bigskip

We begin by  presenting several parameters of hypergraphs
\[
\cT = \cT (q,d,k,r, f_2,\ldots , f_d)\;\;\text{and}\;\;
\cK = \cK (q,d,r,f_2,\ldots , f_d).
\]
The proof of the theorem below is similar to the one for $B\Gamma$ and $\Gamma$.

\begin{theorem}\label{D:degree} {\rm (\cite{LM02})}
Let $q,d,r,k $ be integers, $2\le r\le k$, $d\ge 2$, and  $q$ be a
prime power. Then
\begin{enumerate}
\item
$\cT$ is a regular $r$-graph of order $kq^d$ and size ${{k}\choose
{r}}q^{dr-d+1}$. The degree of each  vertex is ${{k-1}\choose
{r-1}}q^{dr - 2d + 1}$.

\item
For odd $q$, $\cK$ is an $r$-graph of order  $q^d$ and size
${1\over {q^{d-1}}}{q^d\choose r}$.
\end{enumerate}
\end{theorem}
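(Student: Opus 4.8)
The plan is to count the parameters of the hypergraphs $\cT$ and $\cK$ by directly exploiting the triangular structure of the defining system (\ref{Bipmain}), which was already the engine behind Theorem \ref{T:ncc}. The key observation is that, just as in the graph case, once we fix the first coordinates of all $r$ vertices in an edge, the remaining coordinates of \emph{one} chosen vertex are determined by the $d-1$ equations in (\ref{Bipmain}), while the other vertices' coordinates may be chosen freely. This degree-of-freedom accounting will give all the quantities simultaneously.

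For part (1), I would first establish the order: $\cT$ has $k$ color classes, each a copy of $\F_q^d$, so $|V(\cT)| = kq^d$. For the size, I would count edges by summing over the $\binom{k}{r}$ choices of color set $\{i_1,\ldots,i_r\}$. Within a fixed color set, an edge is a choice of vertices $a^{i_1},\ldots,a^{i_r}$ satisfying (\ref{Bipmain}). I would count these by free choice: choose all $r$ first coordinates $a_1^{i_1},\ldots,a_1^{i_r}$ freely ($q^r$ ways), then proceed level by level. At level $s$ (for $2\le s\le d$), the equation fixes $\sum_j a_s^{i_j}$, so I may choose $r-1$ of the $r$ values $a_s^{i_j}$ freely and the last is forced; this contributes $q^{r-1}$ per level across $d-1$ levels. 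Hence each color set yields $q^r \cdot (q^{r-1})^{d-1} = q^{dr-d+1}$ edges, giving total size $\binom{k}{r} q^{dr-d+1}$. For the degree of a fixed vertex $v=a^{i_1}$, I would count edges containing it: the color $i_1$ is used, so I choose the other $r-1$ colors in $\binom{k-1}{r-1}$ ways, then the other $r-1$ vertices' first coordinates freely ($q^{r-1}$ ways), and at each of the $d-1$ higher levels I again have $r-1$ free vertices but now \emph{one fewer} free choice because $a^{i_1}$'s coordinate is already fixed, so $q^{r-1}$ per level with the forced vertex being among the remaining $r-1$; a careful bookkeeping gives $\binom{k-1}{r-1} q^{dr-2d+1}$. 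Regularity is immediate since this count is independent of $v$.

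For part (2), the hypergraph $\cK$ has vertex set $\F_q^d$, so order $q^d$ is immediate. The size should follow from part (1) via the relation between $\cK$ and $\cT$: since $\cK$ is the ``$q^d$-partite'' version where each part is a single vertex, or more directly by a handshake-style double count. I would count incident (vertex, edge) pairs two ways, or equivalently count ordered edges and divide by the $r!$ orderings (here the oddness of $q$, equivalently $2$ being a unit, ensures the symmetry and loop-free behavior needed for the symmetric functions $f_i$ to make the edge well-defined, paralleling the loop count in Theorem \ref{T:ncc}). Counting ordered $r$-tuples satisfying (\ref{Bipmain}) by the same level-by-level freedom as above gives $q^r \cdot (q^{r-1})^{d-1} = q^{dr-d+1}$ ordered solutions, but since now all vertices live in the same $\F_q^d$ I must subtract degenerate tuples; dividing the valid count by the number of orderings yields $\tfrac{1}{r!}\binom{\,\cdot\,}{\,\cdot\,}$-type expressions, which I would reconcile with the stated $\tfrac{1}{q^{d-1}}\binom{q^d}{r}$ by re-expressing the count as $\binom{q^d}{r}$ divided by the number of edges through a generic vertex.

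\textbf{The main obstacle} I anticipate is part (2): getting the size of $\cK$ into the clean closed form $\tfrac{1}{q^{d-1}}\binom{q^d}{r}$. The subtlety is that $\cK$'s vertices all share one copy of $\F_q^d$, so unlike $\cT$ there is no automatic separation of vertices into distinct parts, and I must argue that the $r$ vertices of an edge are forced to be distinct (this is where odd $q$ enters, mirroring the loop analysis in Theorem \ref{T:ncc}) and that the defining system imposes exactly the right codimension. The cleanest route is likely to show that the map sending an edge to its unordered set of first coordinates is such that every $r$-subset of $\F_q^d$ lies in exactly $q^{d-1}$-controlled many edges, so that $|E(\cK)| \cdot q^{d-1} = \binom{q^d}{r}$; verifying this uniformity, rather than the raw solution count, is the delicate step.
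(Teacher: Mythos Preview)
Your approach to part (1) is correct and is exactly the hypergraph analogue of the argument for Theorem~\ref{T:ncc} that the paper intends: the triangular structure of (\ref{Bipmain}) lets you count free coordinates level by level, and your bookkeeping for order, size, and degree is right (explicitly, the degree is $\binom{k-1}{r-1}\,q^{r-1}(q^{r-2})^{d-1}=\binom{k-1}{r-1}q^{dr-2d+1}$, since once one vertex of the edge is fixed you have $r-1$ free first coordinates and only $r-2$ free choices at each of the $d-1$ higher levels).

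Part (2) has a genuine gap. First, the handshake/double-count route fails outright: $\cK$ is \emph{not} regular in general (this is precisely the point of the next result, Theorem~\ref{D:degrees}, which only obtains bi-regularity and only for prime $q$), so you cannot recover the size from a single degree. Second, your ``cleanest route'' does not parse: edges of $\cK$ \emph{are} $r$-subsets of $\F_q^d$, so the phrase ``every $r$-subset of $\F_q^d$ lies in exactly $q^{d-1}$-controlled many edges'' conflates two different objects. What you are groping toward is the decomposition mechanism behind Theorem~\ref{T:decomp1}: for each $c\in\F_q^{d-1}$ let $\cK_c$ be defined by the system with $f_s$ replaced by $f_s+c_s$; every $r$-subset of $\F_q^d$ is an edge of exactly one $\cK_c$ (namely, $c$ equal to its ``defect'' vector), so these $q^{d-1}$ hypergraphs partition $\binom{\F_q^d}{r}$, and once you exhibit isomorphisms among them you get $|E(\cK)|=q^{-(d-1)}\binom{q^d}{r}$. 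Finally, your reading of ``odd $q$'' as merely ``$2$ is a unit, hence no loops'' is the $r=2$ intuition and does not by itself control collisions of vertices for $r\ge 3$; the isomorphism between translates is where the arithmetic hypotheses on $q$ (and, as in Theorem~\ref{T:decomp1}(2), on $\gcd(r,q)$) actually enter, so you should expect to need more than just $2\in\F_q^\times$ there.
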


For $r=2$ and $q$ odd, the number of loops in $\Gamma_n$ could be easily counted. Removing them leads to a bi-regular graph, with some vertices having degree $q$ and others having degree $q-1$.  In general this is not
true for $r\ge 3$. Nevertheless, it is true when $q = p$ is an odd
prime, and the precise statement follows. In this case, the
condition $(r,p) = 1$ implies $(r-1, p)=1$, which allows one to  prove
the following theorem by induction on $r$.

\begin{theorem}\label{D:degrees} {\rm (\cite{LM02})}
Let $p,d,r $ be integers, $2\le r < p$, $d\ge 2$, and $p$ be a
prime. Then $\cK$ is a bi-regular $r$-graph of order $p^d$ and
size ${1\over {p^{d-1}}}{p^d\choose r}$. It contains $p^d - p$
vertices of degree $\Delta $ and $p$ vertices of degree $\Delta +
(-1)^{r+1}$, where $\Delta  = {1\over
{p^{d-1}}}\bigl({{p^d-1}\choose {r-1}} + (-1)^r
\bigr)$.
\end{theorem}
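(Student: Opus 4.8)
The plan is to fix a vertex $v = (a_1, \ldots, a_d) \in \F_p^d$ and count the edges of $\cK$ through $v$ by an inclusion–exclusion over the lattice of set partitions of $\{1,\ldots,r\}$. Because edges are $r$-subsets and the $f_i$ are symmetric, each edge $\{v, w_2, \ldots, w_r\}$ through $v$ corresponds to exactly $(r-1)!$ ordered tuples $(v, b^2, \ldots, b^r)$ with pairwise distinct entries satisfying system~(\ref{Bipmain}); hence $\deg(v) = N_{\mathrm{dist}}(v)/(r-1)!$, where $N_{\mathrm{dist}}(v)$ counts such distinct-entry tuples. The heart of the argument is to compute $N_{\mathrm{dist}}(v)$ through the auxiliary quantities $g_v(Q)$, the number of ordered tuples $(v,b^2,\ldots,b^r)$ satisfying~(\ref{Bipmain}) that are constant on each block of a set partition $Q$ of $\{1,\ldots,r\}$ (with no distinctness imposed across blocks). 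Möbius inversion on the partition lattice then gives $N_{\mathrm{dist}}(v) = \sum_Q \mu(\hat 0, Q)\, g_v(Q)$, where $\hat 0$ is the all-singletons partition.

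Next I would evaluate $g_v(Q)$. If $Q$ has $m(Q)$ blocks with the block of index $1$ pinned to $v$, then solving~(\ref{Bipmain}) coordinate by coordinate (as in the triangular argument of Theorem~\ref{T:ncc}) shows that, for each $s \ge 2$, the $s$-th coordinates of the $m(Q)-1$ free effective vertices satisfy a single linear equation $\sum_t \lambda_t c_s^t = \mathrm{const}$, whose coefficients $\lambda_t$ are the block sizes. Here the hypothesis $r<p$ is decisive: every block size obeys $1 \le \lambda_t \le r < p$, so each $\lambda_t$ is a unit mod $p$ and the constraint is nondegenerate. For $m(Q) \ge 2$ this yields $g_v(Q) = p^{(m(Q)-1)d - d + 1}$, a value independent of $v$; for $m(Q)=1$ (all entries equal to $v$) the count $g_v(\hat 1)$ is the indicator that $v$ satisfies the diagonal system $r a_s = f_s(a_1,\ldots,a_1,\ldots,a_{s-1},\ldots,a_{s-1})$ for $s=2,\ldots,d$. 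Since $r$ is a unit and this system is triangular, it has a unique solution for each value of $a_1$, giving exactly $p$ diagonal vertices and $p^d-p$ others — the two vertex classes.

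To finish, I would substitute into the Möbius sum. Writing $g_v(Q) = p^{1-2d}(p^d)^{m(Q)}$ formally and invoking the classical identity $\sum_Q \mu(\hat 0, Q)\, x^{m(Q)} = x(x-1)\cdots(x-r+1)$ (the characteristic polynomial of the partition lattice), the total evaluates to $p^{1-2d}\,p^d(p^d-1)\cdots(p^d-r+1)$; the only partition where the formal value disagrees with the true $g_v$ is $Q=\hat 1$, which I correct using $\mu(\hat 0,\hat 1) = (-1)^{r-1}(r-1)!$ and the indicator above. Dividing by $(r-1)!$ and simplifying $\tfrac{p^d(p^d-1)\cdots(p^d-r+1)}{(r-1)!} = p^d\binom{p^d-1}{r-1}$ yields $\deg(v) = \tfrac{1}{p^{d-1}}\bigl(\binom{p^d-1}{r-1}+(-1)^r\bigr) + (-1)^{r+1} g_v(\hat 1)$, which is $\Delta$ on the $p^d-p$ non-diagonal vertices and $\Delta+(-1)^{r+1}$ on the $p$ diagonal ones. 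The order and size are inherited from Theorem~\ref{D:degree}; as an independent check, the degree-sum identity $\sum_v \deg(v) = r\,|E(\cK)|$ together with $r\binom{p^d}{r} = p^d\binom{p^d-1}{r-1}$ recovers the same $\Delta$.

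The main obstacle I anticipate is the second step — the clean evaluation of $g_v(Q)$, and in particular verifying that every partition with $m(Q)\ge 2$ contributes a $v$-independent count while the single $v$-dependent term is isolated at $\hat 1$. This is exactly where primality of $p$ and the bound $r<p$ enter (ensuring all block multiplicities are units), and it is what the paper's remark about $(r,p)=(r-1,p)=1$ is flagging. One must also keep track of the role of the symmetry of the $f_i$, which is what legitimizes both the reduction of an edge to an unordered $r$-set and the division by $(r-1)!$.
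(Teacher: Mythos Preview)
Your argument is correct, and it is a genuinely different route from the one the paper sketches. The paper's own proof proceeds by induction on $r$: the base case $r=2$ is essentially Theorem~\ref{T:ncc} (the $p$ ``diagonal'' vertices are the loop vertices of $\Gamma_d$), and the inductive step passes from $(r-1)$-edges to $r$-edges, which is why the paper highlights that $(r,p)=1$ forces $(r-1,p)=1$. Your approach, by contrast, is a one-shot inclusion--exclusion over the partition lattice $\Pi_r$: you evaluate $g_v(Q)=p^{(m(Q)-2)d+1}$ uniformly for $m(Q)\ge 2$ (this is where you use that \emph{every} block size $1,\ldots,r$ is a unit in $\F_p$, hence the full strength of $r<p$), invoke the classical identity $\sum_{Q\in\Pi_r}\mu(\hat 0,Q)\,x^{m(Q)}=x(x-1)\cdots(x-r+1)$, and then correct the single anomalous term at $\hat 1$ using $\mu(\hat 0,\hat 1)=(-1)^{r-1}(r-1)!$. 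This is cleaner and more conceptual than the induction --- it explains in one line both why the two degree classes differ by exactly $(-1)^{r+1}$ and why the exceptional class has size $p$ --- at the modest cost of assuming familiarity with the M\"obius function of $\Pi_r$. The inductive proof is more elementary but obscures the structure; your partition-lattice proof makes the role of the hypothesis $r<p$ completely transparent (all block multiplicities are units, so every collapsed system is nondegenerate).
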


We now turn to  edge-decompositions of hypergraphs.
 Let $\cH$ and $\cH '$ be
hypergraphs. An {\em edge-decomposition of $\cH $ by $\cH '$} is
a collection ${\mathcal P}$ of subhypergraphs of $\cH$, each
isomorphic to $\cH '$, such that $\{E({\mathcal X} )\mid
{\mathcal X}\in {\mathcal P}\}$ is a partition of $E(\cH )$.
We also say in this case that {\em $\cH '$ decomposes $\cH$}, and
 refer to the hypergraphs from ${\mathcal
P}$ as {\em copies} of $\cH '$.

Let $T^{(r)}_{kq^d}$, $2\le r\le k$, $d\ge 1$, denote the complete
$k$-partite $r$-graph with each partition class containing $q^d$
vertices. This is a regular $r$-graph of order $kq^d$ and size
${k\choose r}q^{dr}$, and the degree of each  vertex is
${{k-1}\choose {d-1}}q^{dr -d}$.

As before, let  $K^{(r)}_{q^d}$ denote the complete $r$-graph on
$q^d$ vertices.
The following theorem
 holds for {\it arbitrary}  functions $f_2,\ldots , f_r$.
The proof of the theorem  below is similar to the one for 2-graphs from
\cite{LW01}.

\begin{theorem}\label{T:decomp1} {\rm (\cite{LM02})}
Let $q,d,r,k $ be integers, $2\le r\le k$, $d\ge 2$, and  $q$ be a
prime power. Then
\begin{enumerate}
\item
$\cT = \cT (q,d,r,k, f_2,\ldots , f_d)$ decomposes
$T^{(r)}_{kq^d}$.

\item
$\cK = \cK (q,d,r, f_2,\ldots , f_d)$ decomposes
$K^{(r)}_{q^d}$  provided that $q$ is odd and $(r,q) = 1$.
\end{enumerate}
\end{theorem}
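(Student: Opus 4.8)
The plan is to mimic the edge-decomposition argument for $B\Gamma_n$ and $\Gamma_n$ in Theorem~\ref{T:decomp}, replacing the single ``defect'' scalar by a defect vector in $\F_q^{d-1}$. For an $r$-subset $\{a^{i_1},\dots,a^{i_r}\}$ of $T^{(r)}_{kq^d}$ (one vertex in each of $r$ distinct color classes) I define its \emph{defect} $c=(c_2,\dots,c_d)\in\F_q^{d-1}$ by
\[
c_m=\sum_{j=1}^r a_m^{i_j}-f_m(a_1^{i_1},\dots,a_1^{i_r},\dots,a_{m-1}^{i_1},\dots,a_{m-1}^{i_r}),\qquad 2\le m\le d,
\]
so that $\cT$ is exactly the set of $r$-subsets of defect $0$. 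For each $c$ let $\cT^{(c)}$ be the $r$-graph of all $r$-subsets of defect $c$; the same recipe applied to $K^{(r)}_{q^d}$ gives $\cK^{(c)}$. Since every edge of $T^{(r)}_{kq^d}$ (resp.\ $K^{(r)}_{q^d}$) has a unique defect, the families $\{\cT^{(c)}\}_{c\in\F_q^{d-1}}$ (resp.\ $\{\cK^{(c)}\}$) partition the edge sets, and a size count against Theorem~\ref{D:degree} confirms there are the expected $q^{d-1}$ classes. It therefore remains only to show that each class is isomorphic to $\cT$ (resp.\ $\cK$).

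For the isomorphism I would produce an explicit vertex relabeling of the ambient complete hypergraph that carries $\cT$ onto $\cT^{(c)}$. The natural candidate is a coordinate translation $\tau_s$ that fixes the first coordinate of every vertex and adds $s_m$ to its $m$-th coordinate for $2\le m\le d$, applied simultaneously in all color classes (in the $\cK$-case, to the single vertex set $\F_q^d$). Because $f_m$ depends only on coordinates of index $<m$, the effect of $\tau_s$ on the defect of an edge is triangular: the $m$-th defect coordinate is altered by $r\,s_m$ plus a term depending only on $s_2,\dots,s_{m-1}$. Thus the induced map $s\mapsto(\text{defect shift})$ is a lower-triangular affine map of $\F_q^{d-1}$ whose diagonal entries all equal $r$. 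This is the higher-dimensional analogue of the ``translate the lines'' step in the bipartite case of Theorem~\ref{T:decomp}, and is governed by the same triangular structure that underlies Theorem~\ref{T:ncc}.

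The main obstacle is this last step --- verifying that the translations realize \emph{every} defect class as an isomorphic copy. For $\cK$ the triangular map above is a bijection of $\F_q^{d-1}$ precisely when its diagonal entry $r$ is invertible, which is where the hypothesis $(r,q)=1$ is used; letting $s$ range over a transversal then matches the $q^{d-1}$ translates $\tau_s(\cK)$ bijectively with the defect classes. The assumption that $q$ is odd enters in guaranteeing that each translate is again a genuine $r$-graph: one must rule out degenerate $r$-subsets with repeated or ``self-paired'' vertices (the analogue of the loop count in Theorem~\ref{T:ncc}, where $2$ must be a unit), so that $\tau_s$ carries edges to edges without collapsing them. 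For $\cT$ the same translation argument applies; when $r=k$ every edge meets a fixed color class exactly once, so translating that single class (diagonal entry $1$) already suffices, while for $r<k$ one translates all classes as above, and the invertibility of the diagonal factor is again the crux to be checked. Once $\cT^{(c)}\cong\cT$ (resp.\ $\cK^{(c)}\cong\cK$) is established for every $c$, the partition from the first paragraph is the desired edge-decomposition.
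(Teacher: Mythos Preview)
Your partition of the edge set of the complete hypergraph into defect classes is correct, but the isomorphism step contains a real gap.  When you apply the uniform translation $\tau_s$ (adding $s_m$ to the $m$-th coordinate of every vertex), the new $m$-th defect of an edge $\{a^{i_1},\dots,a^{i_r}\}$ is
\[
c_m+r\,s_m+\Bigl(f_m(a_1^{\,\cdot},a_2^{\,\cdot},\dots,a_{m-1}^{\,\cdot})-f_m(a_1^{\,\cdot},a_2^{\,\cdot}+s_2,\dots,a_{m-1}^{\,\cdot}+s_{m-1})\Bigr).
\]
The bracketed difference depends on the second through $(m-1)$-st coordinates of the vertices of that particular edge, not only on $s_2,\dots,s_{m-1}$.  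Hence $\tau_s$ does \emph{not} send the defect-$0$ class to a single defect class, and the ``lower-triangular map with diagonal $r$'' you describe is not well defined as a map on $\F_q^{d-1}$.  (It would be if every $f_m$ depended only on first coordinates, but the theorem is for arbitrary $f_m$.)  Your remark about $q$ odd preventing ``collapsed'' edges is also off: $\tau_s$ is a bijection of the vertex set and cannot identify distinct vertices.

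The cure---and this is what the paper means by ``similar to the $2$-graph case''---is to reverse the two steps.  Take as your copies the \emph{translates} $\tau_s(\cK)$ (resp.\ $\tau_s(\cT)$) themselves; these are isomorphic to $\cK$ (resp.\ $\cT$) by construction, so no separate isomorphism argument is needed.  What you must then prove is that every $r$-subset of $K^{(r)}_{q^d}$ (resp.\ every edge of $T^{(r)}_{kq^d}$) lies in exactly one translate.  Writing out the condition ``$\tau_{-s}(\{a^{i_1},\dots,a^{i_r}\})$ is an edge of $\cK$'' gives, for each $m$, the equation
\[
r\,s_m=\sum_j a_m^{i_j}-f_m\bigl(a_1^{\,\cdot},a_2^{\,\cdot}-s_2,\dots,a_{m-1}^{\,\cdot}-s_{m-1}\bigr),
\]
which \emph{is} a genuine triangular system in $s_2,\dots,s_d$ with ``diagonal coefficient'' $r$; this is where $(r,q)=1$ enters for $\cK$.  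For $\cT$ one translates a single colour class (or otherwise arranges the diagonal coefficient to be a unit), and no coprimality hypothesis is needed.  Note that these translates are generally \emph{not} the same as your defect classes.
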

\medskip

\section{Specializations of general constructions and their applications}\label{S:exandappl}
\bigskip

In this section we  survey some applications of the graphs $B\Gamma_n$ and $\Gamma_n$, and of similarly constructed hypergraphs and digraphs.  In most instances, the graphs considered are
specializations of $B\Gamma_n$ and  $\Gamma_n$, with $R$ taken to be the finite
field $\mathbb {F}_q$  and the functions $f_i$
chosen in such  a way as to ensure the resulting graphs have additional desired  properties. In particular, many applications  deal with the existence of graphs (hypergraphs) of a fixed order, having many edges and not containing certain subgraphs (subhypergraphs). Our next section provides related terminology and basic references.
\medskip

\subsection{Generalized polygons}\label{SS:poly}
\bigskip

A {\it generalized} $k$-{\it gon} of order $(q,q)$, for $k\ge 3$ and
$q\ge 2$, denoted $\Pi ^k_q$,  is a $(q+1)$-regular bipartite graph of girth $2k$ and diameter $k$. It is
easy to argue that in such a graph each partition contains $n^k_q =
q^{k-1} + q^{k-2} + \ldots + q + 1$ vertices (for information on
generalized polygons, see,  e.g., Van Maldeghem \cite{VMal98}, Thas \cite {Tha95} or Brouwer, Cohen and Neumaier \cite{BCN89}).  It follows from a theorem
by Feit and Higman~\cite{FH64} that if $\Pi ^k_q$ exists, then
$k \in \{3,4,6\}$.  For each of these $k$, $\Pi_q^k$ is known to
exist only for arbitrary prime power~$q$. In the case $k = 3$, the
graph  is better known as the point-line incidence graph of a projective plane of order $q$; for $k=4$ -- as the  generalized quadrangle of order $q$,  and for $k=6$, as the generalized hexagon of order $q$.
Fixing an edge in the graph $\Pi_q^k$,  one can consider a subgraph in $\Pi_q^k$ induced by all vertices at distance at least $k-1$ from the edge. It is easy to argue that the resulting graph is $q$-regular, has girth $2k$ (for $q\ge 4$) and diameter $2(k-1)$ (for $q\ge 4$). We refer to this graph as a {\it biaffine part of $\Pi_q^k$}  (also known as an affine part). Hence, a biaffine part is a $q$-regular induced subgraph of $\Pi_q^k$  having  $q^{k-1}$ vertices in each partition.  Deleting all vertices of  a biaffine part  results in a spanning tree of $\Pi_q^k$ with each inner vertex of degree $q+1$.

  If $\Pi_q^k$ is edge-transitive,  then all its  biaffine parts are isomorphic and we can speak about {\it the}  biaffine part,  and denote it by  $\Lambda_q^k$.     Some classical generalized polygons are known to be edge-transitive.  It turns out that their biaffine parts can be represented as the graphs $B\Gamma_n$:
  \medskip
\begin{eqnarray}\label{biaffineparts}
  \Lambda_q^3 & \text {as} &B\Gamma_2(\F_q; p_1l_1) \label{biaffineparts3},\\
  \Lambda_q^4 & \text {as} &B\Gamma_3(\F_q; p_1l_1, p_1l_2) \cong B\Gamma_3(\F_q; p_1l_1, p_1l_1^2) \label{biaffineparts4},\\
   \Lambda_q^6 & \text {as} &B\Gamma_5(\F_q; p_1l_1, p_2l_1, p_3l_1, p_2l_3 - p_3l_2).\label{biaffineparts6}
\end{eqnarray}

We wish to mention that many other representations of these graphs are possible, and some are more convenient than others when we study particular properties of the graphs. The description of $\Lambda_q^6$ above is due to Williford \cite{Wil12}.

Presentations of $\Lambda_q^k$  in terms of systems of equations  appeared in the literature in different ways, firstly  as an attempt to coordinatize incidence  geometries $\Pi_q^k$, see Payne \cite{Pay70},  \cite{VMal98} and references therein.

Another approach, independent of the previous, is based on the work of Ustimenko, see \cite{Ust90,Ust90a,Ust91},  where incidence structures in group geometries, which were initially used to present generalized polygons,  were described as relations in the corresponding affine Lie algebras.   Some details and examples of related computations can be found in Lazebnik and Ustimenko \cite{LU93},  in Ustimenko and Woldar \cite{UW03}, in Woldar \cite{Wol10}, in Terlep and Williford \cite{TW12} and in more recent work by Yang, Sun and Zhang \cite{YSZ22}.

The descriptions of biaffine parts $\Lambda_q^k$ of the classical $k$-gons $\Pi_q^k$ via the graphs $B\Gamma_{k-1}$ above, suggested to generalize the latter to the values of $k$ for which no generalized $k$-gons exist. The property of nondecreasing  girth of the graphs $B\Gamma_n$ that we mentioned in Remark \ref{rem10.1} of Section \ref{SS:cover} turned out to be  fundamental  for  constructing families of graphs with many edges and without cycles of certain lengths, and in particular, of large girth.  We describe these applications in Section \ref{SS:largeg}.

The graphs  $B\Gamma_n$ can also be used to attempt to construct new generalized $k$-gons ($k\in \{3,4,6\}$) via the following logic: first construct a graph $B\Gamma_{k-1}$ of girth $2k$ and diameter $2(k-1)$,  and then try to ``attach a tree" to it.
In other words, construct a $\Lambda_{k-1}$-like graph, preferably  not isomorphic to one coming from $\Pi_q^k$,  and then, if possible,  extend it to a generalized $k$-gon.
For $k=3$, the extension  will always work.  Of course,  this approach has an inherited restriction on the obtained $k$-gon,  as the automorphism group of any graph $B\Gamma_{k-1}$ contains a subgroup isomorphic to the additive group of the field $\F_q$ (or a ring $R$).
This subgroup is  formed by the following $q$ maps $\phi_a$, $a\in \F_q$:
\begin{align}
\phi_a:  (p_1,p_2,  \ldots, p_{k-1}) &\mapsto (p_1,p_2, , \ldots, p_{k-1} +a),\label{autolastcoordp}\\
       [l_1, l_2, \ldots, l_{k-1}] &\mapsto [l_1, l_2, \ldots, l_{k-1} - a].\label{autolastcoordl}
\end{align}

Lazebnik and Thomason used this approach in \cite{LT04} to construct planes of order $9$ and, possibly new planes of order $16$.   The planes they  constructed all possessed a special group of automorphisms  isomorphic to the additive group of the field, but they were not always translation planes. Of the four planes of order 9, three admit the
additive group of the field
$\F_9$  as a group of translations, and the
construction yielded all three. The known planes of order 16 comprise four self-dual planes and eighteen other planes (nine dual pairs); of these, the method
gave three of the four self-dual planes and six of the nine dual pairs, including
the sporadic (not translation) plane of Mathon. Some attempts to construct new  generalized quadrangles are  discussed in Section \ref{monographs}.
\medskip

\subsection{Tur\'an-type extremal problems }\label{SS:largeg}
\bigskip

For more on this  subject,  see the book by Bollob\'as \cite{Bol78}  and the survey by F\"{u}redi and Simonovits  \cite{FS13}.

Let $\mathcal F$ be a family of graphs.
By $ex(\nu,\mathcal F)$ we denote the
greatest number of edges  in a graph  on $\nu$ vertices
which contains no
subgraph isomorphic to a graph  from $\mathcal F$, and $ex(\nu,\mathcal F)$ is referred to as the {\it Tur\'an number} of $\mathcal F$.  Determining the Tur\'an number $ex(\nu,\mathcal F)$ for a fixed  $\mathcal F$,  is called a {\it Tur\'an-type extremal graph problem}.  Graphs from $\mathcal F$ are called {\it forbidden} graphs,  and if a graph $G$ does not contain any graph from $\mathcal F$ as a subgraph, we say that $G$ is {\it $\mathcal F$-free}.

If $\mathcal F$ contains no bipartite graphs, the leading term in the asymptotic of $ex(\nu,\mathcal F)$ is given by the celebrated Erd\H os--Stone--Simonovits Theorem, see \cite{Bol78}.  When  $\mathcal F$ contains a bipartite graph, then determining $ex(\nu,\mathcal F)$ is called the {\it degenerate Tur\'an-type extremal graph problem}, and  often only bounds on $ex(\nu,\mathcal F)$ are known in this case.

Replacing ``graph" by ``hypergraph" in the definitions  above,   we obtain the corresponding ones for hypergraphs.

Let $C_n$ denote the cycle of length $n$, $n \ge 3$. If $n$ is even, we refer to $C_n$ as an {\it even} cycle.  Note that  $C_{2k}$ is bipartite,  and many of the applications mentioned later in this survey will be related to forbidden even cycles.

We will also use the following standard notation for the comparison of functions.  Let $f$ and $g$ be two real positive functions defined on positive integers.  We write
\medskip

$f = o(g)$  if $f(n)/g(n) \to 0$ as $n\to \infty$;
\medskip

$f= O(g)$ if there exists a positive constant $c$ such that $f(n) \le c g(n)$ for all sufficiently large $n$;
\medskip

$f= \Theta(g)$ if $f= O(g)$ and $g= O(f)$;
\medskip

$f=\Omega (g)$ if $g= O(f)$;
\medskip

$f \sim g $  if $f(n)/g(n) \to 1$ as $n\to \infty$.

\medskip

\subsection{Graphs without cycles of certain length and with many edges}\label{SS:largeg}
\bigskip

For more on this  subject,  see \cite{Bol78,FS13}.
Our goal here is to mention some results,  not mentioned in \cite{FS13},  and related constructions obtained by the algebraically defined graphs.

The best bounds on
$ex(\nu, \{ C_3,C_4, \cdots , C_{2k}\} )$ for
fixed $k$, $2\le k\not=5$, are presented as follows.
Let $\epsilon = 0$ if $k$ is odd, and
$\epsilon = 1$ if $k$ is even.   Then
\begin{equation}\label{UU:upperb1}
 \frac{1}{2^{1+1/k}} \nu^{1+{\frac{2}{3k - 3+\epsilon}}}\le
ex(\nu, \{ C_3,C_4, \cdots , C_{2k}\} )
\le \frac{1}{2}\,\nu^{1+{\frac{1}{k}}} + \frac{1}{2}\,\nu,
\end{equation}
and
\begin{equation}\label{UU:upperb2}
 \frac{1}{2^{1+1/k}} \nu^{1+{\frac{2}{3k - 3+\epsilon}}}\le
ex(\nu, \{ C_3,C_4, \cdots , C_{2k}, C_{2k+1}\} )
\le \frac{1}{2^{1+1/k}}\,\nu^{1+{\frac{1}{k}}}  + \frac{1}{2}\,\nu .
\end{equation}
The upper bounds in both (\ref{UU:upperb1}) and (\ref{UU:upperb2}) are immediate  corollaries  of the result by Alon, Hoory and Linial \cite{AHL02}.
The lower bound holds for
an infinite sequence of values of $\nu$.  It was established by  Lazebnik, Ustimenko and Woldar in~\cite{LUW95} using some  graphs $B\Gamma_n$,  and those will be discussed in detail in Section \ref{SS:cdkq}.

For $k=2,3,5$,   there exist more precise  results by  Neuwirth \cite{Neu01},  Hoory \cite{Hoo02} and Abajo and Di\'anez \cite{AD12}.
\begin{theorem}  For $k=2,3,5$ and
$\nu= 2(q^{k} + q^{k-1} + \cdots + q + 1)$, $q$ is a prime power,
$$ex(\nu, \{ C_3,C_4, \cdots , C_{2k}, C_{2k+1}\} )
= (q+1)(q^{k} + q^{k-1} + \cdots + q + 1),$$
and every extremal graph is a generalized $(k+1)$-gon $\Pi_q^{k+1}$.
\end{theorem}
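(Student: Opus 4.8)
The plan is to separate the statement into three parts: the construction giving the lower bound, the matching upper bound, and the uniqueness of the extremal graph. The starting observation is that a graph is $\{C_3,C_4,\ldots,C_{2k+1}\}$-free precisely when it is a forest or has girth at least $2k+2=2(k+1)$; since a forest on $\nu$ vertices has at most $\nu-1<(q+1)(q^k+\cdots+1)$ edges (as $q\ge 2$), I may assume girth at least $2(k+1)$. The key arithmetic fact is that $\nu = 2(q^k+\cdots+1)=2\,\frac{q^{k+1}-1}{q-1}$ is exactly the bipartite Moore bound for a $(q+1)$-regular graph of girth $2(k+1)$. For the lower bound I would invoke the generalized $(k+1)$-gon $\Pi_q^{k+1}$, which by the discussion in Section~\ref{SS:poly} exists for every prime power $q$ precisely when $k+1\in\{3,4,6\}$, i.e.\ $k\in\{2,3,5\}$: it is a $(q+1)$-regular bipartite graph on $\nu$ vertices of girth $2(k+1)$, hence $\{C_3,\ldots,C_{2k+1}\}$-free, with $\tfrac12(q+1)\nu=(q+1)(q^k+\cdots+1)$ edges.

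For the upper bound I would apply the irregular Moore bound of Alon, Hoory and Linial \cite{AHL02} (already used for the bounds (\ref{UU:upperb1}) and (\ref{UU:upperb2})), which states that a graph on $\nu$ vertices with average degree $\bar d\ge 2$ and even girth $2r$ satisfies $\nu \ge 2\sum_{i=0}^{r-1}(\bar d-1)^i$. Taking $r=k+1$ and writing $t=\bar d-1$, this reads $q^k+\cdots+1=\tfrac12\nu \ge t^k+\cdots+1$. Since $t\mapsto t^k+\cdots+1$ is strictly increasing for $t\ge 1$, this forces $t\le q$, i.e.\ $\bar d\le q+1$, and hence $|E|=\tfrac12\bar d\,\nu \le (q+1)(q^k+\cdots+1)$. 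This already yields the claimed value of $ex(\nu,\{C_3,\ldots,C_{2k+1}\})$.

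For uniqueness I would track the equality case. If $|E|=(q+1)(q^k+\cdots+1)$ then $\bar d=q+1$ and the Moore inequality above is tight. Equality in the bipartite Moore bound forces the graph to be $(q+1)$-regular, bipartite, of girth exactly $2(k+1)$ and diameter $k+1$ --- that is, a bipartite Moore graph --- and by the classical characterization such a graph is exactly the point-line incidence graph of a generalized $(k+1)$-gon of order $(q,q)$, i.e.\ $\Pi_q^{k+1}$. Feit--Higman \cite{FH64} guarantees these exist only for $k+1\in\{3,4,6\}$, which is consistent with the hypothesis $k\in\{2,3,5\}$ and explains why the theorem is restricted to these values.

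I expect the delicate point to be the equality analysis. The Alon--Hoory--Linial bound is proved by spectral/random-walk methods that do not, by themselves, hand over a clean description of the extremal configuration, so the tight case must be argued separately: one has to show that $\bar d=q+1$ forces genuine $(q+1)$-regularity (ruling out an irregular degree sequence with the same average) and that the distance-distribution inequalities underlying the Moore bound all become equalities. This is exactly where the per-case references Neuwirth \cite{Neu01}, Hoory \cite{Hoo02} and Abajo--Di\'anez \cite{AD12} do the real work. Once regularity and the extremal vertex count are established, the identification with $\Pi_q^{k+1}$ is a standard consequence of the theory of generalized polygons.
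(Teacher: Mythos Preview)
The paper does not prove this theorem; it merely records it, attributing the result to Neuwirth~\cite{Neu01}, Hoory~\cite{Hoo02}, and Abajo--Di\'anez~\cite{AD12}. There is therefore no in-text argument to compare your attempt against.

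Your outline is nonetheless the standard one and is correct for the value of the extremal number: the generalized $(k+1)$-gon $\Pi_q^{k+1}$ gives the lower bound, and the Alon--Hoory--Linial irregular Moore bound \cite{AHL02} applied with $r=k+1$ yields $\tfrac12\nu\ge\sum_{i=0}^{k}(\bar d-1)^i$, forcing $\bar d\le q+1$ and hence $|E|\le (q+1)(q^k+\cdots+1)$. Your own diagnosis of the remaining gap is accurate. The paper~\cite{AHL02} does not characterize its equality case, so going from $\bar d=q+1$ to genuine $(q+1)$-regularity, bipartiteness, and the Moore-graph structure is exactly the additional content that the three cited references supply; once that is in hand, the identification with a generalized $(k+1)$-gon is classical.
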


Suppose $\mathcal F = \{C_{2k}\}$.  Erd\H{o}s Even Circuit Theorem (see Erd\H{o}s \cite{Erd65}) asserts that  $$ex(\nu, \{ C_{2k}\} ) = O(\nu^{1+1/k}),$$
and the upper bound is probably sharp, but, as far as we know,   Erd\H{o}s never published a proof of it. The first proof followed from a stronger  result by Bondy and Simonovits \cite{BS74}, which implied that $$ex(\nu, \{ C_{2k}\} ) \le 100kv^{1+1/k}.$$  The upper bound was improved by
Verstra\"ete \cite{Ver00} to $8(k-1)\nu^{1+1/k},$ by Pikhurko \cite{Pik12} to $(k-1)\nu^{1+1/k} + O(\nu)$ and by Bukh and Jiang \cite{BJ17} to $80\sqrt{k\log k}\,\nu^{1+1/k} +O(\nu).$

The only values of $k$ for which
$ex(\nu, \{ C_{2k}\} ) = \Theta(v^{1+1/k})$  are $k=2, 3$, and $5$,   with the strongest results  appearing in
\cite{Fur91,Fur96} by F\"uredi (for $k=2$), in \cite{FNV06} by F\"uredi, Naor and Verstra\"ete (for $k=3$), and in \cite{LUW99} by Lazebnik, Ustimenko and Woldar (for $k=5$).

It is a long standing question to determine the magnitude of $ex(\nu, \{ C_{8}\} )$.  The best lower bound is $\Omega (\nu^{6/5})$ and it comes from the generalized hexagon, which has girth 12.  The best upper bound is $O(\nu^{5/4})$  and it comes  from the general bound $O(\nu^{1+1/k})$ on $2k$-cycle-free graphs.

\begin{problem} Is there a graph $B\Gamma_3(\F_q; f_2,f_3,f_4)$ that  contains no 8-cycles for infinitely many $q$?
\end{problem}
A positive answer to this question would imply that $ex(\nu, \{ C_{8}\} )= \Theta(\nu^{5/4})$.

\medskip

\subsection{Wenger graphs}\label{SS:wenger}
\bigskip

A large part of this subsection is based on Cioab\u{a}, Lazebnik  and Li  \cite{CLL14}.

\subsubsection{Defining equations for Wenger graphs}
\bigskip

Let $q=p^e$, where $p$ is a prime and $e\geq 1$ is an integer.
For $m\geq 1$, $1\le k\le m$, let $f_{k+1} = l_kp_1$.  Consider the graph $W_m(q) = B\Gamma_{m+1}(\F_q; f_2,f_3,\ldots, f_{m+1})$:

\begin{align*}
l_2+p_2&=l_1p_1,\\
l_3+p_3&=l_2p_1,\\
&\vdots\\
l_{m+1}+p_{m+1}&=l_m p_1.
\end{align*}
The graph $W_m(q)$ has $2q^{m+1}$ vertices, is $q$-regular and has $q^{m+2}$ edges.

In \cite{Wen91}, Wenger  introduced a family of $p$-regular bipartite graphs $H_k(p)$ as follows. For every  $k\ge 2$, and every prime $p$, the partite sets of $H_k(p)$ are two copies of integer sequences $\{0,1,\ldots, p-1\}^k$,  with vertices  $a=(a_0,a_1,\ldots,a_{k-1})$ and $b=(b_0,b_1,\ldots,b_{k-1})$  forming an edge if
$$b_j \equiv a_j + a_{j+1} b_{k-1} \pmod{p}\;\; \text{for all}\;\; j=0, \ldots, k-2.$$
The introduction and study of these graphs were motivated by the degenerate Tur\'an-type  extremal graph theory problem of determining $\ex(\nu,\{C_{2k}\})$. It is shown in \cite{BS74} that $\ex(\nu,\{C_{2k}\}) = O(\nu^{1 + 1/k})$, $\nu \to \infty$. Lower bounds of magnitude $\nu^{1 + 1/k}$ were known (and still are) for  $k=2,3,5$ only, and the graphs $H_k(p)$, $k=2,3,5$, provided new and simpler examples of such magnitude extremal graphs.

In \cite{LU93}, using a construction based on a certain Lie algebra, the authors arrived at a family of bipartite graphs $H'_n(q)$,  $n\ge 3$, $q$ a prime power, whose partite sets were two copies of $\F_q^{n-1}$, with vertices $(p)=(p_2, p_3, \ldots, p_{n})$ and $[l]=[l_1,l_3,\ldots,l_{n}]$  forming an edge if
 $$l_k - p_k = l_1p_{k-1}\;\; \text{for all}\;\; k=3, \ldots, n.$$
 It is easy to see that for all $k\ge 2$ and prime $p$, the graphs $H_k(p)$ and $H'_{k+1}(p)$ are isomorphic, and the map
  \begin{align*}
\phi:  (a_0,a_1,\ldots,a_{k-1}) &\mapsto (a_{k-1}, a_{k-2}, \ldots, a_0),\\
       (b_0,b_1,\ldots,b_{k-1}) &\mapsto [b_{k-1}, b_{k-2}, \ldots, b_0],
\end{align*}
provides an isomorphism from $H_k(p)$ to $H'_{k+1}(p)$.
Hence, $H'_n(q)$ can be viewed as generalizations of $H_k(p)$.  It is also easy to show that the graphs $H'_{m+2}(q)$ and $W_m(q)$ are isomorphic: the function
      \begin{align*}
\psi:  (p_2, p_3, \ldots, p_{m+2}) &\mapsto [p_2, p_3, \ldots, p_{m+2}],\\
       [l_1, l_3, \ldots, l_{m+2}] &\mapsto (-l_1, -l_3, \ldots, -l_{m+1}),
\end{align*}
mapping points to lines and lines to points, is an isomorphism of $H'_{m+2}(q)$ to $W_m(q)$.

We call the graphs $W_m(q)$  {\it Wenger graphs}.

Another useful  presentation of Wenger graphs  appeared in Lazebnik and Viglione~\cite{LV02}. In this presentation the functions in the  right-hand sides of the
equations are represented as monomials of $p_1$ and $l_1$ only, see Viglione~\cite{Vig02}. For this, define a bipartite graph $W'_m(q)$ with the same partite sets as $W_m(q)$,  where $(p)=(p_1, p_2, \ldots, p_{m+1})$ and $[l]=[l_1,l_2,\ldots,l_{m+1}]$ are adjacent if
\begin{equation} \label{wenger11} l_k + p_k = l_{1}p_1^{k-1}\;\; \text{for all}\;\; k=2, \ldots, m+1.
\end{equation}
The map
  \begin{align*}
\omega:  (p) &\mapsto (p_1, p_2, p_3', \ldots, p_{m+1}'),\;\text{where}\; p_k'=  p_k+ \sum_{i=2}^{k-1} p_{i}p_1^{k-i},\;k=3,\ldots, m+1,\\
       [l] &\mapsto [l_1, l_2, \ldots, l_{m+1}],
\end{align*}
defines an isomorphism from $W_m(q)$  and $W'_m(q)$.
\medskip

\subsubsection{Automorphisms of Wenger graphs}
\bigskip

It was shown in \cite{LU93} that the automorphism group of $W_m(q)$ acts transitively on each of the partitions,  and on the set of edges of $W_m(q)$.   In other words,  the graphs $W_m(q)$ are point-, line-, and edge-transitive.  A more detailed study,  see Lazebnik and Viglione \cite{LV04},  also showed that $W_1(q)$ is vertex-transitive for all $q$, and that $W_2(q)$ is vertex-transitive for even $q$. For all $m\ge 3$ and $q\ge 3$,  and for $m=2$ and all odd $q$,  the graphs $W_m(q)$ are not vertex-transitive.

The full automorphism group of $W_1(q)$ and $W_2(q)$ were completely described in \cite{Vig02}. Some sets of automorphisms of $W_m(q)$ can be found in \cite{LU93}. A more general result, for all graphs $W_m(q)$, is contained in a paper by  Cara, Rottey and Van de Voorde in \cite{CRV14}.  They showed that the full automorphism group of $W_m(q)$ is isomorphic to a subgroup of the group $P\Gamma L(m+2,q)$ that stabilizes a $q$-arc contained  in a normal rational curve of $PG(m+1,q)$, provided  $q \ge  m + 3$ or
$q = p = m + 2$.
\medskip

\subsubsection{Connectivity of  Wenger graphs}
\bigskip

Another result of \cite{LV04} is that $W_m(q)$ is connected when $1\le m\le q-1$,  and disconnected when $m\ge q$, in which case it has $q^{m-q+1}$ components, each isomorphic to $W_{q-1}(q)$. The statement about the number of components of $W_{m}(q)$ becomes apparent from the representation (\ref{wenger11}).   Indeed, as $l_1p_1^i = l_1p_1^{i+q-1}$,  all points and lines in a component have the property that
 their coordinates $i$ and $j$, where $i \equiv j \mod(q-1)$, are equal.  Hence, points $(p)$,  having $p_1=\ldots = p_{q} = 0$,  and at least one distinct coordinate $p_i$, $q+1\le i\le m+1$, belong to different components. This shows that the number of components is at least $q^{m-q+1}$. As $W_{q-1}(q)$ is connected and $W_m(q)$ is edge-transitive, all components are isomorphic to $W_{q-1}(q)$. Hence, there are exactly $q^{m-q+1}$ of them.
 \medskip

It was pointed out in \cite{CLL14} that a result of Watkins \cite{Wat70}, and the edge-transitivity of $W_m(q)$ imply that the vertex connectivity (and consequently the edge connectivity) of $W_m(q)$ equals the degree of regularity $q$, for any $1\leq m\leq q-1$.
In \cite{Vig08}, Viglione  proved that when $1\leq m\leq q-1$, the diameter of $W_m(q)$ is $2m+2$. It will follow from Theorem \ref{CLL14} (see ahead) that for every fixed $m$ and sufficiently large $q$, $W_m(q)$ are expanders, which are defined below.

Let $G=(V,E)$ be a finite graph.
For a subset of vertices $S\subseteq V$,
let $\partial S$ denote the set of edges with one endpoint in $S$ and one endpoint in $V\setminus S$, i.e.
\[
\partial S:=\{xy\in E:x\in S, y\in V\setminus S\}.
\]
The Cheeger constant (also known as isoperimetric number or expansion ratio) of $G$ is defined by
\[
\min\Big\{\frac{|\partial S|}{|S|}:S\subseteq V,0<|S|\le \frac{1}{2}|V|\Big\}.
\]
An infinite family of {\it expanders} is a family of regular graphs whose Cheeger constants are uniformly bounded away from 0.

Let $G$ be a connected $d$-regular graph with $n$ vertices, and let
$\lambda_1\ge \lambda_2\ge\ldots\ge \lambda_n$ be the eigenvalues of the adjacency matrix of $G$.
Suppose $\lambda_2$ is the second largest eigenvalue in absolute value.
Then we call $G$ a {\it Ramanujan} graph if $\lambda_2\le 2\sqrt{d-1}$.
\medskip

 \subsubsection{Cycles in  Wenger graphs}
\bigskip

It is easy to check that all graphs $W_m(q)$ contain an 8-cycle.
Shao, He and Shan \cite{SHS08} proved that in  $W_m(q)$, $q=p^e$, $p$ prime,  for  $m\geq 2$, for any integer $l\neq 5, 4\leq l\leq 2p$ and any vertex $v$, there is a cycle of length $2l$ passing through the vertex $v$. We wish to remark that the edge-transitivity of $W_m(q)$ implies the existence of a $2l$-cycle through any edge, a stronger statement.  The results of \cite{SHS08} concerning cycle lengths in $W_m(q)$ were extended by Wang, Lazebnik and  Thomason  in \cite{WLT20} as follows.

\begin{enumerate}
\item

For $m\ge 2$ and $p \ge 3$, $W_m(q)$ contains cycles of length $2l$, where $4 \le l \le 4p +1$ and $l \neq 5$.

\item
For $q\ge 5$, $0<c<1$, and every integer $l$ with $3 \le  l \le q^c$, if $1 \le m \le (1 - c -
\frac{7}{3} \log_q 2)l - 1$,  $W_m(q)$ contains a $2l$-cycle.  In particular, $W_m(q)$ contains cycles of length $2l$, where $m + 2 \le l \le q^c$, provided $q$ is sufficiently large.
\end{enumerate}

Alexander, Lazebnik and Thomason,  see \cite{Ale16},  showed that for fixed $m$ and large $q$, Wenger graphs are hamiltonian.

\begin{conjecture} {\rm (\cite{WLT20})} For every $m\ge 2$,  and every prime power $q$, $q\ge 3$, $W_m(q)$ contains cycles of length $2k$, where $4\le k\le q^{m+1}$ and $k\neq 5$.
\end{conjecture}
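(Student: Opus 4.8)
The plan is to translate the existence of a $2k$-cycle into a linear-algebraic condition over $\F_q$ and then attack that condition in several overlapping regimes of $k$. Working in the presentation $W'_m(q)$ of (\ref{wenger11}), write a prospective $2k$-cycle as $(P_1),[L_1],(P_2),[L_2],\ldots,(P_k),[L_k]$, with $[L_i]$ adjacent to $(P_i)$ and $(P_{i+1})$ (indices mod $k$), and let $a_i$ and $b_i$ denote the first coordinates of $(P_i)$ and $[L_i]$. Subtracting the two adjacency relations that share $[L_i]$ gives $(P_i)_j-(P_{i+1})_j=b_i\bigl(a_i^{\,j-1}-a_{i+1}^{\,j-1}\bigr)$, and summing telescopically around the cycle yields, after reindexing and setting $c_i=b_i-b_{i-1}$,
\[
\sum_{i=1}^{k} c_i\, a_i^{\,s}=0,\qquad s=0,1,\ldots,m .
\]
Conversely, any $(a_i)\in\F_q^{k}$ and $(c_i)\in(\F_q^\ast)^{k}$ solving this system for which the resulting $k$ points and $k$ lines are pairwise distinct produce a genuine $2k$-cycle; the condition $c_i\neq0$ encodes that consecutive lines differ, and $b_i=c_1+\cdots+c_i$ recovers the lines. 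Thus I would reduce the conjecture to a statement about full-support dependencies among the Vandermonde vectors $(1,a_i,\ldots,a_i^{\,m})$.

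My second step disposes of the bulk of the range by grouping equal first coordinates. Collecting the indices with a common value $v\in\F_q$ and writing $C_v=\sum_{a_i=v}c_i$, the system collapses to $\sum_{v}C_v\,v^{\,s}=0$ for $0\le s\le m$, a condition on the $t\le q$ distinct values actually used; within each group the individual $c_i\in\F_q^\ast$ are free subject to summing to $C_v$, which (since $q\ge 3$) is solvable whenever the group has size $\ge 2$ and forces $C_v\neq 0$ only for singleton groups. When $q\ge m+2$ and $m+2\le k\le q$ one may take the $a_i$ distinct; the matrix $(v^{s})$ is then a generalized Reed--Solomon generator matrix, and a full-weight codeword in its MDS dual supplies an all-nonzero $(c_i)$, tying the construction directly to the coding-theoretic material of Section~\ref{S:codecryp}. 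For the complementary, harder range, where $k>q$ or $q\le m+1$, the $a_i$ must repeat; when at most $m+1$ distinct values occur the Vandermonde columns are independent, so every $C_v$ is forced to $0$, and I would then realize each group-sum $0$ by nonzero entries (possible as every multiplicity is necessarily $\ge 2$) while arranging the cyclic word in the $a_i$ so that no two consecutive symbols coincide.

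The third step targets the top of the range, where $2k$ approaches $2q^{m+1}=|V(W_m(q))|$, i.e.\ Hamilton cycles. Here I would build on the hamiltonicity of Wenger graphs for large $q$ (Alexander, Lazebnik and Thomason, \cite{Ale16}) and use rotation--extension or absorption to interpolate all long even lengths between the ``medium'' regime and the Hamiltonian one. Throughout, the cover structure of Theorem~\ref{T:cover} and the translation automorphisms fixing the first coordinates and shifting the last are the natural tools for manufacturing new lengths from old: a $2k$-cycle of $W_{m-1}(q)$ whose last-coordinate voltage vanishes lifts to a $2k$-cycle of $W_m(q)$, while a nonzero voltage lifts it to a $2kp$-cycle, which explains the factor $p$ in the known bounds and can be combined with concatenation of shorter cycles to close gaps.

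The hard part will be the global distinctness requirement — ensuring a closed walk is a simple cycle — uniformly across all $m\ge2$ and all $q\ge3$. Character-sum or Lang--Weil estimates readily produce solutions of the Vandermonde system (equivalently, closed walks with prescribed first-coordinate data), but excising the degenerate solutions with repeated vertices is delicate precisely where the conjecture is most demanding: for large $m$ the system carries $m+1$ equations, and for $k$ near $q^{m+1}$ the first coordinates must repeat heavily, so separating points (or lines) that share a first coordinate becomes a genuine combinatorial obstruction rather than a genericity statement. Equally serious is the absence of a single method spanning the whole interval: the coding/counting constructions weaken as $k$ grows past $q$, while rotation--extension arguments are comfortable only near the Hamiltonian end, and bridging these two regimes — together with confirming that $k=5$ (no $10$-cycle) is the unique genuine exception — is where I expect the real difficulty to lie.
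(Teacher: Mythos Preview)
The statement you are attempting to prove is a \emph{conjecture}: the paper does not supply a proof, and to our knowledge the conjecture remains open. So there is no ``paper's own proof'' to compare against; the question is whether your plan would actually close the problem.

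Your reduction is correct and standard: in the presentation $W'_m(q)$ the closure of a putative $2k$-cycle with first coordinates $(a_i)$ and $(b_i)$ is exactly the Vandermonde system $\sum_i c_i a_i^{s}=0$ for $s=0,\ldots,m$ with $c_i=b_i-b_{i-1}$, together with the local non-degeneracy conditions $a_i\ne a_{i+1}$ and $c_i\ne 0$. Your MDS/Reed--Solomon argument for $m+2\le k\le q$ with the $a_i$ distinct is essentially what underlies the known partial results in \cite{SHS08,WLT20}, and your observation that repeated values force group sums $C_v=0$ when at most $m+1$ values occur is also standard.

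The genuine gap is exactly the one you name, and it is not a technicality but the heart of the problem. First, the hamiltonicity input from \cite{Ale16} is only established for \emph{fixed} $m$ and \emph{sufficiently large} $q$; it gives nothing for small $q$ relative to $m$, and in particular nothing uniform over all $m\ge 2$, $q\ge 3$. Second, the ``rotation--extension or absorption'' step you invoke to interpolate long even lengths is a heuristic, not an argument: those techniques typically require expansion or pseudorandomness bounds that are not available uniformly here (the second eigenvalue of $W_m(q)$ is $\sqrt{mq}$ by Theorem~\ref{CLL14}, which deteriorates as $m$ grows). Third, and most seriously, the global distinctness of the $2k$ vertices is precisely what no known method controls across the full range $4\le k\le q^{m+1}$: the Vandermonde system only produces closed walks, and when $k>q$ the first coordinates repeat heavily, so distinct points sharing a first coordinate must be separated by their higher coordinates, which are determined recursively and not freely. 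Your lift/voltage idea manufactures lengths of the form $2kp$ from $2k$, but this arithmetic progression does not by itself fill all even lengths up to $2q^{m+1}$.

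In short, your outline accurately surveys the known partial techniques and honestly identifies where they run out; what is missing is any new idea that bridges the medium range (handled by coding arguments) and the near-Hamiltonian range (handled, non-uniformly, by \cite{Ale16}). That bridge is the open problem.
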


 Representation (\ref{wenger11}) points to a relation of Wenger graphs with the moment curve $t\mapsto (1,t,t^2,t^3,..., t^m)$, and, hence,  with the Vandermonde's determinant, which was explicitly used in \cite{Wen91}. This is also in the background of some geometric constructions by Mellinger and Mubayi \cite{MM05} of magnitude extremal graphs without short even cycles, and in the previously mentioned article \cite{CRV14}.

\medskip

\subsubsection{Spectrum of Wenger graphs}
\bigskip

Futorny and Ustimenko \cite{FU07} considered applications of Wenger graphs in cryptography and coding theory,  as well as some generalizations.  They also conjectured that the second largest eigenvalue $\lambda _2$ of the adjacency matrix of Wenger graphs $W_m(q)$ is bounded from above by $2\sqrt{q}$.
The results of this paper confirm the conjecture for $m=1$ and $2$, or $m=3$ and $q\ge 4$,  and refute it in  other cases.  We wish to point out that for $m=1$ and $2$, or $m=3$ and $q\ge 4$,  the upper bound $2\sqrt{q}$ also follows from the known values of $\lambda _2$ for the point-line $(q+1)$-regular incidence graphs of the generalized polygons $PG(2,q)$, $Q(4,q)$ and $H(q)$ and eigenvalue interlacing, see \cite{BCN89}. In \cite{LLW09},  Li, Lu and Wang showed that the graphs $W_m(q)$,  $m=1,2$, are Ramanujan, by computing the eigenvalues of another family of graphs described by systems of equations in \cite{LU95}, namely  $D(k,q)$, for $k=2,3$.  Their result follows from the fact that  $W_1(q)\simeq D(2,q)$,  and $W_2(q)\simeq D(3,q)$.

 Extending the cases of  $m=2,3$ from \cite{LLW09},   the spectra of Wenger graphs were completely determined in \cite{CLL14}.
\begin{theorem}\label{CLL14}{\rm (\cite{CLL14})}
For all prime power $q$ and $1\leq m \leq q-1$, the distinct eigenvalues of $W_m(q)$ are
\begin{equation*}
\pm q, \; \pm\sqrt{mq}, \;\pm\sqrt{(m-1)q}, \;\cdots ,  \;\pm\sqrt{2q}, \;\pm\sqrt{q}, \; 0 .
\end{equation*}
The multiplicity of the eigenvalue $\pm\sqrt{iq}$ of $W_m(q)$,  $0\leq i \leq m$,  is
\begin{equation*}
(q-1){q \choose i}\sum_{d=i}^m\sum_{k=0}^{d-i} (-1)^k{q-i \choose k}q^{d-i-k}.
\end{equation*}
\end{theorem}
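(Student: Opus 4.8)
The plan is to reduce the computation to the singular values of the biadjacency (point-line incidence) matrix and then diagonalize by additive characters of $\F_q$. I would work with the presentation $W'_m(q)$ from (\ref{wenger11}), since there adjacency is governed by the monomials $l_1p_1^{k-1}$, which ties the problem to the moment curve and Vandermonde determinants. Since $W_m(q)$ is bipartite and $q$-regular with square biadjacency matrix $N$ of size $q^{m+1}$, its adjacency matrix is $A=\left(\begin{smallmatrix}0&N\\N^{T}&0\end{smallmatrix}\right)$, whose eigenvalues are precisely $\pm\sqrt{\lambda}$ as $\lambda$ runs over the eigenvalues of $M:=NN^{T}$ (the nonzero eigenvalues pairing up, and $0$ occurring with multiplicity $2\dim\ker N$). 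Thus it suffices to determine the spectrum of $M$, the ``point graph'' operator whose $(p,p')$ entry counts common neighbors.

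The key step is a Fourier decomposition. The translations fixing $p_1$ and shifting $(p_2,\dots,p_{m+1})$ by a constant (with the opposite shift on lines) form a group $\cong\F_q^{m}$ of automorphisms, so for a fixed nontrivial additive character $\psi$ and each $\vec\beta=(\beta_2,\dots,\beta_{m+1})\in\F_q^{m}$ the subspace $V_{\vec\beta}$ of functions $f(p)=g(p_1)\,\psi\!\big(\sum_{k=2}^{m+1}\beta_k p_k\big)$ is $M$-invariant. Writing a point's neighbors as $l_1=t$, $l_k=tp_1^{k-1}-p_k$, and then that line's neighbors, the double sum over $t,s\in\F_q$ collapses via $\sum_t\psi(tX)=q\,[X=0]$, showing that on $V_{\vec\beta}\cong\mathbb{C}^{\F_q}$ the operator acts as $g\mapsto q\sum_{s:\,P(s)=P(x)}g(s)$, where $P=P_{\vec\beta}(x)=\sum_{k=2}^{m+1}\beta_k x^{k-1}$. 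That is, $M|_{V_{\vec\beta}}=q\,S_{P}$, where $S_P$ sums a function over the fibers of $P:\F_q\to\F_q$. Since $S_P$ is block-diagonal with an all-ones block $J_{d_v}$ for each nonempty fiber $P^{-1}(v)$ of size $d_v$, its eigenvalues are $d_v$ (once per fiber) and $0$. Because $\deg P\le m<q$, every nonzero fiber has size at most $m$ unless $P\equiv 0$ (the case $\vec\beta=0$, a single fiber of size $q$). Collecting the eigenvalues $q\,d_v$ over all $\vec\beta$ gives the distinct eigenvalues $q^2,\,mq,\dots,q,\,0$ of $M$, hence $\pm q,\pm\sqrt{mq},\dots,\pm\sqrt{q},0$ for $A$, as claimed.

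It remains to count multiplicities, which is the main obstacle. For $1\le i\le m$, the multiplicity of $iq$ as an eigenvalue of $M$ is $\mu_i:=\sum_{\vec\beta}\#\{v:|P_{\vec\beta}^{-1}(v)|=i\}$, the number of pairs $(P,v)$ whose fiber has size exactly $i$, summed over all $P$ of degree $\le m$ with $P(0)=0$. I would pass to the ``at least $i$'' count $N_{\ge i}=\sum_P\sum_v\binom{|P^{-1}(v)|}{i}=\sum_{S}\#\{P:\,P\text{ constant on }S\}$, the outer sum over $i$-subsets $S\subseteq\F_q$. The crucial lemma is that for every $i$-subset $S$ (with $i\le m$) the number of such $P$ equals $q^{\,m-i+1}$, independently of whether $0\in S$: when $0\notin S$ the matrix $(x^{j})_{x\in S,\,1\le j\le m}$ has full row rank $i$ by a Vandermonde argument, giving $i-1$ independent difference conditions; when $0\in S$ the value is forced to be $0$ and one imposes $i-1$ independent vanishing conditions off $0$. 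Hence $N_{\ge i}=\binom{q}{i}q^{\,m-i+1}$, and isolating the zero polynomial's contribution $\binom{q}{i}$ yields $\sum_{j=i}^{m}\binom{j}{i}\mu_j=\binom{q}{i}\big(q^{\,m-i+1}-1\big)$.

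Binomial (M\"obius) inversion then gives $\mu_i=\binom{q}{i}\sum_{l=0}^{m-i}(-1)^{l}\binom{q-i}{l}\big(q^{\,m-i-l+1}-1\big)$, and a short induction on $m-i$ (using Pascal's rule and the partial alternating-sum identity $\sum_{l\le t}(-1)^l\binom{n}{l}=(-1)^{t}\binom{n-1}{t}$) rewrites this as the asserted $(q-1)\binom{q}{i}\sum_{d=i}^{m}\sum_{k=0}^{d-i}(-1)^{k}\binom{q-i}{k}q^{\,d-i-k}$. For $i\ge 1$ this $\mu_i$ is the common multiplicity of each of $\pm\sqrt{iq}$; positivity of $\mu_i$ confirms that each value $iq$ actually occurs. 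For $i=0$ the same formula yields $\dim\ker N$, and the eigenvalue $0$ of $A$ occurs with multiplicity $2\mu_0$ (once from each $\pm$ branch), all multiplicities summing to $2q^{m+1}$ as a consistency check. The genuine difficulty lies entirely in this counting step: establishing the uniform fiber-count $q^{\,m-i+1}$ (handling the exceptional role of $0$ forced by $P(0)=0$) and matching the inverted binomial sum to the stated closed form.
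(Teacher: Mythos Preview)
Your approach is correct and essentially the same as the paper's. The paper forms the distance-two graph $H$ on the \emph{line} side, observes that $H$ is the Cayley graph of $(\F_q^{m+1},+)$ with connection set $S=\{(t,tu,\dots,tu^m):t\in\F_q^*,\,u\in\F_q\}$, and reads off the eigenvalues as character sums $\sum_{s\in S}\chi(s)=qN_Q-q$, where $N_Q$ is the number of roots of the degree-$\le m$ polynomial $Q$ attached to the character $\chi$; hence the eigenvalues of $W_m(q)$ are $\pm\sqrt{qN_Q}$. You instead work on the \emph{point} side with $M=NN^{T}$, Fourier-decompose only over the last $m$ coordinates, and obtain the fiber-sum operator $qS_P$; your fiber sizes of $P$ over the value $v$ are exactly the root counts $N_Q$ for $Q=P-v$, so you recover the same eigenvalues and multiplicities. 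The two arguments are just two packagings of the same abelian Fourier computation.

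One minor remark: the final ``short induction'' you propose to match your expression $\binom{q}{i}\sum_{l=0}^{m-i}(-1)^l\binom{q-i}{l}(q^{\,m-i-l+1}-1)$ to the stated double sum is unnecessary. In the paper's formula, swapping the order of summation and using the geometric series $\sum_{d=k+i}^{m}q^{\,d-i-k}=(q^{\,m-i-k+1}-1)/(q-1)$ immediately cancels the factor $q-1$ and gives your single sum. So the two closed forms are literally equal by a one-line manipulation; no induction or Pascal-type identity is needed.
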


The idea of the proof of this theorem in \cite{CLL14} is the following.  Let $L$ and $P$ denote the set of lines and points of the bipartite graph $W_m(q)$, respectively. Let  $H$ denote the distance-two graph of $W_m(q)$ on $L$. This means that the vertex set of $H$ is $L$, and two distinct lines $[l]$ and $[l']$ of $W_m(q)$ are adjacent in $H$ if there exists a point $(p)\in  P$, such that $[l] \sim  (p)\sim  [l']$ in $W_m(q)$.  It is easy to see that the eigenvalues of $W_m(q)$ can be expressed through those  of $H$ as $\pm \sqrt{\lambda + q}$,  $\lambda \in spec(H)$.     It turns out that $H$  is actually the Cayley graph of the additive group of the vector space $\F^{m+1}$ with a generating set $S =\{(t,tu,...,tu^m) | t \in \F_q^* ,u \in \F_q \}$.  It  allowed the computation of the eigenvalues of $H$ using the techniques in Lov\'{a}sz  \cite{Lov75} and Babai \cite{Bab79},  and, hence, of $W_m(q)$.

As we already mentioned, this theorem implies that the graphs $W_m(q)$ are expanders for every fixed $m$ and large $q$.
\medskip

\subsection{Some Wenger-like graphs}
\bigskip

Extended from Wenger graphs, there are some Wenger-like graphs. In \cite{Por18}, Porter introduced some Wenger-like graphs and studied their properties.

\subsubsection{Generalized Wenger graphs}
\bigskip

Cao, Lu, Wan,  L.-P. Wang and Q. Wang \cite{CLWWW15} considered the {\it generalized Wenger graphs}  $G_m(q)=B\Gamma_m(\F_q; f_2,\ldots, f_{m+1})$, with  $f_k= g_k(p_1) l_1$, $2\le k\le m+1$,  where $g_k\in \F_q[X]$ and the mapping $\F_q\to \F_q^{m+1}$, $u\mapsto (1,g_2(u), \ldots, g_{m+1}(u))$ is injective.
A more general result in \cite{LW01} implies that the generalized Wenger graph $G_m(q)$ is $q$-regular. The authors in \cite{CLWWW15} determined the spectrum of the generalized Wenger graphs.
\begin{theorem}{\rm (\cite{CLWWW15})}\label{spectrumgeneral}
For all prime power $q$ and positive integer $m$, the eigenvalues of the generalized Wenger graph $G_m(q)$ counted with multiplicities, are
\begin{equation*}
\pm\sqrt{qN_{F_\omega}}, \;\omega=(\omega_1,\omega_2,\ldots,\omega_{m+1})\in \F_q^{m+1},
\end{equation*}
where $F_\omega(u)=\omega_1+\omega_2g_2(u)+\ldots+\omega_{m+1}g_{m+1}(u)$ and $N_{F_\omega}=|\{u\in \F_q:F_{\omega}(u)=0\}|$.
For $0\le i\le q$, the multiplicity of $\pm\sqrt{qi}$ is
\begin{equation*}
n_i=|\{\omega\in \F_q^{m+1}:N_{F_\omega}=i\}|.
\end{equation*}
Moreover, the number of components of $G_m(q)$ is
\begin{equation*}
q^{m+1-rank_{\F_q}(1,g_2,\ldots,g_{m+1})}.
\end{equation*}
Therefore $G_m(q)$ is connected if and only if $1,g_2,\ldots,g_{m+1}$ are $\F_q$-linearly independent.
\end{theorem}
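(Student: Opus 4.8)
The plan is to reduce the computation of $spec(G_m(q))$ to the diagonalization of a single matrix with a transitive symmetry, exactly following the strategy sketched above for $W_m(q)$. Let $B$ be the $\F_q^{m+1}\times\F_q^{m+1}$ biadjacency matrix of $G_m(q)$, with rows indexed by points and columns by lines, so that the adjacency matrix is $A=\begin{pmatrix}0&B\\ B^{\top}&0\end{pmatrix}$. Since $G_m(q)$ is bipartite, the eigenvalues of $A$ are precisely $\pm\sigma$ with $\sigma$ a singular value of $B$; concretely, each positive eigenvalue $\mu$ of $B^{\top}B$ contributes $+\sqrt{\mu}$ and $-\sqrt{\mu}$ to $spec(A)$, each with the multiplicity of $\mu$, while the eigenvalue $0$ of $A$ has multiplicity $2\dim\ker B$. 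Thus the whole problem reduces to diagonalizing $B^{\top}B$.

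Next I would identify $B^{\top}B$ as a group matrix. Its $([l],[l'])$ entry is the number of common neighbours of the two lines, i.e. the number of $t\in\F_q$ for which the unique point with first coordinate $t$ is adjacent to both (uniqueness and $q$-regularity come from Theorem~\ref{T:ncc}). Subtracting the two adjacency systems and using the special form $f_k(p_1,l_1)=g_k(p_1)l_1$, such a $t$ is a common neighbour exactly when $l_k-l_k'=g_k(t)(l_1-l_1')$ for all $2\le k\le m+1$. The decisive point is that $f_k$ is \emph{linear} in the line-coordinate $l_1$: this makes the condition depend only on the difference $d=l-l'$, so $B^{\top}B$ commutes with all translations of $(\F_q^{m+1},+)$ and is therefore diagonalized by the additive characters of this group. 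Writing $v(t)=(1,g_2(t),\dots,g_{m+1}(t))$, the common neighbours correspond to solutions $t$ of $d=d_1\,v(t)$ with $d_1=l_1-l_1'$; the injectivity of $u\mapsto v(u)$ guarantees at most one such $t$ for each $d\neq 0$, so off-diagonal the entries are $0/1$ and the connection set is $\{s\,v(t):s\in\F_q^{*},\,t\in\F_q\}$, while the diagonal entry is $q$.

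Then I would run the character computation. The characters of $(\F_q^{m+1},+)$ are $\chi_\omega(d)=\psi(\omega\cdot d)$, $\omega\in\F_q^{m+1}$, where $\psi$ is a fixed nontrivial additive character of $\F_q$ and $\omega\cdot d=\sum_i\omega_i d_i$; reparametrizing $d=s\,v(t)$ gives the eigenvalue
\[
\mu_\omega=q+\sum_{s\in\F_q^{*}}\sum_{t\in\F_q}\psi\big(s(\omega\cdot v(t))\big),
\]
the summand $q$ being the diagonal term. Since $\omega\cdot v(t)=\omega_1+\sum_{k=2}^{m+1}\omega_k g_k(t)=F_\omega(t)$, the inner sum over $s$ equals $q-1$ when $F_\omega(t)=0$ and $-1$ otherwise, so summing over $t$ yields $q(N_{F_\omega}-1)$ and hence $\mu_\omega=qN_{F_\omega}$. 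Reading this back through the bipartite correspondence gives the nonzero eigenvalues $\pm\sqrt{qN_{F_\omega}}$ of $A$, with the multiplicity of $\pm\sqrt{qi}$ equal to $|\{\omega:N_{F_\omega}=i\}|=n_i$ for $i\ge 1$, and the eigenvalue $0$ (the case $i=0$) occurring with multiplicity $2n_0$.

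Finally, for the component count I would use that the number of connected components of the $q$-regular graph $G_m(q)$ equals the multiplicity of its top eigenvalue $q=\sqrt{q\cdot q}$, namely $n_q=|\{\omega:N_{F_\omega}=q\}|$. Now $N_{F_\omega}=q$ means $F_\omega$ vanishes identically on $\F_q$, i.e. $\omega$ is orthogonal to every $v(u)$; that space has dimension $m+1-\mathrm{rank}_{\F_q}(1,g_2,\dots,g_{m+1})$, giving $n_q=q^{\,m+1-\mathrm{rank}_{\F_q}(1,g_2,\dots,g_{m+1})}$ and connectedness precisely when $1,g_2,\dots,g_{m+1}$ are $\F_q$-linearly independent. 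I expect the main obstacle to be the second step: checking carefully that $B^{\top}B$ is genuinely translation-invariant (this is exactly where the bilinear form $f_k=g_k(p_1)l_1$, linear in $l_1$, is essential), and then keeping the bookkeeping between $spec(B^{\top}B)$ and $spec(A)$ correct, especially the doubling of the zero eigenvalue and the fact that $-q$ also has multiplicity $n_q$. The character sum itself is routine once the Cayley structure is in place, and note that it in fact yields $\mu_\omega=qN_{F_\omega}$ even without the injectivity hypothesis, which is needed only to make the distance-two graph simple.
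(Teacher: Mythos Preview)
Your proof is correct and follows essentially the same approach as the paper's (which in turn follows \cite{CLL14}): the paper computes the spectrum by recognizing the distance-two graph $H$ on lines as the Cayley graph of $(\F_q^{m+1},+)$ with connection set $S=\{(t,tg_2(u),\dots,tg_{m+1}(u)):t\in\F_q^{*},\,u\in\F_q\}$ and then applying character sums, relating $spec(G_m(q))$ to $spec(H)$ via $\pm\sqrt{\lambda+q}$. Your use of $B^{\top}B$ in place of $H$ is only a cosmetic difference, since $B^{\top}B=A(H)+qI$, and your character computation, bookkeeping for the zero eigenvalue, and identification of $n_q$ with the number of components are all carried out correctly.
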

The idea of the  proof is similar to the one in \cite{CLL14} with the distance-two graph $H$ of $G_m(q)$ on $L$ being the Cayley graph of the additive group of the vector space $\F^{m+1}$ with a generating set $S =\{(t,tg_2(u),...,tg_{m+1}(u)) | t \in \F_q^* ,u \in \F_q \}$.
\medskip

\subsubsection{Linearized  Wenger graphs}
\bigskip

An important particular case of the generalized Wenger graphs is obtained when $g_k(X)= X^{p^{k-2}}$,  $2\le k\le m+1$.  The authors of \cite{CLWWW15} called these graphs the {\it linearized Wenger graphs $L_m(q)$},  and they determined their girth, diameter and the spectrum.
If $m>e$, the linearized Wenger graph is not connected.
It has $q^{m-e}$ connected components, each of which is isomorphic to the graph with $m=e$.

\begin{theorem} {\rm (\cite{CLWWW15})}\label{diameterlinear}
If $m\le e$, the diameter of the linearized Wenger graph $L_m(q)$ is $2(m+1)$.
\end{theorem}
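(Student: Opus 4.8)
The plan is to convert distances in $L_m(q)$ into the solvability of a linear system governed by a Moore (linearized Vandermonde) matrix, exploiting that the defining functions $f_k=p_1^{p^{k-2}}l_1$ are built from the linearized monomials $X^{p^{k-2}}$. Write $q=p^e$ and recall the adjacency relations $p_k+l_k=p_1^{p^{k-2}}l_1$, $2\le k\le m+1$. The key computation is that if a point $(p)$ is joined to a point $(p')$ by a path of length $2$ through a line with first coordinate $\beta$, and $\gamma:=p_1'-p_1$, then $p'_k-p_k=((p_1')^{p^{k-2}}-p_1^{p^{k-2}})\beta=\gamma^{p^{k-2}}\beta$, where the last equality is the iterated Frobenius identity $\alpha^{p^{j}}-p_1^{p^{j}}=(\alpha-p_1)^{p^{j}}$ valid in characteristic $p$. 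Concatenating $s$ such double steps, a point $(p)$ reaches a point $(p')$ by a walk of length $2s$ if and only if there exist $\gamma_1,\dots,\gamma_s,\beta_1,\dots,\beta_s\in\F_q$ with $\sum_i\gamma_i=p_1'-p_1$ and $\sum_i\gamma_i^{p^{k-2}}\beta_i=p'_k-p_k$ for $2\le k\le m+1$. First I would establish this encoding carefully, noting that padding a walk by a back-and-forth raises its length by $2$, so ``distance $\le 2s$'' is equivalent to ``there is a walk of length $2s$''.

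The second step reads off solvability. For fixed $\gamma_1,\dots,\gamma_s$ the equations for the $\beta_i$ form a linear system whose coefficient matrix is the Moore matrix $M=(\gamma_i^{p^{j}})_{0\le j\le m-1,\,1\le i\le s}$. By Moore's theorem (equivalently, by the theory of linearized and subspace polynomials), $\operatorname{rank} M=\min\{m,\dim_{\F_p}\langle\gamma_1,\dots,\gamma_s\rangle_{\F_p}\}$, and the left kernel of $M$ consists of the coefficient vectors of the linearized polynomials $\sum_j a_jx^{p^j}$ of degree $<p^m$ vanishing on all the $\gamma_i$. Thus a target displacement $(p'_2-p_2,\dots,p'_{m+1}-p_{m+1})$ is reachable for a given choice of the $\gamma_i$ exactly when it is orthogonal to this left kernel.

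Next come the two bounds. For the lower bound I would take the origin point $(\mathbf 0)$ and the point $(0,\dots,0,1)$, so $p_1'-p_1=0$ while only the last displacement coordinate is nonzero. Reaching it in $2s$ steps forces $\sum_i\gamma_i=0$; since this is a nontrivial $\F_p$-relation, the $\gamma_i$ span a space of dimension $\le m-1$, and a subspace-polynomial argument then produces a vanishing linearized polynomial of degree exactly $p^{m-1}$ (nonzero leading coefficient $a_{m-1}$), i.e.\ a left-kernel vector not orthogonal to $(0,\dots,0,1)$. Hence the target is unreachable for every $s\le m$, so the distance exceeds $2m$; and it is reachable with $s=m+1$ by taking $\gamma_1,\dots,\gamma_m$ to be $\F_p$-independent (possible precisely because $m\le e$) and $\gamma_{m+1}=-\sum_{i\le m}\gamma_i$, which makes $M$ of full row rank $m$. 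This gives distance exactly $2(m+1)$. For the upper bound I would show no other pair is farther: a point--point pair with $p_1'\ne p_1$ is reached in $2m$ steps by taking $m$ $\F_p$-independent $\gamma_i$ scaled to sum to $p_1'-p_1$ (scaling preserves $\F_p$-independence), yielding full-rank $M$; a point--line pair is then at distance $\le 2m+1$, since some neighbor of the target line has first coordinate different from $p_1$ and is reached in $2m$ steps; and a line--line pair reduces to the same analysis with an extra all-ones row (an \emph{affine} Moore matrix), which is invertible once the intermediate first coordinates $\alpha_1,\dots,\alpha_{m+1}$ are chosen affinely $\F_p$-independent, again using $m\le e$.

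The main obstacle is the rank and left-kernel analysis of the Moore matrix, and especially the lower bound: one must argue cleanly that the single target $(0,\dots,0,1)$ escapes every column space arising from $\gamma_i$ that sum to $0$, and this is exactly where the subspace-polynomial description of vanishing linearized polynomials does the work. A secondary point is to confirm that the hypothesis $m\le e$ is invoked precisely where needed — it is the condition guaranteeing the existence of $m$ (respectively $m+1$ affinely) $\F_p$-independent elements of $\F_q$, which simultaneously drives the upper bounds and marks the connectivity threshold beyond which ($m>e$) the graph splits into $q^{m-e}$ components.
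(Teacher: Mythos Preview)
Your proposal is correct and follows essentially the same approach the paper outlines: encode a length-$2s$ walk between two vertices as a linear system in the free parameters (your $\beta_i$, respectively the paper's unknowns), with coefficient matrix a Moore (linearized Vandermonde) matrix in the chosen first coordinates; then use Moore's theorem to control its rank, choosing $\F_p$-independent (resp.\ affinely $\F_p$-independent) parameters for the upper bound and exploiting the forced $\F_p$-relation $\sum_i\gamma_i=0$ for the lower bound. The only cosmetic difference is that the paper runs the primary computation on the \emph{line} side (line--line walks through intermediate points with first coordinates $\alpha_i$, yielding the $(m{+}1)\times s$ matrix with an extra all-ones row), whereas you work on the \emph{point} side and then pick up the line--line case at the end; the two are dual since Frobenius turns $p_1^{p^{j}}$-differences into $(p_1'-p_1)^{p^{j}}$. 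Your write-up is in fact more explicit than the paper's outline about \emph{why} the lower-bound target $(0,\dots,0,1)$ escapes the column space---namely, the subspace-polynomial construction of a left-kernel vector with nonzero top coefficient $a_{m-1}$---and about where exactly the hypothesis $m\le e$ is used (availability of $m$ $\F_p$-independent elements, resp.\ $m{+}1$ affinely $\F_p$-independent elements, of $\F_q$). One small remark: in your scaling step ``take $m$ $\F_p$-independent $\gamma_i$ scaled to sum to $p_1'-p_1$'', note that the unscaled sum is automatically nonzero because $\sum\gamma_i=0$ would itself be a nontrivial $\F_p$-relation---so the scaling is always possible, and your argument goes through without further case analysis.
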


The outline of the proof is as follows.
Consider the distance between any two lines $L$ and $L'$ in $L_m(q)$.
If there is a path of length $2(m+1)$ with endpoints $L$ and $L'$,
then the coordinates of vertices on the path must satisfy a system of linear equations.
By the fact that if the coefficient matrix of the linear equations is nonsingular,
then the system of linear equations has a unique solution,
there are solutions for the coordinates of vertices on the path.
Thus the diameter of any two vertices in the same partite set is at most $2(m+1)$.
Modifying the construction so that the path goes through the point $P$, similarly, the distance between $P$ and $L$ is no more than $2(m+1)$, so that the diameter of $L_m(q)$ is at most $2(m+1)$.
On the other hand, the distance $2(m+1)$ can be attained. Choose two lines $L$ and $L'$ such that $L'-L=[0,\ldots,0,1]$, if the distance between them is $2s$ with $s\le m$.
Then the system of linear equations has no solution. So the distance between them is at least $2(m+1)$.

\begin{theorem}{\rm (\cite{CLWWW15})}\label{girthlinear}
Let $q=p^e$. If $m\ge 1,e\ge 1$ and $p$ is an odd prime, or $m=1,e\ge 2$ and $p=2$, then the girth of the linearized Wenger graph $L_m(q)$ is 6;
if $p=2$ and either $e=m=1$ or $e\ge 1, m\ge 2$, then the girth of the linearized Wenger graph $L_m(q)$ is 8.
\end{theorem}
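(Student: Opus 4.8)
The plan is to read off $\girth(L_m(q))$ directly from the defining equations
$$p_k+l_k=p_1^{\,p^{k-2}}l_1,\qquad 2\le k\le m+1,$$
exploiting that the exponents $p^{k-2}$ make each coordinate function \emph{additive} (linearized) in $p_1$, so that the Frobenius map $x\mapsto x^p$ can be pulled through differences: $(a-b)^{p^{j}}=a^{p^{j}}-b^{p^{j}}$. First I would rule out $4$-cycles, giving $\girth\ge 6$ in all cases. By Theorem~\ref{T:ncc} (neighbor-complete colorings) any two distinct common neighbors of a vertex have distinct first coordinates; hence in a putative $4$-cycle on points $(p),(p')$ and lines $[l],[l']$ one has $p_1\neq p_1'$ and $l_1\neq l_1'$. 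Subtracting the adjacency equations of $(p)$ against $[l]$ and $[l']$, and likewise for $(p')$, yields $p_1^{\,p^{k-2}}(l_1-l_1')=(p_1')^{\,p^{k-2}}(l_1-l_1')$ for all $k$; taking $k=2$ forces $p_1=p_1'$, a contradiction.

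Next I would reduce the existence of a $6$-cycle to linear algebra. Along a closed walk $P_1L_1P_2L_2P_3L_3P_1$ each edge is determined by the successive first coordinates $a_1,b_1,a_2,b_2,a_3,b_3$ (again Theorem~\ref{T:ncc}); the higher coordinates of $P_1$ cancel in the ``return'' condition, which telescopes and, by Frobenius-additivity, becomes
$$\sum_{i=1}^{3} b_i\,d_i^{\,p^{j}}=0,\qquad j=0,1,\dots,m-1,$$
where $d_1=a_2-a_1,\ d_2=a_3-a_2,\ d_3=a_1-a_3$ satisfy $d_1+d_2+d_3=0$. Theorem~\ref{T:ncc} also shows the walk is a genuine $6$-cycle exactly when the $a_i$ are pairwise distinct (all $d_i\neq 0$) and the $b_i$ are pairwise distinct. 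Thus $\girth(L_m(q))=6$ iff one can pick nonzero $d_i$ summing to $0$ for which the $m\times 3$ matrix $M=(d_i^{\,p^{j}})$ has a kernel vector $(b_1,b_2,b_3)$ with distinct entries.

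The heart of the argument, and the step I expect to be the main obstacle, is the rank analysis of $M$. Since $\sum_i d_i^{\,p^{j}}=(\sum_i d_i)^{p^{j}}=0$, the all-ones vector always lies in $\ker M$ and $\mathrm{rank}\,M\le 2$; hence a kernel vector with distinct entries can exist only when $\mathrm{rank}\,M=1$. I would show that two columns of $M$ are proportional iff the ratio $d_i/d_{i'}$ lies in $\F_p$ (automatic when $m=1$, where $M$ is a single row), so that for $m\ge 2$, $\mathrm{rank}\,M=1$ forces all $d_i$ to be $\F_p$-multiples of one element. When $\mathrm{rank}\,M=1$ the whole system collapses to the single equation $\sum_i d_ib_i=0$, and a short computation using $d_1+d_2+d_3=0$ shows its solutions have all three entries distinct precisely when $b_2\neq b_3$, which is arrangeable whenever $q\ge 3$.

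Finally I would run the case analysis. For odd $p$ (any $m,e$) and for $p=2,\ m=1,\ e\ge 2$ I can exhibit admissible $d_1,d_2,d_3$ realizing $\mathrm{rank}\,M=1$ (for odd $p$ take $d_i\in\F_p^{*}$ with $\sum d_i=0$; for $m=1$ any admissible triple qualifies), producing a $6$-cycle and hence $\girth=6$. In the remaining characteristic-$2$ cases ($m\ge 2$ with any $e$, or $q=2$), proportionality over $\F_2$ would force $d_1=d_2=d_3$, whereas then $d_1+d_2+d_3=d_1\neq 0$; so no admissible triple gives $\mathrm{rank}\,M=1$, there is no $6$-cycle, and $\girth\ge 8$. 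To pin the value at $8$ I would exhibit one explicit octagon valid in every characteristic-$2$ case,
$$(0,\mathbf 0)-[0,\mathbf 0]-(1,\mathbf 0)-[1,\mathbf 1]-(0,\mathbf 1)-[0,\mathbf 1]-(1,\mathbf 1)-[1,\mathbf 0]-(0,\mathbf 0),$$
where $\mathbf 0,\mathbf 1$ denote the all-$0$ and all-$1$ strings of length $m$; every entry lies in the prime field and is Frobenius-fixed, so one checks each consecutive pair satisfies the adjacency equations and that the eight vertices are distinct. Together with the absence of $4$- and $6$-cycles this gives $\girth(L_m(q))=8$ in exactly the stated cases.
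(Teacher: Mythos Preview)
Your argument is correct. The telescoping reduction of $6$-cycle existence to $\mathrm{rank}\,M=1$ is clean and the rank analysis via $d_i/d_{i'}\in\F_p$ is right; the case split then falls out exactly as in the theorem, and the explicit octagon checks in characteristic~$2$ for every $m\ge 1$ (including $q=2$).

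Your route differs from the one the survey sketches. The paper obtains $\girth\ge 6$ structurally, from the cover $L_m(q)\to L_1(q)\cong W_1(q)$ and the known fact that $W_1(q)$ is $C_4$-free, rather than by the direct subtraction you use. For the exact girth, the paper simply reports that \cite{CLWWW15} writes down the coordinates of a shortest cycle in each case; it does not describe how $6$-cycles are excluded in the characteristic-$2$ cases. Your uniform rank criterion on the Moore-type matrix $M=(d_i^{p^j})$ does both jobs at once---producing $6$-cycles when $\mathrm{rank}\,M=1$ is attainable with admissible $d_i$ (odd $p$, or $m=1$ with $q\ge 4$) and ruling them out otherwise---so it is more self-contained and more explanatory than the explicit-cycle approach, at the cost of a little extra linear algebra. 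The two approaches are equally elementary; yours avoids invoking the cover lemma and the prior result on $W_1(q)$.
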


For all $q\ge 2$, $L_1(q)$ is isomorphic to $W_1(q)$, which is 4-cycle-free.
Hence, $L_m(q)$ has girth at least 6 for $m\ge 1$.
The authors also determined the coordinates of points and lines of a shortest cycle in $L_m(q)$ in the proof of Theorem \ref{girthlinear}.

The spectrum of the linearized Wenger graphs is computed in \cite{CLWWW15} for the case $m\ge e$, and by Yan and Liu \cite{YL17} for the case $m<e$. The proofs of the following two theorems rely on the spectrum of the generalized Wenger graphs as shown in Theorem \ref{spectrumgeneral}.
\begin{theorem}{\rm (\cite{CLWWW15})}
Let $m\ge e$. The linearized Wenger graph $L_m(q)$ has $q^{m-e}$ components. The distinct eigenvalues are
\begin{equation*}
0, \;\pm\sqrt{qp^i},\;0\le i\le e.
\end{equation*}
The multiplicities of the eigenvalues $\pm\sqrt{qp^i}$ are $q^{m-e}p^{e-i}\frac{\prod_{j=0}^{e-i-1}(p^e-p^j)^2}{\prod_{j=0}^{e-i-1}(p^{e-i}-p^j)}$.
The multiplicity of the eigenvalue $0$ is $q^{m-e}\sum_{i=1}^e (p^e-p^{e-i})\frac{\prod_{j=0}^{e-i-1}(p^e-p^j)^2}{\prod_{j=0}^{e-i-1}(p^{e-i}-p^j)}$.
\end{theorem}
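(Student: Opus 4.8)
The plan is to deduce the theorem from Theorem~\ref{spectrumgeneral} by specializing $g_k(X)=X^{p^{k-2}}$ and recognizing the resulting polynomials $F_\omega$ as inhomogeneous linearized (additive) polynomials. Writing $\omega=(\omega_1,\omega')$ with $\omega'=(\omega_2,\dots,\omega_{m+1})\in\F_q^m$, the substitution gives
\[
F_\omega(u)=\omega_1+\sum_{k=2}^{m+1}\omega_k u^{p^{k-2}}=\omega_1+\ell_{\omega'}(u),
\]
where $\ell_{\omega'}(u)=\sum_{k=2}^{m+1}\omega_k u^{p^{k-2}}$ is $\F_p$-linear as a map $\F_q\to\F_q$. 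Hence $F_\omega(u)=0$ is the affine system $\ell_{\omega'}(u)=-\omega_1$, so $N_{F_\omega}=|\ker\ell_{\omega'}|$ when $-\omega_1\in\mathrm{im}\,\ell_{\omega'}$ and $N_{F_\omega}=0$ otherwise. Since a kernel is an $\F_p$-subspace of $\F_q$, I get $N_{F_\omega}\in\{0,1,p,\dots,p^e\}$, which already pins the eigenvalues $\pm\sqrt{qN_{F_\omega}}$ down to $\{0\}\cup\{\pm\sqrt{qp^i}:0\le i\le e\}$, matching the claimed list.

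Next I would reduce modulo $u^q-u$. As functions on $\F_q$ one has $u^{p^{k-2}}=u^{p^{((k-2)\bmod e)}}$, so $\ell_{\omega'}$ depends only on the folded coefficients $\tau_r=\sum_k\omega_k$ (summed over those $k$ with $k-2\equiv r\pmod{e}$), $0\le r\le e-1$, and acts as $\tilde\ell(u)=\sum_{r=0}^{e-1}\tau_r u^{p^r}$. Because $m\ge e$, the $\F_q$-linear map $\omega'\mapsto(\tau_0,\dots,\tau_{e-1})$ from $\F_q^m$ onto $\F_q^e$ is surjective, so each reduced $\tilde\ell$ has exactly $q^{m-e}$ preimages $\omega'$; this is the origin of the global factor $q^{m-e}$, and the only place the hypothesis $m\ge e$ is used. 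The crux I would establish next is the classical correspondence between reduced linearized polynomials $\sum_{r=0}^{e-1}\tau_r u^{p^r}$ and $\F_p$-endomorphisms of $\F_q$: both sets have $p^{e^2}$ elements, and the evaluation map is injective (a nonzero linearized polynomial of degree below $q$ cannot vanish on all of $\F_q$), hence a bijection onto $\mathrm{End}_{\F_p}(\F_q)\cong M_e(\F_p)$. Under it $\dim_{\F_p}\ker\tilde\ell$ is the nullity of the matrix, so the number of reduced $\tilde\ell$ with $\dim\ker=i$ equals the number of $e\times e$ matrices over $\F_p$ of rank $e-i$, namely $\prod_{j=0}^{e-i-1}(p^e-p^j)^2/(p^{e-i}-p^j)$.

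Finally I would assemble the multiplicities by bookkeeping. An $\omega$ yields $N_{F_\omega}=p^i$ exactly when its reduced $\tilde\ell$ has $\dim\ker=i$ and $-\omega_1\in\mathrm{im}\,\tilde\ell$, and the image then has $p^{e-i}$ elements; multiplying the matrix count by these $p^{e-i}$ admissible $\omega_1$ and by the fibre size $q^{m-e}$ gives the stated multiplicity $q^{m-e}p^{e-i}\prod_{j=0}^{e-i-1}(p^e-p^j)^2/(p^{e-i}-p^j)$ of $\pm\sqrt{qp^i}$. For the eigenvalue $0$ I instead count $\omega$ with $N_{F_\omega}=0$: for each $\tilde\ell$ of kernel dimension $i\ge1$ there are $p^e-p^{e-i}$ values of $\omega_1$ with $-\omega_1\notin\mathrm{im}\,\tilde\ell$ (the case $i=0$ contributes nothing, as a bijective $\tilde\ell$ leaves no room), and summing over $i$ gives $q^{m-e}\sum_{i=1}^{e}(p^e-p^{e-i})\prod_{j=0}^{e-i-1}(p^e-p^j)^2/(p^{e-i}-p^j)$, as required.

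The component count is immediate from the last part of Theorem~\ref{spectrumgeneral}: the functions $1,X,X^p,\dots,X^{p^{m-1}}$ span over $\F_q$ the same space as $1,X,X^p,\dots,X^{p^{e-1}}$ (again by $u^q=u$), and these $e+1$ monomials of pairwise distinct degrees below $q$ are $\F_q$-linearly independent, so $\mathrm{rank}_{\F_q}(1,g_2,\dots,g_{m+1})=e+1$ and the number of components is $q^{m+1-(e+1)}=q^{m-e}$. The main obstacle will be setting up the linearized-polynomial/matrix correspondence and invoking the rank-counting formula cleanly; once those are in place, the remaining computation is elementary and entirely controlled by the fibre size $q^{m-e}$ forced by $m\ge e$.
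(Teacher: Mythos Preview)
Your approach is correct and is essentially the one the paper indicates: the survey states only that the proof ``relies on the spectrum of the generalized Wenger graphs as shown in Theorem~\ref{spectrumgeneral}'', and you carry out precisely that specialization, identifying $F_\omega$ as an affine linearized polynomial, folding modulo $u^q-u$, invoking the bijection between reduced $p$-polynomials and $M_e(\F_p)$, and applying the standard count of matrices of given rank. The component count via $\mathrm{rank}_{\F_q}(1,X,\dots,X^{p^{m-1}})=e+1$ is likewise the intended argument.
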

For the following theorem, the Gaussian binomial coefficients are required:
\begin{align*}
\begin{split}
\binom{n}{k}_p= \left \{
\begin{array}{ll}
    \prod_{t=0}^{k-1}\frac{p^n-p^t}{p^k-p^t},                    &\text{if } 1\le k\le n,\\
    1,     &\text{if } k=0,\\
    0,                                 &\text{if } k>n,
\end{array}
\right.
\end{split}
\end{align*}
where $n$ and $k$ are nonnegative integers.

\begin{theorem}{\rm (\cite{YL17})}
Let $m< e$. The eigenvalues of the linearized Wenger graph $L_m(q)$ are
\begin{equation*}
0, \;\pm q,\;\pm\sqrt{qp^i},\;0\le i\le m-1.
\end{equation*}
The multiplicities of the eigenvalues $\pm q$ are 1,
the multiplicities of the eigenvalues $\pm \sqrt{qp^i}$ are $p^{e-i}n_i$,
and the multiplicity of the eigenvalue $0$ is $2(q^{m+1}-1-\sum_{i=0}^{m-1}p^{e-i}n_i)$,
where
\begin{equation*}
n_i=\binom{e}{i}_p\sum_{j=0}^{m-i-1}(-1)^jp^{\frac{j(j-1)}{2}}\binom{e-i}{j}_p(q^{m-i-j}-1),
\end{equation*}
where $\binom{e}{i}_p$ and $\binom{e-i}{j}_p$ are the Gaussian binomial coefficients.
\end{theorem}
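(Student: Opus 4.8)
The plan is to obtain this as a specialization of Theorem~\ref{spectrumgeneral}, with essentially all the work residing in a combinatorial count of linearized polynomials by the dimension of their root space. Substituting $g_k(u)=u^{p^{k-2}}$ for $2\le k\le m+1$, the polynomial attached to $\omega=(\omega_1,\dots,\omega_{m+1})\in\F_q^{m+1}$ is $F_\omega(u)=\omega_1+L_\omega(u)$, where $L_\omega(u)=\sum_{j=0}^{m-1}\omega_{j+2}\,u^{p^j}$ is a $p$-linearized (additive) polynomial. The first point is that $L_\omega$ is an $\F_p$-linear endomorphism of $\F_q\cong\F_p^e$, so its set of roots $\ker L_\omega$ is an $\F_p$-subspace and the solution set of $F_\omega(u)=0$, namely $\{u:L_\omega(u)=-\omega_1\}$, is either empty or a coset of $\ker L_\omega$. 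Thus $N_{F_\omega}\in\{0\}\cup\{p^i:0\le i\le m-1\}\cup\{q\}$: the value $q$ forces $L_\omega\equiv 0$ and $\omega_1=0$, hence $\omega=0$; the value $p^i$ occurs precisely when $\dim_{\F_p}\ker L_\omega=i$ and $-\omega_1\in\operatorname{Im}L_\omega$ (the bound $i\le m-1$ coming from $\deg L_\omega\le p^{m-1}$); and $N_{F_\omega}=0$ otherwise. By Theorem~\ref{spectrumgeneral} this already identifies the distinct eigenvalues as $0$, $\pm q$, and $\pm\sqrt{qp^i}$, $0\le i\le m-1$.

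The heart of the argument is to count $M_i:=|\{L_\omega:\dim_{\F_p}\ker L_\omega=i\}|$, the number of linearized polynomials $L=\sum_{j=0}^{m-1}c_j u^{p^j}$ with $c_j\in\F_q$ whose root space has dimension exactly $i$; I then claim $M_i$ equals the quantity $n_i$ in the statement. I would proceed by M\"obius inversion over the lattice of $\F_p$-subspaces of $\F_q$. For a fixed subspace $W$ of dimension $d$ with basis $w_1,\dots,w_d$, the condition $W\subseteq\ker L$ is the $\F_q$-linear system $\sum_{j=0}^{m-1}c_j w_s^{p^j}=0$ for $1\le s\le d$, whose coefficient matrix $(w_s^{p^j})$ is a Moore matrix. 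Since $w_1,\dots,w_d$ are $\F_p$-independent, the nonvanishing of Moore determinants shows this matrix has rank $\min(d,m)$, so
\[
a_{\ge W}:=|\{L:W\subseteq\ker L\}|=q^{\,m-\min(d,m)},
\]
depending only on $d$. (When $d>m$ only $L=0$ survives, since a nonzero $L$ has at most $p^{m-1}$ roots; this is precisely the degenerate case of the rank count.)

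Feeding $a_{\ge W}$ into M\"obius inversion, whose M\"obius function on the subspace lattice is $\mu(W,W')=(-1)^{t}p^{\binom{t}{2}}$ with $t=\dim W'-\dim W$, yields a value that depends only on $i=\dim W$; summing over the $\binom{e}{i}_p$ subspaces of dimension $i$ gives
\[
M_i=\binom{e}{i}_p\sum_{t=0}^{e-i}(-1)^t p^{\binom{t}{2}}\binom{e-i}{t}_p\,q^{\,m-\min(i+t,\,m)}.
\]
Splitting the inner sum at $t=m-i$ and peeling off the terms with $i+t\ge m$, all of which carry the factor $q^0=1$, the tail collapses by the $q$-binomial identity $\sum_{t=0}^{N}(-1)^t p^{\binom{t}{2}}\binom{N}{t}_p=\prod_{k=0}^{N-1}(1-p^k)=0$, valid because $N=e-i\ge 1$ throughout the range $i\le m-1<e$. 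What remains is exactly $\binom{e}{i}_p\sum_{j=0}^{m-i-1}(-1)^j p^{j(j-1)/2}\binom{e-i}{j}_p(q^{m-i-j}-1)=n_i$, proving $M_i=n_i$.

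It remains to assemble the multiplicities. For each $L$ with $\dim\ker L=i$ there are $|\operatorname{Im}L|=p^{e-i}$ choices of $\omega_1$ with $-\omega_1\in\operatorname{Im}L$, so the number of $\omega$ with $N_{F_\omega}=p^i$ is $p^{e-i}M_i=p^{e-i}n_i$, which is the multiplicity of each of $\pm\sqrt{qp^i}$; the value $N_{F_\omega}=q$ arises only from $\omega=0$, giving $\pm q$ multiplicity $1$; and the multiplicity of $0$ is recovered by subtracting all of these from the total eigenvalue count $2q^{m+1}$, yielding $2\bigl(q^{m+1}-1-\sum_{i=0}^{m-1}p^{e-i}n_i\bigr)$. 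I expect the main obstacle to lie in the two pieces of the combinatorial count: proving the Moore-matrix rank formula cleanly (in particular handling the degenerate range $d>m$) and organizing the M\"obius inversion so that the $q$-binomial identity collapses the tail, since tracking exactly which subspace-dimension terms survive is where slips are easiest.
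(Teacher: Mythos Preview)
Your proposal is correct and follows precisely the route the paper indicates: the survey gives no detailed proof here, merely noting that the result ``rel[ies] on the spectrum of the generalized Wenger graphs as shown in Theorem~\ref{spectrumgeneral},'' and your argument carries out exactly that specialization, with the M\"obius inversion over the subspace lattice and the Moore-matrix rank computation being the standard way to count linearized polynomials by kernel dimension. The collapse of the tail via the $q$-binomial identity $\sum_{t=0}^{N}(-1)^t p^{\binom{t}{2}}\binom{N}{t}_p=\prod_{k=0}^{N-1}(1-p^k)=0$ (the $k=0$ factor vanishing) is clean and correct.
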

For $q=p^e$, the results imply  that the graphs $L_e(q)$ are expanders.  It follows from \cite{Ale16} that for a fixed $e$ and large $p$, $L_e(p^e)$  are hamiltonian.
The lengths of some cycles in $L_m(q)$ were found by Wang \cite{Wan17} using a similar method as in \cite{WLT20} and the adjacency of some points and lines in cycles.

\begin{theorem}{\rm (\cite{Wan17})}
Let $q$ be a power of the prime $p$ with $p\ge 3$. For any integer $k$ with $3\le k\le p^2$, the linearized Wenger graph $L_m(q)$ contains cycles of length $2k$.
\end{theorem}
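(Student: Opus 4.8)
The plan is to reduce the statement, for an arbitrary $q=p^e$, to the single case $q=p$, and then to produce the required cycles inside an affine plane of order $p$ by an explicit geometric construction. The defining equations of $L_m(q)$ read $p_k+l_k=p_1^{p^{k-2}}l_1$ for $2\le k\le m+1$; writing $\sigma$ for the Frobenius $t\mapsto t^p$, these are $p_k+l_k=\sigma^{k-2}(p_1)\,l_1$. The key observation is that if the first coordinate $p_1$ of a point lies in the prime field $\F_p$, then $\sigma^{k-2}(p_1)=p_1$ for every $k$, so all $m$ equations collapse to the single relation $p_2+l_2=p_1l_1$. First I would therefore restrict attention to the induced subgraph of $L_m(q)$ spanned by those points $(a,c,c,\dots,c)$ and lines $(b,d,d,\dots,d)$ whose higher coordinates are constant and whose entries $a,b,c,d$ all lie in $\F_p$. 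On such vertices adjacency becomes exactly $c+d=ab$, so this subgraph is precisely $B\Gamma_2(\F_p;p_1l_1)\cong W_1(p)$, the point / non-vertical-line incidence graph of the affine plane $AG(2,p)$. Since $W_1(p)$ has $p^2$ points and $p^2$ lines, any cycle of length $2k$ with $3\le k\le p^2$ in $W_1(p)$ is automatically a cycle in $L_m(q)$; thus it suffices to treat $q=p$.

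Reformulated in $AG(2,p)$, a cycle of length $2k$ is a cyclic sequence of $k$ distinct points $P_1,\dots,P_k\in\F_p^2$ such that consecutive points lie in different columns (so the joining line is non-vertical and uniquely determined, in the spirit of Theorem~\ref{T:ncc}) and the $k$ joining lines $\overline{P_iP_{i+1}}$ are pairwise distinct; distinctness of the points and of the lines makes the alternating walk $P_1,\overline{P_1P_2},P_2,\dots$ a genuine $2k$-cycle, so no separate closure condition is needed. For the low range $3\le k\le p$ I would place the $P_i$ on a conic, say the parabola $u=x^2$ restricted to $k$ distinct columns: distinct columns force consecutive points to differ, and a conic meets every line in at most two points, so no three of the $P_i$ are collinear and the joining lines are automatically distinct.

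For the remaining range $p<k\le p^2$ the points must reuse columns, and the only way two joining lines can coincide is to share both slope and intercept; controlling this is the heart of the argument. I would build the cycle from ``zig-zag'' segments that oscillate between two adjacent columns with a fixed pair of slopes, using the relation $u_{i+1}=u_i+(x_{i+1}-x_i)\,y_i$ (with $y_i$ the slope of $\overline{P_iP_{i+1}}$) to show that along such a segment the intercepts drift by a fixed nonzero amount and so cannot repeat within $p$ steps, and then splicing the segments through carefully chosen transition edges so that the total number of points is exactly $k$ while every slope/intercept pair that does recur is kept apart. Alternatively, one might try an inductive ear insertion, enlarging a $2k$-cycle to a $2(k+1)$-cycle by rerouting one edge $\overline{P_iP_{i+1}}$ through an unused point $Q$ via two new lines, though this needs care near the top of the range.

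The hard part will be exactly this line-distinctness bookkeeping, and it is genuinely concentrated near $k=p^2$: when the cycle is (nearly) Hamiltonian in $W_1(p)$, essentially all $p^2$ lines are forced to be used exactly once, leaving no slack, and a naive ear insertion can fail precisely because the only lines available to reroute are the one just freed and the one still unused. I expect this to require a global rather than purely local description of the near-Hamiltonian cycles --- for instance exhibiting an explicit Hamiltonian cycle of $W_1(p)$ with enough additive symmetry that controlled truncations realize every intermediate length --- together with a direct verification, from the point and line coordinates, that no slope-intercept pair repeats. The few sporadic short lengths (for example confirming that $k=5$ occurs for every $p\ge 3$) should then be dispatched by hand or absorbed into the conic construction.
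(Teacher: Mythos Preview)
Your reduction is correct: since $p_1\in\F_p$ forces $p_1^{p^{k-2}}=p_1$ for every $k$, the induced subgraph on vertices $(a,c,\dots,c)$ and $[b,d,\dots,d]$ with all entries in $\F_p$ is genuinely a copy of $W_1(p)$ sitting inside $L_m(q)$, and the conic argument then handles $3\le k\le p$ cleanly. But this relocates the difficulty rather than removing it. The residual statement is that $W_1(p)$ contains a $2k$-cycle for every $p<k\le p^2$, i.e.\ that the biaffine-plane incidence graph is bipancyclic from its girth all the way to its Hamiltonian length, and you have not proved this. The top end $k=p^2$ alone asks for a Hamiltonian cycle in $W_1(p)$ for \emph{every} odd prime $p$, which you do not exhibit (the survey only cites Hamiltonicity of Wenger graphs for fixed $m$ and sufficiently large $q$). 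Your zig-zag sketch controls intercepts along a single two-column segment but says nothing about collisions between different segments or across the closing edge, and you yourself observe that the ear-insertion alternative runs out of room near $k=p^2$. So the range $p<k\le p^2$ is a genuine gap, not a routine verification left to the reader.

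For comparison, the paper does not give its own proof but attributes the result to \cite{Wan17}, describing the method as explicit cycle construction directly in $L_m(q)$ in the style of \cite{WLT20}, rather than a reduction to $W_1(p)$. Your route is different and would in fact yield more if completed---full bipancyclicity of $W_1(p)$ is of independent interest---but as written the core case remains open.
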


\begin{problem} Determine the lengths of all cycles in the linearized Wenger graph $L_m(q)$.
\end{problem}
\medskip

\subsubsection{Jumped Wenger graphs}
\bigskip

Another particular case of the generalized Wenger graphs is obtained when $(g_2(X),\ldots,g_{m+1}(X))=(X,X^2,\ldots,X^{i-1},X^{i+1},\ldots,X^{j-1},X^{j+1},\ldots,X^{m+2})$.
L.-P. Wang, Wan, W. Wang and Zhou of \cite{WWWZ22} called these graphs the {\it jumped Wenger graphs $J_m(q,i,j)$}, where $1\le i<j\le m+1$.
It is easy to obtain that $J_m(q,i,j)$ is $q$-regular, and if $m+2<q$, $J_m(q,i,j)$ is connected. The authors also determined their girth and diameter.
\begin{theorem} {\rm(\cite{WWWZ22})}
If $1\le m<q-2$, the diameter of the jumped Wenger graph $J_m(q,i,j)$ is at most $2(m+1)$.
In particular, the diameters of $J_m(q,m,m+2)$, $J_m(q,m+1,m+2)$ and $J_m(q,m,m+1)$ are all $2(m+1)$.
\end{theorem}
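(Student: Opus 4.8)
The plan is to prove the upper bound by exhibiting, between any two vertices, a walk of length at most $2(m+1)$; since $2(m+1)$ is even, the diameter is realized between two vertices of the same part, so the critical case is a pair of lines. Write $J_m(q,i,j)$ through its defining equations $p_k+l_k=p_1^{\,e_k}l_1$, $k=2,\dots,m+1$, where $\{e_2<\cdots<e_{m+1}\}=\{1,2,\dots,m+2\}\setminus\{i,j\}$. Given lines $[b]$ and $[b']$, I parametrize a prospective walk
$$[b]=[b^{(0)}]\sim(a^{(1)})\sim[b^{(1)}]\sim\cdots\sim(a^{(m+1)})\sim[b^{(m+1)}]=[b']$$
by the first coordinates $x_s=a_1^{(s)}$ of the $m+1$ internal points and $y_s=b_1^{(s)}$ of the internal lines (with $y_0,y_{m+1}$ prescribed). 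Subtracting the two adjacency relations at each internal point and telescoping eliminates all higher coordinates and leaves, writing $z_s:=y_s-y_{s-1}$, the square system
$$\sum_{s=1}^{m+1}x_s^{\,e}\,z_s=c_e,\qquad e\in S:=\{0,1,\dots,m+2\}\setminus\{i,j\},$$
with $c_0=b_1'-b_1$ (the exponent-$0$ equation $\sum_s z_s=y_{m+1}-y_0$) and $c_{e_k}=b_k'-b_k$. Thus a connecting walk of the desired length exists, for every target, as soon as the $(m+1)\times(m+1)$ generalized Vandermonde matrix $M=(x_s^{\,e})_{e\in S,\,s}$ can be made nonsingular by a choice of nodes $x_1,\dots,x_{m+1}\in\F_q$; the hypothesis $q>m+2$ leaves ample room to take them distinct.

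The key step is to turn nonsingularity of $M$ into a transparent condition. A nonzero $P(t)=\sum_{e\in S}a_e t^e$ kills $M$ exactly when $\deg P\le m+2$, $P(x_s)=0$ for all $s$, and $a_i=a_j=0$. Since any polynomial of degree $\le m+2$ vanishing at the $m+1$ distinct nodes factors as $P(t)=(\alpha t+\beta)\,Q(t)$ with $Q(t)=\prod_{s=1}^{m+1}(t-x_s)=\sum_r q_r t^r$, the conditions $a_i=a_j=0$ read $\alpha q_{i-1}+\beta q_i=0$ and $\alpha q_{j-1}+\beta q_j=0$, so $M$ is nonsingular if and only if $q_{i-1}q_j-q_iq_{j-1}\ne 0$. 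For the three distinguished pairs this $2\times2$ determinant is explicit: using $q_{m+1}=1$, $q_{m+2}=0$, $q_m=-\sum_s x_s$ and $q_{m-1}=\sum_{s<t}x_sx_t$, the pair $(m+1,m+2)$ gives $-1$ (always nonzero; indeed $J_m(q,m+1,m+2)$ is the ordinary Wenger graph $W_m(q)$), the pair $(m,m+2)$ gives $\sum_s x_s$, and $(m,m+1)$ gives $\sum_{s<t}x_sx_t-(\sum_s x_s)^2$, each nonzero for a suitable distinct node set. For general $(i,j)$ the same determinant coincides, up to sign, with a nonzero Schur polynomial (via the dual Jacobi–Trudi identity), and one selects distinct nodes off its zero set.

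For the lower bound of the ``in particular'' clause I exhibit two lines at distance exactly $2(m+1)$. Take $[b],[b']$ with $b'-b=[0,\dots,0,\delta]$, $\delta\ne0$, differing only in the coordinate $k=m+1$ (exponent $\max S$). For each of the three pairs one has $\{0,1,\dots,m-1\}\subseteq S$ with $\max S\ge m$, so for any walk of length $2s$, $s\le m$, the telescoped equations force $\sum_t x_t^{\,e}z_t=0$ for all $e\in\{0,\dots,m-1\}$; since these $m\ge s$ exponents contain a standard Vandermonde block in the (grouped) distinct nodes, every $z$ vanishes, whence $\sum_t x_t^{\max S}z_t=0\ne\delta$, a contradiction. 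Hence no shorter walk joins $[b]$ and $[b']$, giving distance $2(m+1)$. The point-to-point and point-to-line cases of the upper bound are handled by the analogous telescoping along a walk routed through an internal point; there the coefficient matrices are rectangular generalized Vandermonde matrices, of full rank for a suitable node choice, so those distances are at most $2(m+1)$ and $2m+1$ respectively.

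The main obstacle is the non-vanishing of the generalized Vandermonde determinant over $\F_q$ in the general case: the reduction to $q_{i-1}q_j-q_iq_{j-1}\ne0$ isolates a single Schur-type condition, but with only $q>m+2$ one must actually produce distinct nodes off its zero locus rather than invoke a large-field genericity bound. The three distinguished pairs sidestep this entirely, since their determinant is a low-degree symmetric expression that is manifestly nonzero, which is exactly why those are the cases where the exact value $2(m+1)$ is asserted.
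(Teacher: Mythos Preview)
Your approach coincides with the one the survey attributes to \cite{WWWZ22} (patterned on the linearized-Wenger argument of Theorem~\ref{diameterlinear}): telescope the adjacency relations along a length-$2(m+1)$ walk between two lines into the square system $\sum_s x_s^{e} z_s=c_e$, $e\in S$, show the generalized Vandermonde $M=(x_s^{e})$ can be made nonsingular for the upper bound, route the walk through a prescribed point for the point-to-line case, and for the three distinguished pairs obtain the matching lower bound from two lines with $b'-b=[0,\dots,0,\delta]$. Your reduction of ``$\det M\ne 0$ at distinct nodes'' to the single $2\times 2$ condition $q_{i-1}q_j-q_iq_{j-1}\ne 0$, and its identification with a Schur polynomial via the dual Jacobi--Trudi identity, is a clean refinement that the paper does not make explicit.

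The ``main obstacle'' you flag for general $(i,j)$ is not one. Every monomial in the Leibniz expansion of $\det M$ has $x_s$-degree equal to some element of $S$, so $\deg_{x_s}\det M\le \max S\le m+2$ for each $s$; and $\det M$ is a nonzero polynomial over $\F_q$ since its Schur factor has leading monomial $x^{\lambda}$ with coefficient~$1$. The hypothesis $q>m+2$ then lets you iterate the one-variable fact ``a nonzero polynomial of degree $<q$ has a non-root in $\F_q$'' to produce $(x_1,\dots,x_{m+1})\in\F_q^{m+1}$ with $\det M\ne 0$; such a point automatically has distinct coordinates, since equal columns kill the determinant. No Schur-specific nonvanishing argument is needed, and no large-field bound beyond $q>m+2$. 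For the point-to-line step you only sketched, the same per-variable bound works once you note that $\det M(a_1,x_2,\dots,x_{m+1})\not\equiv 0$ for every fixed $a_1$: expanding along the first column gives $\det M=\sum_{e\in S}(\pm a_1^{e})D_e$, the cofactors $D_e$ are linearly independent generalized Vandermondes in $x_2,\dots,x_{m+1}$, and the coefficient of $D_0$ is $\pm 1$.
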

The proof of this theorem used an idea similar to that in the proof of Theorem \ref{diameterlinear}.

\begin{theorem}{\rm (\cite{WWWZ22})}
The girth of the jumped Wenger graph $J_m(q,i,j)$ is at most $8$.
\end{theorem}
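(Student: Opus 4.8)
The plan is to produce an explicit $8$-cycle, which immediately gives $girth(J_m(q,i,j))\le 8$. Recall that $J_m(q,i,j)$ is the generalized Wenger graph whose point $(p)=(p_1,\dots,p_{m+1})$ and line $[l]=[l_1,\dots,l_{m+1}]$ are adjacent exactly when $p_k+l_k=p_1^{e_k}l_1$ for $2\le k\le m+1$, where $e_2<\dots<e_{m+1}$ enumerate $\{1,\dots,m+2\}\setminus\{i,j\}$, so that every $e_k\ge 1$. First I would search for a closed walk $(p^{(1)})\sim[l^{(1)}]\sim(p^{(2)})\sim[l^{(2)}]\sim(p^{(3)})\sim[l^{(3)}]\sim(p^{(4)})\sim[l^{(4)}]\sim(p^{(1)})$. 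Writing $x_s$ for the first coordinate of $(p^{(s)})$ and $y_s$ for that of $[l^{(s)}]$, the two adjacencies at $[l^{(s)}]$ force $l^{(s)}_k=x_s^{e_k}y_s-p^{(s)}_k$ and hence $p^{(s+1)}_k-p^{(s)}_k=(x_{s+1}^{e_k}-x_s^{e_k})\,y_s$ for each $k$. Summing this around the walk (indices mod $4$, $x_5:=x_1$) makes the higher coordinates of $(p^{(1)})$ telescope away, so for any choice of those coordinates the walk closes up precisely when $\sum_{s=1}^4 (x_{s+1}^{e_k}-x_s^{e_k})\,y_s=0$ for every $k$.

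The key idea is to spend only two distinct first coordinates on the points, setting $x_1=x_3=a$ and $x_2=x_4=b$ with $a\ne b$. Then each closing equation factors as $(b^{e_k}-a^{e_k})(y_1-y_2+y_3-y_4)=0$, so the entire system collapses to the single scalar condition $y_1+y_3=y_2+y_4$. Thus it suffices to pick $a\ne b$ together with line-coordinates $y_1,y_2,y_3,y_4$ satisfying $y_1+y_3=y_2+y_4$. I would take $a=0$ and $b=1$, so that $b^{e_k}-a^{e_k}=1$ for every $k$ (because $e_k\ge 1$), and choose $(y_1,y_2,y_3,y_4)=(0,1,0,1)$ in characteristic $2$ and $(0,1,2,1)$ otherwise; in both quadruples consecutive entries are distinct and $y_1+y_3=y_2+y_4$ holds. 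Setting the remaining coordinates of $(p^{(1)})$ to $0$ and propagating via the recursion above then produces a genuine closed walk of length $8$.

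The one thing that actually needs verification, and the only real obstacle, is that these eight vertices are pairwise distinct, since the construction deliberately repeats first coordinates. Consecutive points differ because $a\ne b$, and consecutive lines differ because they share a point (Theorem~\ref{T:ncc}); only the non-consecutive pairs are in question. Using the reconstruction formula with $a=0$, $b=1$ I would compute, for every coordinate $k$, that $p^{(3)}_k-p^{(1)}_k=y_1-y_2$, $p^{(4)}_k-p^{(2)}_k=-(y_2-y_3)$, $l^{(3)}_k-l^{(1)}_k=-(y_1-y_2)$ and $l^{(4)}_k-l^{(2)}_k=y_4-y_3$. Since consecutive $y$'s are distinct in both quadruples, each of these differences is nonzero, whence $(p^{(1)})\ne(p^{(3)})$, $(p^{(2)})\ne(p^{(4)})$, $[l^{(1)}]\ne[l^{(3)}]$ and $[l^{(2)}]\ne[l^{(4)}]$. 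Hence the walk is an $8$-cycle and $girth(J_m(q,i,j))\le 8$. I would close by noting that the argument never used the precise exponents beyond $e_k\ge 1$, so the very same construction shows $girth\le 8$ for every generalized Wenger graph $G_m(q)$.
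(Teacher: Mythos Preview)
Your argument is correct and follows essentially the same route as the paper's proof, which (as the survey records) simply exhibits the coordinates of an explicit $8$-cycle; you have additionally supplied the derivation that shows \emph{why} the choice $x_1=x_3$, $x_2=x_4$ collapses all closing equations to the single condition $y_1+y_3=y_2+y_4$. One small remark: your sentence ``consecutive lines differ because they share a point (Theorem~\ref{T:ncc})'' is phrased backwards—what actually forces $[l^{(s)}]\ne[l^{(s+1)}]$ is that their first coordinates $y_s,y_{s+1}$ are distinct, and your closing comment about arbitrary generalized Wenger graphs needs the injectivity hypothesis on $u\mapsto(g_2(u),\dots,g_{m+1}(u))$ (rather than $e_k\ge 1$) to guarantee the non-adjacent points and lines differ, but neither point affects the proof of the stated theorem.
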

In the proof, the coordinates of each point and line in the cycle of length 8 are shown.
The exact girth of $J_m(q,i,j)$ is determined via case by case analysis, see \cite{WWWZ22}.
\medskip

\subsection{Sun graphs}
\bigskip

Here we discuss a large family of regular Cayley graphs which, in general,  are not bipartite and are defined by a system of equations described in Section~\ref{SS:ord}.  In particular, the generalized distance-two graphs of Wenger graphs and of the linearized Wenger graphs belong to this family.

Let $k\ge 3$ be an integer,  and $\F_q$ denote the field of $q$ elements. Let $f_i,g_i \in \F_q[X]$ with $g_i(-X) = - g_i(X)$, $3\le i \le k$. Consider  a graph $S(k,q) = S(k,q;f_3,g_3,\cdots ,f_k,g_k)$ with the vertex set $\F_q^k$  and edges defined as follows: vertices $a = (a_1,a_2,...,a_k)$ and $b = (b_1,b_2,...,b_k)$ are adjacent if $a_1= b_1$ and the following $k -2$ relations on their components hold: $$b_i -a_i = g_i(b_1 -a_1)f_i \Big(\frac{b_2 -a_2}{b_1 -a_1}\Big), \;\;\;
3 \le i \le k. $$

We call $S(k,q)$ {\it Sun graphs}. The requirement $g_i(-X)= -g_i(X)$  ensures that adjacency in $S(k,q)$  is  symmetric. It was introduced and studied by Sun \cite{Sun17} and Cioab\u{a}, Lazebnik and Sun \cite{CLS18}.
\medskip

Note that $S(k,q)$ are not bipartite.  It is easy to see that they are Cayley graphs with underlying group being the additive group of the vector space $\F_q^k$ with generating set
  $$ \Big\{\Big(a,au,g_3(a)f_3(u),\cdots, g_k(a)f_k(u)\Big)\,|\, a\in \F_q^*, u\in \F_q \Big\}. $$
   This implies that $S(k,q)$ is a vertex-transitive $q(q-1)$-regular graph,  and that their spectra can be studied using \cite{Bab79,Lov75}.   Note that for $f_i = X^{i-1}$ and $g_i = X$, $3\le  i\le  k +1$, $S(k +1,q)$ coincides with the distance-two graph of the Wenger graphs $W_k(q)$ on lines,  and for $f_i = X^{p^{i-2}}$ and $g_i = X$, $3 \le i\le  k+1$, $S(k+1,q)$ coincides with the distance-two graph of the linearized Wenger graphs $L_k(q)$ on lines.

The connectivity and expansion properties of the graphs $S(k,q)$ were studied in \cite{Sun17,CLS18}.  The spectral properties of $S(3; q; x^2; x^3)$ for prime
$q$  between 5 and 19, and of $S(4; q; x^2; x^3; x^3; x^3)$ for prime $q$ between 5 and
13, show that these graphs are {\it not} distance-two graphs of any $q$-regular bipartite
graphs.
\medskip

\subsection{Terlep--Williford graphs}
\bigskip

In \cite{TW12}, Terlep and Williford considered the graphs $TW(q)= B\Gamma_8(\F_q;f_2,\ldots ,f_8)$, where $$f_2 = p_1l_1, \;\; f_3=p_1l_2, \;\; f_4=p_1l_3, \;\; f_5 = p_1l_4, \;\; f_6 = p_2l_3 - 2p_3l_2 + p_4l_1, $$ $$f_7 = p_1l_6 + p_2l_4 - 3p_4l_2 +2p_5l_1, \;\; \text{and} \;\;  f_8 =2p_2l_6 - 3p_6l_2 + p_7l_1.$$
These graphs provide the best asymptotic lower bound on the Tur\'an number of the 14-cycle $ex(\nu,\{C_{14}\})$, i.e.  the greatest number of edges in a graph of order $v$ containing no 14-cycle. The approach to their construction is similar to the one in \cite{LU95}, and it is obtained from a Lie algebra related to a generalized Kac--Moody algebra of rank 2.
\begin{theorem}  {\rm(\cite{TW12})}
For infinitely many values of $\nu$, $$ex(\nu,\{C_{14}\}) \ge (1/2^{9/8}) \nu^{9/8}. $$
\end{theorem}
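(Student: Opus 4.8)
The plan is to reduce the theorem to the single structural fact that $TW(q)$ contains no $14$-cycle, after which the bound is immediate from counting. Since $TW(q)=B\Gamma_8(\F_q;f_2,\ldots,f_8)$, its vertex set is a disjoint union of two copies of $\F_q^{8}$, so $\nu=|V(TW(q))|=2q^{8}$. By Theorem~\ref{T:ncc} with $|R|=q$ the graph is $q$-regular, and summing degrees over its $2q^{8}$ vertices gives $|E(TW(q))|=q^{9}$. Since
\[
\frac{1}{2^{9/8}}\,\nu^{9/8}=\frac{1}{2^{9/8}}\,(2q^{8})^{9/8}=q^{9},
\]
once $TW(q)$ is known to be $C_{14}$-free we obtain $ex(\nu,\{C_{14}\})\ge|E(TW(q))|=q^{9}=\frac{1}{2^{9/8}}\nu^{9/8}$ with $\nu=2q^{8}$, which yields the stated bound along the infinite sequence of prime powers~$q$.

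It remains to prove that $TW(q)$ has no cycle of length $14$ (the defining equations are in fact arranged so that the girth is at least $16$). I would start from the standard description of a cycle in a bipartite graph $B\Gamma_n$: a $14$-cycle is an alternating closed walk
\[
(p^{0}),[l^{0}],(p^{1}),[l^{1}],\ldots,(p^{6}),[l^{6}],(p^{0})
\]
through seven distinct points and seven distinct lines with consecutive vertices adjacent. By Theorem~\ref{T:ncc} every vertex has a unique neighbour of any prescribed first coordinate, so the walk is completely determined by the base vertex $(p^{0})$ together with the first coordinates of the remaining thirteen vertices; moreover, two distinct vertices that share a common neighbour must have distinct first coordinates, which forces the non-degeneracy conditions $p^{i}_1\neq p^{i+1}_1$ and $l^{i}_1\neq l^{i+1}_1$ (indices read cyclically). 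Exploiting the additive automorphisms $\phi_a$ of (\ref{autolastcoordp})--(\ref{autolastcoordl}), together with any further affine symmetries of the defining equations, I would normalise $(p^{0})$ so as to remove as many of its free coordinates as possible.

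With the walk so parametrised, I would propagate the adjacency relations (\ref{Bipmain0}) for the explicit functions $f_2,\ldots,f_8$ one edge at a time. Because (\ref{Bipmain0}) is triangular, each edge expresses the higher coordinates of the next vertex as polynomials in the first coordinates and the data accumulated so far, and imposing closure at $(p^{0})$ produces a system of polynomial equations in the thirteen free first coordinates. The essential point is that the functions $f_i$ encode the structure constants of a Lie algebra attached to a generalized Kac--Moody algebra of rank~$2$, so that the antisymmetry and the Jacobi/Serre relations of that algebra are precisely what control these closure equations. The main obstacle, and the genuine content of \cite{TW12}, is to show that this polynomial system admits no solution compatible with the non-degeneracy conditions above. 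I expect this to require a careful and essentially computational reduction in which the specific coefficients of $f_6,f_7,f_8$---the integers $-2,-3,2,2,-3$ that carry the Kac--Moody data---are used to force an equality between two first coordinates that the cycle demands be distinct. This contradiction establishes that $TW(q)$ is $C_{14}$-free and completes the proof.
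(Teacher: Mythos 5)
Your counting reduction is correct and is exactly the routine part of the argument: $TW(q)$ has $\nu=2q^{8}$ vertices, is $q$-regular by Theorem~\ref{T:ncc}, hence has $q^{9}=\frac{1}{2^{9/8}}\nu^{9/8}$ edges, so the whole theorem rests on showing $TW(q)$ is $C_{14}$-free. That is precisely where your proposal has a genuine gap. You set up the standard parametrization of a would-be 14-cycle (unique neighbors with prescribed first coordinates, normalization by the automorphisms (\ref{autolastcoordp})--(\ref{autolastcoordl}), closure equations from (\ref{Bipmain0})) and then state that you ``expect'' the resulting polynomial system to be refutable by a careful hand computation using the coefficients of $f_6,f_7,f_8$. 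No such hand argument is known. In \cite{TW12} the absence of 14-cycles is established by a Gr\"obner basis computation performed in the computer algebra system Magma over the field of algebraic numbers, and the conclusion is then transferred to finite fields --- for all but finitely many characteristics, hence for infinitely many $q$ and infinitely many $\nu=2q^{8}$ --- via the Lefschetz principle \cite{Mar02}. The survey records, as an open problem stated immediately after this theorem, the task of giving a computer-free proof that $TW(q)$ contains no 14-cycles; so the step you defer is not a verification left to the reader but the entire mathematical content of the theorem, and your outline of it is a plan, not a proof. (Your plan, as written, also works directly over $\F_q$ and so does not even confront the characteristic-transfer issue that the actual proof must handle.)

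There is also a factual error in your parenthetical remark that the defining equations are ``arranged so that the girth is at least 16.'' This is false: $TW(q)$ contains 12-cycles for $q=5,7$, and likely has girth 12 in general. The graphs are 14-cycle-free while possessing 12-cycles, which is exactly what makes the result delicate --- the $C_{14}$-freeness cannot be obtained from any girth bound, and a correct argument must separate cycles of length 14 from those of length 12.
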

The best known upper bound is still of magnitude $\nu^{8/7}$, as follows from the Erd\H os Even Circuit Theorem and \cite{BS74}.  We wish to note that $TW(q)$ also have no cycles of length less than 12. For $q =5,7$, they do contain 12-cycles, and likely have girth 12 in general. The proof in \cite{TW12} performs a Gr\"{o}bner basis computation using the computer algebra system Magma, which established the absence of 14-cycles over the field of algebraic numbers. The transition to finite fields was made using the Lefschetz principle, see, e.g. Marker \cite{Mar02}. We wish to end this section with a problem.

\begin{problem}
Provide a computer-free proof of the fact that the graphs $TW(q)$ contain no 14-cycles for infinitely many $q$.
\end{problem}
\medskip

\subsection{Verstra\"ete--Williford graphs}
\bigskip

Let $q$ be an odd prime power.  In \cite{VW19},  Verstra\"ete  and Williford  consider the graphs $G_q$ with vertex set $V= \F_q^4$, where distinct vertices $a=\langle a_1,a_2,a_3,a_4\rangle$ and $b=\langle b_1,b_2,b_3,b_4\rangle$ are adjacent if $$ a_2+b_2= a_1b_1, \;\; a_3+b_4=a_1^2b_1, \;\; a_4+b_3=a_1b_1^2.$$
One can easily check that every $G_q$ has $q^2$ vertices of degree $q-1$ and all other vertices are of degree $q$.  Hence,  $G_q$ has $\frac{1}{2}\,(q^5-q^2)$ edges. It is also easy to check that $G_q$ contains neither  4-cycles nor 6-cycles.   The last statement also follows from the fact that $G_q$ is a polarity graph of $D(4,q)$  and Theorem 1 in \cite{LUW99}. For  details,  see Section \ref{polarauto}.

A theta-graph, denoted $\theta_{k,l}$,   consists of $k\ge 2$ internally disjoint paths of length $l$ with the same endpoints.  It is demonstrated in \cite{VW19} that for odd prime power $q$,  $G_q$ contains no subgraph isomorphic to $\theta_{k,l}$.  Together with the upper bound of Faudree and Simonovits from \cite{FS83}, it implies the asymptotic order of magnitude of the Tur\'an number for $\theta_{3,4}$,  namely that   $$ex(\nu, \theta_{3,4}) = \Theta(\nu^{5/4}).$$  More on what is known about $\theta_{k,l}$ can be found in \cite{VW19}.
\medskip

\subsection{Monomial graphs and generalized quadrangles}\label{monographs}
\bigskip

When all functions $f_i$ are monomials of two variables,  we call the graphs $B\Gamma_n(\F_q; f_2, \ldots, f_n)$ {\it monomial graphs}. These graphs were first studied in \cite{Vig02}  and in Dmytrenko \cite{Dmy04}.  Let $B(q;m,n) = B\Gamma_2(\F_q; p_1^m l_1^n)$.  For fixed $m,n$ and sufficiently large $q$, the isomorphism problem for $B(q;m,n)$  was solved in
\cite{Vig02},  and for all $m,n,q$,  in Dmytrenko, Lazebnik and Viglione \cite{DLV05}.

\begin{theorem}\label{isomono}{\rm (\cite{DLV05})}
Let $m,n,m',n'$ be positive integers and let $q, q'$ be prime powers. The graphs $B(q;m,n)$ and
$B(q';m',n')$ are isomorphic if and only if $q=q'$ and $\{\gcd(m,q-1), \gcd(n,q-1)\}= \{\gcd(m',q-1), \gcd(n',q-1)\}$ as multisets.
\end{theorem}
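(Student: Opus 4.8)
The plan is to characterize the isomorphism class of the monomial graph $B(q;m,n) = B\Gamma_2(\F_q; p_1^m l_1^n)$ in terms of the two gcd invariants $\gcd(m,q-1)$ and $\gcd(n,q-1)$, and to show these invariants (as an unordered pair) together with $q$ form a complete isomorphism invariant. First I would establish the \emph{easy direction} (sufficiency): if $q=q'$ and the multisets of gcd's agree, then the graphs are isomorphic. The key observation is that the adjacency relation $p_2 + l_2 = p_1^m l_1^n$ depends on $p_1^m$ and $l_1^n$, and as $x$ ranges over $\F_q^*$ the value $x^m$ ranges over the subgroup of $(m,q-1)$-th powers, traversed $\gcd(m,q-1)$ times. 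Thus replacing $m$ by any $m'$ with $\gcd(m',q-1)=\gcd(m,q-1)$ changes the defining monomial only by reparametrizing the first coordinate via a bijection of $\F_q$ (built from a group automorphism of $\F_q^*$ permuting the cosets, extended fixing $0$); this, together with the type-swap isomorphism $\varphi$ of Proposition~\ref{P:dualiso} to handle the symmetry that lets us interchange the roles of $m$ and $n$, yields the isomorphism. I would make the parametrization precise by noting that the neighbor of a point with first coordinate $\alpha$ is determined by the triangular system (as in Theorem~\ref{T:ncc}), so an isomorphism need only be specified consistently on first coordinates and then propagated.

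The harder direction (necessity) is the main obstacle. Here I must show that an isomorphism forces $q=q'$ and equality of the gcd multisets. That $q=q'$ follows from counting: both graphs have $2q^2$ vertices and are $q$-regular by Theorem~\ref{T:ncc}, so the vertex count and degree pin down $q$. The real work is extracting the gcd invariants intrinsically from the graph's structure, independent of the chosen coordinatization. The natural approach is to count short cycles or, more refined, to count the number of $4$-cycles (or the $C_4$-distribution across pairs of vertices) and relate these counts to the number of solutions of equations like $p_1^m l_1^n = p_1'^m l_1'^n$, whose solution-counts are governed precisely by $\gcd(m,q-1)$ and $\gcd(n,q-1)$ via standard character-sum or multiplicative-subgroup arguments. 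I expect the cleanest route is to identify a graph-theoretic quantity---for instance, for each pair of lines the size of their common neighborhood, or the multiset of these common-neighborhood sizes---and compute it explicitly in terms of the gcd's, showing it determines them.

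Concretely, I would compute for two lines $[l_1, l_2]$ and $[l_1', l_2']$ the number of common point-neighbors: a point $(p_1,p_2)$ is adjacent to both iff $p_1^m l_1^n = p_2 + l_2$ and $p_1^m l_1'^{\,n} = p_2 + l_2'$, i.e. $p_1^m(l_1^n - l_1'^{\,n}) = l_2 - l_2'$. The number of $p_1$ solving $p_1^m = c$ for $c \in \F_q^*$ is $\gcd(m,q-1)$ or $0$, and the distribution of values $l_1^n - l_1'^{\,n}$ as the lines vary is controlled by $\gcd(n,q-1)$. Tabulating the resulting common-neighborhood sizes produces a graph-invariant multiset from which both gcd's are recoverable as an unordered pair (the symmetry between points and lines, again via Proposition~\ref{P:dualiso}, is exactly what makes the invariant a multiset rather than an ordered pair, matching the statement). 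The main technical obstacle will be carrying out this solution-counting uniformly across all residue classes and confirming that the aggregate invariant genuinely separates distinct gcd-multisets; I would handle the delicate cases (such as when one gcd equals $q-1$, or interactions when $\gcd(m,q-1)=\gcd(n,q-1)$) by a careful case analysis, and I anticipate that the appeal to Theorem~\ref{T:ncc} for regularity plus the point-line duality of Proposition~\ref{P:dualiso} will streamline the bookkeeping considerably.
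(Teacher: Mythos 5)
The survey states this theorem without proof, citing \cite{DLV05}, so there is no in-paper argument to compare against; judged on its own merits, your proposal is correct and follows essentially the route of the cited works. Your two ingredients --- sufficiency via a first-coordinate bijection $x\mapsto x^{t}$ with $\gcd(t,q-1)=1$ (the exponent $t$ obtained by lifting to a residue coprime to $q-1$) combined with the duality of Proposition~\ref{P:dualiso}, and necessity via the vertex count plus the multiset of common-neighborhood sizes, i.e.\ counts of complete bipartite subgraphs $K_{2,t}$ --- are exactly the mechanisms the survey itself points to for this isomorphism problem, and the details you flag as remaining (the lifting lemma, and the multiplicity bookkeeping that separates degenerate cases such as $\{1,c\}$ versus $\{c,c\}$ using the number of vertex pairs with $q$ common neighbors) are routine to complete.
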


It is easy to argue, see \cite{DLV05},   that every 4-cycle-free graph of the form $B(q;m,n)$,   is isomorphic to $B(q;1,1)$, and so is isomorphic to the biaffine part of the point-line incidence graph of $PG(2,q)$.
This result extends simply to the graphs $B\Gamma_2(\F; p_1^ml_1^n)$, where $\F$ is an algebraically closed field.  On the other hand,  it does not hold when $\F=\R$ -- the field of real numbers. Kronenthal, Miller, Nash, Roeder, Samamah and Wong \cite{KMNRSW24} showed that there exists $f\in \R[p_1,l_1]$ such that $\Gamma(\R;f)$ has girth 6 and is nonisomorphic to $\Gamma(\R;p_1l_1)$ by providing
$f=f(p_1,l_1)=\sum_{i,j\in \N}\alpha_{i,j}p_1^il_1^j\in \Q[p_1,l_1]\setminus \{0\}$,
where both $i$ and $j$ are odd for all nonzero $\alpha_{i,j}$ and either all $\alpha_{i,j}\ge 0$ or all $\alpha_{i,j}\le 0$.

An analogous statement in dimension three is less clear.
For each odd prime power $q$, only two non-isomorphic generalized quadrangles  of order $q$, viewed as finite  geometries, are known.
They are usually denoted by $W(q)$ and $Q(4,q)$, and it is known that one is the dual of the other, see Benson \cite{Ben65}.  This means that viewed as bipartite graphs they are isomorphic. For odd prime power $q$, the graph $B\Gamma_3(\F_q; p_1l_1, p_1l_1^2)$, which has girth 8,  is the biaffine part of $W(q)$.
Just as a 4-cycle-free graph $B\Gamma_2(\F_q; f_2)$ gives rise to a projective plane, a three-dimensional 4- and 6-cycle-free graph $B\Gamma(\F_q; f_2,f_3)$  may give rise to a generalized quadrangle. This suggests to study  the existence of such graphs, and it is reasonable to begin to search for them in the `vicinity' of the graphs $B\Gamma_3(\F_q; p_1l_1, p_1l_1^2)$, by which we mean among monomial graphs.

Another motivation to study monomial graphs in this context is the following.
For  $q=2^e$, contrary to the two-dimensional case, the monomial graphs can lead to a variety of non-isomorphic generalized quadrangles, see Payne \cite{Pay90}, \cite{VMal98}, Glynn \cite{Gly83,Gly89}, Cherowitzo \cite{Che00}. It is conjectured  in \cite{Gly83} that known examples of such quadrangles represent all possible ones. The conjecture was checked by computer for all $e \le 28$ in \cite{Gly89}, and for all $e \le 40$,  by Chandler \cite{Cha05}.

The study of monomial graphs of girth eight for odd $q$ began in \cite{Dmy04}, and continued in
Dmytrenko, Lazebnik and Williford \cite{DLW07}, and in Kronenthal \cite{Kro12}.
All results in these papers suggested that for $q$ odd,  every monomial graph $B\Gamma_3(\F_q; f_2,f_3)$ of girth at least eight is isomorphic to the graph $B\Gamma_3(\F_q; p_1l_1,p_1l_1^2)$, as was conjectured in \cite{DLW07}.

 We wish to note that investigation of cycles in the monomial graphs leads to several interesting questions about bijective functions on $\F_q$, also known as {\it permutation polynomials}  (every function on $\F_q$ can be represented as a polynomial). It was shown in \cite{Dmy04} that if $q$ is odd and the girth of $B\Gamma_3(\F_q; p_1l_1,p_1^m l_1^n)$ is eight,  then  $m=k$ and $n=2k$ for  an integer $k$ satisfying certain conditions. This led to the following conjecture.
\begin{conjecture} {\rm (\cite{DLW07})}\label{DLW} Let $q=p^e$ be an odd prime power.  For an integer  $k$, $1\le k\le q-1$, let
$A_k = X^k[(X+1)^k-X^k]$  and $B_k = [(X+1)^{2k} -1]X^{q-1-k} -2X^{q-1}$ be polynomials in $\F_q[X]$. Then each of them  is a permutation polynomial of $\F_q$  if and only if $k$ is a power of $p$.
\end{conjecture}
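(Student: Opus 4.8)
\noindent
The ``if'' direction I would dispatch by direct computation. If $k=p^s$ is a power of $p$, then $X\mapsto X^k$ is the $s$-fold Frobenius, so $(X+1)^k=X^k+1$ in $\F_q[X]$, and hence $A_k=X^k\big[(X^k+1)-X^k\big]=X^k$, a bijection of $\F_q$. For $B_k$, squaring gives $(X+1)^{2k}-1=X^{2k}+2X^k$, so $B_k=(X^{2k}+2X^k)X^{q-1-k}-2X^{q-1}=X^{k+q-1}$; as a function on $\F_q$ this is again $X^k$ (since $x^{q-1}=1$ for $x\neq 0$ and $0\mapsto 0$), the Frobenius. Thus both are permutation polynomials, and the whole difficulty lies in the converse.

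For the ``only if'' direction I would first extract a cheap necessary condition. Writing $A_k=(X^2+X)^k-X^{2k}$ and using that the involution $x\mapsto -1-x$ fixes $x^2+x$, one finds $A_k(x)-A_k(-1-x)=(1+x)^{2k}-x^{2k}$; similarly, for $x\neq 0$ the involution $x\mapsto x^{-1}$ gives $B_k(x^{-1})-B_k(x)=x^{-k}-x^{k}$. In either case the value collides on a genuine orbit of size two as soon as $\F_q^*$ contains a $2k$-th root of unity $u\neq\pm 1$, i.e. as soon as $d:=\gcd(2k,q-1)>2$ (note $d$ is even, as $q$ is odd). Hence being a permutation polynomial forces $\gcd(2k,q-1)=2$ for both $A_k$ and $B_k$. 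This is clean but weak: many $k$ that are not powers of $p$ survive it, so the real content is to promote $\gcd(2k,q-1)=2$ to ``$k$ is a power of $p$.''

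To do that I would apply Hermite's criterion: if $A_k$ permutes $\F_q$ then $\sum_{x\in\F_q}A_k(x)^t=0$ for every $t$ with $1\le t\le q-2$ and $p\nmid t$. Expanding $A_k(x)^t=\sum_j\binom{t}{j}(-1)^{t-j}x^{k(2t-j)}(x+1)^{kj}$ and using $\sum_{x\in\F_q}x^m=-1$ exactly when $m>0$ and $(q-1)\mid m$, each such vanishing becomes a congruence modulo $p$ among the binomial coefficients $\binom{kj}{i}$ indexed by the solutions of $k(2t-j)+i\equiv 0\pmod{q-1}$. Through Lucas' theorem these are controlled by the base-$p$ digits of $k$, and the plan is to choose a short, well-adapted family of test exponents $t$ that isolates this digit data and forces the base-$p$ expansion of $k$ (reduced modulo $q-1$) to be a single $1$, that is $k=p^s$. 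The parallel computation with $B_k$ would be organized the same way.

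The hard part, and the reason this remains a conjecture, is making the choice of test exponents $t$ uniform across all $q=p^e$ and all $k$ in the full range $1\le k\le q-1$: as $q$ grows $k$ can be comparable to $q$, and the binomial sums become difficult to control. For one range of $k$ I would instead use an analytic input: non-injectivity of $A_k$ is exactly the existence of an $\F_q$-point off the diagonal $x=y$ on the plane curve $(x^2+x)^k-x^{2k}=(y^2+y)^k-y^{2k}$ (equivalently, a $6$-cycle in the monomial graph $B\Gamma_3(\F_q;p_1l_1,p_1^kl_1^{2k})$), and I would invoke the Hasse--Weil bound on the residual curve, after discarding the line $x=y$, to force such a point once $q$ is large compared with $\deg$. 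The crux of this sub-step is proving absolute irreducibility of the residual curve whenever $k$ is not a power of $p$. I expect this analytic argument to cover $k$ small relative to a power of $q$, while the Hermite--Lucas digit analysis is tailored to the complementary large-$k$ range; arranging that the two ranges overlap and together cover $1\le k\le q-1$ for every $q$, keeping the error terms under control, and disposing of the finitely many uncovered small cases, is where I expect essentially all of the work to concentrate.
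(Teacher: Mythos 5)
You have not proved the statement, and you essentially say so yourself. What you actually establish is (a) the ``if'' direction, which is correct: for $k=p^s$ the Frobenius identity $(X+1)^{p^s}=X^{p^s}+1$ collapses $A_k$ to $X^k$ and $B_k$ to $X^{k+q-1}$, both inducing the bijection $x\mapsto x^{p^s}$ on $\F_q$; and (b) the necessary condition $\gcd(2k,q-1)=2$, obtained correctly from the involutions $x\mapsto -1-x$ (for $A_k$) and $x\mapsto x^{-1}$ (for $B_k$). Everything past that point is a plan rather than an argument: on the Hermite--Lucas route you never exhibit the ``well-adapted family of test exponents $t$,'' never write down the resulting digit congruences, and never show they force the base-$p$ expansion of $k$ to consist of a single digit; on the Hasse--Weil route the entire burden is the absolute irreducibility of the residual curve whenever $k$ is not a power of $p$, which you flag as the crux but do not attempt, and you also leave open how the two ranges of $k$ would be made to overlap. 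Since the whole content of the statement is the ``only if'' direction, the gap here is the theorem itself; the gcd condition you do prove is consistent with many $k$ that are not powers of $p$, as you acknowledge.

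For calibration: the paper does not prove this statement either --- it appears as Conjecture \ref{DLW}, with its eventual resolution attributed to external work, namely Hou \cite{Hou18} for $A_k$ and Bartoli--Bonini \cite{BB24} for $B_k$ (with earlier partial progress in \cite{HLL17}). Your two proposed prongs do, in outline, match how the problem was actually settled in that literature: power-sum/Hermite-type computations drive the progress in \cite{HLL17} and \cite{Hou18}, while \cite{BB24} proceeds through algebraic curves, absolute irreducibility, and point-counting bounds. So your instinct for where a proof must live is sound, but neither prong is carried out, and carrying them out is precisely the published, nontrivial work; as submitted, your text is a research program with correct easy lemmas, not a proof.
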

It was shown in \cite{Dmy04,DLW07} that the validity of the conjecture  for either $A_k$ or $B_k$,  would imply the following theorem.

 \begin{theorem}\label{HLL17} {\rm(\cite{HLL17})} Let $q$ be an odd prime power.  Then
 every monomial graph $B\Gamma_3(\F_q; f_2,f_3)$ of girth at least eight is isomorphic to
 the graph $B\Gamma_3(\F_q; p_1l_1,p_1l_1^2)$.
 \end{theorem}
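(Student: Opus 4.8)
The plan is to reduce the statement to the arithmetic assertion of Conjecture~\ref{DLW} and then to prove the one direction of that conjecture which is actually needed. First I would normalize $f_2$. Writing $f_2 = c_2\,p_1^a l_1^b$, the nonzero constant $c_2$ is absorbed by scaling the second coordinates, and the power map $p_1 \mapsto p_1^{s}$, $l_1 \mapsto l_1^{t}$ on the first coordinates (with $s,t$ inverse to $a,b$ modulo $q-1$) is a graph isomorphism that keeps $f_2,f_3$ monomial while turning $f_2$ into $p_1 l_1$; the gcd conditions $\gcd(a,q-1)=\gcd(b,q-1)=1$ needed for this map to exist come from the absence of short cycles, exactly as in the reduction behind Theorem~\ref{isomono}. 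With $f_2 = p_1 l_1$ fixed and $f_3 = p_1^m l_1^n$, the analysis of $6$-cycles in \cite{Dmy04} then forces $m=k$, $n=2k$, and forces $A_k = X^k[(X+1)^k - X^k]$ (equivalently $B_k$) to be a permutation polynomial of $\F_q$. Thus it suffices to establish the nontrivial direction of Conjecture~\ref{DLW}: \emph{if $A_k$ is a permutation polynomial of $\F_q$, then $k$ is a power of $p$.}

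Next I would dispose of the easy implication and set up the hard one. If $k = p^i$, then $(X+1)^k = X^k + 1$ by the Frobenius, so $A_k = X^k[(X^k+1) - X^k] = X^{p^i}$, which is a permutation polynomial. For the converse, observe the polynomial identity $A_{pk}(X) = A_k(X^p)$ in characteristic $p$; since $X \mapsto X^p$ is a bijection of $\F_q$, the polynomial $A_{pk}$ is a permutation polynomial if and only if $A_k$ is. Writing $k = p^j k_0$ with $p \nmid k_0$ and stripping the factors of $p$, I am reduced to proving that $A_{k_0}$ a permutation polynomial with $p \nmid k_0$ forces $k_0 = 1$.

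The heart of the argument, and the step I expect to be the main obstacle, is this last assertion. I would attack it through Hermite's criterion: since $A_{k_0}$ is a permutation polynomial, the power sums $\sum_{x \in \F_q} A_{k_0}(x)^t$ must vanish for every $t$ with $1 \le t \le q-2$. Expanding
\[
A_{k_0}(x)^t = x^{k_0 t}\bigl((x+1)^{k_0} - x^{k_0}\bigr)^t
\]
by the binomial theorem and using that $\sum_{x \in \F_q} x^j$ equals $-1$ when $(q-1)\mid j$ with $j>0$ and equals $0$ otherwise, each vanishing condition becomes a congruence modulo $p$ among sums of products of binomial coefficients. Evaluating these coefficients modulo $p$ via Lucas' theorem recasts the whole family of conditions as combinatorial constraints on the base-$p$ digits of $k_0$. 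The crux is to show that these constraints are jointly compatible only when the base-$p$ expansion of $k_0$ is the single digit $1$, i.e. $k_0 = 1$; controlling the interaction of the binomial coefficients across all admissible exponents $t$ simultaneously is the genuinely delicate part, and is precisely where the size-free argument of \cite{HLL17} must improve on the earlier Weil-bound approach of \cite{DLW07}, which handled only $q$ large relative to $k$.

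Finally, with $k = p^i$ established, I would exhibit the isomorphism explicitly. The map sending each point $(a_1,a_2,a_3)$ to $(a_1,a_2,a_3^{p^i})$ and each line $[b_1,b_2,b_3]$ to $[b_1,b_2,b_3^{p^i}]$ is a bijection on the vertex set. Since $a_3^{p^i} + b_3^{p^i} = (a_3+b_3)^{p^i}$ and $a_1^{p^i} b_1^{2p^i} = (a_1 b_1^2)^{p^i}$, the adjacency relation $a_3 + b_3 = a_1 b_1^2$ of $B\Gamma_3(\F_q; p_1 l_1, p_1 l_1^2)$ is carried exactly onto the relation $a_3 + b_3 = a_1^{p^i} b_1^{2p^i}$ of $B\Gamma_3(\F_q; p_1 l_1, p_1^{p^i} l_1^{2p^i})$, while the first relation $a_2 + b_2 = a_1 b_1$ is preserved. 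Hence the two graphs are isomorphic, which together with the reduction of the first paragraph completes the proof.
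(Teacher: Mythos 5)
Your reduction is sound as far as it goes: the normalization of $f_2$ to $p_1l_1$, the passage via the $6$-cycle analysis of \cite{Dmy04,DLW07} to the form $B\Gamma_3(\F_q;p_1l_1,p_1^kl_1^{2k})$ with $A_k$ a permutation polynomial, the identity $A_{pk}(X)=A_k(X^p)$ that strips factors of $p$ from $k$, and the closing Frobenius isomorphism between $B\Gamma_3(\F_q;p_1l_1,p_1l_1^2)$ and $B\Gamma_3(\F_q;p_1l_1,p_1^{p^i}l_1^{2p^i})$ are all correct. But the proof has a genuine gap exactly where you yourself flag ``the main obstacle'': the assertion that $A_{k_0}$ a permutation polynomial of $\F_q$ with $p\nmid k_0$ forces $k_0=1$ is never proved. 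You gesture at Hermite's criterion and Lucas' theorem and then concede that controlling the resulting digit constraints across all exponents $t$ simultaneously ``is the genuinely delicate part.'' That assertion is the entire content of the hard direction of Conjecture~\ref{DLW} for $A_k$; it is Hou's theorem \cite{Hou18}, a substantial result published on its own, and no routine Hermite-plus-Lucas computation is known to deliver it. A proposal that leaves this step as a strategy sketch is a plan for a proof, not a proof.

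It is also worth noting that your intended route is not that of \cite{HLL17}, to which the theorem is attributed. As the survey states, \cite{HLL17} proved the theorem while \emph{falling short} of establishing Conjecture~\ref{DLW} for either $A_k$ or $B_k$ separately: the girth condition forces both $A_k$ and $B_k$ to be permutation polynomials, and \cite{HLL17} derives the theorem from partial progress on the conjecture, using these conditions jointly rather than the full statement for a single polynomial. Your plan instead requires the complete single-polynomial implication ($A_k$ a permutation polynomial $\Rightarrow k$ a power of $p$), which is strictly stronger than what the theorem needs, was settled only later (\cite{Hou18} for $A_k$, \cite{BB24} for $B_k$), and is precisely the step you leave unproved. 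To repair the argument you must either invoke \cite{Hou18} as a black box --- legitimate, but then your contribution is only the already-known reduction from \cite{Dmy04,DLW07} --- or genuinely carry out an argument at the level of \cite{HLL17}, which exploits information about both polynomials at once.
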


Hou, Lappano and Lazebnik in \cite{HLL17} proved Theorem \ref{HLL17} by making sufficient progress on Conjecture \ref{DLW},  though falling short of proving it either for $A_k$ or  for $B_k$. Finally,  Conjecture \ref{DLW} for $A_k$  was confirmed by Hou \cite{Hou18}, and for $B_k$, by  Bartoli and Bonini \cite{BB24}.

Hence,  no new generalized 4-gon can be constructed in this way.  What if not both polynomials $f_2$ and $f_3$ are monomials?  In \cite{KL16},  Kronenthal and Lazebnik showed that over every algebraically closed field $\F$ of characteristic zero,  every  graph $B\Gamma_3(\F; p_1l_1,f_3(p_1,l_1))$  of girth at least eight is isomorphic to the graph $B\Gamma_3(\F; p_1l_1,p_1l_1^2)$.  Their methods imply that the same result holds over infinitely many finite fields. In particular, the following theorem holds.

\begin{theorem}\label{KL16} {\rm(\cite{KL16})} Let $q$ be a power of a prime $p$, $p\ge 5$, and   let $M=M(p)$ be the least common multiple of
integers $2,3,\ldots, p-2$. Suppose $f_3\in \F_q[p_1,l_1]$ has degree at most $p-2$ with respect to each of $p_1$ and $l_1$.  Then over every finite field extension $\F$ of $\F_{q^M}$,
every graph $B\Gamma_3(\F; p_1l_1,f_3(p_1,l_1))$ of girth at least eight is isomorphic to the
graph $B\Gamma_3(\F; p_1l_1,p_1l_1^2)$.
\end{theorem}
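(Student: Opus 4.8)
The plan is to convert ``girth at least eight'' into a sharp functional condition on $f_3$ and then read $f_3$ off from it. First I would invoke the covering map of Theorem~\ref{T:cover}: since $B\Gamma_3(\F;p_1l_1,f_3)\to B\Gamma_2(\F;p_1l_1)$ and the base biaffine plane $B\Gamma_2(\F;p_1l_1)$ has girth $6$ (it has no $4$-cycles, as a one-line computation on $p_2+l_2=p_1l_1$ shows), the graph has girth $\ge 6$ automatically, so girth $\ge 8$ is equivalent to the absence of $6$-cycles. By the local bijectivity in part (ii) of the covering definition, every $6$-cycle of $B\Gamma_3$ projects to a non-backtracking closed $6$-walk of the base, i.e.\ to a genuine base $6$-cycle, and conversely a base $6$-cycle lifts precisely when the third-coordinate increments around it sum to zero. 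Writing the first coordinates of the three points and three lines of a base hexagon as $a,c,e$ and $b,d,f$, the base-cycle condition is $b(a-c)+d(c-e)+f(e-a)=0$, which lets me solve $f=\big(b(a-c)+d(c-e)\big)/(a-e)$; one then checks that the three line-distinctness conditions $b\ne d$, $f\ne b$, $f\ne d$ all collapse to the single inequality $b\ne d$. The lifting (closing) condition becomes
\[
\Phi_{a,c,e}(b,d):=\big(f_3(a,b)-f_3(c,b)\big)+\big(f_3(c,d)-f_3(e,d)\big)+\big(f_3(e,f)-f_3(a,f)\big)=0 ,
\]
so girth $\ge 8$ is \emph{equivalent} to: for all pairwise distinct $a,c,e$, the cleared-denominator numerator $P_{a,c,e}(b,d)$ of $\Phi_{a,c,e}$ vanishes only on the diagonal $b=d$.

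The second step is a rigidity argument by point counting that forces $P_{a,c,e}$ to be a constant times a power of $b-d$. Fix pairwise distinct $a,c,e\in\F$. The hypothesis $\deg_{p_1}f_3,\deg_{l_1}f_3\le p-2$ bounds $\deg_{b,d}P_{a,c,e}\le p-2$. Suppose $P_{a,c,e}$ had an irreducible factor over $\F$ other than $b-d$. Its geometric components (over $\overline{\F}$) have degree at most $p-2$, hence are defined over some $\F_{q^{r}}$ with $r\le p-2$; because $M=\mathrm{lcm}(2,\dots,p-2)$ we have $r\mid M$, so each such component is already defined over $\F\supseteq\F_{q^{M}}$. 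As $|\F|\ge q^{M}$ dwarfs the square of the degree, the Weil bound gives each component $\sim|\F|$ rational points, of which only $O(1)$ lie on $b=d$; an off-diagonal point yields a genuine $6$-cycle over $\F$, contradicting girth $\ge 8$. Hence $P_{a,c,e}(b,d)=\lambda(a,c,e)\,(b-d)^{m}$ identically, for every admissible $a,c,e$. (For the model $f_3=p_1l_1^2$ one computes $(a-e)\,\Phi_{a,c,e}=(a-c)(c-e)(b-d)^2$, so $m=2$; while for e.g.\ $f_3=p_1l_1^3$ one gets $\Phi_{a,c,e}\propto(b-d)^2(b+d)$, whose factor $b+d$ is off-diagonal, correctly excluding it when $p\ge5$.) This is exactly where both hypotheses enter: the degree bound bounds the geometric degree (giving $r\le p-2$, hence $r\mid M$), and $\F\supseteq\F_{q^M}$ supplies the field of definition together with enough points.

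The third step extracts $f_3$ from the family of identities $P_{a,c,e}(b,d)=\lambda(a,c,e)(b-d)^m$. I would first normalize using genuine graph isomorphisms: replacing $f_3$ by $f_3+\phi(p_1)+\psi(l_1)$ (realized by the relabelings $p_3\mapsto p_3-\phi(p_1)$, $l_3\mapsto l_3-\psi(l_1)$) lets me assume every monomial of $f_3$ has positive degree in both $p_1$ and $l_1$, and rescaling $p_3,l_3$ by $\mu$ replaces $f_3$ by $\mu f_3$. Expanding $f_3(x,y)=\sum_{j\ge1}y^{\,j}g_j(x)$ with each $g_j$ of degree $\le p-2$ and no constant term, and substituting $f=\big(b(a-c)+d(c-e)\big)/(a-e)$, I match coefficients of the powers of $b$ and $d$ in the forced identity. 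Since every exponent satisfies $1\le j\le p-2$, the binomial coefficients arising from expanding $f^{\,j}$ are nonzero modulo $p$, so the matching behaves exactly as in characteristic zero (mirroring the characteristic-zero classification of \cite{KL16}); this collapses the identity to $m=2$, forces $g_j\equiv0$ for $j\ne2$ and $g_2$ linear, i.e.\ $f_3(p_1,l_1)\equiv\alpha\,p_1l_1^{2}$ after the normalization. Finally the rescaling with $\mu=\alpha^{-1}$ gives $f_3\cong p_1l_1^2$, realizing the isomorphism with $B\Gamma_3(\F;p_1l_1,p_1l_1^2)$.

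The main obstacle is the passage between geometry over $\overline{\F}$ and arithmetic over the fixed field $\F_{q^M}$. Turning ``no zeros off the diagonal'' into ``a power of $b-d$'' requires handling the case where an $\F$-irreducible factor of $P_{a,c,e}$ is geometrically \emph{reducible} (and so could have few $\F$-points on its own); this is precisely why one must guarantee that each geometric component is defined over $\F$, which the bound $r\le p-2$ together with $M=\mathrm{lcm}(2,\dots,p-2)$ delivers, and why an effective Weil estimate uniform over all admissible $a,c,e$ and all $f_3$ of the allowed degree is needed. The degree restriction $\le p-2$ is what keeps every step characteristic-zero-like (no vanishing binomial coefficients, no inseparability, and geometric degrees bounded independently of $\F$); relaxing it would reintroduce Frobenius twists such as $p_1^{p^s}l_1^{2p^s}$, which also produce girth $8$ and genuinely break the rigidity, so the hypotheses cannot simply be dropped.
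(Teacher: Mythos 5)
Your Step 1 is correct: girth $\ge 8$ is indeed equivalent to the non-vanishing of $\Phi_{a,c,e}(b,d)$ for all pairwise distinct $a,c,e$ and all $b\ne d$, with $f=\bigl(b(a-c)+d(c-e)\bigr)/(a-e)$, and your model computations for $p_1l_1^2$ and $p_1l_1^3$ check out. The genuine gap is in Step 2, and it is not a technicality. For \emph{fixed} pairwise distinct $a,c,e\in\F$, the polynomial $P_{a,c,e}$ lies in $\F[b,d]$, so its absolutely irreducible factors are defined over finite extensions \emph{of $\F$} of degree at most $p-2$ over $\F$, i.e., over fields $\F_{|\F|^r}$ with $r\le p-2$ --- not, as you assert, over $\F_{q^r}$. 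Your chain ``$r\le p-2$, hence $r\mid M$, hence the component is defined over $\F_{q^M}\subseteq\F$'' silently replaces extensions of $\F$ by extensions of $\F_q$; what it actually asserts is that every $\F$-irreducible factor of $P_{a,c,e}$ is absolutely irreducible, which is precisely the point needing proof. Worse, the conclusion you want from Step 2 is false for general polynomials with your vanishing property: the norm form $b^2-(\alpha+\alpha^{|\F|})bd+\alpha^{|\F|+1}d^2$, with $\alpha$ a generator of the quadratic extension of $\F$, is $\F$-irreducible of degree $2\le p-2$ and vanishes on $\F^2$ only at $(0,0)$, a diagonal point, yet it is not a constant times a power of $b-d$. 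So ``no off-diagonal $\F$-zeros'' cannot yield $P_{a,c,e}=\lambda(b-d)^m$ unless one exploits that $f_3$ has coefficients in the \emph{small} field $\F_q$, and your argument never uses this fact correctly.

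The repair is to specialize rather than fix arbitrary $a,c,e$: choose $a,c,e$ pairwise distinct in $\F_q$ (possible since $q\ge p\ge 5$). Then $P_{a,c,e}\in\F_q[b,d]$ has total degree at most $p-2$, its geometric components really are defined over $\F_{q^r}$ with $r\le p-2$, so $r\mid M$ places them over $\F_{q^M}\subseteq\F$, and your Weil-bound count legitimately produces an off-diagonal $\F$-point --- but now the forced identity $P_{a,c,e}=\lambda(a,c,e)(b-d)^m$ is available only for $a,c,e$ in the subfield $\F_q$, so Step 3 must be rerun using only these specializations (extending the identity to all of $\F$ by interpolation is not automatic: after clearing $(a-e)$, the degree in $a$ is roughly $2(p-2)$, which exceeds $q-1$ when $q=p$). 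Two further points. First, your coefficient matching should not force ``$g_2$ linear and all other $g_j\equiv 0$'': $f_3=p_1^2l_1$ passes every test and must be absorbed via the polarity isomorphism with $p_1l_1^2$. Second, for comparison, the proof this survey cites from \cite{KL16} counts no points at all: it establishes the classification over algebraically closed fields of characteristic zero and then observes that the argument only ever (i) extracts roots of auxiliary polynomials of degree at most $p-2$ with coefficients in $\F_q$, which lie in $\F_{q^M}\subseteq\F$ exactly because the degrees of their irreducible factors divide $M$, and (ii) uses nonvanishing of integer constants (binomial-type coefficients with arguments at most $p-2$), which survive reduction mod $p$. That is the precise role of the two hypotheses you were trying to exploit, and it is where your Step 2, as written, falls short.
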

Kronenthal, Lazebnik and Williford \cite{KLW19} extended these ``uniqueness" results to the family of graphs $B\Gamma_3(\F; p_1^ml_1^n,f_3(p_1,l_1))$ (with $p_1l_1$ replaced by an arbitrary monomial $p_1^ml_1^n$).
Then Xu, Cheng and Tang \cite{XCT21} extended these results to the family of graphs $\Gamma_3(\F;f_2,f_3)$,  where $f_2=g(p_1)h(l_1)$, which is the product of two univariate polynomials, and $f_3=f_3(p_1,l_1)$.

\begin{problem}\label{polyquad}

(i) Let $q$ be an odd prime power, and let $f_2,f_3\in \F_q[p_1,l_1]$.  Is it true that every graph $B\Gamma_3(\F_q; f_2,f_3)$ with girth at least eight is isomorphic to the graph $B\Gamma_3(\F_q; p_1l_1,p_1l_1^2)$?
\smallskip

\noindent (ii) Let $q$ be an odd prime power, and let $f_2\in \F_q[p_1,l_1]$ and $f_3\in \F_q[p_1,l_1,p_2,l_2]$. Is it true that every graph $B\Gamma_2(\F_q; f_2,f_3)$ with girth at least eight is isomorphic to the graph $B\Gamma_2(\F_q; p_1l_1,p_1l_1^2)$?

\end{problem}
It is clear that a negative  answer to each of the two parts of Problem \ref{polyquad}  may lead to a new generalized quadrangle. It will lead to one, if such a graph exists and it is possible to ``attach" it to a $(q+1)$-regular tree on  $2(q^2+q+1)$ vertices.  Though we still cannot conjecture the uniqueness result for  odd $q$,  we  believe that it holds over algebraically closed fields.

\begin{conjecture} {\rm (\cite{KL16})} Let $\F$ be an algebraically closed field of characteristic zero, and let $f_2,f_3\in \F [p_1,l_1]$.  Then every graph $B\Gamma_3(\F; f_2,f_3)$ with girth at least eight is isomorphic to the graph $B\Gamma_3(\F; p_1l_1,p_1l_1^2)$.
\end{conjecture}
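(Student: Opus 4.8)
The plan is to convert the hypothesis $\girth \ge 8$ -- the simultaneous absence of $4$-cycles and $6$-cycles -- into polynomial conditions on the pair $(f_2,f_3)$, and then to classify all admissible pairs up to the coordinate changes that preserve the family $B\Gamma_3(\F;\cdot,\cdot)$. The key tool is the triangularity used in the proof of Theorem~\ref{T:ncc}: once the first coordinates of the vertices along a closed walk are fixed, all remaining coordinates are forced, so a cycle is detected by its sequence of first coordinates together with the telescoping consistency of the two defining equations. Concretely, for $a\neq b$ set $g_{ab}(t)=f_2(a,t)-f_2(b,t)$ and $h_{ab}(t)=f_3(a,t)-f_3(b,t)$. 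Subtracting the adjacency relations of two points sharing a line shows that $B\Gamma_3(\F;f_2,f_3)$ is $4$-cycle-free if and only if the map $t\mapsto(g_{ab}(t),h_{ab}(t))$ is injective for every $a\neq b$, and that it is $6$-cycle-free if and only if the system
\[
\sum_{i=1}^{3}\bigl(f_2(x_i,u_i)-f_2(x_{i+1},u_i)\bigr)=0,\qquad
\sum_{i=1}^{3}\bigl(f_3(x_i,u_i)-f_3(x_{i+1},u_i)\bigr)=0
\]
(indices read cyclically) admits no choice of first coordinates $x_1,x_2,x_3,u_1,u_2,u_3$ producing six distinct vertices.

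Next I would bring in the rigidity available over an algebraically closed field of characteristic zero: a univariate polynomial is injective precisely when it has degree one. Feeding this into the $4$-cycle condition, and using the coordinate changes $(p_1,p_2,p_3)\mapsto(p_1,p_2+\phi(p_1),p_3+\beta(p_1))$ and $[l_1,l_2,l_3]\mapsto[l_1,l_2+\psi(l_1),l_3+\delta(l_1)]$, which alter $f_2$ by $\phi(p_1)+\psi(l_1)$ and $f_3$ by $\beta(p_1)+\delta(l_1)$ while keeping us inside the family, together with diagonal scalings and the point/line swap of Proposition~\ref{P:dualiso}, the aim is to place $f_2$ into a short normal form. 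When this normalization can be pushed all the way to $f_2\sim p_1l_1$, the conjecture follows from the result of Kronenthal and Lazebnik \cite{KL16}, who settled exactly the case $B\Gamma_3(\F;p_1l_1,f_3)$ over such fields. The relaxations already in the literature -- monomial $f_2=p_1^ml_1^n$ in \cite{KLW19} and product $f_2=g(p_1)h(l_1)$ in \cite{XCT21} -- should be recovered as the cases in which $f_2$ is already close to normal form, with the $6$-cycle system then used to pin down $f_3$.

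The mechanism that makes the target extremal must be reproduced by any general argument, so it is worth isolating. For $(f_2,f_3)=(p_1l_1,p_1l_1^2)$ the $6$-cycle system becomes $\sum_i u_i\delta_i=\sum_i u_i^2\delta_i=0$ with $\delta_i=x_i-x_{i+1}$ and $\sum_i\delta_i=0$, so for distinct $u_i$ the Vandermonde matrix of $(1,u_i,u_i^2)$ is nonsingular and forces $\delta=0$; this is precisely the moment-curve $t\mapsto(1,t,t^2)$ phenomenon noted earlier in the survey. The heart of a proof is thus to show that this Vandermonde-type rigidity is the \emph{only} way for the $4$- and $6$-cycle systems to be simultaneously unsolvable, i.e.\ that it cannot be manufactured by some exotic higher-degree $f_2$ whose $4$-cycle degeneracies happen to be ``repaired'' by $f_3$.

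This last point is exactly where I expect the main obstacle to lie. For a general bivariate $f_2$ the map $t\mapsto(g_{ab}(t),h_{ab}(t))$ may well be injective even though $g_{ab}$ alone is not -- for instance when its two preimages are separated by $h_{ab}$ -- so one cannot simply conclude $f_2\sim p_1l_1$; the coordinate group above need not normalize such an $f_2$, and yet the graph might still be $C_4$- and $C_6$-free. Controlling this interaction forces one to treat $f_2$ and $f_3$ together and to decide, via resultants, B\'{e}zout bounds, and the factorization theory available over an algebraically closed field, when the two-equation system has only degenerate solutions; the combinatorial explosion of monomial interactions for unrestricted $f_2$ is what has so far blocked the argument. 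Finally, the characteristic-zero and algebraically-closed hypotheses are not cosmetic: they underlie the ``injective iff degree one'' step and the clean intersection theory, and a Lefschetz-type transfer (the route from the algebraically closed case to the finite-field statement in Theorem~\ref{KL16}) would be needed for any arithmetic corollaries.
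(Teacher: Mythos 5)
You were asked to prove a statement that the paper itself records as an open conjecture: the survey contains no proof of it, only of special cases --- $f_2=p_1l_1$ over algebraically closed fields of characteristic zero in \cite{KL16} (with Theorem~\ref{KL16} as the finite-field variant), monomial $f_2=p_1^ml_1^n$ in \cite{KLW19}, and multiplicatively separable $f_2=g(p_1)h(l_1)$ in \cite{XCT21}. So there is no proof in the paper to compare yours against, and any complete argument would constitute new research rather than a reconstruction.

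Your proposal is, by its own admission, a plan rather than a proof, and the gap sits exactly where you say it does. The preparatory material is sound: the translation of girth at least eight into joint injectivity of $t\mapsto(g_{ab}(t),h_{ab}(t))$ for $4$-cycles and the cyclic alternating-sum system for $6$-cycles is correct (and is the standard device in \cite{KL16} and its sequels); the coordinate changes you list do stay inside the family; and the Vandermonde computation correctly isolates why $(p_1l_1,p_1l_1^2)$ is extremal. But the decisive step is stated conditionally --- ``when this normalization can be pushed all the way to $f_2\sim p_1l_1$'' --- and it cannot be pushed there in general: adding $\phi(p_1)+\psi(l_1)$ to $f_2$ changes each $g_{ab}$ only by an additive constant, and affine substitutions in $p_1,l_1$ merely reparametrize it, so this finite-dimensional group of moves cannot normalize an arbitrary bivariate $f_2$ of unbounded degree. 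Moreover the characteristic-zero rigidity ``injective iff degree one'' never applies to $g_{ab}$ alone, since the $4$-cycle condition constrains only the pair $(g_{ab},h_{ab})$ jointly; classifying or excluding the pairs in which $h_{ab}$ ``repairs'' the non-injectivity of $g_{ab}$ is precisely the open content of the conjecture, and invoking resultants and B\'ezout bounds names a toolbox without supplying an argument. In short, you have correctly assembled the known reductions and correctly located the obstruction, but the obstruction remains open --- for you and for the literature alike.
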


The investigations in \cite{KL16} were continued in  Nassau \cite{Nas20}. Let $\F$  be an algebraically closed field of characteristic zero.
Consider the graph $B\Gamma_3(\F; f_2,f_3)$.  In their  definition (\ref{Bipmain0}), the first defining polynomial functions are  $f_2= f_2(p_1,l_1)$  and $f_3= f(p_1,l_1,p_2,l_2)$.  Note that $f_3$ can always be thought of as a function of three variables rather than four, namely  $f_3=  f_3(p_1,l_1, p_2) = f(p_1,l_1, p_2, f_2(p_1,l_1) - p_2)$ or $f_3=  f_3(p_1,l_1, l_2) = f(p_1,l_1, f_2(p_1,l_1) - l_2, l_2)$.
Therefore, we may assume that $f_3=  f_3(p_1,l_1, p_2)$.  In particular, if $f_3$ is actually a function of two variables, say $l_1$ and $p_2$, we write it as $f_3(l_1,p_2)$.
Among several results in \cite{Nas20},  we mention just a few.
\begin{theorem} \label{NasRes} {\rm(\cite{Nas20})} Let $\F$ be an algebraically closed field,  and $m,n$ be positive integers.
\begin{enumerate}
\item If $B\Gamma_3(\F; p_1l_1,f_3(l_1,p_2))$ has girth at least 8, then its girth is 8 and it is isomorphic to the graph $B\Gamma_3(\F; p_1l_1, p_1l_1^2)$.
\item If $B\Gamma_3(\F; p_1l_1,p_1^m g(l_1,p_2))$, where $g$ is a polynomial over $\F$,  has girth at least 8, then its girth is 8 and it is isomorphic to the graph $B\Gamma_3(\F; p_1l_1, p_1l_1^2)$.
\item If $B\Gamma_3(\F; p_1^m l_1^n, f_3(p_1,l_1, p_2))$  has girth at least 8, then $m=1$ or $m=2$.
\end{enumerate}
\end{theorem}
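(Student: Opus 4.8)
The plan is to reduce each statement to the non-existence of short cycles and then translate that into an algebraic condition on $f_3$ that can be analyzed over an algebraically closed field. In all three cases girth at least $8$ means no $4$-cycles and no $6$-cycles. For parts (1) and (2) the choice $f_2=p_1l_1$ already forbids $4$-cycles: writing the four adjacencies of a putative $4$-cycle on points $(p),(p')$ and lines $[l],[l']$ and subtracting the corresponding $f_2$-equations yields $(p_1-p_1')(l_1-l_1')=0$, impossible when the two points and the two lines are distinct (equivalently, the base graph $B\Gamma_2(\F;p_1l_1)$ is the biaffine plane and is $4$-cycle-free). Hence for (1) and (2) girth at least $8$ is equivalent to the absence of $6$-cycles, while for (3) I must keep the $4$-cycle analysis because $f_2=p_1^ml_1^n$ no longer forbids them over an algebraically closed field.

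First I would set up the $6$-cycle equations. A $6$-cycle consists of points $(u),(v),(w)$ and lines $[a],[b],[c]$ with $(u)\sim[a]\sim(v)\sim[b]\sim(w)\sim[c]\sim(u)$; the neighbor-complete coloring of Theorem~\ref{T:ncc} forces $u_1,v_1,w_1$ pairwise distinct and $a_1,b_1,c_1$ pairwise distinct. Eliminating the second coordinates through the three pairs of $f_2$-equations gives the scalar relation $(u_1-v_1)a_1+(v_1-w_1)b_1+(w_1-u_1)c_1=0$ together with $u_2-v_2=(u_1-v_1)a_1$ (and cyclically), leaving one free parameter $s=u_2$. Adding the three pairs of $f_3$-equations around the cycle yields a second relation; for part (1) with $f_3=f_3(l_1,p_2)$ it reads $f_3(a_1,u_2)-f_3(a_1,v_2)+f_3(b_1,v_2)-f_3(b_1,w_2)+f_3(c_1,w_2)-f_3(c_1,u_2)=0$, and for part (2) it carries the extra factors $u_1^m,v_1^m,w_1^m$. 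A $6$-cycle exists exactly when these two relations admit a common solution obeying the distinctness constraints, so girth $8$ is the assertion that they do not.

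The heart of the argument is the analysis of this functional equation. I would expand $f_3$ as a polynomial and read off the obstruction coefficient by coefficient, using that over an algebraically closed field a non-constant one-variable polynomial is surjective and a polynomial of degree at least $2$ is never injective; this is what converts ``no solution of the cycle system'' into rigid degree and invariance conditions on the coefficients of $f_3$. For part (1) these conditions should pin $f_3(l_1,p_2)$ down to an essentially quadratic shape, and for part (2) they should force $m=1$ together with $g(l_1,p_2)=l_1^2$; in each case I would then apply the known repertoire of isomorphisms (coordinate substitutions $p_2\mapsto p_2+\phi(p_1)$, $l_2\mapsto l_2+\psi(l_1)$ and their third-coordinate analogues, together with the duality of Proposition~\ref{P:dualiso}) to normalize $f_3$ and build an explicit isomorphism onto $B\Gamma_3(\F;p_1l_1,p_1l_1^2)$, invoking the monomial uniqueness result behind Theorem~\ref{HLL17} to close the cases that reduce to monomials. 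Once this isomorphism is established the girth is exactly $8$, since the target graph contains $8$-cycles.

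For part (3) the governing condition comes from $4$-cycles. Writing a $4$-cycle on points $(u),(v)$ and lines $[a],[b]$ and eliminating second coordinates gives $(u_1^m-v_1^m)(a_1^n-b_1^n)=0$; over an algebraically closed field of characteristic zero one may choose distinct $u_1,v_1$ with $u_1^m=v_1^m$ whenever $m\ge 2$, and a short computation then forces $u_2=v_2$, so the surviving obstruction is that $\ell\mapsto f_3(u_1,\ell,u_2)-f_3(v_1,\ell,u_2)$ be injective, hence affine of degree exactly $1$, for every such degenerate pair. Read off coefficientwise, this says every coefficient of $f_3$ of degree at least $2$ in $l_1$ is invariant under $p_1\mapsto\zeta p_1$ for each $m$-th root of unity $\zeta$, i.e.\ depends on $p_1$ only through $p_1^m$, while the linear coefficient must separate $p_1$ from $\zeta p_1$. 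I expect the main obstacle to lie precisely here: these $4$-cycle constraints alone do not bound $m$, so one must feed them into the $6$-cycle system of the previous paragraph and show that for $m\ge 3$ the symmetry thereby forced on the high-degree part of $f_3$ is incompatible with simultaneously avoiding $6$-cycles, producing a genuine short cycle, whereas $m=1$ (no degenerate pairs at all) and $m=2$ (only the symmetry $p_1\mapsto -p_1$) survive. Managing this interaction between the $f_2$-level degeneracies and the $f_3$-level cycle equation is where the real work lies; the remainder is the polynomial bookkeeping sketched above.
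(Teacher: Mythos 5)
Your reduction of the girth hypotheses to the cycle systems is correct: for parts (1) and (2), $f_2=p_1l_1$ does kill $4$-cycles, the distinctness of the first coordinates around a $6$-cycle follows from Theorem~\ref{T:ncc}, and the two relations you extract (the linear one from the $f_2$-level and the alternating sum at the $f_3$-level) are exactly the right ones. But beyond that point the proposal is a plan rather than a proof, and the missing part is where the entire content of Theorem~\ref{NasRes} lives. (Note that the survey itself gives no proof --- the result is quoted from \cite{Nas20} --- so your argument has to stand on its own.) For parts (1) and (2) you write that the coefficientwise analysis ``should pin $f_3(l_1,p_2)$ down to an essentially quadratic shape'' and that the isomorphism would then be assembled from coordinate substitutions; nothing in the proposal actually performs this analysis, decides which coefficient configurations survive, or exhibits the normalizing substitutions. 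This step is the analogue of the arguments of \cite{KL16}, which occupy most of that paper and involve delicate case distinctions on the degrees of $f_3$; asserting that surjectivity and non-injectivity of polynomials over an algebraically closed field will ``convert'' the no-solution condition into rigidity does not identify the conditions nor why they force precisely the isomorphism class of $B\Gamma_3(\F;p_1l_1,p_1l_1^2)$. In addition, invoking Theorem~\ref{HLL17} to close the monomial cases is not legitimate here: that theorem concerns $\F_q$ with $q$ odd, and girth lower bounds transfer from an algebraically closed field \emph{down} to its finite subfields, not in the direction you need; over algebraically closed fields the corresponding uniqueness statements are those of \cite{KL16} and \cite{Nas20} themselves, so you would be assuming a sibling of the very statement being proved.

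For part (3) the gap is one you yourself concede: the $4$-cycle constraints you derive (invariance of the coefficients of $l_1^j$, $j\ge 2$, under $p_1\mapsto\zeta p_1$, plus a separation condition on the linear coefficient) are, as you note, consistent with arbitrary $m$, and the decisive step --- showing that for $m\ge 3$ this forced symmetry is incompatible with $6$-cycle-freeness --- is deferred with the phrase ``this is where the real work lies.'' That interaction argument \emph{is} the theorem; without it part (3) is unproved. There is also a characteristic issue you pass over: your degenerate pairs $u_1\neq v_1$ with $u_1^m=v_1^m$ exist only when $m$ is not a power of $\mathrm{char}\,\F$; if $\mathrm{char}\,\F=p>0$ and $m=p^s$, the map $x\mapsto x^m$ is injective (Frobenius) and your analysis for part (3) never gets started, so at minimum you must either restrict to characteristic zero (as in the setup paragraph preceding the theorem in the survey) or supply a separate argument for that case. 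In summary, the scaffolding is right, but all three parts stop short at their decisive steps.
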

Another question raised in \cite{Nas20} was the following: Suppose $q$ is an odd prime power. Given $B\Gamma_3 (\F_q; f_2(p_1,l_1), f_3(p_1,l_1,p_2))$,  is it always possible to find a polynomial $h=h(p_1,l_1)$ such that the graphs $B\Gamma _3(\F_q;  f_2 (p_1,l_1), h(p_1,l_1))$ and $B\Gamma_3 (\F_q; f_2(p_1,l_1), f_3(p_1,l_1,p_2))$ are isomorphic?
There are many examples where the answer to this question  is affirmative. Moreover, given $f_3$, the polynomial $h$ can be found in many ways. For example, it is easy to verify that the following graphs are isomorphic:
    $$B\Gamma _3(\F_q;p_1l_1, p_1^2l_1 - p_1p_2) \simeq B\Gamma_3(\F_q;p_1l_1, p_2l_1)\simeq $$
    $$B\Gamma _3(\F_q;p_1l_1, p_1l_1^2)\simeq B\Gamma _3(\F_q;p_1l_1, p_1^2l_1).$$
Does one really need three variables in $f_3$ to create a graph not isomorphic to those with two variables $p_1$ and $l_1$  for infinitely many $q$?
The  answer to this question is positive. The following graph appeared in \cite{Nas20}: $$B\Gamma_3 ({\F_p}; p_1l_1, p_1 1_1 p_2(p_1 + p_2 + p_1p_2)),$$
where $p$ is an odd prime,  and it was conjectured to have the property, i.e. it is not isomorphic to any graph $B\Gamma_3 (\F_p; f_2(p_1,l_1), h(p_1,l_1)) $.   The result was proven for primes  $p \equiv 1 \pmod{3}$  in Lazebnik and Taranchuk \cite{LT22}.  The authors  showed that the automorphism group of the graph $B\Gamma _3({\F_p}; p_1l_1, p_1l_1p_2(p_1 + p_2 + p_1p_2))$ is trivial. At the same time,  the   automorphism group of any  graph $B\Gamma_3 (\F_p; f_2(p_1,l_1), h(p_1,l_1)) $ contains a subgroup isomorphic to the additive group of the field $\F_p$, formed by the automorphisms $\phi_a$ presented in (\ref{autolastcoordp}), (\ref{autolastcoordl}). Hence, for every prime $p \equiv 1 \pmod{3}$, the graphs $B\Gamma _3({\F_p}; p_1\ell_1, p_1\ell_1p_2(p_1 + p_2 + p_1p_2))$ and $B\Gamma_3 (\F_p; f_2(p_1,l_1), h(p_1,l_1)) $
are not isomorphic.

Kodess, Kronenthal and Wong \cite{KKW22} studied $B\Gamma_3(\F;f(p_1)h(l_1),g(p_1)j(l_1))$, and classified all the graphs $B\Gamma_3(\F;f(p_1)h(l_1),g(p_1)h(l_1))$ by girth, where $\F$ is an algebraically closed field.
Ganger, Golden, Kronenthal and Lyons \cite{GGKL19} proved that every graph $B\Gamma_2(\R;f_2(p_1,l_1))$ has girth 4 or 6 and classified infinite families of such graphs by girth, where $\R$ is the field of real numbers.
Then Kodess, Kronenthal, Manzano-Ruiz and Noe \cite{KKMN21} gave a complete classification of monomial graphs $B\Gamma_3(\R;p_1^ml_1^n,p_1^sl_1^t)$.
\medskip

\subsection{$(n^{2/3},n)$-bipartite graphs of girth 8 with many edges}
\label{SS:szek}
\bigskip

  Let $f(n,m)$ denote the greatest number of edges
in a bipartite graph
whose bipartition sets have cardinalities
$n,m$ ($n\ge m$) and whose  girth is at least 8.
In \cite{Erd79}, Erd\H{o}s conjectured that $ f(n,m) = O(n)$
for $m= O(n^{2/3})$. For a motivation of this question, see de Caen and Sz\'ekely~\cite{CS91}.
Using some results from combinatorial number
theory and set systems, this conjecture was refuted in \cite{CS91},
by showing the existence of an infinite family
of $(m,n)$-bipartite graphs
with $m\sim n^{2/3}$,  girth
at least 8, and having
$n^{1+ 1/57 + o(1)}$ edges. As the authors
pointed out,
this disproved Erd\H{o}s' conjecture, but fell well short
of their
upper bound
$O(n^{1+1/9})$.

Using certain induced subgraphs of
algebraically defined graphs,
Lazebnik, Ustimenko and Woldar~\cite{LUW94}
explicitly constructed
an infinite family
of $(n^{2/3},n)$-bipartite graphs
of girth 8 with $n^{1+ 1/15}$ edges.
Here is the construction.

Let $q$ be an odd prime power, and
set
$P= \mathbb{F}_q\times
\mathbb{F}_{q^2}\times \mathbb{F}_q$,
$L=\mathbb{F}_{q^2}\times  \mathbb{F}_{q^2}\times
\mathbb{F}_q$.
We define the bipartite graph $\Gamma(q)$
with bipartition $P\cup L$ in which $(p)$ is adjacent to $[l]$ provided
\begin{align}
  l_2 + p_2 &= p_1l_1, \notag \\
  l_3+ p_3  &= - (p_2\overline{l_1} + \overline{p_2}l_1),\notag
\end{align}
and here $\overline{x}$ denotes the
image of $x$ under the involutory
automorphism of $\mathbb{F}_{q^2}$ with fixed field
$\mathbb{F}_q$.

In the context of the current survey,
$\Gamma(q)$ is closely related
to the induced subgraph
$B\Gamma_3[\mathbb{F}_q,\mathbb{F}_{q^2}]$
of $B\Gamma_3=B\Gamma_3(\mathbb{F}_{q^2};p_1l_1,- (p_2\overline{l_1} + \overline{p_2}l_1))$
(see Section~\ref{SS:indsub}).
Indeed, the only difference is that the third
coordinates of the vertices of
$\Gamma(q)$ are required to come from
$\mathbb{F}_q$.

Assuming now that
$q^{1/3}$ is an integer, we may further
choose $A\subset\mathbb{F}_q$
with $|A|=q^{1/3}$.
Set
$P_A=A\times
\mathbb{F}_{q^2}\times \mathbb{F}_q$,
and denote by $\Gamma'(q)$
the subgraph of
$\Gamma(q)$ induced by the
set $P_A\cup L$.
Then the family
$\{\Gamma'(q)\}$
 gives
the desired
$(n^{2/3},n)$-bipartite graphs
of girth 8 and $n^{1+ 1/15}$ edges,
where
$n = q^2$, see ~\cite{LUW94} for details.
\begin{problem}  Improve the magnitude (exponent of $n$) in either the upper  or the lower bound in the inequality
$$   c_1n^{1+1/15}\le  f(n^{2/3},n) \le c_2n^{1+1/9},$$
where $c_1,c_2$ are positive constants.
\end{problem}
\medskip

\subsection{Digraphs}
\bigskip

Consider a digraph with loops $ D_n=\Gamma_n(\F_q;f_2, \ldots, f_{n})$, defined as in Section \ref{SS:ord} by system (\ref{E:eqn12}), with $f_i$'s  not necessarily symmetric. The study of these digraphs was initiated in Kodess \cite{Kod14}. Some general properties of these digraphs are similar to those of the graphs $\Gamma_n$.  A digraph is called {\it strongly connected} if there exists a directed path between any two of its vertices,  and every digraph is a union of its strongly connected (or just strong)  components.

Suppose  each $f_i$ is a function of only two variables,  and
there is an arc from a vertex $ \langle a_1,\dotso,a_{n}\rangle$ to a vertex
$ \langle b_1,\dotso,b_{n}\rangle$ if
$$
a_i + b_i = f_{i}(a_1,b_1),\;\;\text{for all}\; i, \;2\le i \le n.
$$
The strong  connectivity of these digraphs was studied by Kodess and Lazebnik \cite{KL15}. Utilizing  some ideas from \cite{Vig02}, they obtained   necessary and sufficient conditions for strong connectivity of $D_n$ and completely described its strong components.  The results are expressed in terms of the properties of the span over $\F_p$ of the image of an explicitly constructed  vector function from $\F_q^2$ to $\F_q^{n-1}$, whose definition depends on the functions $f_i$. The details are a bit lengthy, and can be found in  \cite{KL15}.

Finding the diameter of strongly connected  digraphs $D_n$ seems to be a very hard problem,  even for $n=2$. Specializing $f_2$ to a monomial of two variables, i.e.,  $f_2=X^mY^n$,   makes it a bit easier,  though exact results are still
 hard to obtain.  In \cite{KLSS16},  Kodess, Lazebnik, Smith and Sporre studied the diameter of digraphs $D(q; m,n) = D_2(\F_q; X^mY^n)$.    They obtained precise values and good bounds on the diameter of these digraphs  for many instances of the parameters. For some of the results, the connection to Waring numbers over finite fields was utilized. The necessary and sufficient conditions for strong connectivity of $D(q;m,n)$ in terms of the arithmetic properties of $q,m,n$ appeared in \cite{KL15}.

Another interesting question about monomial digraphs is the isomorphism problem:  when is $D(q; m_1,n_1)$  isomorphic to $D(q;m_2,n_2)$?  A similar question for the bipartite monomial graphs $B(q;m,n)$ was answered in Theorem~\ref{isomono}.  For those graphs $B(q;m,n)$, just the count of 4-cycles resolves the isomorphism question for fixed $m,n$  and large $q$ (see \cite{Vig02}),  and the count of complete bipartite subgraphs gives the answer for all $q,m,n$ (see \cite{DLW07}).   In contrast, for the digraphs $D(q;m,n)$,  counting cycles of length from one (loops) to seven  is not sufficient: there exist digraphs for which these counts coincide, and which are not isomorphic (see  \cite{Kod14}).  In this regard, we would like to state the following problem and a conjecture. For any digraphs $A$ and $B$, let $|A(B)|$ denote the number of subdigraphs of $A$ isomorphic to $B$.

\begin{problem}
Are there  digraphs $D_1, \ldots, D_k$ such that any two monomial digraphs $D=D(q;m,n)$ and $D'=D(q';m',n')$  are isomorphic if and only if  $|D(D_i)|= |D'(D_i)|$ for each $i=1,\ldots, k$?
\end{problem}

Kodess and Lazebnik \cite{KL17} discussed several necessary conditions and several sufficient conditions for the isomorphism.
Though the sufficiency of the condition in the following conjecture is easy to verify, see  \cite{KL17}, its  necessity is still to be established.

\begin{conjecture}
\label{conj_isom} {\rm (\cite{Kod14})}
Let $q$ be a prime power.
The digraphs $D(q; m_1,n_1)$ and $D(q; m_2,n_2)$
are isomorphic
if and only if there exists $k$, coprime with $q-1$,
such that
\[
m_2 \equiv k m_1 \mod (q-1),
\]
\[
n_2 \equiv k n_1 \,\mod (q-1).
\]
\end{conjecture}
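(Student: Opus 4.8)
\section*{Proof proposal}

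The statement has two directions, and the plan is to dispose of sufficiency by an explicit map while reserving the real effort for necessity, which is where the difficulty—and the reason the statement is still only conjectural—resides. Recall that $D(q;m,n)=D_2(\F_q;X^mY^n)$ has vertex set $\F_q^2$, with an arc from $a=\langle a_1,a_2\rangle$ to $b=\langle b_1,b_2\rangle$ precisely when $a_2+b_2=a_1^m b_1^n$. Since $x^j$ depends only on $j \bmod (q-1)$ for $x\neq 0$, and $0^j=0$ for $j\ge 1$, I take $1\le m,n\le q-1$. For sufficiency, suppose $k$ is coprime to $q-1$ with $m_2\equiv k m_1$ and $n_2\equiv k n_1 \pmod{q-1}$, and set $s\equiv k^{-1}\pmod{q-1}$. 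Because $\gcd(s,q-1)=1$, the map $x\mapsto x^{s}$ permutes $\F_q$, so $\Phi:\langle a_1,a_2\rangle\mapsto\langle a_1^{s},a_2\rangle$ is a bijection of $\F_q^2$. Now $\Phi(a)\to\Phi(b)$ in $D(q;m_2,n_2)$ means $a_1^{sm_2}b_1^{sn_2}=a_2+b_2$; since $sm_2\equiv m_1$ and $sn_2\equiv n_1\pmod{q-1}$ (and all exponents may be taken positive, handling $a_1=0$ or $b_1=0$), this reads $a_1^{m_1}b_1^{n_1}=a_2+b_2$, i.e.\ $a\to b$ in $D(q;m_1,n_1)$. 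Hence $\Phi$ is an isomorphism.

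For necessity, let $\Phi$ be an isomorphism $D(q;m_1,n_1)\to D(q;m_2,n_2)$, and the first task is to extract graph-intrinsic invariants that $\Phi$ must respect. The key observation is that \emph{having identical out-neighbourhoods} is an equivalence relation definable purely from the arc set: for $u=\langle a_1,a_2\rangle$ and $v=\langle a_1',a_2'\rangle$ one checks that $N^+(u)=N^+(v)$ iff $a_2=a_2'$ and $a_1^m=(a_1')^m$, so each such class has size $\gcd(m,q-1)$ (away from the degenerate value $a_1=0$). Dually, the identical-in-neighbourhood classes have size $\gcd(n,q-1)$. Since $\Phi$ preserves both relations, this forces $\gcd(m_1,q-1)=\gcd(m_2,q-1)$ and $\gcd(n_1,q-1)=\gcd(n_2,q-1)$. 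Further necessary conditions come from small configurations: the loops lie on the curve $2a_2=a_1^{m+n}$, while a digon on $\{u,v\}$ requires $a_1^m b_1^n=a_1^n b_1^m$, i.e.\ $(a_1/b_1)^{m-n}=1$, so the digon count is governed by $\gcd(m-n,q-1)$; matching these under $\Phi$ yields $\gcd(m_1-n_1,q-1)=\gcd(m_2-n_2,q-1)$, and likewise for other linear combinations of $m$ and $n$.

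These congruences are necessary, but—as the survey records, since counts of loops through $7$-cycles already fail to separate non-isomorphic $D(q;m,n)$—they cannot be expected on their own to recover a single common multiplier $k$. The decisive remaining step is a \emph{rigidity} statement: that every isomorphism, after composition with the visible automorphisms $\psi_s:\langle a_1,a_2\rangle\mapsto\langle s a_1,\, s^{m+n}a_2\rangle$ ($s\in\F_q^*$) and the Frobenius $\sigma:\langle a_1,a_2\rangle\mapsto\langle a_1^{p},a_2^{p}\rangle$, is of the monomial shape $\langle a_1,a_2\rangle\mapsto\langle c\,a_1^{s},a_2\rangle$ used in the sufficiency proof. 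Granting this, preservation of the single defining equation $a_2+b_2=a_1^{m_1}b_1^{n_1}$ forces $s m_2\equiv m_1$ and $s n_2\equiv n_1\pmod{q-1}$ with $\gcd(s,q-1)=1$, and $k=s^{-1}$ is the desired multiplier. To prove the rigidity I would show that $\Phi$ must carry the identical-neighbourhood partition, together with its coset structure under $\{\psi_s\}$, to the corresponding partition of the target, thereby inducing a bijection of first coordinates that conjugates the multiplicative action $x\mapsto sx$ of $\F_q^*$ to itself; a bijection of $\F_q$ compatible with this action and with the induced additive structure on the fibres is forced to be semilinear and multiplicative, hence of the form $x\mapsto c\,x^{p^i}$, which yields the monomial form.

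The main obstacle is precisely this automorphism-group determination: ruling out exotic isomorphisms that do not respect the first-coordinate structure. Pure enumeration cannot accomplish it, so one cannot bypass a structural analysis of $\mathrm{Aut}(D(q;m,n))$ and of the coupling between the additive second-coordinate fibres and the multiplicative first-coordinate action. I expect the delicate points to be the degenerate behaviour at $a_1=0$, which breaks the clean coset picture, and small fields $q$, where $\{\psi_s\}$ and $\langle\sigma\rangle$ may be too small to pin the induced map down—exactly the regime in which the known sporadic failures of naive invariants occur. Making the rigidity argument unconditional is the crux, and its difficulty is why at present only the separate necessary and sufficient conditions of Kodess and Lazebnik, rather than the full equivalence, have been established.
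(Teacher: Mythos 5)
The statement you set out to prove is not a theorem of the paper but an open conjecture (attributed to \cite{Kod14}): the survey explicitly says that the sufficiency of the condition is easy to verify (citing \cite{KL17}), while its necessity ``is still to be established,'' even when $q$ is prime. Your sufficiency argument is correct and is essentially the known easy direction: with $s\equiv k^{-1}\pmod{q-1}$, the map $\langle a_1,a_2\rangle\mapsto\langle a_1^{s},a_2\rangle$ is a bijection of $\F_q^2$ carrying the defining equation $a_2+b_2=a_1^{m_1}b_1^{n_1}$ to that of $D(q;m_2,n_2)$, with the degenerate values $a_1=0$, $b_1=0$ handled by keeping exponents positive. Your auxiliary computations (loops on $2a_2=a_1^{m+n}$, same-out-neighbourhood classes of size $\gcd(m,q-1)$, digons governed by $\gcd(m-n,q-1)$, the automorphisms $\psi_s$ and Frobenius) are also correct as stated.

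However, the necessity direction is not proved, and this is a genuine gap rather than an omitted routine step. All of your invariants are necessary conditions only, and, as you yourself acknowledge, such enumeration cannot suffice: the paper records pairs of non-isomorphic monomial digraphs whose counts of all subdigraphs on cycles of length one through seven agree. Everything in your argument then funnels into the ``rigidity'' claim that every isomorphism, after composition with $\psi_s$ and Frobenius, has the monomial shape $\langle a_1,a_2\rangle\mapsto\langle c\,a_1^{s},a_2\rangle$; for this you offer only a plan (``I would show that $\Phi$ must carry the identical-neighbourhood partition\ldots''), not a proof, and the proposed semilinearity step is asserted rather than derived. That rigidity statement is essentially a reformulation of the conjecture itself --- equivalently, a determination of the full isomorphism and automorphism structure of $D(q;m,n)$ --- which is exactly what remains open in \cite{Kod14,KL17}. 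So your proposal establishes only the direction already in the literature and leaves the conjecture where the paper leaves it; it should be presented as partial progress plus a program, not as a proof.
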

The conjecture is still not resolved even when $q$ is a prime.
As a bi-product of the work on isomorphism of  monomial digraphs,  Coulter,  De Winter, Kodess and Lazebnik \cite{CWKL19} established a peculiar result on the number of roots of certain polynomials over finite fields.  It can be considered as an application of the digraphs $D(q; m,n)$ to algebra.

\subsection{Multicolor Ramsey numbers}\label{SS:MRnum}
\bigskip

Let $k\ge 2$.
The {\it multicolor Ramsey number $r_n(G)$} is the minimum integer $N$ such that in any edge-coloring of the complete graph $K_N$ with $n$ colors, there is a monochromatic $G$.
Using a 4-cycle free graph $\Gamma_2= \Gamma_2({\mathbb F}_q; XY)$ with $q$ being an odd prime power,
Lazebnik and Woldar \cite{LW00} showed that $r_q(C_4)\ge q^2+2$.  It  compared well with
an upper bound by
Chung and Graham \cite{CG75},  which implied that  $r_q(C_4) \le q^2 + q + 1$.

Lazebnik and Mubayi \cite{LM02} generalized this result.
\begin{theorem} {\rm(\cite{LM02})}
 Let $p$ be a prime. Then
\begin{equation}  \label{rkK2tplus1}
tk^2 + 1 \le r_k(K_{2,t+1}) \le tk^2 + k + 2,
\end{equation}
where the lower bound holds whenever $t$ and $k$ are both prime powers of $p$. If $k$ is a prime power, then
$$r_k(C_4) \ge k^2 + 2.$$
\end{theorem}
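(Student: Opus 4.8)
The plan is to prove the two inequalities by independent methods. For the upper bound $r_k(K_{2,t+1}) \le tk^2 + k + 2$, I would argue by contradiction in the style of the K\H{o}v\'ari--S\'os--Tur\'an counting. Fix a coloring of $E(K_N)$ in $k$ colors with no monochromatic $K_{2,t+1}$, and for a vertex $v$ and a color $c$ let $d_c(v)$ denote the number of color-$c$ edges incident to $v$. Counting monochromatic paths of length two (``cherries'') of color $c$ first by their center and then by their two endpoints gives $\sum_v \binom{d_c(v)}{2} = \sum_{\{x,y\}} \lambda_c(x,y)$, where $\lambda_c(x,y)$ is the number of common color-$c$ neighbors of $x$ and $y$. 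Absence of a monochromatic $K_{2,t+1}$ means $\lambda_c(x,y) \le t$, so $\sum_v \binom{d_c(v)}{2} \le t\binom{N}{2}$ for each $c$. Summing over colors, using $\sum_c d_c(v) = N-1$ and the convexity of $x \mapsto \binom{x}{2}$, I obtain
\[
Nk\binom{(N-1)/k}{2} \le \sum_{c}\sum_{v}\binom{d_c(v)}{2} \le kt\binom{N}{2}.
\]
After cancelling the common factor $\tfrac12 N(N-1)$ this collapses to $N-1-k \le tk^2$, i.e. $N \le tk^2+k+1$; hence every coloring of $K_{tk^2+k+2}$ contains a monochromatic $K_{2,t+1}$.

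For the lower bound I would construct a $k$-coloring of $K_{tk^2}$ in which every color class is $K_{2,t+1}$-free, which forces $r_k(K_{2,t+1}) \ge tk^2+1$. Put $q=k$; since $t$ and $q$ are powers of $p$, the field $\F_{qt}$ of order $qt$ exists, and I fix a surjective $\F_p$-linear map $L\colon \F_{qt}\to\F_q$, each of whose fibers has exactly $t$ elements. Take the vertex set $V=\F_{qt}\times\F_q$, so $|V|=qt\cdot q=tk^2$, and color the edge joining $u=(u_1,u_2)$ and $v=(v_1,v_2)$ by
\[
c(u,v) = u_2 + v_2 - L(u_1 v_1) \in \F_q .
\]
This is symmetric and well defined, so it is a genuine $k$-coloring. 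In the color class $\gamma$, a vertex $w=(w_1,w_2)$ is a common neighbor of $u$ and $v$ precisely when $L\bigl((u_1-v_1)w_1\bigr)=u_2-v_2$, with $w_2$ then determined: if $u_1=v_1$ there is no solution, while if $u_1\neq v_1$ the map $w_1\mapsto L\bigl((u_1-v_1)w_1\bigr)$ is a surjective $\F_p$-linear map with fibers of size $t$, so $u$ and $v$ have exactly $t$ common neighbors. Thus no color class contains $K_{2,t+1}$. Specializing to $t=1$ gives $\F_{qt}=\F_q$ and $L=\mathrm{id}$, recovering the fact that the resulting $\Gamma_2(\F_q;XY)$-type color classes are $C_4$-free.

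To sharpen the case $t=1$ to $r_k(C_4)\ge k^2+2$, I would adjoin an apex to the coloring of $K_{q^2}$ just obtained, whose color-$\gamma$ class has adjacency $u_2+v_2-u_1v_1=\gamma$. Add a new vertex $\infty$ and color each edge $\infty v$ by the first coordinate $v_1$, placing $v$ in the class $S_\gamma=\{\gamma\}\times\F_q$; since $\{S_\gamma\}_{\gamma\in\F_q}$ partitions $\F_q^2$, this extends the coloring to $K_{q^2+1}$. Any two vertices of $S_\gamma$ share the first coordinate $\gamma$, so subtracting their color-$\gamma$ adjacency relations forces equality of their second coordinates and shows they have no common neighbor in that class; hence $\infty$ is the unique common color-$\gamma$ neighbor of any two of its color-$\gamma$ neighbors, and no monochromatic $C_4$ arises. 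This yields $r_k(C_4)\ge k^2+2$.

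I expect the lower-bound construction, rather than the counting, to carry the real content: the crux is the fiber-size computation that pins the number of common neighbors to exactly $t$, which is what makes each color class tightly $K_{2,t+1}$-free. The apex refinement is the most delicate point, and it also explains the asymmetry in the statement, since for $t>1$ the same device would leave two of $\infty$'s neighbors with up to $t$ ordinary common neighbors together with $\infty$ itself, i.e. $t+1$ in total; the extra unit therefore survives only when $t=1$, and the general bound stops at $tk^2+1$.
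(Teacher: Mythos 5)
Your proof is correct and follows essentially the same route as the paper: the K\H{o}v\'ari--S\'os--Tur\'an double count for the upper bound (which the survey simply cites), and the same algebraic lower-bound coloring --- your construction on $\F_{tk}\times\F_k$ with $c(u,v)=u_2+v_2-L(u_1v_1)$ is a reparametrization of the survey's graph on $\F_q\times W$ (take $q=tk$ and $W\cong\F_k$) together with its decomposition of $K_{tk^2}$ into translates, and your apex-vertex argument for $r_k(C_4)\ge k^2+2$ is the standard one from the cited sources. A minor advantage of your phrasing is that, by defining the $k$-coloring directly rather than invoking Theorem~\ref{T:decomp}, you never need that theorem's hypothesis that $2$ be a unit, so the case $p=2$ is covered uniformly.
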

The upper bound in this theorem follows from K\H{o}v\'ari,  S\'os and Tur\'an \cite{KST54}. For the lower bound, the construction was algebraic.  Recently,  Taranchuk \cite{Tar24private}  suggested that the construction of $K_{2,t+1}$-free graphs that are used to construct  the lower bound in (\ref{rkK2tplus1}) can be obtained as a particular case of a $\Gamma_n$-like graph.
Let $q=p^e$, where $p$ is prime and $e\ge 1$, and let $\F_q$ be a field of order $q$. The field $\F_q$ can be viewed as an $e$-dimensional vector space $\F_p^e$ over $\F_p$.
Given a positive integer $d$, $1\le d < e$, consider a linear map $f$ of $\F_p^e$ onto any of its $(e-d)$-dimensional  subspaces $W$.  As the kernel of $f$ has dimension $d$, it contains $p^d$ vectors. It is known that any function on $\F_q$ can be interpolated by a polynomial with one indeterminate of degree at most $q-1$. Without changing notation,  we denote the polynomial that interpolates the linear map $f$  by $f$ again. Consider the  graph $G$ with the vertex set $V(G) = \F_q \times W$ and two distinct vertices $(x_1,x_2)$ and $(y_1, y_2)$ being adjacent if
$$  x_2 + y_2 = f(x_1y_1).$$
It is clear that if $t=p^d$, then $n=|V(G)| = q\,|W|=  q^2/t$.  It is easy to verify that $G$ is $K_{2,t+1}$-free,  and, after the loops are deleted,  it is left with
$$\frac{\sqrt{t}}{2} n^{3/2} - \frac{\sqrt{tn}}{2}$$
edges.  Partitioning the edge set of the complete graph $K_n$ into copies of $G$ (Theorem \ref{T:decomp}), leads to the lower bound in (\ref{rkK2tplus1}).
\medskip

Li and Lih \cite{LL09} used Wenger graphs to determine the asymptotic behavior of the Ramsey number $r_n(C_{2k})=\Theta(n^{k/(k-1)})$ when $k\in \{2,3,5\}$.
For details,  and more on the multicolor Ramsey numbers, see
\cite{CG75,LW00,LW25},  and a survey by Radziszowski \cite{Rad24}.
\medskip

\subsection{Miscellaneous constructions and applications}
\bigskip

Two graphs on the same vertex set are {\it $G$-creating} if their union contains $G$ as a subgraph. Let $H(n,k)$ be the maximum number of pairwise $C_k$-creating hamiltonian paths of $K_n$. By the constructions of bipartite graphs with many edges and without even cycles in Reiman \cite{Rei58}, Benson \cite{Ben66} and Lazebnik, Ustimenko and Woldar \cite{LUW95}, Solt\'{e}sz \cite{Sol20} showed that
\[
n^{n/k-o(n)}\le H(n,2k)\le n^{(1-2/(3k^2-2k))n-o(n)}.
\]
Then Harcos and Solt\'esz \cite{HS20} used Ramanujan graphs to improve the upper bound to $n^{(1-\frac{1}{3k})n+o(n)}$,
and Byrne and Tait \cite{BT24} improved this upper bound to $n^{\frac{2}{3}n+o(n)}$ for $k=3$, to $n^{\frac{4}{5}n+o(n)}$ for $k=4,5$, and to $n^{(1-\frac{2}{3k})n+o(n)}$ for $k\ge 6$.
Mirzaei, Suk and Verstra\"ete \cite{MSV19} studied the effect of forbidding short even cycles in incidence graphs of point-line arrangements in the plane.
By modifying the construction in \cite{LU95},
the authors constructed an arrangement of $n$ points and $n$ lines in the plane, such
that their incidence graph has girth at least $k+5$, and determines at least $\Omega(n^{1+4/(k^2+6k-3)})$ incidences.
\medskip

\section{Some Tur\'an-type extremal problems for hypergraphs}\label{SS:hyper}
\bigskip

Let $ex_r(\nu,H)$ be the largest number of edges in an $H$-free $r$-uniform hypergraph on $\nu$ vertices. For $r=2$,  we use the notation for usual graphs:  $ex(\nu,H) =  ex_2(\nu,H)$.

For $k \geq 2$, a {\it cycle} (Berge cycle) in a hypergraph ${\mathcal H}$ is an
alternating sequence of vertices and edges of the form
$v_{1},E_{1},v_{2},E_{2},\dots ,v_{k},E_{k},v_{1}$, such that

\begin{tabular}{cl}
(i) & $v_{1},v_{2},\dots,v_{k}$ are distinct vertices of ${\mathcal H}$, \\
(ii) & $E_{1},E_{2},\dots,E_{k}$ are distinct edges of ${\mathcal H}$, \\
(iii) & $v_{i}, v_{i+1} \in E_{i}$ for each $i \in \{1,2,\ldots,
k-1\}$, and $v_{k}, v_{1} \in E_{k}$.
\end{tabular}

We refer to a cycle with $k$ edges as a {\it $k$-cycle},
and denote the family of all $k$-cycles by ${\mathcal C}_{k}$.
For example, a 2-cycle consists of a pair of vertices and a pair of edges such that
the pair of vertices is a subset of  each edge.
The above definition of a hypergraph cycle is the ``classical" definition
(see, for example, Duchet \cite{Duc85}).
For $r = 2$ and $k\ge 3$, it coincides with the definition of a cycle $C_{k}$ in
graphs and, in this case, ${\mathcal C}_{k}$ is a family consisting of precisely
one member.
The {\it girth} of a hypergraph ${\mathcal H}$, containing a cycle,
is the minimum length of a cycle in ${\mathcal H}$.

In \cite{LV03}, Lazebnik and Verstra\"ete considered the Tur\'an-type
extremal problem of
determining the maximum number of edges in an $r$-graph on $\nu$
vertices of girth five.  For graphs ($r = 2$), this is an old
problem of Erd\H os \cite{Erd75}.
The following inequalities are the best known:
 $$\frac{1}{2\sqrt{2}}\nu^{3/2} + \Omega(\nu^{5/4})\le ex(\nu, \{C_3, C_4\})\le \frac{1}{2} \nu \sqrt{\nu -1}, $$
 where the upper bound holds for all $\nu$ and can be derived easily,  and the lower bound is a recent result by Ma and Yang \cite{MY22} for  $\nu=2(q^2+q+1)$ and $q$ a prime power.

For bipartite graphs, on the other hand,
this maximum is $(1/2\sqrt{2})\nu^{3/2} + O(\nu)$ as $\nu \rightarrow
\infty$. Many attempts at reducing the gap between the constants
$1/2\sqrt{2}$ and $1/2$ in the main terms of the upper and lower bounds have not
succeeded so far. Tur\'{a}n-type
questions for hypergraphs  are generally harder than for graphs, and the following
result was  surprising,  as in this case the constants in the upper and lower bounds for the maximum turned out to be equal,  and the difference between the bounds was $O(\nu^{1/2})$.
\begin{theorem}\label{thLV} {\rm (\cite{LV03})}
Let ${\mathcal H}$ be a 3-graph on $\nu$ vertices and of girth at least
five. Then
\[ |{\mathcal H}| \le
\frac{1}{6}\nu\sqrt{\nu - \frac{3}{4}} + \frac{1}{12}\nu.\]
For any odd prime power $q\ge 27$, there exist 3-graphs ${\mathcal H}$
on $\nu = q^2$ vertices,
of girth five, with
\[
 |{\mathcal H}| = {{q+1}\choose 3} = \frac{1}{6}\nu^{3/2} -
 \frac{1}{6}\nu^{1/2}.
\]
\end{theorem}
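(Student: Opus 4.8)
The plan is to prove the two assertions by separate arguments, first converting the girth hypothesis into clean combinatorial constraints and then performing a weighted count, and finally giving a geometric construction for the matching lower bound.

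For the upper bound, I would first unpack what girth at least five means for a $3$-graph $\mathcal H$ with $m$ edges on $\nu$ vertices. The absence of a $2$-cycle says $\mathcal H$ is \emph{linear}: any two vertices lie in at most one edge, so the edges cover exactly $3m$ pairs of vertices. I would then extract two consequences of having no $3$- or $4$-cycle. First, between any two vertices there is at most one \emph{cherry} $x,E_1,z,E_2,y$ (with $x,z\in E_1$, $z,y\in E_2$, $E_1\ne E_2$): two distinct cherries on the same pair $\{x,y\}$ always produce a $2$-, $3$-, or $4$-cycle, after a short case analysis on which of the four edges and two middle vertices coincide. Second, if $\{x,y\}$ lies in an edge $F$, then no cherry joins $x$ to $y$, since such a cherry together with $F$ is a $3$-cycle. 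This degenerate-case bookkeeping is the fussiest part of the upper bound.

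I would then count cherries by their middle vertex. Through a vertex $z$ of degree $d(z)$ the edges pairwise meet only in $z$, so the number of cherries centred at $z$ equals $4\binom{d(z)}{2}=2d(z)(d(z)-1)$. By the two constraints, distinct cherries have distinct endpoint pairs and no endpoint pair is already covered by an edge, so
\[
\sum_{z} 2d(z)(d(z)-1)\ \le\ \binom{\nu}{2}-3m .
\]
Using $\sum_z d(z)=3m$ this rearranges to $2\sum_z d(z)^2\le \binom{\nu}{2}+3m$, and Cauchy--Schwarz in the form $\sum_z d(z)^2\ge 9m^2/\nu$ gives $18m^2/\nu\le \tfrac12\nu(\nu-1)+3m$. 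Solving $18m^2-3\nu m-\tfrac12\nu^2(\nu-1)\le 0$ yields exactly $m\le \tfrac16\nu\sqrt{\nu-\tfrac34}+\tfrac1{12}\nu$. The sharp constant comes precisely from the refinement ``$-3m$'': discarding the edge-covered pairs is what separates this bound from the cruder $\tfrac16\nu^{3/2}+\tfrac16\nu$ one gets without it.

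For the lower bound I would use a conic construction. Fix a nondegenerate conic $\mathcal C$ in $PG(2,q)$, $q$ odd; its polarity defines a conjugacy (orthogonality) relation on points. Let $V$ be the $q^2$ points off $\mathcal C$, and let the edges of $\mathcal H$ be the \emph{self-polar triangles}, i.e.\ the triples of pairwise conjugate points. Linearity is immediate, since two conjugate points $A,B$ determine the third vertex uniquely as the pole of the line $AB$. There is no $3$-cycle, because three vertices occurring pairwise in edges are pairwise conjugate, hence non-collinear (a conjugacy involution on a line cannot pair one point with two distinct others) and forming a single self-polar triangle, forcing the three edges to coincide. There is no $4$-cycle, because a putative one gives distinct $v_1,v_2,v_3,v_4$ with $v_1,v_3$ conjugate to both $v_2$ and $v_4$; then $v_1,v_3$ lie on the polar lines of $v_2$ and of $v_4$, and two distinct lines meet in one point, so these polars coincide and $v_2=v_4$, a contradiction. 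Thus $\mathcal H$ has girth at least five. I would count edges by incidences: an external point lies on $\tfrac{q-1}{2}$ self-polar triangles and an internal point on $\tfrac{q+1}{2}$, and with $\tfrac{q(q+1)}{2}$ external and $\tfrac{q(q-1)}{2}$ internal points the total number of (vertex, triangle) incidences is $\tfrac12 q(q^2-1)$, so $|\mathcal H|=\tfrac16 q(q^2-1)=\binom{q+1}{3}=\tfrac16\nu^{3/2}-\tfrac16\nu^{1/2}$. The remaining point is that ``girth five'' demands girth exactly five; having shown girth $\ge 5$, I would finish by exhibiting a pentagon (a conjugacy $5$-cycle $v_1\perp v_2\perp\cdots\perp v_5\perp v_1$ with distinct vertices and distinct triangles), which exists once $q$ is large, and this is where $q\ge 27$ enters. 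The main obstacles I anticipate are the degenerate-case analysis in the cherry-uniqueness step for the upper bound, and, for the lower bound, securing the exact count $\binom{q+1}{3}$ through the external/internal split together with confirming that a pentagon occurs.
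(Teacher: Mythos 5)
Your proposal follows the same route as the source of this theorem, \cite{LV03} (the survey itself only states the result, sketches the constructions, and cites that paper). The upper bound via cherries is the known argument, and your execution is correct: girth at least five gives linearity, distinct cherries have distinct endpoint pairs, those pairs avoid the $3m$ pairs covered by edges, so
\[
\sum_{z} 2d(z)\bigl(d(z)-1\bigr)\;\le\;\binom{\nu}{2}-3m,
\]
and Cauchy--Schwarz plus the quadratic formula yields exactly $\frac{1}{6}\nu\sqrt{\nu-\frac34}+\frac{1}{12}\nu$; you also correctly identify that the refinement ``$-3m$'' is what produces the sharp constant. The lower-bound construction is likewise the one the survey attributes to Lov\'asz: your self-polar triangles of a conic are precisely the triangles of the polarity graph of $PG(2,q)$ with absolute points deleted. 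Your verifications of linearity, of the absence of Berge $3$- and $4$-cycles via uniqueness of poles, and the incidence count $\frac{q(q+1)}{2}\cdot\frac{q-1}{2}+\frac{q(q-1)}{2}\cdot\frac{q+1}{2}=\frac12 q(q^2-1)$, giving $|\mathcal{H}|=\binom{q+1}{3}$, are all correct.

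The one genuine gap is the step you flag but do not carry out: showing the girth is \emph{exactly} five. This is not a cosmetic afterthought; it is the only place the hypothesis $q\ge 27$ is used, and the theorem as stated (``of girth five'') fails without it. What must be produced are five distinct non-absolute points $v_1,\dots,v_5$, cyclically conjugate, such that the five hyperedges $E_i=\{v_i,v_{i+1},\mathrm{pole}(v_iv_{i+1})\}$ are pairwise distinct; by linearity this forces, in particular, that no three consecutive $v_i$ lie in a common self-polar triangle, so an arbitrary $5$-cycle of the conjugacy graph does not suffice. ``A pentagon exists once $q$ is large'' is precisely the assertion requiring proof: in \cite{LV03} it is established by a separate, nontrivial argument (exhibiting points subject to explicit algebraic conditions in $\F_q$), and the threshold $q\ge 27$ is the outcome of that analysis, not an input. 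As written, your proposal proves the upper bound and produces a girth-$\ge 5$ hypergraph with $\binom{q+1}{3}$ edges --- which suffices for the extremal consequence --- but it does not prove the existence statement of the theorem in the form given.
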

In the context of this survey,  we wish to mention that the original construction for the lower bound
came from considering the following algebraically defined  3-graph ${\mathcal G}_q$  (Lazebnik--Verstra\"ete 3-graph),   of order $\nu =
q(q-1)$,  of girth five (for sufficiently large $\nu$) and number of edges $\sim \frac{1}{6}\nu^{3/2} -\frac{1}{4}\nu+o(\nu^{1/2})$, $\nu\to \infty$.  Let
$\F_{q}$ denote the finite field of odd characteristic,
and  let $S_{q}$ denote the set of points on the curve $2x_2 =
x_1^2$, where $(x_1,x_2)\in \F_{q} \times \F_{q}$. Define a hypergraph ${\mathcal G}_q$ as follows. The vertex
set of ${\mathcal G}_q$ is $\F_{q} \times \F_{q}
\setminus  S_{q}$. Three distinct vertices $a=(a_1,a_2)$,
$b=(b_1,b_2)$ and $c=(c_1,c_2)$ form an edge $\{a,b,c\}$ of ${\mathcal
G}_q$ if and only if the following three equations are satisfied:
\begin{eqnarray*}
a_{2} + b_{2} &=& a_{1}b_{1}, \\
b_{2} + c_{2} &=& b_{1}c_{1}, \\
c_{2} + a_{2} &=& c_{1}a_{1}.
\end{eqnarray*}
It is not difficult to check that ${\mathcal G}_q$ has girth at least
five for all odd $q$ and girth five for all sufficiently large
$q$. The number of edges in ${\mathcal G}_q$ is precisely ${q \choose
3}$, since there are ${q \choose 3}$ choices for distinct
$a_1,b_1$ and $c_1$, which uniquely specify $a_2,b_2$ and $c_2$
such that $a,b,c$ are not on the curve $2y = x^2$ and $\{a,b,c\}$
is an edge.

The idea to consider the hypergraph $\mathcal {H}_q$, whose edges are 3-sets of vertices of triangles in the polarity graph of $PG(2,q)$ with absolute points deleted, is due to Lov\'asz, see \cite{LV03} for details.   It raised the asymptotic lower bound to $\sim \frac{1}{6}\nu^{3/2} -\frac{1}{6}\nu+o(\nu^{1/2})$, $\nu\to \infty$, as stated in Theorem \ref{thLV}.  So, treating $\le$ in an asymptotic sense, we have
$$ \frac{1}{6}\nu^{3/2} -\frac{1}{6}\nu+o(\nu^{1/2}) \;\le \; ex_3(\nu, \{{\mathcal C}_{3}, {\mathcal C}_{4}\}) \; \le \;
\frac{1}{6}\nu\sqrt{\nu - \frac{3}{4}} + \frac{1}{12}\nu.$$

Mukherjee \cite{Muk24} used the construction of bipartite graphs without even cycles and with many edges in \cite{LUW95} to prove that
 $ex_3(\nu,\widetilde{C}_6)=\Theta(\nu^{7/3})$,
where $\widetilde{C}_6$ is the 3-uniform hypergraph obtained by adding a new vertex $x$ to $V(C_6)$ and $\widetilde{C}_6=\{e\cup{x} : e \in E(C_6)\}$.

A $k$-partite graph is said to be {\it balanced} $k$-partite if each partite set has the same number of vertices.
Let $ex(\nu,\nu,\nu, \mathcal F)$ be the maximum number of edges in balanced 3-partite graphs on partition classes of size $\nu$, which are $\mathcal F$-free.
Lv, Lu and Fang constructed balanced $3$-partite graphs with many edges, which are ${\mathcal C}_4$-free in \cite{LLF20} and $\{{\mathcal C}_3,{\mathcal C}_4\}$-free in \cite{LLF22}, and showed that
$$ex(\nu,\nu,\nu, {\mathcal C}_{4})=\Big(\frac{3}{\sqrt{2}}+o(1)\Big)\,\nu^{3/2}$$
and
$$ex(\nu,\nu,\nu, \{{\mathcal C}_3, {\mathcal C}_{4}\} )\ge \Big(\frac{6\sqrt{2}-8}{(\sqrt{2}-1)^{3/2}}+o(1)\Big)\,\nu^{3/2}.$$
For more on Tur\'an-type problems for graphs and hypergraphs,  see \cite{Bol78,Fur91,FS13}.
\medskip

\section{Graphs $D(k,q)$ and $C\!D(k,q)$}\label{SS:cdkq}
\bigskip

For any $k\ge 2$, and any prime power $q$, the bipartite graph
$D(k,q)$ is defined to be $B\Gamma _k(\mathbb {F}_q; f_2,\ldots, f_k)$,
where $f_2 = p_1l_1$,
$f_3= p_1l_2$, and for $4\le i\le k$,
\[
f_i=
\begin{cases}
-p_{i-2}l_1, & \text{for}\; i\equiv 0 \;\; {\rm or} \;\; 1
\pmod 4, \cr
p_1l_{i-2}, & \text{for}\; i\equiv 2 \;\; {\rm or} \;\; 3
\pmod 4.
\end{cases}
\]

It was shown that these graphs are
edge-transitive and, most importantly, the girth of $D(k,q)$ is at least $k+5$ for
odd $k$. It was shown in~\cite{LUW95} that
for $k\ge 6$ and $q$ odd, the graphs $D(k,q)$ are disconnected, and
the order of each component (any two being isomorphic) is at least
$2q^{k -\lfloor{\frac{k+2}{4}}\rfloor  +1}$. Let $C\!D(k,q)$
denote one of these components. It is the family of graphs
$C\!D(k,q)$ which provides the best lower bound mentioned before,
 being a slight improvement over the previous best lower
bound $\Omega({\nu}^{1 + \frac{2}{3k + 3}})$  given by the family
of Ramanujan graphs constructed by Margulis \cite{Mar88}, and
independently by Lubotzky, Phillips and Sarnak \cite{LPS88}.

The construction of the graphs
$D(k,q)$
was motivated by
attempts to
generalize the notion of
the biaffine part of
a generalized polygon,
and it was facilitated by
results in Ustimenko \cite{Ust91} on the
embedding of Chevalley group geometries
into their corresponding Lie algebras.  For a more recent exposition of these ideas,  see  \cite{UW03,Wol10,TW12}.

In fact, $D(2,q)$ and
$D(3,q)$ ($q$ odd) are exactly the biaffine parts of
a regular generalized
$3$-gon and $4$-gon, respectively
(see ~\cite{LU93}
for more details).
We wish to point out that $D(5,q)$ is not the biaffine part of the generalized hexagon.

As we mentioned before, the generalized $k$-gons exist only for $k=3,4,6$ (see \cite{FH64}),
therefore, $D(k,q)$ are not subgraphs of generalized $k$-gons for $k\ge 4$.

In this section we will discuss some basic  properties of these graphs.

\medskip

\subsection{Equivalent representation of $D(k,q)$}
\bigskip

The defining equations for the graph $D(k,q)$ have changed with time,  and the changes reflected better understanding of their automorphisms.  The following statement describes some transformations of the defining equations that lead to isomorphic graphs.

\begin{proposition} {\rm (\cite{Sun17,LSW17})} \label{Dkq:signs}
Let $k \ge 2$ and $a_1,\ldots,a_{k-1} \in \F_q^*$.  Let $H(k,q) = B\Gamma_k(q;f_2,\ldots,f_k) $
where $f_2 = a_1p_1l_1$, $f_3 = a_2p_1l_2$, and for $4\le i\le k$,
\[
f_i=
\begin{cases}
-a_{i-1}p_{i-2}l_1, &  {\rm for}\;\; i\equiv 0 \;\; {\rm or} \;\; 1
\pmod 4,\cr
a_{i-1}p_1l_{i-2}, & {\rm for }\;\; i\equiv 2 \;\; {\rm or} \;\; 3
\pmod 4.\cr
\end{cases}
\]
Then $H(k,q)$ is isomorphic to $D(k,q)$.
\end{proposition}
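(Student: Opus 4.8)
The plan is to exhibit an explicit isomorphism given by a diagonal coordinate scaling, in the same spirit as the isomorphism $\varphi$ of Proposition~\ref{P:dualiso}. The defining systems for $D(k,q)$ and $H(k,q)$ differ only in that each right-hand side of $H(k,q)$ carries an extra nonzero scalar $a_{i-1}$, so the natural attempt is to seek constants $c_2,\ldots,c_k\in\F_q^*$ and to define
$$\phi:\ (p_1,p_2,\ldots,p_k)\mapsto (p_1,c_2p_2,\ldots,c_kp_k),\qquad [l_1,l_2,\ldots,l_k]\mapsto [l_1,c_2l_2,\ldots,c_kl_k],$$
a map that fixes the first coordinate and scales the $i$-th coordinate of every point and every line by the \emph{same} factor $c_i$ for $i\ge 2$. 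Using a common factor on points and lines at each index is essential: only then does the left-hand side $c_ip_i+c_il_i$ of the $i$-th relation equal $c_i(p_i+l_i)$, which lets the $D(k,q)$-relation $p_i+l_i=f_i$ be substituted directly.

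First I would demand that $\phi$ carry adjacency in $D(k,q)$ to adjacency in $H(k,q)$ and read off the resulting conditions on the $c_i$ relation by relation. Substituting the $D(k,q)$-equations into the $H(k,q)$-equations and cancelling the common monomial yields the base cases $c_2=a_1$ and $c_3=a_1a_2$ (from $f_2=a_1p_1l_1$ and $f_3=a_2p_1l_2$), together with the uniform recursion $c_i=a_{i-1}c_{i-2}$ for $4\le i\le k$; the two congruence classes modulo $4$ produce the same recursion precisely because the first coordinates are left unscaled. Solving this recursion writes each $c_i$ as a product of certain of the $a_j$'s, so in particular every $c_i$ lies in $\F_q^*$.

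Next I would verify that $\phi$ is a graph isomorphism. Since each $c_i$ is a nonzero scalar, $\phi$ is a bijection of vertex sets preserving the point/line partition, and the computation above shows it sends every edge of $D(k,q)$ to an edge of $H(k,q)$. As both graphs are $q$-regular on the same vertex set by Theorem~\ref{T:ncc}, they have equally many edges, so a vertex bijection mapping edges into edges is automatically an isomorphism; alternatively, running the identical argument with the scalars $c_i^{-1}$ shows $\phi^{-1}$ sends edges of $H(k,q)$ to edges of $D(k,q)$.

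I do not expect a genuine obstacle here; the only care needed is bookkeeping. One must keep the base indices $i=2,3$ (whose right-hand sides involve $l_1$ and $l_2$ rather than the generic $p_{i-2}$ or $l_{i-2}$) separate from the recursion for $i\ge 4$, and one must check that scaling a coordinate by a fixed factor is consistent across all relations in which that coordinate occurs — which holds exactly because each coordinate has a single assigned factor, with $p_1,l_1$ unscaled and $p_i,l_i$ scaled by $c_i$. Keeping the first coordinates unscaled avoids the slightly more general recursion one obtains by also scaling $p_1$ and $l_1$, and keeps the closed form for the $c_i$ as clean as possible.
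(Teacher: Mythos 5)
Your proposal is correct and matches the paper's own proof: the paper exhibits exactly this diagonal scaling isomorphism, fixing $p_1,l_1$ and multiplying the $i$-th coordinate of points and lines by the common factor $c_i$ (with $c_{2i}=a_{2i-1}a_{2i-3}\cdots a_1$ and $c_{2i+1}=a_{2i}a_{2i-2}\cdots a_2a_1$, i.e.\ the closed form of your recursion $c_i=a_{i-1}c_{i-2}$). The only difference is presentational — the paper states the scalars in closed form and calls the adjacency verification straightforward, whereas you derive them from the recursion and justify why edge-preservation plus bijectivity suffices.
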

\begin{proof}
Let $\varphi : V(D(k,q)) \mapsto V(H(k,q))$ be defined via $(p) \rightarrow (x)$, and $[l] \rightarrow [y]$, where
\begin{align*}
x_1 = p_1 , \;\;\;& \;\;\; y_1 = l_1,\\
x_2 = a_1p_2, \;\;\;&\;\;\; y_2 = a_1l_2, \\
x_{2i+1} = a_{2i}a_{2i-2}\ldots a_2a_1p_{2i+1}, \;\;&\;\; y_{2i+1} = a_{2i}a_{2i-2}\ldots a_2a_1l_{2i+1},\\
x_{2i} = a_{2i-1}a_{2i-3}\ldots a_1p_{2i},\;\;&\;\; y_{2i} = a_{2i-1}a_{2i-3}\ldots a_1l_{2i}.\\
\end{align*}
Clearly, $\varphi$ is a bijection.  The verification that
 $\varphi$ preserves the adjacency is straightforward, and can be found in \cite{Sun17,LSW17}.
\end{proof}

Taking
\[
a_{i} = \begin{cases}
 &-1, \quad\quad {\rm for}\;\; i\equiv 0 \;\; {\rm or} \;\; 3
\pmod 4,\cr
 & 1, \ \ \quad\quad {\rm for}\;\; i\equiv 1 \;\; {\rm or} \;\; 2
\pmod 4,
 \end{cases}
 \]
 and using Proposition~\ref{Dkq:signs}, we see that $D(k,q)$ is isomorphic to $B\Gamma_k(q;f_2,\ldots,f_k)$
where $f_2 = p_1l_1$, $f_2=p_1l_2$, and for $4\le i\le k$,
\[
f_i=
\begin{cases}
p_{i-2}l_1, & \text{for }\; i\equiv 0 \;\; \text{ or} \;\; 1
\pmod 4,\cr
p_1l_{i-2}, & \text{for }\; i\equiv 2 \;\; \text{ or} \;\; 3
\pmod 4.
\end{cases}
\]
\underline{From now on, we will use this representation of  $D(k,q)$.}

\bigskip

Moreover, in the case of $q=2$,
\[
D(2,2)\cong C_8, D(3,2) \cong 2 C_8, D(4,2) \cong 4 C_8,
\]
and
\[
D(k,2) \cong 2^{k-3}C_{16},
\]
for $k\ge 5$. Here $nH$ denotes the union of $n$ disjoint copies of a graph $H$. Therefore we assume that $q\ge 3$ for the rest of this section.\\
\medskip

\subsection{Automorphisms of $D(k,q)$}
\bigskip

There are many automorphisms of $D(k,q)$, and below we  list the ones that are used to establish some properties of the graph.  It is a straightforward verification that the mappings we describe are
indeed automorphisms.
For more details,  see \cite{LU93,LU95}, F\" uredi, Lazebnik, Seress,
Ustimenko and Woldar \cite{FLSUW95}, Lazebnik, Ustimenko and Woldar \cite{LUW96} and Erskine \cite{Ers17}.
The automorphisms in these references
 may look different to the ones we list here
since we use another representation of the graph.

\subsubsection{Multiplicative automorphisms}
\bigskip

For any $a,b\in \mathbb{F}_q^*$, consider the map $m_{a,b}: \mathcal{P}_k\mapsto \mathcal{P}_k, \mathcal{L}_k\mapsto \mathcal{L}_k$
such that $(p) \xrightarrow{m_{a,b}} (p')$, and $[l]\xrightarrow{m_{a,b}} [l']$ where $p'_1 = ap_1$, $l'_1 = bl_1$, and for any $2\le i\le k$,
\[
p'_i =
\begin{cases}
a^{\lfloor \frac{i-1}{4} \rfloor +1}b^{\lfloor \frac{i}{4} \rfloor+1} p_i, & \rm{for}\;i \equiv  0{\rm ,}\;\; 1\;\;{\rm or}\;\;2
\pmod 4,\cr
a^{\lfloor \frac{i}{4} \rfloor +2}b^{\lfloor \frac{i}{4} \rfloor+1} p_i, &\rm{for}\; i \equiv 3
\pmod 4,
\end{cases}
\]
\[
l'_i =
\begin{cases}
a^{\lfloor \frac{i-1}{4} \rfloor +1}b^{\lfloor \frac{i}{4} \rfloor+1} l_i, & \rm{for}\;i \equiv  0{\rm ,}\;\; 1\;\;{\rm or}\;\;2
\pmod 4,\cr
a^{\lfloor \frac{i}{4} \rfloor +2}b^{\lfloor \frac{i}{4} \rfloor+1} l_i, &\rm{for}\; i \equiv 3
\pmod 4.
\end{cases}
\]
In Table~\ref{table:multi}, each entry illustrates how each coordinate is changed under the map $m_{a,b}$, i.e., the factor that the corresponding coordinate of a point
or a line is multiplied by. For example, $m_{a,b}$ changes $p_1$ to $ap_1$, $l_1$ to $bl_1$, both $p_{4t+3}$ and $l_{4t+3}$ to their product with $a^{t+2}b^{t+1}$.

\begin{table}[H]
\centering
\begin{tabular}{ |c|c|c|c|}
\toprule
    & $m_{a,b}$ & $  $ & $m_{a,b}$ \\
\hline
 $p_1$ & $* a$ & $l_1$ & $ * b$  \\
\hline
  $p_{4t}$ & $* a^tb^{t+1} $ & $l_{4t}$ &$ * a^tb^{t+1}$ \\
\hline
 $p_{4t+1}$& $* a^{t+1}b^{t+1}$ &$l_{4t+1}$ & $* a^{t+1}b^{t+1}$  \\
\hline
 $p_{4t+2}$& $* a^{t+1}b^{t+1}$ &$l_{4t+2}$ & $* a^{t+1}b^{t+1}$  \\
\hline
  $p_{4t+3}$& $* a^{t+2}b^{t+1}$ &$l_{4t+3}$ & $* a^{t+2}b^{t+1}$  \\
\bottomrule
\end{tabular}
\caption{Multiplicative automorphism}\label{table:multi}
\end{table}

\begin{proposition}\label{S:multiauto}
For any $a,b\in\mathbb{F}_q^*$, $m_{a,b}$ is an automorphism of $D(k,q)$.
\end{proposition}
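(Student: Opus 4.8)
The plan is to verify directly that $m_{a,b}$ preserves the defining relations of $D(k,q)$, exploiting the fact that $m_{a,b}$ acts by scaling each coordinate by a nonzero element of $\mathbb{F}_q$. First I would note that, since $a,b \in \mathbb{F}_q^*$ and every coordinate of a point or a line is multiplied by a nonzero power-product of $a$ and $b$, the map $m_{a,b}$ is a bijection of $V(D(k,q))$ that sends points to points and lines to lines. Being a bijection that respects the point/line partition, it will be an automorphism as soon as it preserves adjacency, and because scaling by a unit is reversible, preserving each relation will automatically be an ``if and only if''.

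The key structural observation is that, for every index $i$ with $2 \le i \le k$, the point coordinate $p_i$ and the line coordinate $l_i$ are scaled by the \emph{same} factor, which I will call $c_i$, whereas the two first coordinates are scaled differently, namely $p_1 \mapsto a p_1$ and $l_1 \mapsto b l_1$. Reading these off Table~\ref{table:multi}, one has $c_{4t} = a^t b^{t+1}$, $c_{4t+1} = c_{4t+2} = a^{t+1} b^{t+1}$, and $c_{4t+3} = a^{t+2} b^{t+1}$. Since the defining equations (\ref{Bipmain0}) are indexed by $i \ge 2$ and have left-hand side $p_i + l_i$, each such left-hand side is multiplied precisely by $c_i$ under $m_{a,b}$. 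Hence it remains only to show that the corresponding right-hand side $f_i$, which is a single monomial, is multiplied by the same factor $c_i$.

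The heart of the argument is then a short case analysis on $i \bmod 4$. For $i=2,3$ one checks directly that $p_1 l_1 \mapsto ab\,p_1 l_1 = c_2\,p_1 l_1$ and $p_1 l_2 \mapsto a^2 b\,p_1 l_2 = c_3\,p_1 l_2$. For $i \ge 4$ with $i \equiv 0,1 \pmod 4$, where $f_i = p_{i-2} l_1$, I would multiply the scaling factor $c_{i-2}$ of $p_{i-2}$ by the factor $b$ of $l_1$ and verify $c_{i-2}\cdot b = c_i$; for $i \equiv 2,3 \pmod 4$, where $f_i = p_1 l_{i-2}$, verify instead $a\cdot c_{i-2} = c_i$. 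For instance, when $i=4t$ one has $i-2 = 4(t-1)+2$, so $c_{i-2} = a^t b^t$ and $c_{i-2}\,b = a^t b^{t+1} = c_{4t}$; the remaining three classes are entirely analogous. Since every $c_i$ is a unit, the $i$-th relation holds for the image $(p',l')$ exactly when it holds for $(p,l)$, so $m_{a,b}$ preserves adjacency and is an automorphism.

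The main obstacle, and really the only nontrivial point, is keeping the exponents of $a$ and $b$ straight across the four residue classes: the shift $i \mapsto i-2$ in the monomial $f_i$ moves an index from one residue class to another and thereby switches which of the two exponent formulas applies. This is pure bookkeeping with no conceptual difficulty; once $c_i$ is recorded for each class as above, the handful of verifications $c_{i-2}\,b = c_i$ and $a\,c_{i-2} = c_i$ are immediate.
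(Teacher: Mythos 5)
Your proof is correct and is precisely the ``straightforward verification'' that the paper asserts but omits (deferring to the references \cite{LU93,LU95,FLSUW95,LUW96,Ers17}): both the structural observation that $p_i$ and $l_i$ scale by a common unit $c_i$ for $i\ge 2$, and the residue-class bookkeeping checking $c_{i-2}\,b=c_i$ and $a\,c_{i-2}=c_i$, are exactly what the verification requires, and your sample computation together with the remaining classes checks out. Since adjacency is defined by equalities scaled throughout by units, your ``if and only if'' remark correctly yields that the bijection $m_{a,b}$ is an automorphism.
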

\medskip

\subsubsection{Additive automorphisms}
\bigskip

For any $x \in \mathbb{F}_q$, and any $0\leq j\leq k$,
we define the map $t_{j,x} : \mathcal{P}_k\rightarrow \mathcal{P}_k, \mathcal{L}_k\rightarrow \mathcal{L}_k$ as follows.
\vskip 2mm
\begin{enumerate}
\item The map $t_{0,x}$ fixes the first coordinate of a line, whereas $t_{1,x}$ fixes the first coordinate of a point.
In Table~\ref{table:add1}, we illustrate how each coordinate is changed under the maps. If the entry is empty,
it means that this coordinate is fixed by the map. For example, the map $t_{1,x}$ changes the following coordinates
of a line according to the rule:  $l_1 \rightarrow l_1+x$, $l_4 \rightarrow l_4+l_2x$, $l_{2t} \rightarrow l_{2t}+l_{2t-3}x$ for
$t\ge 3$, and the following coordinates of a point according to the rule: $p_2 \rightarrow p_2+p_1x$, $p_4 \rightarrow p_4+2p_2x+p_1x^2,\cdots$.

\begin{table}[H]
\centering
\begin{tabular}{ |c|c|c|c|}
\toprule
&  $t_{0,x}$ & $t_{1,x}$ & $t_{2,x}$\\
 \hline
\;\;$p_1$\;\; & $+x$ & & \\
 \hline
\;\;$p_2$\;\; &  &$+p_1x$ & $+x$\\
 \hline
 \;\;$p_3$\;\; & $+p_2x$ &  & $-p_1x$ \\
  \hline
  \;\;$p_4$\;\; & & $+2p_2x+p_1x^2$ &  \\
  \hline
  \;\;$p_5$\;\; & $+p_4x$ & $+p_3x$ & $-p_2x$ \\
  \hline
 \;\;$p_{4t+1}$\;\; & $+p_{4t}x$  & $+p_{4t-1}x$ & $-p_{4t-3} x$  \\
 \hline
 \;\;$p_{4t+2}$\;\; &  & $+p_{4t-1}x$  & $+p_{4t-2}x$\\
 \hline
   \;\;$p_{4t+3}$\;\; & $+p_{4t+2}x$ &    & $-p_{4t-1}x$ \\
    \hline
   \;\;$p_{4t}$\;\; &  & \makecell{$+p_{4t-2}x+p_{4t-3}x$\\$+p_{4t-5}x^2$} & $+p_{4t-4}x$ \\
  \hline
  \;\;$l_1$\;\; & &$+x$ &  \\
  \hline
   \;\;$l_2$\;\; & $+l_1x$ &  & $-x$\\
   \hline
  \;\;$l_3$\;\; & $+2l_2x+l_1x^2$ & & \\
  \hline
   \;\;$l_4$\;\; & & $+l_2x$ & $+l_1x$\\
   \hline
   \;\;$l_5$\;\; & $+l_4x$ &  & $-l_2x$ \\
   \hline
   \;\;$l_{4t+1}$\;\; & $+l_{4t}x$ &  &$-l_{4t-3}x$\\
   \hline
   \;\;$l_{4t+2}$\;\; & $+l_{4t}x$ & $+l_{4t-1}x$ & $+l_{4t-2}x$\\
   \hline
   \;\;$l_{4t+3}$\;\; & \makecell{$+l_{4t+2}x+l_{4t+1}x$\\$+l_{4t}x^2$} & & $-l_{4t-1}x$ \\
   \hline
   \;\;$l_{4t}$\;\; &  &$+l_{4t-3}x$ & $+l_{4t-4}x$\\
\bottomrule
\end{tabular}
\caption{Additive automorphism}\label{table:add1}
\end{table}

\item For $2\le j\le k$, $t_{j,x}$ is a map which fixes the first $j-1$ coordinates of a point and a line.
In Table~\ref{tabel:add2}, we illustrate how each coordinate is changed under the corresponding map.

\begin{table}[ht]
\centering
\begin{tabular}{ |c|c|c|c|}
\toprule
    \multicolumn{4}{|c|}{ $j \equiv 0\;,\;1\pmod 4$} \\
\hline
   & $t_{j,x}$ &  & $t_{j,x}$\\
  \hline
     \makecell{$p_i$\\$i\le j-1$} & & \makecell{$l_i$\\$i\le j-1$} & \\
  \hline
  $p_j$ & $+x$ & $l_j$ & $-x$ \\
  \hline
  $p_{j+1+2t}$ & & $l_{j+1+2t}$&\\
  \hline
  $p_{j+2}$ & $-p_1x$  & $l_{j+2}$ &   \\
  \hline
  $p_{j+4}$ & $-p_2x$ & $l_{j+4}$ & $-l_2x$ \\
    \hline
    $p_{j+4+2t}$ & $-p_{2t+1}x$  &$l_{j+4+2t}$&  $-l_{2t+1}x$\\
\bottomrule
\end{tabular}
\vskip 5mm
\begin{tabular}{ |c|c|c|c|}
\toprule
    \multicolumn{4}{|c|}{$j \equiv 2\;,\;3\pmod 4$}\\
\hline
   & $t_{j,x}$ &  & $t_{j,x}$ \\
 \hline
   \makecell{$p_i$\\$i\le j-1$} & & \makecell{$l_i$\\$i\le j-1$} & \\
      \hline
  $p_j$ & $+x$ & $l_j$ & $-x$  \\
   \hline
$p_{j+1+2t}$ & & $l_{j+1+2t}$ & \\
   \hline
 $p_{j+2}$&  & $l_{j+2}$ &$+l_1x$    \\
    \hline
   $p_{j+4}$ & $+p_2x$ & $l_{j+4}$ & $+l_2x$ \\
    \hline
$p_{j+4+2t}$ & $+p_{2t+2}x$ &$l_{j+4+2t}$ & $+l_{2t+2}x$ \\
\bottomrule
\end{tabular}
\caption{Additive automorphism (continued)}\label{tabel:add2}
\end{table}

\end{enumerate}
\vskip 5mm
\begin{proposition}\label{S:addiauto}
For any $x\in\mathbb{F}_q$, and any $0\leq j\leq k$, $t_{j,x}$ is an automorphism of $D(k,q)$.
\end{proposition}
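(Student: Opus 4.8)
The plan is to check, for each admissible $j$, that $t_{j,x}$ is a type-preserving bijection of $V(D(k,q))$ and that it carries edges to edges. Bijectivity is immediate from the tables: in every case the image coordinate $p_i'$ equals $p_i$ plus a polynomial in $x$ and in the coordinates $p_1,\dots,p_{i-1}$ of strictly smaller index (and similarly $l_i'$ in terms of $l_1,\dots,l_{i-1}$), so the transformation is triangular and can be inverted by back-substitution one coordinate at a time; moreover no table entry mixes point-coordinates with line-coordinates, so $t_{j,x}$ maps $\mathcal{P}_k$ to $\mathcal{P}_k$ and $\mathcal{L}_k$ to $\mathcal{L}_k$. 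It then remains to prove edge-preservation: assuming $(p)$ and $[l]$ satisfy all defining relations $p_i+l_i=f_i(p_1,l_1,\dots,p_{i-1},l_{i-1})$ for $2\le i\le k$, one must show that the images $(p')=t_{j,x}(p)$ and $[l']=t_{j,x}(l)$ satisfy $p_i'+l_i'=f_i(p_1',l_1',\dots,p_{i-1}',l_{i-1}')$ for every $i$.

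I would carry this out one relation at a time: substitute the relevant table entries into both sides of the $i$-th relation, expand using $f_i\in\{p_{i-2}l_1,\,p_1l_{i-2}\}$ (together with $f_2=p_1l_1$ and $f_3=p_1l_2$), and then reduce the resulting polynomial in the original coordinates using the defining relations $p_m+l_m=f_m$, all of which hold by hypothesis. For $t_{j,x}$ with $j\ge 2$ the coordinates $p_1,l_1,\dots,p_{j-1},l_{j-1}$ are fixed, so the relations with $i<j$ are untouched, and the $j$-th relation holds at once because $p_j'+l_j'=(p_j+x)+(l_j-x)=p_j+l_j$ while $f_j$ depends only on fixed coordinates. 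The remaining relations ($i>j$) are where the prescribed modifications of the higher coordinates in Table~\ref{tabel:add2} do their work; I would split this verification into the two congruence classes $j\equiv 0,1$ and $j\equiv 2,3$ modulo $4$ dictated by the two halves of the table, and in each class track the effect of the perturbation at index $j$ through the form of $f_i$ determined by the residue of $i$ modulo $4$.

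The delicate cases are $t_{0,x}$ and $t_{1,x}$, which perturb $p_1$ or $l_1$. Since $p_1$ and $l_1$ enter almost every $f_i$, the perturbation spreads widely and the tables produce genuine quadratic terms, such as the $+p_1x^2$ in $p_4$ and the $+2l_2x+l_1x^2$ in $l_3$ of Table~\ref{table:add1}. For these, bare substitution does not close the computation. For instance, checking the relation $p_3+l_3=p_1l_2$ for $t_{0,x}$ yields, after expansion, a discrepancy equal to $(p_1l_1-p_2-l_2)\,x$, which vanishes exactly because of the quadratic relation $p_2+l_2=p_1l_1$ at index $2$. I expect this to be the main obstacle: the verification must be organized so that the leftover cross-terms at each index are cleared by invoking the lower-index defining relations, with $p_2+l_2=p_1l_1$ playing the decisive role again and again. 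The cases $j=0$ and $j=1$ themselves run in parallel, with the roles of points and lines, and of the two forms of $f_i$, interchanged; so it suffices to treat one of them in full and transcribe. Everything beyond this cancellation bookkeeping is routine.
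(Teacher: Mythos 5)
Your proposal is correct and takes essentially the same route as the paper: the paper offers no written proof, declaring it ``a straightforward verification'' by direct substitution into the defining equations (with details deferred to \cite{LU93,LU95,FLSUW95,LUW96,Ers17}), which is precisely the coordinate-by-coordinate check you outline. Your identification of the key mechanism --- that the cross-terms left over after substitution (e.g.\ $(p_2+l_2-p_1l_1)x$ in the third relation under $t_{0,x}$) are cleared by the lower-index relations --- is exactly the bookkeeping that verification requires.
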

\medskip

\subsubsection{Polarity automorphism}\label{polarauto}
\bigskip

Consider the map $\phi:\mathcal{P}_k\rightarrow \mathcal{L}_k,\mathcal{L}_k \rightarrow \mathcal{P}_k$ such that
\[
(p_1,p_2,p_3,p_4,\ldots,p_{k-1},p_k)\xrightarrow{\phi}
\begin{cases}
 [p_1,p_2,p_4,p_3,\ldots,p_{k},p_{k-1}],  & \text{ if $k$ is even,} \cr
 [p_1,p_2,p_4,p_3,\ldots,p_{k-1},p_{k-2},p_k], &\text{\ if $k$ is odd,}
\end{cases}
\]
and
\[
[l_1,l_2,l_3,l_4,\ldots,l_{k-1},l_k]\xrightarrow{\phi}
\begin{cases}
 (l_1,l_2,l_4,l_3,\ldots,l_{k},l_{k-1}),  & \text{ if $k$ is even,} \cr
 (l_1,l_2,l_4,l_3,\ldots,l_{k-1},l_{k-2},l_k), &\text{\ if $k$ is odd.}
\end{cases}
\]
The proof of the following proposition is straightforward. \begin{proposition}\label{S:polarauto}
If $k$ is even, or $q$ is even, then $\phi$ is an automorphism of $D(k,q)$.
\end{proposition}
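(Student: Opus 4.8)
The plan is to verify directly that $\phi$ sends edges to edges. Since $\phi$ is visibly an involution — the underlying coordinate permutation $\sigma$ (fixing positions $1,2$, swapping the consecutive pairs $(3,4),(5,6),\dots$, and additionally fixing position $k$ when $k$ is odd) satisfies $\sigma^2=\mathrm{id}$, and applying $\phi$ twice also restores the point/line types — it is a bijection of $V(D(k,q))$, so adjacency preservation in one direction already yields that $\phi$ is an automorphism (its inverse is $\phi$ itself, which also preserves edges, hence non-edges map to non-edges). Thus I would fix an edge $(p)\sim[l]$, write $\phi(p)=[x]$ with $x_i=p_{\sigma(i)}$ and $\phi(l)=(y)$ with $y_i=l_{\sigma(i)}$, and check that the point $(y)$ and line $[x]$ satisfy all $k-1$ defining equations of $D(k,q)$.

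The core of the argument is a term-by-term matching: I would show that the $i$-th defining equation evaluated on $\bigl((y),[x]\bigr)$ is, after the substitution $x_i=p_{\sigma(i)}$, $y_i=l_{\sigma(i)}$, precisely the $\sigma(i)$-th defining equation of the original edge. Three structural facts drive this. First, $f_2=p_1l_1$ is symmetric and $\sigma$ fixes positions $1,2$, so the first relation is preserved outright. Second, the residue of $i$ modulo $4$ selects between the two shapes $p_{i-2}l_1$ and $p_1l_{i-2}$; the permutation $\sigma$ interchanges these two residue-types (it interchanges the classes $0$ and $3$, and $1$ and $2$, modulo $4$), while the point/line interchange built into $\phi$ swaps the roles of the $p$- and $l$-coordinates, and these two interchanges compensate, so the shape of equation $i$ on the image matches that of equation $\sigma(i)$ on the original. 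Third, away from the boundary the index shift is consistent because $\sigma(i)-2=\sigma(i-2)$, which I would confirm by the same residue bookkeeping; the small-index relations ($i=2,3,4$, governed by $f_2,f_3$) I would check by hand.

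The main obstacle, and the place where the hypothesis enters, is the relation attached to the coordinate that $\sigma$ leaves fixed. When $k$ is even, $\sigma$ fixes no index in $\{3,\dots,k\}$, every defining relation is matched with a genuine partner, and the verification closes over an arbitrary field, giving the statement for all $q$. When $k$ is odd the top position $k$ is fixed by $\sigma$, and the relation indexed by $k$ can be reconciled with the corresponding original relation only after a sign is absorbed (equivalently, after discarding a factor of $2$ produced when the orientation of the tail of the coordinate string is reversed). That reconciliation is legitimate exactly in characteristic two, i.e.\ for $q$ even. I expect this sign and parity bookkeeping at the fixed coordinate to be the sole delicate point — everything else is routine substitution — and it is precisely what forces the dichotomy ``$k$ even or $q$ even'' in the statement.
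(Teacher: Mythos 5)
Your reduction (bijectivity via the involution, so edge-preservation in one direction suffices) is fine, and your verification for even $k$ is correct and complete: with $\sigma=(3\,4)(5\,6)\cdots(k{-}1\,k)$, the $i$-th relation for the image pair is precisely the $\sigma(i)$-th relation for the original pair, since $\sigma$ interchanges the residue classes $\{0,1\}$ and $\{2,3\}$ modulo $4$ (hence the two shapes $p_{i-2}l_1$ and $p_1l_{i-2}$, which the point--line interchange also swaps) and satisfies $\sigma(i)-2=\sigma(i-2)$. That half needs no hypothesis on $q$ and is exactly the routine check the paper declares straightforward.

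The odd-$k$ half of your argument has a genuine gap, and it cannot be closed by the sign bookkeeping you invoke. Take $k\equiv 1\pmod 4$. The relation of index $k$ for the image pair reads $l_k+p_k=l_{k-1}p_1$ (because $y_{k-2}=l_{\sigma(k-2)}=l_{k-1}$), while the original relation of index $k$ reads $p_k+l_k=p_{k-2}l_1$; so what you must prove is that $p_{k-2}l_1=p_1l_{k-1}$ holds on every edge. This discrepancy is not a sign and not a factor of $2$: the two sides are different bilinear expressions, and the needed identity is not a consequence of the system in any characteristic, including characteristic two. Concretely, in $D(5,q)$ with $q=2^e$, the point $(p)=(0,0,1,0,0)$ and the line $[l]=[1,0,1,0,1]$ are adjacent (all four relations hold, using $1+1=0$), yet their images $[x]=\phi((p))=[0,0,0,1,0]$ and $(y)=\phi([l])=(1,0,0,1,1)$ violate the fifth relation: $y_5+x_5=1$ while $y_3x_1=0$. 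Since all entries are $0$ or $1$, this computation is valid over every field of characteristic two, so the pair-swap-fixing-the-top-coordinate map does not preserve edges for odd $k$ even when $q$ is even (the case $k\equiv 3\pmod 4$ fails the same way). In other words, the ``$q$ even'' part of the statement cannot be reached by your term-matching scheme plus a characteristic-two correction at the fixed coordinate; it would require a genuinely different map for odd $k$ (or a different reading of the definition of $\phi$) rather than the one your proof, on its natural reading, uses.
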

\begin{theorem}\label{S:transitive}{\rm(\cite{LU95})}
For any integer $k\geq 2$, and any prime power $q$, the automorphism group of  $D(k,q)$ is transitive on $\mathcal{P}_k$, transitive on $\mathcal{L}_k$, and the graph is edge-transitive.
If any one of $k$ and $q$ is even, then $D(k,q)$ is vertex-transitive.
\end{theorem}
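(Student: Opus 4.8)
The plan is to assemble the required automorphisms from the three families already shown to be automorphisms: the additive maps $t_{j,x}$ (Proposition~\ref{S:addiauto}), the multiplicative maps $m_{a,b}$ (Proposition~\ref{S:multiauto}), and the polarity $\phi$ (Proposition~\ref{S:polarauto}). Since $t_{j,x}$ and $m_{a,b}$ preserve vertex type, it is natural to fix the zero vector as a base point and to show that every point can be driven to the origin $\mathbf{0}=(0,\dots,0)\in\mathcal{P}_k$, and similarly for lines.

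First I would establish transitivity on $\mathcal{P}_k$ by a triangular elimination. Given a point $(p)=(p_1,\dots,p_k)$, apply $t_{0,-p_1}$ to set the first coordinate to $0$; then, for $j=2,3,\dots,k$ in increasing order, apply $t_{j,x}$ with the value of $x$ that cancels the current $j$-th coordinate. The key feature from Table~\ref{tabel:add2} is that $t_{j,x}$ fixes the first $j-1$ coordinates of a point and changes $p_j$ to $p_j+x$; hence each step leaves the previously zeroed coordinates untouched, and the composition sends $(p)$ to $\mathbf{0}$. As all these maps lie in the automorphism group and preserve $\mathcal{P}_k$ setwise, the group is transitive on $\mathcal{P}_k$. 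The identical argument applied to lines --- now using $t_{1,x}$ to adjust $l_1$ and $t_{j,x}$ (which sends $l_j$ to $l_j-x$) to adjust $l_j$ for $2\le j\le k$ --- gives transitivity on $\mathcal{L}_k$.

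For edge-transitivity I would analyze the stabilizer of the base point. Substituting $\mathbf{0}$ into the defining equations shows that the neighbors of $\mathbf{0}$ are exactly the lines $[\alpha,0,\dots,0]$, $\alpha\in\F_q$, since every $f_i$ is a product involving some $p$-coordinate and so vanishes at $\mathbf{0}$; this agrees with Theorem~\ref{T:ncc}. Now $t_{1,x}$ fixes $\mathbf{0}$ (every change to a point coordinate vanishes when all $p_i=0$) while sending $[\alpha,0,\dots,0]$ to $[\alpha+x,0,\dots,0]$, so $\{t_{1,x}:x\in\F_q\}$ permutes the neighbors of $\mathbf{0}$ transitively. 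Given two edges $(p)[l]$ and $(p')[l']$, I would use point-transitivity to carry each onto an edge incident with $\mathbf{0}$, match the two resulting neighbors of $\mathbf{0}$ by a suitable $t_{1,x}$, and compose; this produces an automorphism carrying the first edge to the second.

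Finally, vertex-transitivity under the hypothesis that $k$ or $q$ is even follows by adjoining the polarity $\phi$, which interchanges $\mathcal{P}_k$ and $\mathcal{L}_k$ and is an automorphism precisely in this case (Proposition~\ref{S:polarauto}): any point reaches $\mathbf{0}$, then $\phi$ carries $\mathbf{0}$ into $\mathcal{L}_k$, and line-transitivity finishes the job, so the group is transitive on $V(D(k,q))=\mathcal{P}_k\cup\mathcal{L}_k$. The steps are largely bookkeeping once the automorphism families are in hand; the one point demanding care is the edge-transitivity reduction, where one must verify both that the base point is fixed by the relevant $t_{1,x}$ and that these maps act transitively on its neighborhood. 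It is exactly the absence of a part-swapping automorphism when $k$ and $q$ are both odd that blocks the vertex-transitivity conclusion in that remaining case.
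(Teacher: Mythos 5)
Your proof is correct and follows exactly the route the survey intends: point- and line-transitivity by triangular elimination with the additive automorphisms $t_{j,x}$ (Proposition~\ref{S:addiauto}), edge-transitivity via the stabilizer maps $t_{1,x}$ acting transitively on the neighborhood $\{[\alpha,0,\ldots,0]:\alpha\in\F_q\}$ of the zero point, and vertex-transitivity from the polarity $\phi$ (Proposition~\ref{S:polarauto}) when $k$ or $q$ is even. This is the same argument as in the cited source \cite{LU95}, for which the survey's Propositions~\ref{S:multiauto}--\ref{S:polarauto} are precisely the stated ingredients.
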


Moreover, the automorphism group of $D(k,q)$ acts transitively on the set of  paths of length 3 (3-paths). This useful fact appeared implicitly in several papers, and it was rediscovered independently and stated explicitly in \cite{TW12,Sun17,Ers17}.  For $k\ge 4$ with $k\not\equiv 3 \pmod{4}$ and any prime power $q$, the automorphism group of $D(k,q)$ acts transitively on the set of all ordered 3-paths (see \cite{Ers17}).
It will be discussed in Section \ref{conDkq} that, except for finitely many $k$, the graph $D(k,q)$ is disconnected and all its components are isomorphic. The components are denoted by
$C\!D(k,q)$. The order of the automorphism group of $C\!D(k,q)$, for some small $k$ and $q$, was computed in \cite{Ers17}, and the following conjecture appeared there.
\begin{conjecture}\label{erskinethesis}{\rm (\cite{Ers17})}  Let $k\ge 3$, $m= k- \lfloor\frac{k-2}{4} \rfloor$ and let $q=p^e$ be an odd prime power larger than 3.  Then the automorphism group of the  graph  $C\!D(k,q)$ has order exactly
\[
\begin{cases}
 eq^{m+1}(q-1)^2,  & \text{ if }    q\equiv 3  \pmod{4},  \\
 2eq^{m+1}(q-1)^2,  & \text{ if} \;  q\equiv 0,1,2  \pmod{4}.
\end{cases}
\]
\end{conjecture}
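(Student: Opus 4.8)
The plan is to establish matching lower and upper bounds for $|\mathrm{Aut}(C\!D(k,q))|$. For the lower bound I would assemble an explicit subgroup of the stated order from the automorphisms already catalogued: the multiplicative maps $m_{a,b}$ of Proposition~\ref{S:multiauto}, the additive maps $t_{j,x}$ of Proposition~\ref{S:addiauto}, the coordinatewise Frobenius $x\mapsto x^p$ (an automorphism because every defining function $f_i$ of $D(k,q)$ has coefficients $\pm 1$ in the prime field), and, in the appropriate cases, a side-swapping involution. First I would identify the invariant coordinates that label the components of $D(k,q)$, so as to determine which of these maps stabilize a fixed component $C\!D(k,q)$ and hence restrict to it. The torus $\{m_{a,b}\}$ fixes the all-zero vertex and so contributes a subgroup of order $(q-1)^2$; the Frobenius contributes a cyclic group of order $e$; and the restrictions of the $t_{j,x}$ generate a unipotent group whose order is exactly $q^{m+1}$, where $m = k-\lfloor\frac{k-2}{4}\rfloor$. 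The main counting step is to show that precisely $m+1$ of the levels $j$ act nontrivially and independently on the component. Assembling these as an iterated semidirect product and checking that the successive intersections are trivial yields a group of order $e\,q^{m+1}(q-1)^2$.

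The factor of $2$ is more delicate and is where the congruence condition on $q$ enters. The polarity $\phi$ of Proposition~\ref{S:polarauto} is available only when $k$ is even or $q$ is even, so for odd $q$ it cannot by itself account for the factor $2$ uniformly in $k$. Instead I would seek a duality of $C\!D(k,q)$ --- an automorphism interchanging the two sides of the bipartition within the component --- obtained by composing a polarity-type map with a suitable $m_{a,b}$. The existence of such an involution should reduce to the solvability of a single quadratic condition (essentially whether $-1$ is a square in $\F_q$), which holds precisely when $q$ is even or $q\equiv 1\pmod 4$ and fails when $q\equiv 3\pmod 4$. This produces the extra factor $2$ in exactly the asserted cases and is the natural origin of the $q\bmod 4$ dichotomy.

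The upper bound is the crux, and I expect it to be the main obstacle. The strategy is to use the neighbor-complete coloring of Theorem~\ref{T:ncc}, coloring each vertex by its first coordinate. I would first show that any $\sigma\in\mathrm{Aut}(C\!D(k,q))$ either preserves the bipartition or swaps the two sides, and that it respects this coloring, thereby inducing a well-defined permutation $\bar\sigma$ of the color set $\F_q$. Exploiting that the automorphism group acts transitively on $3$-paths (the remark following Theorem~\ref{S:transitive}) together with the rigidity of the defining relations~(\ref{Bipmain0}), I would argue that $\bar\sigma$ must lie in the one-dimensional affine semilinear group $A\Gamma L(1,q)$; this caps the ``color part'' of $\sigma$ and already supplies the $e$ factor and part of the $(q-1)$ factors. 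After multiplying $\sigma$ by an appropriate element of the group constructed in the lower bound, one is reduced to an automorphism fixing a chosen vertex, its color, and one incident edge.

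The final and hardest step is to show that the stabilizer of such a flag is trivial. Here I would propagate the fixing of coordinates through the triangular system~(\ref{Bipmain0}): once the first two coordinates of a point and of an adjacent line are fixed, each successive equation determines the next coordinate, forcing $\sigma$ to act as the identity on ever larger neighborhoods, and by connectedness on all of $C\!D(k,q)$. This bootstrapping must be carried out separately for the four residue classes of $k\bmod 4$, since the defining functions $f_i$ change form across them, and care is needed where the multiplicative and additive parts interact. Combining the trivial flag-stabilizer with the orbit count from transitivity then forces $|\mathrm{Aut}(C\!D(k,q))|$ to coincide with the lower bound, completing the proof.
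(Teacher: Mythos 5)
You are attempting to prove what the paper records as Conjecture~\ref{erskinethesis}: an open conjecture from Erskine's thesis, supported only by machine computations for small $k$ and $q$, and the paper immediately follows it with the open problem of determining $\mathrm{Aut}(C\!D(k,q))$. There is therefore no proof in the paper to compare against, and your text is not a proof either, but a program. Its one solid half is the lower bound when $q\equiv 3\pmod 4$: since the automorphism group of $D(k,q)$ acts transitively on $3$-paths (remark after Theorem~\ref{S:transitive}), its component stabilizer acts transitively on the $q^{m+1}(q-1)^2$ unordered $3$-paths of $C\!D(k,q)$, and the coordinate-wise Frobenius maps form a group of order $e$ fixing the $3$-path $[1,0,\ldots,0]\sim(0)\sim[0]\sim(1,0,\ldots,0)$; orbit--stabilizer then already yields $|\mathrm{Aut}(C\!D(k,q))|\ge e\,q^{m+1}(q-1)^2$, so your assembly of $m_{a,b}$, $t_{j,x}$ and Frobenius is a more laborious route to a bound that is essentially known. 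Even within the lower bound, the extra factor $2$ for $q\equiv 1\pmod 4$ is not routine: the polarity of Proposition~\ref{S:polarauto} requires $k$ or $q$ to be even, so for odd $k$ and odd $q$ you would have to construct a genuinely new point--line duality of the component; asserting that its existence ``should reduce to whether $-1$ is a square'' is a hope consistent with the conjectured dichotomy, not an argument.

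The genuine gap is the upper bound, which you yourself call the crux, and your plan for it assumes exactly what has to be proved. You posit that every abstract automorphism $\sigma$ respects the neighbor-complete coloring by first coordinates, hence induces a permutation $\bar\sigma$ of $\F_q$ lying in $A\Gamma L(1,q)$, and you then ``propagate'' coordinate-fixing through the triangular system (\ref{Bipmain0}). But the coloring of Theorem~\ref{T:ncc} is an artifact of the presentation, not a canonical graph invariant: nothing known forces an automorphism of the underlying abstract graph to act coordinate-wise, to preserve the color classes, or to induce any map of $\F_q$ at all. Establishing such rigidity --- equivalently, showing that the setwise stabilizer of a $3$-path is no larger than the Frobenius group, possibly doubled --- is precisely the content of the conjecture, and is the reason it remains open; the analogous rigidity has been achieved only in very low dimension and for special graphs (the determination of $\mathrm{Aut}$ for $D(2,q)$, $D(3,q)$ in \cite{Vig02}, and the trivial-automorphism-group result of \cite{LT22}), each time by substantial ad hoc work. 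As written, your bootstrapping step is circular, and splitting into residue classes of $k\bmod 4$ does not repair it.
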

This count suggests that the stabilizer of the 3-path  $$[1,0,\ldots,0]\sim (0,0,\dots, 0)\sim [0,0,\ldots, 0]\sim (1,0,\ldots, 0) $$  in the action of the automorphism group of $C\!D(k,q)$ on its 3-paths   is generated by the multiplicative automorphisms and the Frobenius automorphisms of the field. The results in  \cite{Vig02} on the structure of the automorphism group of  $D(2,q)$ and $D(3,q)$  support this.    For $q=3$,  the number of components of $D(k,3)$ is not well-understood.
\begin{problem}  Determine the automorphism group  of the graph $C\!D(k,q)$.
\end{problem}

\subsection{Girth of $D(k,q)$}
\bigskip

Lazebnik and Ustimenko in~\cite{LU95}
showed that $\girth(D(k,q)) \ge k+5$ for
odd~$k$, and  $\girth(D(k,q)) \ge k+4$ for even~$k$.

\begin{theorem}\label{S:girthLB}~{\rm(\cite{LU95})} Let $k\ge 2$ be an integer, and let $q$ be a prime power. Then $\girth(D(k,q)) \ge k+5$ if $k$ is odd, and
$\girth(D(k,q)) \ge k+4$ if $k$ is even.
\end{theorem}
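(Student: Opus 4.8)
The plan is to reduce both inequalities to the single assertion that $D(k,q)$ contains no cycle of length at most $k+3$. Since $D(k,q)$ is bipartite, every cycle has even length, so this assertion forces the shortest cycle to have length at least $k+4$; when $k$ is even this is already the claimed bound, and when $k$ is odd the number $k+3$ is even while $k+4$ is odd, so the first admissible even length exceeding $k+3$ is $k+5$, giving $\girth(D(k,q)) \ge k+5$. (This is consistent with, and can be cross-checked against, the monotonicity of girth in $k$ from Remark~\ref{rem10.1}.) So I would assume for contradiction that there is a cycle $C$ of length $2s \le k+3$,
$$(u^0) \sim [v^0] \sim (u^1) \sim [v^1] \sim \cdots \sim (u^{s-1}) \sim [v^{s-1}] \sim (u^0),$$
with points $u^i$ and lines $v^i$, superscripts read modulo $s$, and I would use point-transitivity (Theorem~\ref{S:transitive}) to place $u^0$ at the zero vector.

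The engine of the proof is to subtract the adjacency relations (\ref{Bipmain0}) at a vertex shared by two consecutive edges of $C$. Writing $a_i = u^i_1$ and $b_i = v^i_1$ for the first coordinates, Theorem~\ref{T:ncc} immediately gives $a_i \ne a_{i+1}$ and $b_{i-1} \ne b_i$ for all $i$, since a vertex cannot have two neighbors with the same first coordinate. At the line $v^i$, common to $(u^i)$ and $(u^{i+1})$, subtracting the two systems and using that each $f_j$ of $D(k,q)$ is a monomial carrying the point's or the line's first coordinate as one factor yields, for every coordinate index $j$,
$$u^i_j - u^{i+1}_j = (a_i - a_{i+1})\,E^{(j)}_i,$$
where $E^{(j)}_i$ is a monomial in $b_i$ and in coordinates of $v^i$ of index strictly smaller than $j$ (in fact of index at most $j-2$ once $j \ge 4$); a symmetric computation at the point $u^i$ gives $v^i_j - v^{i-1}_j = (b_i - b_{i-1})\,F^{(j)}_i$ with the analogous triangular dependence. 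First I would record these for $j = 2,3,4,\dots$ explicitly, to make the triangular (lower-index) structure, and the role of the residue $j \bmod 4$ in choosing which lower coordinate appears, completely transparent.

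Summing each telescoping identity around $C$ then produces, for every $j$, a pair of closing conditions
$$\sum_{i=0}^{s-1}(a_i - a_{i+1})\,E^{(j)}_i = 0, \qquad \sum_{i=0}^{s-1}(b_i - b_{i-1})\,F^{(j)}_i = 0,$$
and the whole problem becomes showing that this hierarchy of $2(k-1)$ conditions cannot be satisfied once $2s \le k+3$, given that every factor $a_i - a_{i+1}$ and $b_i - b_{i-1}$ is nonzero. The crux is an induction on $j$: the triangularity lets one feed the already-established vanishing of the low-index coordinate differences into $E^{(j)}_i$ and $F^{(j)}_i$, and the mod-$4$ alternation in the definition of the $f_j$ is exactly what advances the ``frozen'' prefix of the vectors $u^i$ and $v^i$ by one coordinate every second step. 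A cycle as short as $2s \le k+3$ runs out of coordinates before this process terminates, eventually forcing either two consecutive first coordinates to coincide, contradicting Theorem~\ref{T:ncc}, or a Vandermonde-type system in the $b_i$ (respectively the $a_i$) to be singular, which it is not. This last nondegeneracy is the moment-curve phenomenon already visible in the Wenger subfamilies, and it is where the specific alternating pattern of $D(k,q)$ is indispensable.

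I expect the main obstacle to be precisely the bookkeeping in this induction: the $E^{(j)}_i$ and $F^{(j)}_i$ are nonlinear and change shape with $j \bmod 4$, so the closing conditions must be organized carefully, processing the even- and odd-indexed coordinates in tandem and peeling off one first-coordinate difference at each stage, in order to convert ``$2s \le k+3$'' into a genuine contradiction rather than a mere dimension count. A cleaner route worth trying in parallel is induction on $k$ through the covering map $D(k,q) \to D(k-1,q)$ of Theorem~\ref{T:cover}: a cycle of length $2s$ projects to a closed walk of the same length downstairs, which is either a forbidden short cycle (handled by the inductive hypothesis) or has a repeated vertex, meaning that two cycle vertices agree in their first $k-1$ coordinates and differ only in the $k$-th; the single remaining equation must then be exploited, and the gain of exactly $2$ precisely when $k$ is odd ought to emerge from the parity of that last relation. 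The base cases $D(2,q)$ and $D(3,q)$, of girth $6$ and $8$, are immediate, being the biaffine parts of the projective plane and the generalized quadrangle.
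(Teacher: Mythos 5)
Your reduction to the single claim that $D(k,q)$ contains no cycle of length at most $k+3$ is correct, and your telescoping identities around a hypothetical cycle are accurate in substance. But there is a genuine gap exactly where you place "the main obstacle": the assertion that the hierarchy of closing conditions cannot be satisfied once $2s\le k+3$ --- that the induction "runs out of coordinates," forcing two consecutive first coordinates to coincide or a Vandermonde-type system to be singular --- is the entire content of the theorem, and it is asserted rather than proven. A concrete way to see that something essential is missing: the Wenger graphs $W_m(q)$ fit your general template exactly (a triangular system whose right-hand sides are monomials carrying a first coordinate as a factor times a lower-index coordinate, with all consecutive first-coordinate differences nonzero by Theorem~\ref{T:ncc}, and the same telescoping identities $u^i_j-u^{i+1}_j=(a_i-a_{i+1})E^{(j)}_i$), yet every $W_m(q)$ contains an $8$-cycle, so any argument operating at the level of generality you describe would prove a false statement for them. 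The mod-$4$ alternation of $D(k,q)$ must therefore enter the induction in an explicit, load-bearing way; you say it is "indispensable," but the proposal never exhibits how the closing conditions of $D(k,q)$ differ from those of Wenger-type graphs, and that is precisely the hard step. Your alternative route (induction on $k$ via the projection $D(k,q)\to D(k-1,q)$) has the same defect: from the even case $k-1$, girth monotonicity (Remark~\ref{rem10.1}) gives only girth at least $k+3$ for odd $k$, and since $k+3$ is even, bipartiteness does not improve this; ruling out cycles of length exactly $k+3$ --- the gain of $2$ at each even-to-odd step --- is again the whole theorem, and it cannot simply "emerge from the parity of that last relation," in part because a short cycle upstairs may project to a degenerate closed walk whose analysis is not routine.

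For comparison, the proof the survey presents (due to Taranchuk \cite{Tar24}, simpler than the original argument of \cite{LU95}) avoids this bookkeeping entirely by a structural argument: one introduces the auxiliary graphs $A(n,q)$, shows that the zero point $(0)$ of $A(n,q)$ lies on no cycle of length less than $2n+2$, constructs a covering map $D(2k+1,q)\to A(k+2,q)$ sending $(0)$ to $(0)$, and then combines point-transitivity of $D(2k+1,q)$ (Theorem~\ref{S:transitive}) with the lifting property of covers to exclude all short cycles in the odd-dimensional graphs; the even case then follows from girth monotonicity. Your second route gestures in this direction, but projecting within the family $D(\cdot,q)$ is not the same as covering a different, simpler family through a distinguished vertex: the latter is what makes the argument close, and it is the step your proposal does not supply.
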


Recently, Taranchuk \cite{Tar24} presented a simpler proof of this theorem. Here are the main ideas of his proof.
The algebraically defined graph $A(n,q)=B\Gamma_n(q;f_2,\ldots,f_n)$
was introduced by Ustimenko in \cite{Ust04,Ust07,Ust13},
where
$$
f_i= \begin{cases}p_{i-1} \ell_{1}, & \text { if } i\text { is even, } \\
p_{1} \ell_{i-1}, & \text { if } i \text { is odd, }\end{cases}
$$
for $2 \leq i \leq n$.  Let $(0)$ denote the point corresponding to the zero vector.
The point $(0)$ in $A(n, q)$ is not contained in any cycle of length less
than $2n + 2$. Then there is a covering map from $D(2k + 1, q)$ to
$A(k + 2, q)$ which maps the point $(0)$ in $D(2k + 1, q)$ to the point $(0)$ in $A(k + 2, q)$. By transitivity of $D(2k + 1, q)$
on the set of points and the properties of covering maps, no cycle of length less than $2k + 6$ can appear in $D(2k + 1, q)$.

\medskip

The following conjecture was stated
in~\cite{FLSUW95} for all $q\ge 5$, and here we extend it
to the case where $q=4$.
\begin{conjecture}\label{S:girthconj}
The graph $D(k,q)$ has girth $k+5$ for odd $k$ and girth $k+4$ for even $k$, and all prime powers $q\ge 4$.
\end{conjecture}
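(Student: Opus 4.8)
The plan is to combine the matching lower bound, which is already Theorem~\ref{S:girthLB}, with an explicit upper bound: it suffices to produce, for every prime power $q\ge 4$, a single cycle of length $k+5$ when $k$ is odd and of length $k+4$ when $k$ is even. Writing $g(k)$ for this target length, note that $g(k)=2\lceil k/2\rceil+4$, so $g$ is constant across each step from an odd $k$ to $k+1$ and increases by $2$ across each step from an even $k$ to $k+1$. A first sanity check is a dimension count. A cycle of length $2m$ passes through points $P_0,\dots,P_{m-1}$ and lines $L_0,\dots,L_{m-1}$ with $P_i\sim L_i\sim P_{i+1}$ cyclically; by point-transitivity (Theorem~\ref{S:transitive}) we may fix $P_0=(0,\dots,0)$, and by the triangular nature of the defining system (Theorem~\ref{T:ncc}) the whole closed walk is then determined by the first coordinates $b_0,a_1,b_1,\dots,a_{m-1},b_{m-1}$. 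Closing the walk forces the point recovered from $L_{m-1}$ to equal $(0,\dots,0)$, which is $k-1$ polynomial equations in these $2m-1$ parameters. For $2m=g(k)$ the excess of parameters over equations is $g(k)-k$, namely $5$ for odd $k$ and $4$ for even $k$. Hence the variety of closed walks of the target length has positive expected dimension, and for all sufficiently large $q$ a Lang--Weil estimate produces $\F_q$-points; the non-degeneracy conditions (distinctness of the points $P_i$ and of the lines $L_j$) are open and so hold off a proper subvariety, yielding a genuine cycle.

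To organize the argument uniformly in $k$ I would induct from the known base cases $\girth(D(2,q))=6$ and $\girth(D(3,q))=8$, using the covering maps $D(k+1,q)\to D(k,q)$ of Theorem~\ref{T:cover}. Because the $(k+1)$-st defining function $f_{k+1}$ depends only on coordinates of index at most $k-1$, the top coordinate along a lift is governed by a sign-alternating recursion; traversing a closed walk $C$ of even length returns the top coordinate to its initial value plus a fixed increment $\Delta_C\in\F_q$ that is independent of that initial value. Thus a cycle $C$ in $D(k,q)$ lifts to a cycle in $D(k+1,q)$ precisely when $\Delta_C=0$. This explains the two regimes above: across an odd-to-even step (where $g$ is constant) the task is to exhibit one girth cycle of $D(k,q)$ with $\Delta_C=0$, which then lifts to a girth cycle of $D(k+1,q)$; across an even-to-odd step (where $g$ jumps by $2$) every girth cycle must have $\Delta_C\ne 0$, consistently with Theorem~\ref{S:girthLB}, so one must instead realize a length-$(g(k)+2)$ cycle in $D(k+1,q)$, equivalently a closed walk of that length in $D(k,q)$ with vanishing increment that remains simple upstairs.

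The payoff of this set-up is that, after normalizing a candidate cycle with the additive and multiplicative automorphisms of Propositions~\ref{S:addiauto} and~\ref{S:multiauto} (for instance placing three consecutive vertices at the standard $3$-path, which is legitimate by the $3$-path transitivity recorded after Theorem~\ref{S:transitive}), the closing and increment conditions collapse to a concrete low-dimensional system over $\F_q$. I expect this to reduce, much as in the monomial-graph setting behind Conjecture~\ref{DLW}, to the solvability of a single explicit polynomial equation in one or two variables over $\F_q$, or dually to the assertion that a specific polynomial---whose shape depends on $k\bmod 4$ through the periodic definition of the $f_i$---fails to be a permutation polynomial of $\F_q$.

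The main obstacle is precisely this last reduction made uniform in $q$. For large $q$ the Weil bounds settle solvability at once, so the genuine content lies in the small fields---$q=4$ and $q=5$ above all---and in controlling the $k$-dependence. Here I would first dispose of $q=4,5$ by the explicit construction (a finite computation for each residue of $k$ modulo $4$, extended to all $k$ by the lifting step), and then seek a Weil-type estimate valid for every $q$ above an explicit, residue-independent threshold, so that only finitely many pairs $(k\bmod 4,\,q)$ remain to be checked by hand. Making the governing polynomial completely explicit in each residue class $k\bmod 4$, and proving the requisite solvability or non-permutation statement for \emph{all} $q\ge 4$ rather than merely asymptotically, is the step I expect to be genuinely hard, and is presumably why the conjecture remains open.
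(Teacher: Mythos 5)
The statement you set out to prove is Conjecture~\ref{S:girthconj}, which the paper itself leaves open: it records the matching lower bound (Theorem~\ref{S:girthLB}) and only partial confirmations for special families of pairs $(k,q)$ (Theorems~\ref{F:girth}, \ref{S:girthk3}, \ref{F:girth14}, \ref{F:girth16}, \ref{F:girth23}, \ref{F:girth23a}, \ref{S:girthlift}). So there is no proof in the paper to compare against; the only question is whether your argument settles the conjecture, and it does not. The parts of your plan that are sound --- reducing to an upper bound, the parameter count for closed walks rooted at $(0,\dots,0)$, and the dichotomy that a cycle of $D(k,q)$ lifts to a cycle of $D(k+1,q)$ exactly when its net increment $\Delta_C$ vanishes --- are precisely the voltage-lift picture of Section~\ref{voltagelift} and Theorem~\ref{T:cover}, and they are the same machinery behind the paper's partial results (Theorem~\ref{S:girthlift} in particular); none of it is new leverage on the conjecture.

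The genuine gaps are the two steps you label as ``expected.'' First, Lang--Weil does not apply to a variety merely because its expected dimension is positive: one needs a geometrically irreducible component of positive dimension defined over $\F_q$, not contained in the degenerate locus, and nothing in your setup supplies that; the closing equations could a priori define an empty variety, or one whose components are permuted by Frobenius. Second, and fatally for uniformity, the Lang--Weil threshold depends on the degree and dimension of the variety, and both grow with $k$ (unrolling the triangular system gives closing equations of $k$-dependent degree). So ``all sufficiently large $q$'' is not uniform in $k$: for each fixed $q\ge 4$ infinitely many $k$ escape the asymptotic argument, and the lifting induction cannot recover them --- the odd-to-even step needs a girth cycle with \emph{zero} increment (not guaranteed by the mere existence of girth cycles), and the even-to-odd step needs a brand-new longer closed walk with zero increment that stays simple upstairs, a problem of exactly the original type at every $k$. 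Hence your ``finite computation per residue class, extended by lifting'' does not close under induction. Finally, your argument nowhere uses $q\ge 4$, yet the conjecture is false for $q=3$ (the paper's table gives $\girth(D(4,3))=12\neq 8$), so any correct proof must isolate what changes between $q=3$ and $q\ge 4$; generic point-counting is structurally incapable of seeing this. What remains is, as you concede in your last paragraph, precisely the open problem.
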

The conjecture is wide open, and it was confirmed only for a few infinite families of $k$ and $q$,
see Schliep \cite{Sch94}, Thomason \cite{Tho97,Tho02}, Xu, Cheng and Tang \cite{XCT23} and Xu \cite{Xu23}.

For $q=2$, the girth of $D(k,2)$ is 8 if $k=2,3,4$, and 16 if $k\ge 5$.
For $q=3$, the girth of $D(k,3)$ exhibits different behavior, and we do not understand it completely.
The known results on the girth for $2\le k\le 320$ are summarized by Xu, Cheng and Tang in  \cite{XCT23a}.
They are shown in the following table with either the exact values of the girth  or upper bounds for the girth, where $[m,n]$ denotes the set of all  integers $k$ such that $m\le k\le n$.
Note that the lower bound of the girth is $k+5$ for odd $k$, and $k+4$ for even $k$.

\begin{table}[htbp]
\centering
\begin{tabular}{|c|c|c|c|c|c|c|c|}
\toprule
$k$ &2   & 3 & $[4,8]$    &$[9,14] $    & $[15, 16]$ &$[17,19]$& $[20,24]$ \\
\hline
girth & 6 & 8 & 12  &18   & 20&24&28  \\
\hline
$k$ &$[25,26]$ & $[31,32]$&35 & 39&$[49,50]$    &51  &55 \\
\hline
girth &34&36& 40 &$\le 48$& 54  &56 &$\le 68$ \\
\hline
$k$  & 67 &71&79&[103,104]& 111 & 135&143     \\
\hline
girth & 72&$\le 80$ &$\le 96$&108 & $\le136$ &$\le 144$& $\le 160$ \\
\hline
$k$&[157,158]  &159  &211 &223&271&287& [319,320]  \\
\hline
girth  &162 &$\le 192$  & 216& $\le 272$&$\le 288$&$\le 320$&324\\
\bottomrule
\end{tabular}
\caption{Known girth of $D(k,3)$ for $2\le k\le 320$ }
\end{table}

\begin{problem}
Determine the girth of $D(k,3)$ for all $k\ge 2$.
\end{problem}

Conjecture~\ref{S:girthconj} was proved only for infinitely many pairs of $(k,q)$. The following results describe all of them.
\medskip

\begin{theorem}\label{F:girth}{\rm{(\cite{FLSUW95})}}
For any $k\ge 3$ odd, and $q$ being a member of the arithmetic progression $\{1+n(\frac{k+5}{2})\}_{n\ge 1}$,
\[
\girth(D(k,q)) = k+5.
\]
\end{theorem}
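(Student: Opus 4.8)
The plan is to combine the lower bound already in hand with one explicitly constructed short cycle. By Theorem~\ref{S:girthLB} we have $\girth(D(k,q))\ge k+5$ for odd $k$, so it suffices to exhibit a single cycle of length exactly $k+5$. Write $m=\tfrac{k+5}{2}$; since $k$ is odd this is an integer, and $m\ge 4$ for $k\ge 3$. The hypothesis that $q$ lies in the progression $\{1+n\tfrac{k+5}{2}\}_{n\ge1}$ says precisely that $m\mid q-1$, so the cyclic group $\F_q^*$ contains an element $\epsilon$ of order exactly $m$, i.e.\ a primitive $m$-th root of unity. The whole construction is arranged so that this $\epsilon$ drives a rotational symmetry of the desired cycle.

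First I would set $\rho=m_{\epsilon,\epsilon}$, which is an automorphism of $D(k,q)$ by Proposition~\ref{S:multiauto} and satisfies $\rho^{m}=m_{\epsilon^{m},\epsilon^{m}}=\mathrm{id}$. From the multiplicative table, $\rho$ multiplies the $i$-th coordinate of any point or line by $\epsilon^{e_i}$, where $e_1=1$ and $e_i=\lfloor(i+3)/2\rfloor$ for $i\ge2$; in particular $1\le e_i\le e_k=\tfrac{k+3}{2}=m-1$ for every $1\le i\le k$. The construction is then a \emph{rotary cycle}: choose a point $P_0$ and a line $L_0$ that is a common neighbour of $P_0$ and $\rho(P_0)$, and form the closed walk
\[
P_0\sim L_0\sim \rho(P_0)\sim \rho(L_0)\sim \rho^{2}(P_0)\sim\cdots\sim \rho^{m-1}(P_0)\sim \rho^{m-1}(L_0)\sim \rho^{m}(P_0)=P_0 .
\]
Every adjacency here is the $\rho$-image of one two steps earlier, so the walk is legitimate as soon as $L_0\sim P_0$ and $L_0\sim\rho(P_0)$. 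If both $P_0$ and $L_0$ have nonzero first coordinate, those first coordinates run through the $\epsilon^{i}$-multiples, which are pairwise distinct because $\epsilon$ is primitive of order $m$; hence the $m$ points and $m$ lines are pairwise distinct and the walk is a genuine $(k+5)$-cycle.

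Next I would produce the common neighbour. By the neighbour-complete property (Theorem~\ref{T:ncc}), for a point $P=(c_1,\dots,c_k)$ and a prescribed line first coordinate $\beta$ there is a unique adjacent line $L^{P}(\beta)=[\beta,\lambda_2,\dots,\lambda_k]$, with the $\lambda_i$ read off from $\lambda_i=f_i-c_i$ (so $\lambda_2=c_1\beta-c_2$, $\lambda_3=c_1\lambda_2-c_3$, and then $\lambda_i=c_1\lambda_{i-2}-c_i$ or $\lambda_i=c_{i-2}\beta-c_i$ according to $i\bmod 4$). I want one line that is simultaneously $L^{P_0}(\beta)$ and $L^{\rho(P_0)}(\beta)$; since $\rho(P_0)$ has coordinates $\epsilon^{e_i}c_i$, this amounts to solving $\lambda_i(c_1,\dots,c_i,\beta)=\lambda_i(\epsilon^{e_1}c_1,\dots,\epsilon^{e_i}c_i,\beta)$ for $2\le i\le k$. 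The crucial observation is that in the $i$-th equation the variable $c_i$ enters only through the terms $-c_i$ and $-\epsilon^{e_i}c_i$, so its coefficient is $\epsilon^{e_i}-1$, while every other term involves $c_1,\dots,c_{i-1},\beta$ only. Because $1\le e_i\le m-1$ and $\epsilon$ has order exactly $m$, we have $\epsilon^{e_i}\ne1$ for all $i$, so each coefficient is nonzero; the system is triangular and solves uniquely for $c_2,\dots,c_k$ once we set, say, $c_1=\beta=1$. This gives $P_0$ and $L_0=L^{P_0}(\beta)$ with nonzero first coordinates and the required common-neighbour property.

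The step I expect to be the main obstacle is the bookkeeping in this last paragraph: one must verify that the polynomials $f_i$ interact with the scaling $c_i\mapsto\epsilon^{e_i}c_i$ so that $c_i$ really occurs in the $i$-th equation with coefficient exactly $\epsilon^{e_i}-1$ and no hidden higher occurrence destroys triangularity, and one must confirm $1\le e_i\le m-1$ for all $i\le k$ — which is exactly where $m=\tfrac{k+5}{2}$ and the oddness of $k$ enter, through $e_k=m-1$. Once that is checked, the primitivity of $\epsilon$, guaranteed by $m\mid q-1$, is precisely what keeps every coefficient nonzero, so the arithmetic hypothesis on $q$ is used in an essential and transparent way, and the cycle of length $k+5$ together with Theorem~\ref{S:girthLB} yields $\girth(D(k,q))=k+5$.
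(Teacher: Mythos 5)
Your proof is correct: the exponents $e_i=\lfloor(i+3)/2\rfloor$ read off from Table~\ref{table:multi} do satisfy $1\le e_i\le e_k=\frac{k+3}{2}=m-1$, so every diagonal coefficient $\epsilon^{e_i}-1$ in your triangular system for the common neighbour of $P_0$ and $\rho(P_0)$ is invertible, and the $\rho$-orbit of the resulting configuration is a genuine cycle of length $2m=k+5$, which together with Theorem~\ref{S:girthLB} yields the equality. This is essentially the same approach as the original proof in \cite{FLSUW95} (which this survey cites rather than reproduces): a primitive $\frac{k+5}{2}$-th root of unity furnishes a multiplicative automorphism of order $\frac{k+5}{2}$, and a $(k+5)$-cycle arises as the orbit under this rotation of an edge-pair $P_0\sim L_0\sim\rho(P_0)$, exactly the mechanism that \cite{Sun17} later modified (using the polarity to halve the modulus) to obtain Theorem~\ref{S:girthk3}.
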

\begin{remark}
The theorem could be extended for even $k\ge 2$ and $q$ being a member of the arithmetic progression $\{1+n(\frac{k+4}{2})\}_{n\ge 1}$, and in this case $\girth\bigl(D(k,q)\bigr) = k+4$. The proof is essentially the same as the proof in \cite{FLSUW95}, and we omit it.
\end{remark}
By modifying an idea from  \cite{FLSUW95}, this result was strengthened in \cite{Sun17}.
\begin{theorem}\label{S:girthk3}{\rm{(\cite{Sun17})}}
For any $k\ge 3$ with $k\equiv 3\pmod 4$, and $q$ being a member of the arithmetic progression $\{1+n(\frac{k+5}{4})\}_{n\ge 1}$,
\[
\girth\bigl(D(k,q)\bigr) = k+5.
\]
\end{theorem}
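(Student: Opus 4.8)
The plan is to prove only the upper bound $\girth(D(k,q)) \le k+5$, since the matching lower bound $\girth(D(k,q)) \ge k+5$ for odd $k$ is already furnished by Theorem \ref{S:girthLB}, and $k \equiv 3 \pmod 4$ is odd. Because $D(k,q)$ is bipartite, a cycle of length $k+5$ has $\frac{k+5}{2}$ points and $\frac{k+5}{2}$ lines; the congruence $k\equiv 3\pmod 4$ forces $4 \mid k+5$, so both $\frac{k+5}{2}$ and $\frac{k+5}{4}$ are integers and the hypothesis $q = 1 + n\frac{k+5}{4}$ says precisely that $\frac{k+5}{4}\mid q-1$. Hence $\F_q^*$ contains an element $\theta$ of exact order $s := \frac{k+5}{4}$, a primitive $s$-th root of unity. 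The entire point of the construction is to build a $(k+5)$-cycle whose coordinates are governed by $\theta$, so that only a root of unity of order $\frac{k+5}{4}$ is needed, rather than the order $\frac{k+5}{2}$ demanded by Theorem \ref{F:girth}.

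First I would recall the symmetric-cycle idea behind Theorem \ref{F:girth}. There one seeks a closed walk $P^0 \sim L^0 \sim P^1 \sim \cdots \sim P^{t-1}\sim L^{t-1}\sim P^0$ of length $2t = k+5$ invariant under a point-preserving automorphism $\rho$ with $P^i\mapsto P^{i+1}$ and $L^i \mapsto L^{i+1}$; taking $\rho = m_{a,b}$ a multiplicative automorphism (Proposition \ref{S:multiauto}) and $P^i = \rho^i(P^0)$, $L^i = \rho^i(L^0)$, the whole walk is determined by the base edge $(P^0)\sim[L^0]$, its adjacencies collapse to the two conditions $P^0 \sim L^0$ and $L^0 \sim \rho(P^0)$, and closure demands $\rho^t = \mathrm{id}$ with $t = \frac{k+5}{2}$. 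Requiring $\rho$ to have order exactly $\frac{k+5}{2}$ is what forces a root of unity of that order in \cite{FLSUW95}. Sun's modification, available because $4 \mid k+5$, is to impose a finer periodicity: instead of letting the $\frac{k+5}{2}$ points form a single $\rho$-orbit, one arranges the vertex coordinates so that a multiplicative automorphism of order only $s = \frac{k+5}{4}$, built from $a,b \in \langle\theta\rangle$, generates the cycle up to an explicit order-two identification, an extra involutive symmetry supplying the missing factor of two at no further cost to $q-1$. Concretely this amounts to writing down explicit coordinates for all $k+5$ vertices as fixed polynomial expressions in $\theta$, periodic with period $s$ in blocks of four consecutive coordinates, a template that is consistent exactly because $4 \mid k+5$.

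With the cycle so parametrized by $\theta$, the remaining work splits into two verifications. The routine one is algebraic: substitute the coordinate formulas into the defining relations of $D(k,q)$ in the normalized representation $f_2 = p_1l_1$, $f_3 = p_1l_2$, $f_i = p_{i-2}l_1$ or $p_1l_{i-2}$, and check that each of the $k-1$ adjacency equations $p_i + l_i = f_i$ collapses to an identity in $\theta$, using only $\theta^s = 1$; the four residue classes of $i \bmod 4$ are handled uniformly, and the arithmetic-progression hypothesis is invoked precisely to guarantee these identities. I expect the main obstacle to be the second verification: showing that the $k+5$ vertices produced are pairwise distinct, so the closed walk is a genuine $(k+5)$-cycle and not a shorter cycle traversed repeatedly or a walk with backtracking. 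Here one must check that $\theta$ having exact order $s$ prevents any coincidence $P^i = P^j$ or $L^i = L^j$, both within a block of four and across blocks, and that the auxiliary involution is fixed-point-free on the vertices of the cycle. Once distinctness is established, the walk is an honest $(k+5)$-cycle, giving $\girth(D(k,q)) \le k+5$, which together with Theorem \ref{S:girthLB} yields the claimed equality.
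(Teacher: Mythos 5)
Your framing is correct as far as it goes, and it matches the only thing the survey itself says about this result (the survey gives no proof, attributing the theorem to \cite{Sun17} as ``modifying an idea from \cite{FLSUW95}''): the lower bound $\girth(D(k,q))\ge k+5$ is indeed Theorem~\ref{S:girthLB}, the congruence $k\equiv 3\pmod 4$ gives $4\mid k+5$, and the hypothesis on $q$ says exactly that $s=\frac{k+5}{4}$ divides $q-1$, so $\F_q^*$ contains an element $\theta$ of order $s$. The problem is that everything after this setup is deferred rather than done. The entire mathematical content of the theorem is the existence of an actual $(k+5)$-cycle, and you never produce one: the coordinate ``template, periodic with period $s$ in blocks of four'' is asserted to exist but never exhibited; the claim that each adjacency equation ``collapses to an identity in $\theta$'' is unsupported, even though the right-hand sides $f_i$ change form with $i\bmod 4$ and couple coordinates of both endpoints, so consistency of such a template is precisely the crux; and closure of the walk after exactly $k+5$ steps and pairwise distinctness of its vertices --- which you yourself flag as ``the main obstacle'' --- are not addressed at all. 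As written, this is a plan for a proof, not a proof.

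The gap is compounded by the fact that the one structural mechanism you add to the \cite{FLSUW95} picture --- an order-$s$ multiplicative rotation together with a fixed-point-free ``auxiliary involution supplying the missing factor of two'' --- is never defined, and in the only cases where the theorem is genuinely new it has no realization among the automorphisms catalogued in this survey. Note first that if $q$ and $s$ are both odd, then $s\mid q-1$ and $2\mid q-1$ already force $\frac{k+5}{2}=2s\mid q-1$, so Theorem~\ref{F:girth} applies directly; the present theorem adds content only when $s$ is even (i.e.\ $k\equiv 3\pmod 8$) or $q$ is even. Now, a fixed-point-free involution of a cycle of length $4s$ is either the antipodal rotation or a reflection through two opposite edge-midpoints. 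When $s$ is even, the antipodal rotation is $\sigma^{s/2}$, already a power of your order-$s$ rotation $\sigma$, so it cannot supply any extra identification; when $s$ is odd, realizing it by a multiplicative automorphism $m_{c,d}$ and combining with $\sigma=m_{a,b}$ produces a multiplicative rotation of order $2s$, which forces $2s\mid q-1$ --- exactly the hypothesis one is trying to avoid. An edge-midpoint reflection, on the other hand, swaps the two sides of the bipartition, hence must come from a point--line duality, and by Proposition~\ref{S:polarauto} the polarity is an automorphism of $D(k,q)$ only when $k$ or $q$ is even --- unavailable when $k\equiv 3\pmod 8$ and $q$ is odd. The repair consistent with your own ``blocks of four'' remark is to drop the involution entirely and enlarge the fundamental domain instead: seek a base path $P^0\sim L^0\sim Q^0\sim M^0$ (two points, two lines) and a multiplicative automorphism $\sigma=m_{a,b}$ of order $s$ with $M^0\sim\sigma(P^0)$, so that the $\sigma$-orbit of the path is a closed walk of length $4s=k+5$. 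Exhibiting such a base path explicitly, verifying the closure condition against the defining equations of $D(k,q)$, and proving the $4s$ vertices are distinct is the work that a complete proof must contain and that your proposal omits.
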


Cheng, Chen and Tang found other sets of pairs $(k,q)$ for which the girth of $D(k,q)$ could be determined precisely, see \cite{CCT14,CCT16}. See also the aforementioned papers \cite{XCT23,XCT23a}. Their results are as follows.

\begin{theorem}\label{F:girth14}{\rm{(\cite{CCT14})}}
For any $q\ge 4$, and any odd $k$ such that $(k+5)/2$ is a power of the characteristic of $\mathbb{F}_q$,
\[
\girth(D(k,q)) = k+5.
\]
\end{theorem}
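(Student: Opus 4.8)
The lower bound $\girth(D(k,q)) \ge k+5$ for odd $k$ is already supplied by Theorem~\ref{S:girthLB}, so the whole task reduces to exhibiting a single cycle of length $k+5 = 2m$, where by hypothesis $m = (k+5)/2 = p^s$ is a power of the characteristic $p$ of $\mathbb{F}_q$. Since by Theorem~\ref{S:transitive} the automorphism group of $D(k,q)$ is edge-transitive, I would first normalize the sought cycle to pass through a fixed edge, say $(0,\dots,0) \sim [1,0,\dots,0]$; this costs nothing and pins down the first two vertices of the walk.

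The plan is to encode a closed walk of length $2m$ by the sequence of first coordinates of its $2m$ vertices. By Theorem~\ref{T:ncc}, once the starting vertex and this sequence of first coordinates are prescribed, every remaining coordinate of every vertex is forced by the triangular defining system: moving along an edge, each new coordinate $p_i$ (resp.\ $l_i$) is recovered from the single relation $p_i + l_i = f_i$ together with the already-computed lower-index coordinates. Propagating these forced values around the walk expresses the coordinates of the terminal point as explicit polynomials in the chosen first coordinates, and the closure requirement --- that the neighbor of the last line with first coordinate $0$ coincides with $(0,\dots,0)$ --- becomes a system of $k-1$ polynomial equations (one for each higher coordinate that must vanish) in these parameters. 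Since there are more than $k-1$ free first-coordinate parameters, and since we are free to choose a highly structured family of them, I expect this system to collapse drastically.

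The role of the hypothesis $m = p^s$ is precisely to force that collapse. My plan is to take the first-coordinate sequence from a structured one-parameter family --- the additive analogue of the geometric progression of $m$-th roots of unity used in Theorem~\ref{F:girth} --- chosen so that the decisive closing equation reduces to a single polynomial in one variable whose operative exponent is $m = p^s$. In characteristic $p$ the Frobenius identity $(x+y)^{p^s} = x^{p^s} + y^{p^s}$, equivalently $\binom{p^s}{j} \equiv 0 \pmod p$ for $0 < j < p^s$, kills all the intermediate binomial terms accumulated around the walk, so the closing equation degenerates to an $\mathbb{F}_p$-linear (additive) condition in that variable; because the Frobenius map is a bijection of $\mathbb{F}_q$, such a condition is always solvable. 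This is the exact counterpart of Theorem~\ref{F:girth}, where the divisibility $m \mid q-1$ furnishes a primitive $m$-th root of unity and a vanishing geometric sum in the place occupied here by the vanishing binomial coefficients.

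Two things remain to be verified, and the first is where I expect the genuine work to lie. The main obstacle is the explicit computation of the ``return map'': one must carry the triangular recursion far enough to locate precisely which coordinate of the terminal vertex carries the exponent-$m$ dependence, and to confirm that all lower-index closing equations either hold automatically or are absorbed by the remaining free parameters once the Frobenius collapse is invoked. This requires a careful index-by-index tracking of how the two monomial types $p_{i-2}l_1$ and $p_1 l_{i-2}$ propagate around the walk. Second, one must check genuineness: consecutive first coordinates must be distinct so that the walk never backtracks, whereupon the established lower bound $\girth(D(k,q)) \ge 2m$ guarantees that a non-backtracking closed walk of length exactly $2m$ is an honest cycle. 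The hypothesis $q \ge 4$ enters at this final step, supplying enough field elements to keep the chosen first coordinates distinct where needed and to rule out the degeneracies that occur over the smallest fields.
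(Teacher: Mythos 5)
Your overall strategy --- the lower bound from Theorem~\ref{S:girthLB}, edge-transitivity (Theorem~\ref{S:transitive}) to normalize, a closed walk of length $k+5$ encoded by its sequence of first coordinates via Theorem~\ref{T:ncc}, and closure forced by a characteristic-$p$ collapse --- is the right one: it is the natural additive analogue of the root-of-unity construction behind Theorem~\ref{F:girth}, and it is in outline how \cite{CCT14} proceeds. (The survey itself gives no proof of Theorem~\ref{F:girth14}, only the citation, so that source is the relevant comparison.) Your auxiliary points are also sound: a non-backtracking closed walk of length $k+5$ must be an honest cycle once the girth is known to be at least $k+5$, since a repeated vertex with minimal index gap would produce a shorter cycle.

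The gap is that, as submitted, this is a plan rather than a proof, and the missing part is precisely where the entire mathematical content of the theorem lives. You never specify the first-coordinate sequence, never compute the $k-1$ closure equations obtained by propagating the triangular recursion around the walk, and never prove that they vanish; you only record the expectation that the system will ``collapse drastically.'' Moreover, the mechanism you describe --- one decisive closing equation with operative exponent $p^s$, solvable because Frobenius is a bijection of $\F_q$ --- is not the right shape for what must be shown. One needs \emph{all} $k-1$ closure conditions to hold simultaneously for a single choice of parameters; the workable mechanism is that, for a suitably structured (arithmetic-progression-like) choice of first coordinates, the coefficients appearing in every one of these conditions reduce to binomial coefficients $\binom{p^s}{j}$ with $0<j<p^s$ (or sums of such), all of which vanish modulo $p$. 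Establishing that each of the roughly $2p^s$ coordinates yields a condition of exactly this form requires the index-by-index bookkeeping, sensitive to the four-periodic alternation of the monomials $p_{i-2}l_1$ and $p_1l_{i-2}$, that occupies the bulk of \cite{CCT14}; solvability of a single additive equation says nothing about the remaining $k-2$. Until that computation (or a substitute for it) is carried out, the hypothesis $(k+5)/2=p^s$ has not actually been used, and the asserted role of $q\ge 4$ --- which in fact enters through non-degeneracy requirements on the parameter choices, not merely through keeping consecutive first coordinates distinct --- remains a guess.
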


\begin{theorem}\label{F:girth16}{\rm{(\cite{CCT16})}}
For any prime $p$, and any positive integers $h,m,s$ with $h | (p^m~-~1)$ and $hp^s > 3$,
\[
\girth(D(2hp^s-4,p^m)) = \girth(D(2hp^s-5,p^m) )= 2hp^s.
\]
\end{theorem}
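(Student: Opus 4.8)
The plan is to trap $\girth(D(k,q))$ between the universal lower bound of Theorem~\ref{S:girthLB} and a single explicit short cycle, transferring information between the two indices by the monotonicity of girth under covers. Write $q=p^{m}$ and observe that $2hp^{s}-5$ is odd while $2hp^{s}-4$ is even, so Theorem~\ref{S:girthLB} gives
\[
\girth(D(2hp^{s}-5,q))\ge 2hp^{s},\qquad \girth(D(2hp^{s}-4,q))\ge 2hp^{s}.
\]
Since $D(2hp^{s}-4,q)=B\Gamma_{2hp^{s}-4}$ is a lift of $D(2hp^{s}-5,q)=B\Gamma_{2hp^{s}-5}$ by Theorem~\ref{T:cover}, Remark~\ref{rem10.1} gives $\girth(D(2hp^{s}-5,q))\le \girth(D(2hp^{s}-4,q))$. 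Hence the entire theorem reduces to exhibiting one cycle of length exactly $2hp^{s}$ in the \emph{even} graph $D(2hp^{s}-4,q)$: such a cycle forces $\girth(D(2hp^{s}-4,q))=2hp^{s}$, and the displayed inequalities then collapse to $\girth(D(2hp^{s}-5,q))=2hp^{s}$ as well. The hypothesis $hp^{s}>3$ is used here only to keep both indices in the admissible range.

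\textbf{The cycle as a characteristic-$p$ ``helix''.} To build the required cycle I would prescribe a closed walk $P_{0}\sim L_{0}\sim P_{1}\sim L_{1}\sim\cdots$ by choosing, at each step, the first coordinate of the next vertex and letting the triangular system of Theorem~\ref{T:ncc} determine all remaining coordinates uniquely. The prescription is governed by a primitive $h$-th root of unity $\omega\in\F_{q}^{*}$, which exists because $h\mid q-1$: the first coordinates are made to repeat with period $h$ up to scaling by $\omega$, so that after a block of $2h$ steps the low-order coordinates already return to their initial values, exactly as in the multiplicative cycles underlying Theorem~\ref{F:girth} and its case $h=1$, Theorem~\ref{F:girth14}. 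The deeper coordinates, however, do not yet close: each traversal of this length-$2h$ block shifts them by a fixed increment, and since all coordinates lie in characteristic $p$ this shift is \emph{unipotent}. The walk is designed so that the block must be traversed $p^{s}$ times before the deep coordinates return, giving total length $2hp^{s}$. Equivalently, one may view the walk as the orbit of an edge under a cyclic group of order $hp^{s}$ of automorphisms combining the multiplicative maps $m_{a,b}$ of Proposition~\ref{S:multiauto} (contributing the factor $h$) with the additive maps $t_{j,x}$ of Proposition~\ref{S:addiauto} (contributing the factor $p^{s}$), these orders being coprime since $h\mid p^{m}-1$ forces $\gcd(h,p)=1$.

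\textbf{The main obstacle: exactly $p^{s}$ blocks.} The heart of the argument is to prove that the deep coordinates return after \emph{exactly} $p^{s}$ blocks. I would track the per-block shift through the recursion $f_{i}=p_{i-2}l_{1}$ or $p_{1}l_{i-2}$ evaluated along the length-$2h$ block: because $\omega^{h}=1$, the scaling is restored after each block, while the increment threads through successive coordinates as a unipotent linear map. Iterating this map $N$ times produces coefficients $\binom{N}{1},\binom{N}{2},\dots$, and by Kummer's/Lucas' theorem these vanish modulo $p$ first at the least power of $p$ exceeding the nilpotency index of the per-block shift. The delicate, genuinely computational point is to show that this index lies in the interval $(p^{s-1},p^{s}]$ — forcing order exactly $p^{s}$ — which is where the precise interaction between the order-$h$ scaling and the characteristic-$p$ accumulation, coordinate by coordinate, must be controlled. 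I expect this nilpotency-index computation to be the principal difficulty.

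\textbf{Conclusion of the squeeze.} Once $p^{s}$ blocks are shown to return to the start, the closed walk has length $2hp^{s}$ and is non-backtracking by construction; any earlier return would yield a cycle shorter than $2hp^{s}$, contradicting the lower bound of Theorem~\ref{S:girthLB}, so all $2hp^{s}$ vertices are distinct and the walk is a genuine $2hp^{s}$-cycle. This gives $\girth(D(2hp^{s}-4,q))\le 2hp^{s}$, hence equality, and by the monotonicity established in the first step $\girth(D(2hp^{s}-5,q))=\girth(D(2hp^{s}-4,q))=2hp^{s}$, as claimed.
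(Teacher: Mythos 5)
Your reduction is correct and is the right skeleton: Theorem~\ref{S:girthLB} gives $\girth(D(2hp^s-5,q))\ge 2hp^s$ and $\girth(D(2hp^s-4,q))\ge 2hp^s$, Theorem~\ref{T:cover} with Remark~\ref{rem10.1} gives $\girth(D(2hp^s-5,q))\le \girth(D(2hp^s-4,q))$, so the entire theorem indeed rests on exhibiting a single cycle of length $2hp^s$ in $D(2hp^s-4,q)$. Your closing observation is also correct and worth keeping: once a non-backtracking closed walk of length $2hp^s$ is produced, the lower bound forces all of its vertices to be distinct, so an earlier return never needs to be ruled out separately.

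The genuine gap is that the cycle is never constructed. Everything after the reduction is a description of what a construction ought to look like (``the walk is designed so that\dots'', ``I would track\dots'', ``I expect this nilpotency-index computation to be the principal difficulty''): you never write down the sequence of first coordinates prescribing the walk, never verify that the walk returns to the required first coordinates after each block of $2h$ steps, never prove the per-block return map is unipotent, and never compute its order. That verification is the entire mathematical content of the statement --- it is exactly what \cite{CCT16} supplies, and it is where the hypotheses $h\mid p^m-1$ and the precise index $2hp^s-4$ actually get used; the skeleton alone is routine given the survey's stated theorems. There is also an internal inconsistency: your ``main obstacle'' paragraph makes the lower estimate on the nilpotency index (forcing order \emph{exactly} $p^s$) the crux, yet your own conclusion shows exactness is free from the girth lower bound, so the difficulty you flag is the wrong one, while the one that matters (closure after $p^s$ blocks at all) is untouched. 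Finally, note that the survey's machinery cannot be used to bypass the construction: starting from Theorem~\ref{F:girth} with $k=2h-5$ and iterating the lifting Theorem~\ref{S:girthlift} does recover $\girth(D(2hp^s-5,q))=2hp^s$, but only for $h\ge 4$, and it yields the even-index statement only when the condition $k\not\equiv 3 \pmod 4$ holds along the iteration; the cases $h\le 3$ and the remaining even-index cases are precisely where the explicit cycle of \cite{CCT16} is unavoidable.
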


\begin{theorem}\label{F:girth23}{\rm{(\cite{XCT23})}} For any $q>3$,
\[
\girth(D(3,q))=\girth(D(4,q))=8 \mbox{  and  }\girth(D(5,q))=10.
\]
\end{theorem}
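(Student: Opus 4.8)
The plan is to split each equality into its two inequalities. The lower bounds are immediate from Theorem~\ref{S:girthLB}: since $3$ and $5$ are odd and $4$ is even, that theorem already yields $\girth(D(3,q))\ge 8$, $\girth(D(4,q))\ge 8$ and $\girth(D(5,q))\ge 10$ for every prime power $q$. Hence the whole content of the statement is to produce, for each $q>3$, an explicit $8$-cycle in $D(4,q)$ and a $10$-cycle in $D(5,q)$; the $8$-cycle in $D(3,q)$ will then come for free by projection.

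First I would build an explicit $8$-cycle in $D(4,q)$. Using point-transitivity (Theorem~\ref{S:transitive}) I may base it at the point $(0)=(0,0,0,0)$. By the triangular structure of the defining system (Theorem~\ref{T:ncc}), once the first coordinate of each successive vertex is prescribed, all remaining coordinates are forced; so a closed walk of length $8$ through $(0)$ is encoded by the list of first coordinates of its eight vertices, and ``being a genuine $8$-cycle'' becomes the requirement that these eight vertices be pairwise distinct together with the closing conditions that coordinates $2,3,4$ return to their initial value $0$ after one turn. Propagating the relations $p_2+l_2=p_1l_1$, $p_3+l_3=p_1l_2$, $p_4+l_4=p_2l_1$ around the walk reduces the closing conditions to a small polynomial system in the free first coordinates. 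Carrying this out, one free parameter survives—write $s$ for the first coordinate of the third point—and the conditions collapse to a single relation solvable as $m=(2-s)/s$ for the first coordinate $m$ of one of the lines, subject only to $s\notin\{0,1,2\}$ together with finitely many further non-degeneracy conditions, all of which can be met once $q>3$ (the cases $q=4,5$ are cleanest to verify by hand). Since $\{0,1,2\}$ exhausts $\F_3$ but is a proper subset of $\F_q$ for every $q>3$, such an $s$ exists exactly when $q>3$; this is precisely where the hypothesis enters, and it matches the recorded value $\girth(D(4,3))=12$. Projecting the resulting $8$-cycle onto its first three coordinates—an injection on the cycle, since its points have pairwise distinct first coordinates by construction—yields an $8$-cycle in $D(3,q)$, so that $\girth(D(3,q))=\girth(D(4,q))=8$; as a consistency check, the covering relations of Theorem~\ref{T:cover} and Remark~\ref{rem10.1} force $\girth(D(3,q))\le\girth(D(4,q))$, in agreement with both being $8$.

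The real work is the $10$-cycle in $D(5,q)$. I would run the same scheme one dimension higher: base a closed walk of length $10$ at $(0)$ in $D(5,q)$, take the first coordinates of its five points and five lines as the free unknowns, and use Theorem~\ref{T:ncc} to propagate coordinates $2,\dots,5$, the new relation being $p_5+l_5=p_3l_1$. The demand that all five coordinates close up after one turn now gives four polynomial closing conditions, one per coordinate $2,3,4,5$; I would eliminate so as to leave at least one genuinely free field element and then impose pairwise distinctness of the ten vertices. I expect this elimination, and the verification that the surviving solution set still meets $\F_q$ in a point giving ten distinct vertices, to be the main obstacle: the system is larger, the fifth-coordinate condition couples the parameters more tightly, and one must confirm that a valid choice persists for every $q>3$ (again checking $q=4,5$ directly) while failing for $q=2,3$, consistent with $\girth(D(5,2))=16$ and $\girth(D(5,3))=12$. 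Once a single such $10$-cycle is exhibited, the lower bound from Theorem~\ref{S:girthLB} gives $\girth(D(5,q))=10$ and the proof is complete.
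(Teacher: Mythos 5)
Your strategy is the right one, and its first half is rigorous: Theorem~\ref{S:girthLB} does give $\girth(D(3,q))\ge 8$, $\girth(D(4,q))\ge 8$ and $\girth(D(5,q))\ge 10$, so the theorem reduces to exhibiting an $8$-cycle in $D(4,q)$ and a $10$-cycle in $D(5,q)$ for each $q>3$, the $D(3,q)$ case then following from $\girth(D(3,q))\le\girth(D(4,q))$ (Theorem~\ref{T:cover} and Remark~\ref{rem10.1}). Bear in mind that the survey itself contains no proof of Theorem~\ref{F:girth23} to compare with --- it is quoted from \cite{XCT23}, which indeed argues by producing explicit girth cycles --- so your text has to stand alone as a proof, and it does not: the cycles are never actually produced.

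Concretely, for $D(4,q)$ you assert that the closing conditions ``collapse to a single relation solvable as $m=(2-s)/s$'' with $s\notin\{0,1,2\}$ plus ``finitely many further non-degeneracy conditions,'' but you give no derivation; that computation \emph{is} the upper bound, and the unspecified extra conditions are exactly where a small field could fail. (The claim is in fact essentially correct: taking the cycle through $(0,0,0,0)\sim[0,0,0,0]$, with second point $(1,0,0,0)$ and second line $[1,1,1,0]$, the closing system forces the fourth point to have first coordinate $s-1$, where $s$ is the first coordinate of the third point, and forces the last two lines to have first coordinates $2(s-1)/s$ and $(s-2)/s$; one then checks that $s\notin\{0,1,2\}$ kills every degeneracy, including the characteristic-$2$ case, where pairs of lines share first coordinates but still differ as vectors. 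None of this verification appears in your proposal.) The more serious gap is $D(5,q)$: there you write down no cycle, no solved closing system, and no choice of parameters --- you yourself call the elimination ``the main obstacle'' and leave it untouched. Since $\girth(D(5,q))=10$ is half of the statement and cannot be inherited from the $D(4,q)$ case (the covering relation only yields $\girth(D(4,q))\le\girth(D(5,q))$, which is the useless direction), what you have is a credible plan for a proof, not a proof.
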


\begin{theorem}\label{F:girth23a}{\rm{(\cite{XCT23a})}}
\rm{(i)} $\girth(D(4t+2,q))=\girth(D(4t+1,q))$.\\
(ii) $\girth(D(4t+3,q))=4t+8$ if $\girth(D(2t,q))=2t+4$.\\
(iii) $\girth(D(8t,q))=8t+4$ if $\girth(D(4t-2,q))=4t+2$.\\
(iv) $\girth(D(2^{s+2}t-5,q)) =2^{s+2}t$ if $p\ge 3, 2^s\mid (q-1), 2^{s+1} \nmid(q-1),
2\nmid t$ and $t\mid_p(q-1)$,
where $t\mid_p(q-1)$ denotes $t\mid (q-1)p^r$ for some $r\ge 0$.
\end{theorem}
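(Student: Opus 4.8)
The plan is to treat the four parts as one interlocking scheme: parts (i)--(iii) are \emph{recursions} relating $\girth(D(k,q))$ to the girth of a graph with fewer coordinates, and part (iv) is obtained by iterating these recursions down to a base case whose girth is pinned by the arithmetic hypotheses. Throughout I would lean on three standing facts. First, the girth of $D(k,q)$ is nondecreasing in $k$ (Remark~\ref{rem10.1}, via the covering maps of Theorem~\ref{T:cover}). Second, the lower bounds $\girth(D(k,q))\ge k+5$ for odd $k$ and $\ge k+4$ for even $k$ (Theorem~\ref{S:girthLB}), which can be sharpened if needed through the covering $D(2k+1,q)\to A(k+2,q)$ and the fact that the origin of $A(n,q)$ meets no cycle shorter than $2n+2$. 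Third, edge- and vertex-transitivity (Theorem~\ref{S:transitive}), which let me assume every shortest cycle passes through $(0,\dots,0)$ and parametrise cycles by the first coordinates of their vertices, using the neighbour-complete colouring of Theorem~\ref{T:ncc}.

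For part (i) both lower bounds equal $4t+6$, and monotonicity gives $\girth(D(4t+1,q))\le\girth(D(4t+2,q))$; the content is the reverse inequality. I would focus on the single extra equation appearing when coordinate $4t+2$ is adjoined, namely $p_{4t+2}+l_{4t+2}=p_1l_{4t}$ (since $4t+2\equiv 2\pmod 4$). A closed walk of $D(4t+1,q)$ lifts to a cycle of the \emph{same} length in $D(4t+2,q)$ precisely when its ``voltage'' --- the net increment of the $(4t+2)$-coordinate around the walk, an explicit polynomial $\sum_j(x^{2j}_1-x^{2j-2}_1)x^{2j-1}_{4t}$ in the lower coordinates --- vanishes. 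The key step, and the first real difficulty, is to show that a shortest cycle of $D(4t+1,q)$ can be chosen with zero voltage, equivalently that the cycle system for $D(4t+2,q)$ stays solvable at length $\girth(D(4t+1,q))$; this yields $\girth(D(4t+2,q))\le\girth(D(4t+1,q))$ and hence equality.

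For parts (ii) and (iii) the lower bounds $\girth(D(4t+3,q))\ge 4t+8$ and $\girth(D(8t,q))\ge 8t+4$ are again immediate from Theorem~\ref{S:girthLB}; the substance is the matching upper bound, where one must \emph{build} a cycle of the target length out of a shortest cycle of the smaller graph supplied by the hypothesis. I would take a shortest cycle of $D(2t,q)$ (resp.\ $D(4t-2,q)$), of length $2t+4$ (resp.\ $4t+2$), write it explicitly in coordinates, and then close a doubled walk of length $4t+8=2(2t+4)$ (resp.\ $8t+4$) into a genuine cycle of $D(4t+3,q)$ (resp.\ $D(8t,q)$). The polarity automorphism (Proposition~\ref{S:polarauto}) together with the additive and multiplicative automorphisms are the natural devices for performing this reflection and closure; the technical core is verifying that the doubled configuration satisfies all the higher defining equations with no accidental shortening.

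Finally, part (iv) follows by iterating (i)--(iii). Writing $k=2^{s+2}t-5=4(2^st-2)+3$, part (ii) reduces the claim $\girth(D(2^{s+2}t-5,q))=2^{s+2}t$ to the even-index statement $\girth(D(2^{s+1}t-4,q))=2^{s+1}t$, and repeated application of (iii) (with (i) bridging consecutive even/odd indices) descends this to a single base graph. The arithmetic hypotheses $p\ge 3$, $2^s\mid(q-1)$ with $2^{s+1}\nmid(q-1)$, $2\nmid t$, and $t\mid_p(q-1)$ enter only at this base case: there the existence of a cycle of the extremal length is equivalent to the solvability in $\F_q^*$ of an equation of cyclotomic type whose degree carries the factor $2^s$ (from the $2$-part of $q-1$) and the factor $t$ (through $t\mid_p(q-1)$). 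This is exactly parallel to the role of $h\mid(p^m-1)$ and the $p$-power in Theorem~\ref{F:girth16}, with the prime $2$ in place of $p$. I expect this base-case computation to be the main obstacle and the heart of the theorem: tracking the defining equations down the recursion to one polynomial condition, and proving that its solvability over $\F_q$ is governed \emph{precisely} by the stated $2$-adic and $p$-adic divisibility conditions.
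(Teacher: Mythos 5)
Your opening reduction in part (iv) is correct: writing $2^{s+2}t-5=4(2^st-2)+3$, part (ii) leaves exactly the statement $\girth(D(2^{s+1}t-4,q))=2^{s+1}t$. But your next move --- ``repeated application of (iii) (with (i) bridging consecutive even/odd indices)'' down to a base case settled by a new cyclotomic computation --- both fails and is unnecessary. It fails because the recursions (i)--(iii) cannot descend from the index $2^{s+1}t-4$: for $s\ge 2$ this index is $\equiv 4\pmod 8$ (recall $2\nmid t$), so (iii) (which needs an index $\equiv 0\pmod 8$) and (i) (which needs $\equiv 2\pmod 4$) are both inapplicable; and even in the single case $s=1$ where (iii) and then (i) do fire, the descent lands on an index $\equiv 1\pmod 4$, which none of (i)--(iii) reduces. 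It is unnecessary because $\girth(D(2^{s+1}t-4,q))=2^{s+1}t$ is precisely an instance of Theorem \ref{F:girth16}, which is what the hypotheses of (iv) are tailored to invoke: write $t=t_1p^a$ with $p\nmid t_1$; since $p\nmid(q-1)$, the condition $t\mid_p(q-1)$ forces $t_1\mid(q-1)$, and since $t_1$ is odd and $2^s\mid(q-1)$ we get $h:=2^st_1\mid(q-1)$; then $2^{s+1}t-4=2hp^a-4$, and Theorem \ref{F:girth16} gives girth $2hp^a=2^{s+1}t$ (the degenerate case $2^st=2$ is just $D(3,q)$, already handled by Theorem \ref{F:girth23}). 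This is exactly the derivation the survey records when it notes that part (iv) ``can be easily obtained from part (ii) and Theorem \ref{F:girth16}.'' Note also that the hypothesis $2^{s+1}\nmid(q-1)$ plays no role in this argument --- it only marks the cases not already covered by applying Theorem \ref{F:girth16} directly to $D(2hp^s-5,q)$ --- so your expectation that the $2$-adic condition ``precisely governs'' solvability of some base-case equation points in the wrong direction.

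For parts (i)--(iii) the survey offers no proof at all (they are quoted from \cite{XCT23a}), so there is nothing to compare against except to observe that your text does not prove them either: the zero-voltage claim in (i) and the ``no accidental shortening'' verification for the doubled cycles in (ii)--(iii) are the entire content of those statements, and you explicitly defer both. As it stands, the proposal establishes none of the four parts, and the one part for which a short complete argument was available from results already stated in the survey --- part (iv) --- is where your plan goes astray.
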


We wish to note that part (iv) of Theorem \ref{F:girth23a} can be easily obtained from part (ii) and Theorem~\ref{F:girth16}.
\bigskip

Suppose that the girth of $D(k,q)$ satisfies Conjecture~\ref{S:girthconj}. Then the following theorem allows us to determine the exact value of the girth of $D(k',q)$
for infinitely many values of $k'$,
namely, $k' = p^m\girth\bigl(D(k,q)\bigr)-5$  and  $k' = p^m\girth\bigl(D(k,q)\bigr)-4$ for an arbitrary positive integer $m$ if $k\not\equiv 3\pmod 4$, and $k' = p^m\girth\bigl(D(k,q)\bigr)-5$ for an arbitrary positive integer $m$ if $k\equiv 3\pmod 4$.

\begin{theorem}\label{S:girthlift}{\rm{(\cite{Sun17})}}
Let $p$ be the characteristic of $\mathbb{F}_q$ and $g_k = \girth(D(k,q))$, where $k\ge 3$.  Suppose that $g_k$ satisfies Conjecture~\ref{S:girthconj}. Then
$$\girth\bigl(D(pg_k-5,q)\bigr) = pg_k.$$
In addition, if $k\not\equiv 3\pmod 4$, then the following also holds:
$$\girth\bigl(D(pg_k-4,q)\bigr) = pg_k.$$
\end{theorem}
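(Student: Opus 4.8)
The plan is to combine the monotonicity of girth under the covering maps (Remark~\ref{rem10.1}, Theorem~\ref{T:cover}) with the lower bound of Theorem~\ref{S:girthLB}, reducing everything to constructing a single cycle of length $pg_k$. First some bookkeeping: since $g_k$ satisfies Conjecture~\ref{S:girthconj}, we have $g_k=k+5$ for odd $k$ and $g_k=k+4$ for even $k$, so in either case $g_k$ is even, $pg_k$ is even, and $k':=pg_k-5$ is odd. Hence the conjectural value of $\girth(D(k',q))$ at the odd index $k'$ is exactly $k'+5=pg_k$, and Theorem~\ref{S:girthLB} already supplies the matching lower bound $\girth(D(k',q))\ge k'+5=pg_k$. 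It therefore suffices to exhibit one cycle of length $pg_k$ in $D(k',q)$: such a cycle forces $\girth(D(k',q))\le pg_k$, hence equality, and the same lower bound guarantees that any non-backtracking closed walk of length $pg_k$ we produce is automatically a genuine cycle, since a shorter sub-cycle is impossible.

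For the construction I would use the covering map $\eta:D(k',q)\to D(k,q)$ of Theorem~\ref{T:cover} (projection onto the first $k$ coordinates) and lift a shortest cycle. Let $C$ be a cycle of length $g_k$ in $D(k,q)$, which exists because $\girth(D(k,q))=g_k$. I would track how the fiber coordinates $x_{k+1},\dots,x_{k'}$ evolve as one walks repeatedly around $C$ in $D(k',q)$: one full traversal of $C$ induces an affine "holonomy" map on the fiber $\F_q^{\,k'-k}$, whose linear part is unipotent because for $i\ge 4$ the defining equation for coordinate $i$ couples it only to coordinate $i-2$, and because $g_k$ is even the diagonal is $1$. Writing the net change of $x_i$ after $d$ traversals as a polynomial $P_i(d)$ in $d$ with coefficients built from field products along $C$, the nested-summation structure of the recursion makes $\deg P_i$ increase by one only after on the order of $g_k$ consecutive coordinates, starting from the first index at which $C$ fails to lift. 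The crucial arithmetic is that $k'-k=pg_k-5-k$ equals $(p-1)g_k$ up to a parity correction, which places the top index $k'$ exactly at the threshold where $\deg P_i$ is still strictly less than $p$ for all $i\le k'$. Since in characteristic $p$ every binomial coefficient $\binom{p}{j}$ with $0<j<p$ vanishes, each $P_i(p)$ collapses to $0$ for all $i\le k'$. Thus traversing $C$ exactly $p$ times returns every coordinate to its start: the lift $C^{p}$ closes up to a non-backtracking closed walk of length $pg_k$, which by the lower bound is a genuine $pg_k$-cycle.

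For the final statement I would lift this $pg_k$-cycle one coordinate further, from $D(pg_k-5,q)$ to $D(pg_k-4,q)$ (again a cover, so $\girth(D(pg_k-4,q))\ge \girth(D(pg_k-5,q))=pg_k$ by Remark~\ref{rem10.1}). The cycle lifts precisely when the defect in the new coordinate $pg_k-4$ vanishes after $p$ traversals, i.e.\ when $\deg P_{pg_k-4}<p$; the residue of $pg_k-4$ modulo $4$, which is controlled by $k\bmod 4$, decides whether this index still lies below the degree threshold, and the hypothesis $k\not\equiv 3\pmod 4$ is exactly the condition under which it does. In that case the lifted cycle has length $pg_k$, giving $\girth(D(pg_k-4,q))\le pg_k$ and hence equality.

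The main obstacle is the polynomial-degree bookkeeping: proving that $\deg P_i$ really increases by only one per period $g_k$ rather than per two coordinates, which is what makes $pg_k-5$ the exact cutoff. This hinges on showing that the period-$g_k$ sums of the per-edge coupling coefficients around $C$ cancel except at the sparse resonant indices, and it is here that the minimality of $C$ (so that $C$ lifts cleanly through the first block of coordinates) and the precise form of the functions $f_i$ according to the residue modulo $4$ must be used. The characteristic $2$ case, where the relevant binomial coefficients behave differently, will need separate care.
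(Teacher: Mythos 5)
Your overall frame is the right one, and it is essentially the frame of the argument in \cite{Sun17} (note the survey itself states this theorem without proof): since $g_k$ is even, $pg_k-5$ is odd and $pg_k-4$ is even, so Theorem~\ref{S:girthLB} supplies the lower bound $\girth \ge pg_k$ in both cases; it then suffices to produce a single cycle of length $pg_k$; a cyclically non-backtracking closed walk of length $pg_k$ in a graph of girth at least $pg_k$ is necessarily a cycle; and the candidate walk is a lift of a $g_k$-cycle $C$ of $D(k,q)$ traversed $p$ times, which one wants to close up using the fact that the traversal holonomy is affine with unipotent linear part together with $\binom{p}{j}\equiv 0 \pmod p$ for $0<j<p$. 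All of this is correct, and your parity bookkeeping ($k'-k=(p-1)g_k$ for odd $k$, $(p-1)g_k-1$ for even $k$) is right.

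The gap is the one you flag yourself, and it is not a technicality: the bound $\deg P_i<p$ for every $i\le pg_k-5$ --- equivalently, that the drift degree jumps by one only once per block of roughly $g_k$ coordinates --- \emph{is} the theorem, and your proposal neither proves it nor sketches the correct mechanism for it. Without extra input, the recursion (coordinate $i$ driven by coordinate $i-2$) raises the degree after boundedly many coordinates, which would let the cycle climb only $O(p)$ coordinates above $k$, nowhere near the required $(p-1)g_k$. The cancellations that slow the growth are not a consequence of the \emph{minimality} of $C$, as you suggest; they are the $k-1$ closure identities satisfied by the first-coordinate sequence of $C$ (the first being $\sum_s(a_{s+1}-a_s)b_s=0$, i.e.\ closure of coordinate $2$), which hold simply because $C$ is a closed cycle of $D(k,q)$, and which must be threaded through the mod-$4$ alternation between the coupling types $f_i=p_{i-2}l_1$ and $f_i=p_1l_{i-2}$ to kill the top-degree terms one potential jump at a time; the hypothesis that $g_k$ equals the value in Conjecture~\ref{S:girthconj} is what makes the supply of identities (about $g_k$ of them) match the demand. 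Carrying out that ledger is the actual proof, and it is long. Finally, your explanation of the hypothesis $k\not\equiv 3\pmod 4$ cannot be correct: for odd $p$ one has $pg_k-4\equiv g_k\pmod 4$, and $g_k\equiv 0\pmod 4$ both when $k\equiv 0$ and when $k\equiv 3\pmod 4$, so the residue of the new top coordinate does not separate the admitted case $k\equiv 0$ from the excluded case $k\equiv 3$; the exclusion comes from an off-by-one in the count of usable cancellation identities in the finer bookkeeping, not from the residue of $pg_k-4$.
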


By Theorems~\ref{F:girth},~\ref{S:girthk3},~and \ref{S:girthlift}, Conjecture~\ref{S:girthconj} is true for $(k+5)/2$
 being the product of a factor of $q-1$ which is at least 4 and a power of the characteristic of $\mathbb{F}_q$,
 and for $(k+5)/4$ being the product of a factor of $q-1$ which is at least 2 and a power of the characteristic
  of $\mathbb{F}_q$.

The known values of girth of $D(k,q)$ for $2\le k\le 100$ and $4\le q\le 97$ are shown as follows,
which are proven either by computer or Theorems
\ref{F:girth}-\ref{S:girthlift}, and the empty cells correspond to the cases where the girth is unknown.

\begin{table}[htbp]
\centering
\scalebox{0.95}{\begin{tabular}{ |c|c|c|c|c|c|c|c|c|c|c|c|c|c|c|c|}
\toprule
\diagbox{$q$}{$k$} &2   & 3 & 4   & 5   & 6 & 7     &8    &9     & 10 & 11 & 12 &13  & 14&15&16\\
\midrule
4 &6    & 8   & 8 & 10  &10 &12  &12 & 14 & 14 & 16 & 16  & 18 &18&20&20\\
\hline
5 &6    & 8   & 8 & 10  &10 &12  &12 & 14 & 14 & 16 & 16  & 18 &18&20&20\\
\hline
7 & 6 & 8    &8  &10 &10  &12 & 12  & 14& 14 &16 & 16& 18 &18&20&20 \\
\hline
8 & 6 & 8    &8   &10 &10  &12 & 12  & 14&14 &16 & 16& 18 &18&20 &20\\
\hline
9 & 6 & 8    &8   &10 &10  &12 & 12  &14 & 14 & 16&16 &18  &18 &20&20\\
\hline
11 & 6 & 8    &8   &10 &10  &12 & 12  & &  & 16& 16& 18 & 18&20&20\\
\hline
13 & 6 & 8    &8   &10 &10  &12 & 12  & 14 & 14  & 16& 16& 18 &18 &20&20\\
\hline
16 & 6 & 8    &8   &10 &10 &12 & 12  & 14 & 14  &16 & 16&18  &18 &20&20\\
\hline
17 & 6 & 8    &8   & 10& 10 &12 &  12 & 14 & 14 & 16& 16& 18 &18 &20&20\\
\hline
19 & 6 & 8    &8   & 10&10  &12 &12   & 14 & 14  & 16& 16& 18 &18& 20&20\\
\hline
23& 6 &8     & 8 &10 & 10 & 12& 12  & &  & 16&16 & 18 &18  & 20&20\\
 \hline
25 & 6 & 8    &8  &10 & 10 &12 &12   & 14 & 14  &16 &16 & 18 & 18 & 20&20\\
 \hline
27& 6 & 8    & 8 & 10& 10 &12 & 12  & &  & 16& 16& 18 &18  &20 &20\\
 \hline
29 & 6 &  8   &8  &10 & 10 & 12& 12  &14 & 14 & 16&16 &18  &18  &20 &20\\
 \hline
31&  6&  8   &  8&10 & 10 & 12& 12  & 14 & 14  & 16& 16&18  &18  & 20&20\\
 \hline
 32& 6 & 8    & 8 &10 & 10 & 12& 12  & &  &16 &16 & 18 &  18&20 &20\\
 \hline
37& 6 &   8  & 8 &10 & 10 & 12& 12    & 14 & 14  &16 &16 & 18 & 18 & 20&20\\
 \hline
41 & 6 &  8   &  8&10 & 10 & 12& 12    & 14 & 14  &16 &16& 18 &18  &20&20 \\
 \hline
43 & 6 &  8   & 8 &10 & 10 & 12& 12    & 14&14  & 16&16 &18  & 18 &20 &20\\
 \hline
47 & 6 &  8   & 8 & 10&  10 & 12& 12  & &  & 16& 16&  18&18  &20 &20\\
 \hline
49 & 6 &   8  &8  & 10& 10 & 12& 12   &14 &14  & 16&16 &18  &18  &20&20 \\
 \hline
53 & 6 &  8   & 8 &10 & 10 & 12& 12  & 14 & 14  & 16& 16& 18 &18  &20 &20\\
 \hline
 59& 6 &  8   & 8 & 10& 10 & 12& 12    & &  &16 & 16&   18 &18    &20 &20\\
 \hline
 61& 6 &   8  &  8& 10& 10 & 12& 12    & 14 & 14  & 16& 16&  18 &18    &20 &20\\
 \hline
64& 6 &   8  & 8 &10 & 10 & 12& 12    &14 & 14 &16 & 16& 18 &18  &20 &20\\
 \hline
67& 6 &    8 & 8 &10 &  10 & 12& 12   & 14 & 14  &16 &16 &   18 &18    &20 &20\\
 \hline
71 & 6 &   8  & 8 & 10&  10 & 12& 12   &14 & 14 & 16& 16&   18 &18    & 20&20\\
 \hline
 73 & 6 &   8  & 8 &10 & 10 & 12& 12   & 14 & 14  & 16&16 & 18 & 18 &20 &20\\
 \hline
79 & 6 &   8  &8  &10 & 10 & 12& 12   & 14 & 14  &16 &16 &   18 &18   &20 &20\\
 \hline
81 & 6 &   8  &8  &10 &  10 & 12& 12    & 14 & 14 & 16&16 &18  & 18 &20 &20\\
 \hline
83 & 6 &   8  &  8&10 & 10 & 12& 12   & &  & 16&16 &   18 &18   & 20&20\\
 \hline
89 & 6 &    8 &  8& 10& 10 & 12& 12   & 14 & 14  & 16&16 &   18 &18    & 20&20\\
 \hline
97 & 6 &    8 &8  &10 &  10 & 12& 12  & 14 & 14  &16 &16 &   18 &18    &20 &20\\
\bottomrule
\end{tabular}}
\caption{Girth of $D(k,q)$ for $2\le k\le 16$ and $4\le q\le 97$ }
\end{table}

\begin{table}[htbp]
\centering
\scalebox{0.95}{\begin{tabular}{ |c|c|c|c|c|c|c|c|c|c|c|c|c|c|c|}
\toprule
\diagbox{$q$}{$k$} &17   & 18 & 19  & 20   & 21 & 22     &23   &24     & 25 & 26 & 27 &28  & 29&30\\
\midrule
4 &  22 & 22   & 24 & 24  & 26& 26 &28 & 28 & &  & 32  & 32 &  &    \\
\hline
5 &   22 & 22   & 24 &  24 & 26& 26  &28 & 28 & &  & 32  & 32 &  &    \\
\hline
7&   22 & 22   & 24 & 24  & 26& 26  & 28& 28& &  & 32  & 32 &  &     \\
\hline
8 &   22 & 22   & 24 &  24 & 26& 26  & 28& 28 & &  &  32 &  32&  &    \\
\hline
9 &   22 & 22    & 24 & 24  & 26& 26  & 28& 28 &30 & 30 & 32  &  32&  &   \\
\hline
11 & 22  &22    & 24 & 24 & 26& 26  & 28& 28  & &  & 32  &32  &  &   \\
\hline
13 &  22 & 22   & 24 &24   &26 &26  & 28& 28 & &  & 32  & 32 &  &   \\
\hline
16 &   22 & 22   & 24 &  24 &26& 26  & 28& 28 & 30&30  &  32 & 32 &  &    \\
\hline
17 &   22 & 22    & 24 &  24 & &  & 28& 28  & &  &  32 &32  &  34&  34  \\
\hline
19 &   22 & 22    & 24 & 24  & &  & 28& 28 & &  & 32  & 32 &  &    \\
\hline
23& 22&  22 & 24   & 24 &   & &  28& 28 &  & & 32 & 32  &  &   \\
 \hline
25 &   22 & 22   &  24&  24 & 26& 26  & 28& 28 &30 & 30 & 32  & 32 &  &   \\
 \hline
27&   22 & 22    & 24 & 24  &26 & 26 & 28& 28  & 30&30  &  32 & 32 &  &    \\
 \hline
29&  22 & 22    & 24 &24   & &  & 28& 28 & &  &  32 & 32 &  &    \\
 \hline
31&   &    & 24 & 24  & &  & 28& 28  & 30&  30&  32 &32  &  &    \\
 \hline
 32&   22 & 22   &24  &  24 & &  & 28& 28& &  &32   &32  &  &    \\
 \hline
37&   &    & 24 & 24  & &  &  28& 28 &  &  & 32  & 32 &  &   \\
 \hline
41&   &    & 24 & 24  & &  &  28& 28 & &  & 32  &  32&  &     \\
 \hline
43&   &    & 24 & 24  & &  & 28& 28 & &  &  32 &32  &  &    \\
 \hline
47 &   &    & 24 & 24  &  & &  28& 28 & & & 32  &32  &  &    \\
 \hline
49&   22 & 22   & 24 & 24  & 26& 26  & 28& 28 & &  & 32  &32  &  &     \\
 \hline
53&   &    & 24 & 24  &26 & 26 & 28& 28  & &  &  32 & 32 &  &    \\
 \hline
 59&   &    &24  & 24  & &  & 28& 28 & &  & 32  & 32 &  &   \\
 \hline
 61&   &    & 24 & 24  & &  & 28& 28  &30 &  30&  32 &32  &  &    \\
 \hline
64&   22 & 22   & 24 &  24 & 26& 26  & 28&  28& &  & 32  &32  &  &   \\
 \hline
67&  22 &   22 & 24 &  24 & &  & 28& 28  & &  & 32  &32  &  &    \\
 \hline
71 &   &    & 24 &  24 & &  & 28& 28 & &  &32   &32  &  &    \\
 \hline
 73&   &    & 24 & 24  & &  & 28& 28 & &  & 32  & 32 &  &    \\
 \hline
79&   &    & 24 & 24  & 26& 26 & 28& 28  & &  & 32  & 32 &  &   \\
 \hline
81&   22 & 22   & 24 & 24  & 26& 26  & 28& 28  &30 & 30 &  32 & 32 &  &   \\
 \hline
83&   &    & 24 &  24 & &   & 28& 28&  &  &  32& 32 &  &   \\
 \hline
89 &  22 & 22   &24  & 24  & &  &28& 28  & &  &  32 & 32 &  &   \\
 \hline
97 &   &    & 24 &24   & &   &  28& 28 & & & 32  &32  &  &  \\
 \bottomrule
\end{tabular}}
\caption{Girth of $D(k,q)$ for $17\le k\le 30$ and $4\le q\le 97$}
\end{table}

\begin{table}[H]
\scalebox{0.95}{\begin{tabular}{ |c|c|c|c|c|c|c|c|c|c|c|c|c|c|c|}
\toprule
\diagbox{$q$}{$k$} &31   & 32 & 33  & 34   &35 & 36    &37   &38     & 39 & 40 & 41 &42  & 43&44\\
\midrule
4 & 36  & 36   &  &   &40 & 40 & &  & 44&44  &   &  & 48 &   48 \\
\hline
5 &  36 & 36   &  &   &40 & 40 & &  &44 & 44 &   &  & 48 &   48    \\
\hline
7& 36  & 36   &  &   &40 & 40 &42 &42  & 44&44  &   &  & 48 &   48  \\
\hline
8&  36 & 36   &  &   &40 & 40 & &  & 44&44  &   &  & 48 &   48   \\
\hline
9& 36  & 36   &  &   &40 & 40 &42 & 42 & 44& 44 &   &  & 48 &   48  \\
\hline
11&  36 &  36  &  &   &40 & 40 & &  &44 & 44 &   &  & 48 &   48 \\
\hline
13& 36  &  36  &  &   &40 & 40 & &  &44 & 44 &   &  & 48 &   48 \\
\hline
16&  36 &  36  &  &   &40 & 40 & &  &44 &44  &   &  & 48 &   48   \\
\hline
17& 36  &  36  &  &   &40 & 40 & &  &44 & 44 &   &  & 48 &   48    \\
\hline
19 &36   &  36  &38  & 38  &40 & 40 & &  &44 &44  &   &  & 48 &   48\\
\hline
23&  36 & 36   &  &   &40 & 40 & &  & 44& 44 & 46  & 46 & 48 &   48   \\
 \hline
25 &  36 &  36  &  &   &40 & 40 & &  & 44& 44 &   &  & 48 &   48\\
 \hline
27& 36  & 36   &  &   &40 & 40 & &  & 44& 44 &   &  & 48 &   48    \\
 \hline
29& 36  &  36  &  &   &40 & 40 & &  &44 & 44 &   &  & 48 &   48  \\
 \hline
31&  36 &  36  &  &   &40 & 40 & &  & &  &   &  & 48 &   48   \\
 \hline
 32& 36  & 36   &  &   &40 & 40 & &  &44 & 44 &   &  & 48 &   48 \\
 \hline
37&  36 &  36  &  &   &40 & 40 & &  & &  &   &  & 48 &   48 \\
 \hline
41& 36  &  36  &  &   &40 & 40 & &  & &  &   &  & 48 &   48     \\
 \hline
43& 36  &  36  &  &   &40 & 40 &42 & 42 & &  &   &  & 48 &   48    \\
 \hline
47&  36 &  36  &  &   &40 & 40 & &  & &  &  46 & 46 & 48 &   48   \\
 \hline
49&  36 &  36  &  &   &40 & 40 &42 &42  & 44& 44 &   &  & 48 &   48  \\
 \hline
53& 36  &  36  &  &   &40 & 40 & &  & &  &   &  & 48 &   48 \\
 \hline
 59&  36 &  36  &  &   &40 & 40 & &  & &  &   &  & 48 &   48  \\
 \hline
 61& 36  &  36  &  &   &40 & 40 & &  & &  &   &  & 48 &   48   \\
 \hline
64&  36 &36    &  &   &40 & 40 &42 & 42 &44 & 44 &   &  & 48 &   48 \\
 \hline
67&  36 &  36  &  &   &40 & 40 & &  & 44&44  &   &  & 48 &   48   \\
 \hline
71 & 36  &  36  &  &   &40 & 40 & &  & &  &   &  & 48 &   48    \\
 \hline
 73&  36 &  36  &  &   &40 & 40 & &  & &  &   &  & 48 &   48   \\
 \hline
79&  36 &  36  &  &   &40 & 40 & &  & &  &   &  & 48 &   48   \\
 \hline
81& 36  & 36  &  &   &40 & 40 &42 & 42 &44 & 44 &   &  & 48 &   48   \\
 \hline
83&  36 &  36  &  &   &40 & 40 & &  & &  &   &  & 48 &   48  \\
 \hline
89 & 36  &  36  &  &   &40 & 40 & &  &44 &44  &   &  & 48 &   48  \\
 \hline
97&  36 &   36 &  &   &40 & 40 & &  & &  &   &  & 48 &   48\\
 \bottomrule
\end{tabular}}
\caption{Girth of $D(k,q)$ for $31\le k\le 44$ and $4\le q\le 97$}
\end{table}

\begin{table}[H]
\scalebox{0.95}{\begin{tabular}{ |c|c|c|c|c|c|c|c|c|c|c|c|c|c|c|}
\toprule
\diagbox{$q$}{$k$} &45   & 46 & 47  &48  &49 & 50   &51 &52   & 53& 54 & 55 &56  & 57&58\\
\midrule
4 &   &    &  52&  52 & &  & 56& 56 & &  &   &  &  &    \\
\hline
5 &  50 & 50   & 52&  52  & &  &56 &  & &  & 60  &60  &  &    \\
\hline
7&   &    &  52&  52  & &  & 56&56  & &  &   &  &  &     \\
\hline
8 &   &    &  52&  52   & &  &56 &56  & &  &   &  &  &    \\
\hline
9 &   &    &  52&  52  &54 & 54 &56 &  & & &60   &60  &  &   \\
\hline
11 &   &    &  52&  52   & &  &56 &  & &  &   &  &  &   \\
\hline
13 &   &    & 52 &52  & &  &56 &  & &  &   &  &  &   \\
\hline
16 &   &    &  52&  52   & &  & 56&  & &  &  60 &60 &  &    \\
\hline
17 &   &    &  &   & &  &56 &  & &  &   &  &  &    \\
\hline
19 &   &    &  &   & &  &56 &  & &  &   &  &  &    \\
\hline
23& &   &    &  &   & & 56 & &  & &  &   &  &   \\
 \hline
25 & 50  &  50  &  52&  52   & &  & 56&  & &  &  60 & 60 &  &   \\
 \hline
27&   &    &52  &52   & 54& 54 &56 &  & &  & 60 &60  &  &    \\
 \hline
29&   &    &  &   & &  &56 &  56& 58& 58 &   &  &  &    \\
 \hline
31&   &    &  &   & &  & 56&  & &  & 60  & 60 &62  &  62  \\
 \hline
 32&   &    &  &   & &  & 56&  & &  &   &  &  62&   62 \\
 \hline
37&   &    &  &   & &  & 56&  & &  &   &  &  &   \\
 \hline
41&   &    &  &   & &  &56 &  & &  &   &  &  &     \\
 \hline
43&   &    &  &   & &  & 56&  56& &  &   &  &  &    \\
 \hline
47 &   &    &  &   & &  & 56&  & &  &   &  &  &    \\
 \hline
49&   &    &  52&  52  & &  & 56& 56 & &  &   &  &  &     \\
 \hline
53&   &    & 52 & 52  & &  &56 &  & &  &   &  &  &    \\
 \hline
 59&   &    &  &   & &  &56 &  &58 & 58 &   &  &  &   \\
 \hline
 61&   &    &  &   & &  &56 &  & &  &  60 &60  &  &    \\
 \hline
64&   &    & 52&  52   & &  & 56& 56 & &  &   &  &  &   \\
 \hline
67&   &    &  &   & &  & 56&  & &  &   &  &  &    \\
 \hline
71 &   &    &  &   & &  &56 &56  & &  &   &  &  &    \\
 \hline
 73&   &    &  &   & &  &56 &  & &  &   &  &  &    \\
 \hline
79&   &    & 52 & 52  & &  &56 &  & &  &   &  &  &   \\
 \hline
81&   &    &  52&  52   & 54& 54 &56 &  & &  & 60  & 60 &  &   \\
 \hline
83&   &    &  &   & &  &56 &  & &  &   &  &  &   \\
 \hline
89 &   &    &  &   & &  &56 &  & &  &   &  &  &   \\
 \hline
97 &   &    &  &   & &  &56 &  & &  &   &  &  &  \\
 \bottomrule
\end{tabular}}
\caption{Girth of $D(k,q)$ for $45\le k\le 58$ and $4\le q\le 97$}
\end{table}

\begin{table}[H]
\scalebox{0.95}{\begin{tabular}{ |c|c|c|c|c|c|c|c|c|c|c|c|c|c|c|}
\toprule
\diagbox{$q$}{$k$} &59  & 60 & 61  & 62  &63 & 64    &65   &66     & 67 & 68 & 69 &70  & 71&72\\
\midrule
4 & 64  &  64  &  &   & &  & &  &72 & 72 &   &  &  &    \\
\hline
5& 64  &  64  &  &   & &  & 70& 70 &72 &  &   &  &  &    \\
\hline
7& 64  &  64  &  &   & &  & 70& 70 &72 &  &   &  &  &   \\
\hline
8 & 64  &  64  &  &   & &  & &  &72 &  &   &  &  &    \\
\hline
9 & 64  &  64  &  &   & &  & &  &72 & 72 &   &  &  &  \\
\hline
11 & 64  &  64  &  &   & &  & &  &72 &  &   &  &  &   \\
\hline
13& 64  &  64  &  &   & &  & &  &72 &  &   &  &  &  \\
\hline
16& 64  &  64  &  &   & &  & &  &72 &  &   &  &  &   \\
\hline
17& 64  &  64  &  &   & 68& 68 & &  &72 &  &   &  &  &    \\
\hline
19& 64  &  64  &  &   & &  & &  & 72&  &   &  &  76&76  \\
\hline
23& 64  &  64  &  &   & &  & &  &72 &  &   &  &  &   \\
 \hline
25 & 64  &  64  &  &   & &  &70 &70  &72 &  &   &  &  &   \\
 \hline
27& 64  &  64  &  &   & &  & &  &72 &72  &   &  &  &    \\
 \hline
29& 64  &  64  &  &   & &  & &  &72 &  &   &  &  &   \\
 \hline
31& 64  &  64  &  &   & &  & &  &72 &  &   &  &  &   \\
 \hline
 32& 64  &  64  &  &   & &  & &  &72 &  &   &  &  & \\
 \hline
37& 64  &  64  &  &   & &  & &  & 72&72  &74   & 74 &  &  \\
 \hline
41& 64  &  64  &  &   & &  & &  &72 &  &   &  &  &     \\
 \hline
43& 64  &  64  &  &   & &  & &  &72 &  &   &  &  & \\
 \hline
47 & 64  &  64  &  &   & &  & &  &72 &  &   &  &  &  \\
 \hline
49& 64  &  64  &  &   & &  &70 &70  &72 &  &   &  &  & \\
 \hline
53& 64  &  64  &  &   & &  & &  &72 &  &   &  &  &   \\
 \hline
 59& 64  &  64  &  &   & &  & &  & 72&  &   &  &  &  \\
 \hline
 61& 64  &  64  &  &   & &  & &  &72 &  &   &  &  &  \\
 \hline
64& 64  &  64  &  &   & &  & &  &72 & 72 &   &  &  &  \\
 \hline
67& 64  &  64  & 66 &66   & &  & &  &72 &  &   &  &  &   \\
 \hline
71& 64  &  64  &  &   & &  & 70&  70& 72&  &   &  &  &   \\
 \hline
 73& 64  &  64  &  &   & &  & &  &72 & 72 &   &  &  &    \\
 \hline
79& 64  &  64  &  &   & &  & &  &72 &  &   &  &  &   \\
 \hline
81& 64  &  64  &  &   & &  & &  &72 & 72 &   &  &  &  \\
 \hline
83& 64  &  64  &  &   & &  & &  &72 &  &   &  &  &   \\
 \hline
89& 64  &  64  &  &   & &  & &  &72 &  &   &  &  &  \\
 \hline
97 & 64  &  64  &  &   & &  & &  & 72&  &   &  &  & \\
 \bottomrule
\end{tabular}}
\caption{Girth of $D(k,q)$ for $59\le k\le 72$ and $4\le q\le 97$}
\end{table}

\begin{table}[H]
\scalebox{0.95}{\begin{tabular}{ |c|c|c|c|c|c|c|c|c|c|c|c|c|c|c|}
\toprule
\diagbox{$q$}{$k$} &73   & 74 & 75  & 76  &77 &78    &79   &80    & 81 & 82 & 83 &84  & 85&86\\
\midrule
4 &   &    &80  &80   & &  & &  & &  & 88  &  &  90& 90   \\
\hline
5 &   &    & 80  &  80 & &  & &  & &  & 88  &  &  90& 90    \\
\hline
7&   &    & 80  &   & &  &84 &84  & &  & 88  &  &  90& 90     \\
\hline
8 &   &    & 80  & 80  & &  & &  & &  & 88  &  &  90& 90    \\
\hline
9 &   &    &  80 &   & &  &84 & 84 & &  & 88  &  &  90& 90   \\
\hline
11 &   &    & 80  &   & &  & &  & &  & 88  &88  &  90& 90   \\
\hline
13 & 78  & 78   &  80 &   & &  & &  & &  & 88  &  &  90& 90   \\
\hline
16 &   &    & 80 & 80  & &  & &  & &  & 88  &  &  90& 90    \\
\hline
17 &   &    & 80  &   & &  & &  & &  &  88 &  &  90& 90    \\
\hline
19 &   &    & 80  &   & &  & &  & &  &  88 &  &  90& 90    \\
\hline
23& &   &    80&  &   & &  & &  & &  88&   &  90& 90   \\
 \hline
25 &   &    & 80  &  80 & &  & &  & &  & 88  &  &  90& 90   \\
 \hline
27&  78 &  78  & 80  &   & &  & &  & &  & 88  &  &  90& 90   \\
 \hline
29&   &    &80   &   & &  & &  & &  &88   &  &  90& 90    \\
 \hline
31&   &    &80   &   & &  & &  & &  &   &  &  90& 90    \\
 \hline
 32&   &    & 80  &  80 & &  & &  & &  &  88 &  &  90& 90    \\
 \hline
37&   &    & 80  &   & &  & &  & &  &   &  &  90& 90   \\
 \hline
41&   &    & 80 & 80  & 82& 82 & &  & &  &   &  &  90& 90    \\
 \hline
43&   &    & 80  &   & &  &84 & 84 &86 & 86 &   &  &  90& 90   \\
 \hline
47 &   &    & 80  &   & &  & &  & &  &   &  & 90& 90    \\
 \hline
49&   &    &80   &   & &  &84 &  84& &  & 88  &  &  90& 90    \\
 \hline
53&   &    &80   &   & &  & &  & &  &   &  &  90& 90  \\
 \hline
 59&   &    & 80  &   & &  & &  & &  &   &  &  90& 90  \\
 \hline
 61&   &    & 80 &   & &  & &  & &  &   &  & 90& 90   \\
 \hline
64&   &    & 80  &  80 & &  &84 &84& &  & 88  &  &  90& 90  \\
 \hline
67&   &    & 80  &   & &  & &  & &  & 88  &  &  90& 90   \\
 \hline
71 &   &    & 80  &   & &  & &  & &  &   &  &  90& 90    \\
 \hline
 73&   &    &80   &   & &  & &  & &  &   &  & 90& 90  \\
 \hline
79& 78  &  78  & 80  &   & &  & &  & &  &   &  &  90& 90   \\
 \hline
81&   &    & 80 &  80 & &  & 84& 84 & &  &88   &  &  90& 90  \\
 \hline
83&   &    & 80  &   &82 & 82 & &  & &  &   &  &  90& 90  \\
 \hline
89 &   &    & 80  &   & &  & &  & &  &  88 &88  & 90& 90  \\
 \hline
97 &   &    & 80  &   & &  & &  & &  &   &  &  90& 90  \\
 \bottomrule
\end{tabular}}
\caption{Girth of $D(k,q)$ for $73\le k\le 86$ and $4\le q\le 97$}
\end{table}

\begin{table}[H]
\scalebox{0.95}{\begin{tabular}{ |c|c|c|c|c|c|c|c|c|c|c|c|c|c|c|}
\toprule
\diagbox{$q$}{$k$} &87   & 88 & 89  & 90   &91 & 92    &93   &94     & 95 &96 &97 &98  & 99&100\\
\midrule
4 &   &    &  &   &96 & 96 & &  & &  &   &  &104  &    \\
\hline
5 &   &    &  &   &96 & 96 & &  &100 & 100 &   &  & 104 &    \\
\hline
7&   &    &  &   &96 & 96 &98 &  98& &  &   &  & 104 &    \\
\hline
8 &   &    &  &   &96 & 96 & &  & &  &   &  & 104 &     \\
\hline
9 &   &    &  &   &96 & 96 & &  & &  &   &  & 104 & \\
\hline
11 &   &    &  &   &96 & 96 & &  & &  &   &  &104  &    \\
\hline
13 &   &    &  &   &96 & 96 & &  & &  &   &  &  104& 104 \\
\hline
16 &   &    &  &   &96 & 96 & &  & &  &   &  & 104 &    \\
\hline
17&   &    &  &   &96 & 96 & &  & &  &   &  &  &    \\
\hline
19&   &    &  &   &96 & 96 & &  & &  &   &  &  &     \\
\hline
23& 92  &   92 &  &   &96 & 96 & &  & &  &   &  &  & \\
 \hline
25&   &    &  &   &96 & 96 & &  & 100&100  &   &  &104  &   \\
 \hline
27&   &    &  &   &96 & 96 & &  & &  &   &  & 104 &    \\
 \hline
29&   &    &  &   &96 & 96 & &  & &  &   &  &  &  \\
 \hline
31&   &    &  &   &96 & 96 & &  & &  &   &  &  &     \\
 \hline
 32&   &    &  &   &96 & 96 & &  & &  &   &  &  &    \\
 \hline
37&   &    &  &   &96 & 96 & &  & &  &   &  &  &   \\
 \hline
41&   &    &  &   &96 & 96 & &  & &  &   &  &  &    \\
 \hline
43&   &    &  &   &96 & 96 & &  & &  &   &  &  &   \\
 \hline
47& 92  &  92  & 94 & 94  &96 & 96 & &  & &  &   &  &  &    \\
 \hline
49&   &    &  &   &96 & 96 &98 & 98 & &  &   &  & 104 &     \\
 \hline
53&   &    &  &   &96 & 96 & &  & &  &   &  & 104 &104   \\
 \hline
 59&   &    &  &   &96 & 96 & &  & &  &   &  &  &    \\
 \hline
 61&   &    &  &   &96 & 96 & &  & &  &   &  &  &   \\
 \hline
64&   &    &  &   &96 & 96 & &  & &  &   &  &104  & \\
 \hline
67&   &    &  &   &96 & 96 & &  & &  &   &  &  & \\
 \hline
71 &   &    &  &   &96 & 96 & &  & &  &   &  &  &    \\
 \hline
 73&   &    &  &   &96 & 96 & &  & &  &   &  &  &    \\
 \hline
79&   &    &  &   &96 & 96 & &  & &  &   &  &104  & \\
 \hline
81&   &    &  &   &96 & 96 & &  & &  &   &  &104  &    \\
 \hline
83&   &    &  &   &96 & 96 & &  & &  &   &  &  &    \\
 \hline
89&   &    &  &   &96 & 96 & &  & &  &   &  &  &   \\
 \hline
97 &   &    &  &   &96 & 96 & &  & &  &   &  &  &  \\
 \bottomrule
\end{tabular}}
\caption{Girth of $D(k,q)$ for $87\le k\le 100$ and $4\le q\le 97$}
\end{table}

\medskip

\subsection{Connectivity of $D(k,q)$}\label{conDkq}
\bigskip

Let $c(G)$ be the number of components of a graph $G$.
In~\cite{LUW95}, Lazebnik,
Ustimenko and Woldar proved that
for $k\ge 6$ and $q$ odd, the graph $D(k,q)$ is disconnected. As the graph $D(k,q)$ is edge-transitive, all components are isomorphic. Let $C\!D(k,q)$ denote one of them.
It was shown in~\cite{LUW95} that $c(D(k,q))\ge q^{t-1}$, where $t=\lfloor \frac{k+2}{4}\rfloor$, and therefore the order of $C\!D(k,q)$ is at most $2q^{k -t +1}$. Moreover, in~\cite{LUW96}, the same authors proved that for all odd $q$, $c(D(k,q)) = q^{t-1}$. It is shown in \cite{LV04} that $c(D(k,q))=q^{t-1}$ for  even $q>4$, $c(D(k,4)) = q^t$ for $k\ge 4$, and $c(D(2,4))=c(D(3,4)) = 1$.\\

In order to characterize the components,  we begin with the notion of an invariant vector of the component (see~\cite{LUW95}).

\subsubsection{Invariant vector}
\bigskip

Let $k\geq 6$ and $t =\lfloor \frac{k+2}{4} \rfloor$. For every point
$(p) = (p_1,\ldots, p_k)$ and every line $[l] = [l_1,\ldots, l_k]$ in $D(k,q)$, and for any $2\le r\le t$,
let $a_r :\mathcal{P}_k\cup\mathcal{L}_k \rightarrow \mathbb{F}_q$ be given by:
\[
a_r((p)) = \begin{cases}
 -p_1p_4 + p_2^2 +p_5-p_6,  &\text{if } r=2, \\
  (-1)^{r-1}[p_1p_{4r-4}-p_2p_{4r-6}-p_2p_{4r-7}+p_3p_{4r-8}-p_{4r-3}+&\\
                                       p_{4r-2}+\sum_{i=2}^{r-2} (-p_{4i-3}p_{4(r-i)-2} + p_{4i-1}p_{4(r-i)-4})], &\text{if } r\geq 3,

                \end{cases}
 \]
and
\[
a_r([l]) =
\begin{cases} -l_1l_3+l_2^2-l_5+l_6, &\text{if }r=2, \\
(-1)^{r-1} [l_1l_{4r-5}-l_2l_{4r-6}-l_2l_{4r-7}+l_3l_{4r-8} +l_{4r-3}-&\\
l_{4r-2}+\sum_{i=2}^{r-2}(-l_{4i-3}l_{4(r-i)-2} +l_{4i-1}l_{4(r-i)-4})], &\text{if } r\geq 3.
\end{cases}
\]
For example,
\[
a_3((p)) = p_1p_8-p_2p_6-p_2p_5+p_3p_4-p_9+p_{10},
\]
and
\[
a_3([l]) = l_1l_7-l_2l_6-l_2l_5+l_3l_4+l_9-l_{10}.
\]

The {\it invariant vector} $\vec{a}(u)$ of  a vertex $u$ is defined to be
\[
\vec{a}= \vec{a}(u) = \langle a_2(u),a_3(u),\ldots,a_t(u)\rangle.
\]
The following proposition justifies the term. The proof is straightforward, and the details can be found in the original paper \cite{LUW95}, and with the representation of $D(k,q)$ adopted in this survey, in \cite{LSW17}.

\begin{theorem}\label{S:INVAR}{\rm{(\cite{LUW95})}}
 If $(p) \sim [l]$, then $\vec{a}((p)) = \vec{a}([l])$.
  \end{theorem}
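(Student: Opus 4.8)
My plan is to prove $a_r((p)) = a_r([l])$ for each $r$ with $2 \le r \le t$ by substituting the adjacency relations into the difference and watching it telescope to zero. First I would record the relations in the forms I will actually use: $p_2 + l_2 = p_1 l_1$, $p_3 + l_3 = p_1 l_2$, and for $i \ge 4$, $p_i + l_i = p_{i-2} l_1$ if $i \equiv 0, 1 \pmod 4$ and $p_i + l_i = p_1 l_{i-2}$ if $i \equiv 2, 3 \pmod 4$; I will also use the solved forms $p_2 = p_1 l_1 - l_2$, $l_3 = p_1 l_2 - p_3$, and, more generally, $l_{4i-3} = p_{4i-5} l_1 - p_{4i-3}$ and $l_{4i-1} = p_1 l_{4i-3} - p_{4i-1}$. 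The case $r = 2$ is genuinely different because of the square $p_2^2$; it is disposed of by a short direct computation, writing $p_2^2 - l_2^2 = (p_2 - l_2) p_1 l_1$ and reducing $-p_1 p_4 + l_1 l_3$ and $p_5 - p_6 + l_5 - l_6$ by the relations for indices $4, 5, 6$, after which everything cancels.

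For $r \ge 3$ I would factor out the common sign $(-1)^{r-1}$ and analyze $S = A_r^{(p)} - A_r^{(l)}$, where $A_r^{(p)}$ and $A_r^{(l)}$ denote the bracketed expressions in $a_r((p))$ and $a_r([l])$. Processing from the highest index downward, the two linear terms combine through the relations for $4r-3 \equiv 1$ and $4r-2 \equiv 2 \pmod 4$ into $-p_{4r-5} l_1 + p_1 l_{4r-4}$; absorbing the cross term $p_1 p_{4r-4} - l_1 l_{4r-5}$ and then, in turn, $-p_2 p_{4r-6} + l_2 l_{4r-6}$, $-p_2 p_{4r-7} + l_2 l_{4r-7}$, and $p_3 p_{4r-8} - l_3 l_{4r-8}$ --- each time replacing a high-index factor by its adjacency value and a low-index factor by $p_2 = p_1 l_1 - l_2$ or $l_3 = p_1 l_2 - p_3$ --- reduces this entire ``head'' to the single residual $R_1 = l_1(-p_2 p_{4r-9} + p_3 p_{4r-10})$.

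The heart of the proof is a telescoping induction on the summation index. I would show that after incorporating the $i$-th summand $T_i = -p_{4i-3} p_{4(r-i)-2} + l_{4i-3} l_{4(r-i)-2} + p_{4i-1} p_{4(r-i)-4} - l_{4i-1} l_{4(r-i)-4}$, the running residual is exactly $R_i = l_1(-p_2 p_{4r-9} + p_{4i-1} p_{4r-4i-6})$, of which $R_1$ is the $i = 1$ instance. The inductive step is a threefold use of the relations: collecting the $p_{4r-4i-2}$-terms of $R_{i-1}$ and $T_i$, the coefficient collapses via $p_{4i-5} l_1 - p_{4i-3} = l_{4i-3}$ and $p_{4r-4i-2} + l_{4r-4i-2} = p_1 l_{4r-4i-4}$ into $p_1 l_{4i-3} l_{4r-4i-4}$; this merges with the $p_{4r-4i-4}$-terms, where $p_1 l_{4i-3} - p_{4i-1} = l_{4i-1}$ and $p_{4r-4i-4} + l_{4r-4i-4} = p_{4r-4i-6} l_1$ leave exactly $p_{4i-1} p_{4r-4i-6} l_1$, i.e. $R_i$. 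Carrying the induction to its final value $i = r-2$ gives $R_{r-2} = l_1(-p_2 p_{4r-9} + p_{4r-9} p_2) = 0$, whence $S = 0$ and $a_r((p)) = a_r([l])$.

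I expect the only real obstacle to be index bookkeeping. At each substitution one must confirm that the index in question lies in the correct residue class modulo $4$, so that the appropriate form of the adjacency relation is used, and one must check that the head reduction lands precisely on $R_1$, the $i = 1$ case of the residual pattern. Beyond that, the argument is the repeated application of a single mechanism: the surviving terms always group as $c\,(p_j + l_j)$ with a coefficient $c$ that an adjacency relation collapses to one coordinate, while $p_j + l_j$ is replaced by its lower-index product, shifting the residual one notch along the telescope. A homogeneity check under the multiplicative automorphisms $m_{a,b}$ (each $a_r$ having weight $(r,r)$) serves as a useful safeguard against algebraic slips.
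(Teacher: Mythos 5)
Your proposal is correct: I checked the $r=2$ case, the head reduction to $R_1 = l_1(-p_2p_{4r-9}+p_3p_{4r-10})$, the inductive step $R_{i-1}+T_i = R_i$ (all residue classes mod $4$ are used correctly), and the termination $R_{r-2}= l_1(-p_2p_{4r-9}+p_{4r-9}p_2)=0$, which also covers $r=3$ where the sum is empty. This is essentially the same approach as the proof the paper defers to in \cite{LUW95,LSW17} --- a direct verification by substituting the adjacency relations into $a_r((p))-a_r([l])$ --- with your telescoping residual merely serving as a clean bookkeeping device for that computation.
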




\begin{corollary}\label{S:RINVA}
All the vertices in the same component of $D(k,q)$
have the same invariant vector.
\end{corollary}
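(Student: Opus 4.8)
The plan is to deduce the corollary immediately from Theorem~\ref{S:INVAR} together with the definition of a connected component as a maximal set of vertices joined by walks. Since $D(k,q)$ is bipartite with parts $\mathcal{P}_k$ and $\mathcal{L}_k$, every edge of the graph joins a point $(p)$ to a line $[l]$; hence Theorem~\ref{S:INVAR}, which asserts $\vec{a}((p)) = \vec{a}([l])$ whenever $(p)\sim[l]$, applies to \emph{every} edge of $D(k,q)$. In other words, the invariant-vector map $\vec{a}$ takes the same value on the two endpoints of each edge.

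First I would fix two vertices $u$ and $w$ lying in the same component. By definition of a component there is a walk
\[
u = v_0 \sim v_1 \sim v_2 \sim \cdots \sim v_m = w
\]
in $D(k,q)$ joining them. For each $i$ the pair $v_{i-1}, v_i$ forms an edge, so by the observation above $\vec{a}(v_{i-1}) = \vec{a}(v_i)$. A trivial induction on $i$ then yields
\[
\vec{a}(u) = \vec{a}(v_0) = \vec{a}(v_1) = \cdots = \vec{a}(v_m) = \vec{a}(w),
\]
which is precisely the assertion of the corollary.

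There is no real obstacle here: the entire content of the corollary is carried by Theorem~\ref{S:INVAR}, and the remaining argument is the standard fact that a function constant across every edge is constant on each connected component. The only point worth verifying is that Theorem~\ref{S:INVAR} indeed governs every edge of the graph, and this is guaranteed by the bipartiteness of $D(k,q)$: there are no point--point or line--line edges, so the hypothesis $(p)\sim[l]$ of the theorem is satisfied by all of them. Consequently $\vec{a}$ is well defined as a function on the set of components, assigning to each component the common invariant vector of its vertices.
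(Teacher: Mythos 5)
Your proof is correct and follows exactly the argument the paper intends: the paper states the corollary without proof as an immediate consequence of Theorem~\ref{S:INVAR}, and your walk-plus-induction argument (together with the observation that bipartiteness makes the theorem apply to every edge) is precisely that standard deduction made explicit.
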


Let $(0)$ denote the point corresponding to the zero vector. By Corollary~\ref{S:RINVA}, and the fact that $\vec{a}((0)) = \vec{0}$, we obtain the following theorem.
\begin{theorem} {\rm {(\cite{LUW95})}}\label{S:invariantForC}
Let $u$ be a vertex  in the component of $D(k,q)$ containing $(0)$. Then $$\vec{a}(u) = \vec{0}.$$
\end{theorem}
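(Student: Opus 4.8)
The plan is to derive the statement almost immediately from Corollary~\ref{S:RINVA} together with a one-line evaluation of the invariant vector at the zero point. First I would invoke Corollary~\ref{S:RINVA}: since $u$ lies in the same component of $D(k,q)$ as the point $(0)$, the two vertices share a common invariant vector, so that $\vec{a}(u) = \vec{a}((0))$. This reduces the theorem to the single assertion that $\vec{a}((0)) = \vec{0}$, and it is here that the distinguished role of the base point $(0)$ is used.

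For that computation I would simply inspect the defining formulas for the functions $a_r$. For $r=2$ one has $a_2((p)) = -p_1p_4 + p_2^2 + p_5 - p_6$, and for $r \ge 3$ the expression $a_r((p))$ is, up to the overall sign $(-1)^{r-1}$, a sum of terms each of which is either a product $p_ip_j$ of two coordinates or a single coordinate $p_m$. In particular every monomial appearing in $a_r$ contains at least one coordinate $p_i$ as a factor, so each $a_r$ has zero constant term. Evaluating at the zero vector $(0)$, where $p_1 = p_2 = \cdots = p_k = 0$, therefore yields $a_r((0)) = 0$ for every $2 \le r \le t$, whence $\vec{a}((0)) = \langle a_2((0)), \ldots, a_t((0))\rangle = \vec{0}$.

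Combining the two steps gives $\vec{a}(u) = \vec{a}((0)) = \vec{0}$, as required. I do not expect any genuine obstacle: the substantive content lies entirely in Theorem~\ref{S:INVAR} (invariance of $\vec{a}$ across an edge) and its packaging into Corollary~\ref{S:RINVA}, both of which I am free to assume. Once those are in hand, the present theorem is nothing more than the observation that the canonical base point $(0)$ has vanishing invariant vector, which is transparent from the absence of constant terms in each $a_r$. The only point worth stating carefully is that $(0)$ is genuinely a vertex of the component in question, so that Corollary~\ref{S:RINVA} applies to the pair $(u, (0))$ directly.
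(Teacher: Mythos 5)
Your proposal is correct and is essentially identical to the paper's own argument: the paper derives the theorem directly from Corollary~\ref{S:RINVA} together with the observation that $\vec{a}((0)) = \vec{0}$, which holds because every monomial in each $a_r$ contains at least one coordinate as a factor. Nothing further is needed.
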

A natural question at this point is whether the equality of invariant vectors of two vertices of $D(k,q)$ implies that the vertices are in the same component.  The answer is affirmative for $k\ge 6$ and $q\neq 4$,  and we will discuss it in the following four subsections.  This discussion will also lead to determining the number of components $c(D(k,q))$.
\medskip

\subsubsection{Lower bound for $c(D(k,q))$}
\bigskip

\begin{theorem}\label{S:CLB}{\rm{(\cite{LUW95})}}
For any $k\ge 2$, let
$t=\lfloor \frac{k+2}{4} \rfloor$ and let $q$ be a prime power. Then
\[
c(D(k,q)) \ge q^{t-1}.
\]
\end{theorem}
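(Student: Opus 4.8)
The plan is to use the invariant vector $\vec{a}(u)=\langle a_2(u),\ldots,a_t(u)\rangle\in\F_q^{t-1}$ as a component invariant. By Corollary~\ref{S:RINVA}, all vertices lying in a single component of $D(k,q)$ share the same invariant vector, so two vertices with distinct invariant vectors must lie in distinct components. Hence the number of components is bounded below by the number of distinct values that the map $u\mapsto\vec{a}(u)$ assumes on $V(D(k,q))$. It therefore suffices to show that this map is \emph{surjective} onto $\F_q^{t-1}$, which immediately yields $c(D(k,q))\ge q^{t-1}$.

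To prove surjectivity I would exploit the triangular structure of the coordinate functions $a_2,\ldots,a_t$ with respect to the coordinates $p_{4r-2}$. Fix a target $\langle c_2,\ldots,c_t\rangle\in\F_q^{t-1}$ and consider the point $(p)$ all of whose coordinates vanish except $p_{4r-2}$ for $2\le r\le t$, which I set to $(-1)^{r-1}c_r$. This is legitimate because the largest index used, $4t-2$, satisfies $4t-2\le k$: indeed $t=\lfloor(k+2)/4\rfloor$ gives $4t\le k+2$, so the coordinate $p_{4t-2}$ exists. Next I would substitute this sparse choice into the defining formulas for $a_r((p))$. Since $p_1=p_2=p_3=0$, the terms $p_1p_{4r-4}$, $p_2p_{4r-6}$, $p_2p_{4r-7}$, $p_3p_{4r-8}$ all vanish; since only coordinates of the form $p_{4j-2}$ are nonzero, every term of the sum $\sum_{i=2}^{r-2}(-p_{4i-3}p_{4(r-i)-2}+p_{4i-1}p_{4(r-i)-4})$ carries a vanishing factor $p_{4i-3}$ or $p_{4i-1}$, and the extraneous linear term $-p_{4r-3}$ vanishes as well. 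What survives is exactly $a_r((p))=(-1)^{r-1}p_{4r-2}=(-1)^{r-1}(-1)^{r-1}c_r=c_r$, and the case $r=2$ reduces similarly via $a_2((p))=-p_6$ to $c_2$. Thus $\vec{a}((p))=\langle c_2,\ldots,c_t\rangle$, establishing surjectivity.

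The main obstacle is purely a matter of careful bookkeeping: one must verify that, under the chosen substitution, every quadratic cross-term and the extraneous linear term $-p_{4r-3}$ in the formula for $a_r$ indeed vanishes for every $r$, so that $a_r$ depends, after this substitution, only on the single coordinate $p_{4r-2}$. Once this diagonal dependence is confirmed, surjectivity of $u\mapsto\vec{a}(u)$ is immediate, and with it the lower bound $c(D(k,q))\ge q^{t-1}$.
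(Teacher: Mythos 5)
Your proof is correct and takes essentially the same approach as the paper: both arguments combine Corollary~\ref{S:RINVA} with points supported only on the coordinates $p_{4r-2}$, $2\le r\le t$, on which the invariant map acts diagonally, so that distinct prescribed vectors in $\F_q^{t-1}$ are realized by points in necessarily distinct components. If anything, your write-up is more careful than the paper's, which glosses over the signs $(-1)^{r-1}$ and the index bookkeeping (including the bound $4t-2\le k$) that you verify explicitly.
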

\begin{proof} Let $x=(x_2,\ldots,x_t)$ and $y=
(y_2,\ldots,y_t)$ be  distinct vectors in $ \mathbb{F}_q^{t-1}$. Consider
points  $(p) = (p_1,\ldots,p_k)$ and
$(p') = (p'_1,\ldots,p'_k)$ defined by:
\[
p_j =
         \begin{cases}  x_{\frac{j-2}{4}},  &\text{if } j \equiv 2 \pmod 4,\cr
                                 0, &\text{otherwise,}
          \end{cases}
   \]
   and
   \[
p'_j =
         \begin{cases}  y_{\frac{j-2}{4}},  &\text{if }j \equiv 2 \pmod 4,\cr
                                 0, &\text{otherwise.}
          \end{cases}
   \]
It is easy to see that $\vec{a}((p)) =x  \neq y= \vec{a}((p'))$, and by Corollary~\ref{S:RINVA},
$(p)$ and $(p')$ are in different components.  So there are at least $q^{t-1}$ components.
\end{proof}
\medskip

\subsubsection{Projections and lifts}
  \bigskip

The results presented in  this section are rather technical. They will allow
us to show in the following  two sections
that equality of invariant vectors of two points of $D(k,q)$
implies that the points belong to the same component of $D(k,q)$. The proofs can be found in
the original papers \cite{LUW96} for $q$ odd, in \cite{LV04} for $q$ even,  and, using the
representation of $D(k,q)$ adopted in this survey, in \cite{LSW17}.

For $k\ge 3$, the \emph{projection}
\[
\pi: V(D(k,q)) \rightarrow V(D(k-1,q))
\]
 is defined via
\[
(p_1,\ldots,p_k) \mapsto (p_1,\ldots,p_{k-1}),\quad\quad [l_1,\ldots,l_k] \mapsto [l_1,\ldots,l_{k-1}].
\]

\noindent
As we mentioned in Section~\ref{SS:cover}, $\pi$ is a graph homomorphism of $D(k,q)$ to $D(k-1,q)$.
The vertex $w=v^\pi \in V(D(k-1,q))$
will often be denoted by $v'$; we say that $v$ is a \emph{lift} of $w$ and $w$ is a \emph{projection} of $v$.  If $B$
is a component of $D(k,q)$, we will often denote $B^\pi$ by $B'$, and $\pi_B$ will denote the restriction
of $\pi$ to $B$. We say that an automorphism $\tau$ $stabilizes$ $B$ if $B^\tau =B$; the group of all such
automorphisms is denoted by $Stab(B)$. A component of $D(k,q)$ containing a vertex $v$ will be denoted
by $C(v)$. The point and line corresponding to the zero vector $\vec{0}$ will be denoted by $(0)$ and
$[0]$, respectively. We will always denote the component $C((0))$ of $D(k,q)$ by just $C$. Then $C'$ will
be the corresponding component in $D(k-1,q)$.

\begin{theorem}\label{S:stab}{\rm{(\cite{LUW96,LV04})}} Let $\tau$ be an automorphism of $D(k,q)$,
and let $B$ be a component of $D(k,q)$ with $v\in V(B)$. Then $\tau$ stabilizes $B$ if and only if $v^\tau\in B$.
In particular, $t_{0,x}$, $t_{1,x}$, and $m_{a,b}$ are in $Stab(C)$ for all $x,a,b\in \mathbb{F}_q$, $a,b\neq 0$.
\end{theorem}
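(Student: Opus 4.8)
The plan is to split the statement into a completely general fact about graph automorphisms and a short verification for the three named families of maps, all of which are automorphisms of $D(k,q)$ by Propositions~\ref{S:multiauto} and~\ref{S:addiauto}.

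First I would prove the equivalence. An automorphism $\tau$ of any graph preserves paths, and since $\tau^{-1}$ is again an automorphism, $\tau$ permutes the connected components; in particular $B^\tau$ is itself a component of $D(k,q)$. As $v\in V(B)$ forces $v^\tau\in V(B^\tau)$, the component $B^\tau$ is precisely the one containing $v^\tau$. The forward direction is then immediate: if $B^\tau=B$ then $v^\tau\in B^\tau=B$. Conversely, if $v^\tau\in B$, then $B$ is the unique component containing $v^\tau$, and that component is $B^\tau$, so $B^\tau=B$. This argument uses nothing particular to $D(k,q)$.

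For the ``in particular'' claim I would apply this equivalence with $v=(0)$ and $B=C=C((0))$, reducing everything to checking that $(0)^\tau\in C$ for each $\tau\in\{t_{0,x},t_{1,x},m_{a,b}\}$. The maps $m_{a,b}$ and $t_{1,x}$ fix $(0)$. Indeed, reading the defining tables, each coordinate of the image of a point $(p)$ under $m_{a,b}$ is a scalar multiple of some $p_i$, while under $t_{1,x}$ each coordinate is $p_i$ plus terms that are products of powers of $x$ with various $p_j$ (for example $p_2\mapsto p_2+p_1x$ and $p_4\mapsto p_4+2p_2x+p_1x^2$, with $p_1$ itself fixed). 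Since every coordinate of $(0)$ vanishes, all these correction terms vanish, so $(0)^{m_{a,b}}=(0)^{t_{1,x}}=(0)\in C$.

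The only case needing a computation is $t_{0,x}$. From the table, $t_{0,x}$ sends $p_1\mapsto p_1+x$ and adds to each later point-coordinate a term carrying a factor $p_j$, so $(0)^{t_{0,x}}=(x,0,\ldots,0)$. To place this vertex in $C$, I would exhibit a common neighbor with $(0)$, namely the line $[0]=[0,\ldots,0]$. One has $(0)\sim[0]$ trivially, since every adjacency relation reads $0=0$; and for the point $(x,0,\ldots,0)$ with the line $[0]$, each right-hand side $f_i$ carries a factor $l_1=0$ or $l_{i-2}=0$ while both left-hand coordinates vanish, so again every relation reads $0=0$ and $(x,0,\ldots,0)\sim[0]$. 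Hence $(x,0,\ldots,0)$ is joined to $(0)$ by the path $(0)\sim[0]\sim(x,0,\ldots,0)$, so $(0)^{t_{0,x}}\in C$ and $t_{0,x}\in Stab(C)$ by the first part. The only thing demanding care is the tabular bookkeeping that every correction term in $m_{a,b}$ and $t_{1,x}$ vanishes on $(0)$ and that $t_{0,x}$ disturbs only the first coordinate of $(0)$; all the rest is immediate.
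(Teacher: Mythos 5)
Your proof is correct, and since the paper states Theorem~\ref{S:stab} without proof (deferring to \cite{LUW96,LV04} and \cite{LSW17}), your argument supplies exactly the standard one used in those references: automorphisms permute components, so $\tau$ stabilizes $B$ if and only if the image of any single vertex of $B$ lies in $B$; then one checks that $m_{a,b}$ and $t_{1,x}$ fix $(0)$, while $t_{0,x}$ sends $(0)$ to $(x,0,\ldots,0)$, which lies in $C$ via the path $(0)\sim[0]\sim(x,0,\ldots,0)$. The only blemish is cosmetic: for $i=3$ the line factor in $f_3=p_1l_2$ is $l_2$ rather than $l_1$ or $l_{i-2}$, but since every $f_i$ carries some line coordinate as a factor, all right-hand sides still vanish against $[0]$ and the adjacency check goes through unchanged.
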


\begin{theorem}\label{S:pro}{\rm{(\cite{LUW96,LV04})}}  Let $B$ be a component of $D(k,q)$. Then $\pi_B$ is a $t$-to-1
graph homomorphism for some $t$, $1\le t\le q$. In particular, let $k\equiv 0,\;3\pmod 4$, and suppose $\pi_C$
is a $t$-to-1 mapping for some $t>1$. Then $t=q$.
\end{theorem}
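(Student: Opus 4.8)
The plan is to split the statement into the general $t$-to-$1$ claim, which follows from covering-space theory, and the dichotomy $t\in\{1,q\}$ for $C$, which I would extract from the multiplicative automorphisms $m_{a,b}$. First I would record that $\pi\colon D(k,q)\to D(k-1,q)$ is a covering map: taking $A=\F_q$ in Theorem~\ref{T:cover} gives $D(k,q)\to D(k-1,q)$, and in the bipartite case condition~(ii) of the definition in Section~\ref{SS:cover} says exactly that $\pi$ restricts to a bijection on each neighborhood. Given a component $B$ of $D(k,q)$, local bijectivity shows that $\pi(B)$ is connected and closed under adjacency (any neighbor of $\pi(\bar w)$ lifts to a $B$-neighbor of $\bar w$), so $\pi(B)$ is a single component $B'$ of $D(k-1,q)$ and $\pi_B\colon B\to B'$ is again a covering. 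For adjacent $w\sim w'$ in $B'$, assigning to each preimage of $w$ in $B$ its unique $B$-neighbor over $w'$ is a bijection of fibres (both injectivity and surjectivity come from local bijectivity of $\pi$), so the fibre size $t=|\pi_B^{-1}(w)|$ is constant on the connected graph $B'$; clearly $1\le t\le q$ since the full $\pi$-fibre of any vertex has $q$ elements.

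For the second assertion I would localize at the zero vector. Put $(0')=\pi((0))\in C'$; its $\pi$-fibre is $\{(0,\dots,0,c):c\in\F_q\}$, so
\[
T:=\{\,c\in\F_q:\ (0,\dots,0,c)\in C\,\}
\]
has $|T|=t$ by the first part, and $0\in T$ because $(0)\in C$. By Theorem~\ref{S:stab} we have $m_{a,b}\in Stab(C)$ for all $a,b\in\F_q^*$, and reading the multiplicative table, $m_{a,b}$ leaves the zero coordinates zero and multiplies the last coordinate by $\lambda=a^{s}b^{s+1}$ when $k=4s$, and by $\lambda=a^{s+2}b^{s+1}$ when $k=4s+3$. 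Hence $(0,\dots,0,c)\in C$ implies $(0,\dots,0,\lambda c)\in C$, that is, $T$ is stable under multiplication by each such $\lambda$.

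The crux is the arithmetic of these exponents: for $k\equiv 0,3\pmod 4$ the two exponents of $\lambda$ are consecutive integers and hence coprime; since $\F_q^*$ is cyclic, a monomial $a^{e_1}b^{e_2}$ with $\gcd(e_1,e_2)=1$ is surjective onto $\F_q^*$ as $(a,b)$ ranges over $(\F_q^*)^2$, so $\lambda$ attains every nonzero value. Therefore $T$ is closed under multiplication by every nonzero scalar; since $\F_q^*$ acts transitively on $\F_q\setminus\{0\}$ and $0\in T$, this forces $T=\{0\}$ or $T=\F_q$, that is $t\in\{1,q\}$, and $t>1$ then gives $t=q$. I expect the point demanding attention to be precisely this coprimality step: for $k\equiv 1,2\pmod 4$ the two exponents coincide, so $\lambda=(ab)^{s+1}$ is confined to $(s+1)$-th powers and need not exhaust $\F_q^*$, which is exactly why the congruence hypothesis $k\equiv 0,3\pmod 4$ is indispensable rather than incidental.
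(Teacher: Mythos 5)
Your proof is correct and takes essentially the same route as the proof in the sources to which the paper defers (\cite{LUW96,LV04,LSW17}): the covering property of $\pi$ forces the fibre cardinality to be constant over a component, and the multiplicative automorphisms $m_{a,b}\in Stab(C)$ of Theorem~\ref{S:stab}, whose action on the $k$-th coordinate has consecutive (hence coprime) exponents precisely when $k\equiv 0,3\pmod 4$, sweep the fibre over the zero vertex through all of $\F_q^*$. The coprimality point you single out is indeed the crux, and it is exactly how the congruence hypothesis enters in the cited proofs as well (compare the use of $m_{a,b}$ with $ab=1$, $b\neq 1$ in the paper's proof of Theorem~\ref{S:invariantInverseNot4}).
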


\begin{theorem}\label{S:surj}{\rm{(\cite{LUW96,LV04})}} The map $\pi_C:V(C)\rightarrow V(C')$ is surjective.
\end{theorem}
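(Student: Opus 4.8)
The plan is to deduce the result from the fact that $\pi$ is a covering map, together with the standard path-lifting property of covers. By Theorem~\ref{T:cover} (applied with $A=\mathbb{F}_q$, $n=k$, and target $k-1$), the projection $\pi\colon D(k,q)\to D(k-1,q)$ realizes $D(k,q)$ as a cover of $D(k-1,q)$; in particular $\pi$ satisfies condition (ii) in the definition of a cover, so for every vertex $v$ the restriction $\pi|_{N(v)}\colon N(v)\to N(v^\pi)$ is a bijection. Concretely, this is just Theorem~\ref{T:ncc}: each vertex of $D(k,q)$ or of $D(k-1,q)$ has exactly one neighbor with a prescribed first coordinate, and $\pi$ preserves first coordinates. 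Since $C$ is connected and $\pi$ is a graph homomorphism, $C^\pi$ is a connected subgraph of $D(k-1,q)$ containing $(0)^\pi=(0)$, and hence $C^\pi\subseteq V(C')$, where $C'$ is the component of $D(k-1,q)$ containing $(0)$. It therefore remains to prove the reverse inclusion $V(C')\subseteq C^\pi$.

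First I would show that $C^\pi$ is closed under taking neighbors inside $C'$. Suppose $w\in C^\pi$, say $w=v^\pi$ with $v\in V(C)$, and let $w''$ be any neighbor of $w$ in $D(k-1,q)$. Using the local bijection $\pi|_{N(v)}$, there is a unique vertex $v''\in N(v)$ with $(v'')^\pi=w''$. Because $v''$ is adjacent to $v\in V(C)$, it lies in the same component, so $v''\in V(C)$ and therefore $w''\in C^\pi$. Thus $C^\pi$ is a nonempty subset of the connected graph $C'$ that is closed under adjacency.

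Finally, connectivity of $C'$ finishes the argument: given an arbitrary $w\in V(C')$, choose a path $(0)=w_0,w_1,\dots,w_n=w$ in $C'$; since $(0)\in C^\pi$ and $C^\pi$ is closed under neighbors, an immediate induction on $i$ gives $w_i\in C^\pi$ for all $i$, whence $w=w_n\in C^\pi$. This proves $V(C')\subseteq C^\pi$, and hence $\pi_C$ is onto $V(C')$. The only point that genuinely needs checking is the local bijectivity of $\pi$ on neighborhoods --- i.e.\ that $\pi$ really is a cover in the sense of condition (ii) --- but this is exactly what Theorem~\ref{T:ncc} supplies, so I expect no serious obstacle; the remainder is the routine path-lifting that makes any covering map surjective onto a connected component. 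One could instead invoke Theorems~\ref{S:stab} and~\ref{S:pro} to run a counting argument on fibre sizes, but the path-lifting route avoids computing $|V(C)|$ and $|V(C')|$ altogether.
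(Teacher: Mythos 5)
Your proof is correct, and it is worth noting that the survey itself does not reproduce a proof of this theorem: it only defers to the original sources \cite{LUW96,LV04,LSW17}. Your argument --- $\pi$ is a cover by Theorem~\ref{T:cover} (equivalently, local bijectivity on neighborhoods follows from Theorem~\ref{T:ncc} since $\pi$ preserves adjacency and first coordinates), so the image $C^\pi$ is a nonempty subset of $V(C')$ closed under adjacency, and connectivity of $C'$ then forces $C^\pi = V(C')$ by induction along a path from $(0)$ --- is precisely the standard path-lifting argument used in those references, so there is nothing to flag.
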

\medskip

\subsubsection{Exact number of components of  $D(k,q)$, $q\neq 4$}
\bigskip

The proofs of the results of this section can be found in
the original papers \cite{LUW96} for $q$ odd, in \cite{LV04} for $q$ even,  and, using the
representation of $D(k,q)$ adopted in this survey, in \cite{LSW17}.

\begin{theorem}\label{S:invariantInverseNot4}{\rm{(\cite{LUW96,LV04})}}
Let $q$ be a prime power, $q\neq 4$, and $k\ge 6$. If $v\in V(D(k,q))$ satisfies $\vec{a}(v) = \vec{0}$,
then $v\in V(C)$.
\end{theorem}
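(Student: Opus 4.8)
The plan is to show that the zero level set $Z := \{v \in V(D(k,q)) : \vec{a}(v) = \vec{0}\}$ coincides with $V(C)$. By Theorem~\ref{S:invariantForC} we already have $V(C) \subseteq Z$, and by Corollary~\ref{S:RINVA} the set $Z$ is a union of whole components; hence it suffices to prove the cardinality identity $|V(C)| = |Z|$, which forces $Z = V(C)$. First I would compute $|Z|$ directly. In each invariant $a_r$ (for both points and lines) the two highest-indexed coordinates enter only through the linear term $-\,p_{4r-3}+p_{4r-2}$ (resp.\ $+\,l_{4r-3}-l_{4r-2}$), while $p_{4r-2}$ occurs in the other $a_{r'}$ only quadratically, paired with lower coordinates. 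Consequently, for fixed values of all remaining coordinates the assignment $(p_6,p_{10},\ldots,p_{4t-2}) \mapsto (a_2,\ldots,a_t)$ is a triangular affine bijection onto $\F_q^{t-1}$, so $\vec{a}$ is surjective with all fibres of equal size. Since $|V(D(k,q))| = 2q^k$, this yields $|Z| = 2q^{\,k-t+1}$, where $t=\lfloor (k+2)/4\rfloor$.

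The remaining task is to show $|V(C)| = 2q^{\,k-t+1}$, and I would do this by induction on $k$ through the projection $\pi: D(k,q)\to D(k-1,q)$. Writing $C' = \pi(C)$ for the component of $D(k-1,q)$ containing $(0)$, Theorem~\ref{S:surj} gives that $\pi_C$ is onto $V(C')$, and Theorem~\ref{S:pro} gives that $\pi_C$ is $s$-to-$1$ for some $s$ with $1\le s\le q$; hence $|V(C)| = s\,|V(C')|$. The base case is $k=5$, where $t=1$ and $D(5,q)$ is connected (disconnectedness begins only at $k=6$), so $C = D(5,q)$ and $|V(C)| = 2q^5 = 2q^{\,5-t+1}$. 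Since $t$ increases by one exactly when $k\equiv 2\pmod{4}$ and is unchanged otherwise, the induction closes provided I can show $s=1$ when $k\equiv 2\pmod{4}$ and $s=q$ when $k\not\equiv 2\pmod{4}$: the recursion $|V(C)| = s\,|V(C')|$ then telescopes to $2q^{\,k-t+1}$.

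Determining $s$ is the crux, and I expect it to be the main obstacle. When $k\equiv 2\pmod{4}$, write $k = 4t-2$; here the newly added coordinate $p_k = p_{4t-2}$ is precisely the coordinate appearing with coefficient $1$ in the linear part of the new invariant $a_t$. Thus the $q$ lifts of any vertex of $V(C')$, which differ exactly in this last coordinate, realize $q$ distinct values of $a_t$ and hence $q$ distinct invariant vectors; by Corollary~\ref{S:RINVA} they lie in $q$ distinct components, so at most one lies in $C$, forcing $s=1$. When $k\not\equiv 2\pmod{4}$, no new invariant coordinate is introduced, and I would instead use the generators of $Stab(C)$ supplied by Theorem~\ref{S:stab}: among the additive automorphisms $t_{j,x}$ one finds a map fixing every coordinate of index below $k$ while acting nontrivially on the last coordinate, so it carries one lift of $(0)$ to another lift inside $C$ and gives $s\ge 2$. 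For $k\equiv 0,3\pmod{4}$, Theorem~\ref{S:pro} then upgrades this to $s=q$; for $k\equiv 1\pmod{4}$, the explicit action of the additive automorphisms on the top coordinates shows directly that the entire fibre $\F_q$ is swept out, again giving $s=q$.

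Combining $|V(C)| = 2q^{\,k-t+1} = |Z|$ with $V(C)\subseteq Z$ yields $V(C) = Z$, which is the assertion. I would note that the hypothesis $q\ne 4$ enters exactly in the fibre computation above: over $\F_4$ the action of the additive automorphisms (and their interplay with the Frobenius in characteristic two) on the last coordinate degenerates, the fibre size ceases to obey the clean $1$-or-$q$ dichotomy, and in fact $c(D(k,4)) = q^{t}$ rather than $q^{t-1}$; this is why $D(k,4)$ must be excluded. The only genuinely computational ingredient is the bookkeeping of how each $t_{j,x}$ transforms the highest-indexed coordinates of points and lines (Tables~\ref{table:add1} and~\ref{tabel:add2}), and it is precisely there that the separate treatments for odd $q$ (\cite{LUW96}) and even $q\ne 4$ (\cite{LV04}) diverge.
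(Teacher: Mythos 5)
Your high-level strategy is a legitimate variant of the paper's argument: instead of proving membership vertex-by-vertex by induction, you count. Three of your four ingredients are correct: $V(C)\subseteq Z$ and the fact that $Z$ is a union of components follow from Theorem~\ref{S:invariantForC} and Corollary~\ref{S:RINVA} exactly as you say; the count $|Z|=2q^{\,k-t+1}$ via triangular solvability in the coordinates $p_{4r-2}$ is the same computation the paper performs (later, inside the proof of Theorem~\ref{S:numCompNot4}); and your argument that $s=1$ when $k\equiv 2\pmod 4$ (the $q$ lifts realize $q$ distinct values of $a_t$, hence lie in distinct components) is sound.

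The gap is the case $k\not\equiv 2\pmod 4$, which is precisely the technical heart of the paper's proof. You assert that Theorem~\ref{S:stab} supplies an element of $Stab(C)$ fixing every coordinate of index below $k$ while moving the last one; the only such automorphisms are (compositions of) the maps $t_{k,x}$, and Theorem~\ref{S:stab} does \emph{not} place these in $Stab(C)$. It only guarantees $t_{0,x}$, $t_{1,x}$, $m_{a,b}\in Stab(C)$ a priori; for any other $\tau$ it gives the criterion that $\tau\in Stab(C)$ if and only if $(0)^\tau\in C$. For $\tau=t_{k,x}$ this criterion reads $(0,\ldots,0,x)\in V(C)$, and since for $k\not\equiv 2\pmod 4$ the vertex $(0,\ldots,0,x)$ has $\vec{a}=\vec{0}$, that is literally an instance of the theorem you are trying to prove: the argument is circular. (Nor is this a harmless omission: for $k\equiv 2\pmod 4$ the same map $t_{k,x}$, $x\neq 0$, genuinely fails to stabilize $C$, so no soft general principle can rescue the claim.) The paper earns the conclusion $s=q$ with real work: it takes the special vertex $(p')=(0,\ldots,0,1,1,0)\in V(C')$, uses surjectivity of $\pi_C$ (Theorem~\ref{S:surj}) to obtain one lift $(0,\ldots,0,1,1,0,y)\in V(C)$, and then either applies $m_{a,b}$ with $ab=1$, $b\neq 1$ (known to be in $Stab(C)$, and scaling the last coordinate by $b$ while fixing the others) to produce a second lift when $y\neq 0$, or, when $y=0$, applies Theorem~\ref{S:stab} in the legitimate direction — the exhibited vertex $(0,\ldots,0,1,1,0,0)\in V(C)$ is the image of $(0)$ under $t_{4j-3,1}t_{4j-2,1}$, so that composition lies in $Stab(C)$ — and uses it to manufacture two lifts of a further vertex. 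Your treatment of $k\equiv 1\pmod 4$ has the additional defect that the dichotomy of Theorem~\ref{S:pro} is stated only for $k\equiv 0,3\pmod 4$, so the upgrade from $s\ge 2$ to $s=q$ is not available there and the full fibre must be produced explicitly. Until $s=q$ is established by a non-circular route, the identity $|V(C)|=|Z|$, and with it your whole proof, does not go through.
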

\begin{proof}
The proof proceeds by induction on $k$. It is known from \cite{LUW96} that for $q\neq 4$, the graphs $D(k,q)$
are connected for $k=2,3,4,5$.

We begin with the base case $k=6$. Let $v\in V(D(6,q))$ with $\vec{a}(v) = \vec{0}$,
and let $v' = v^\pi \in V(D(5,q))$. Since $D(5,q)$ is connected, then $v'\in C' = D(5,q)$. Since $\pi_C$ is
surjective by Theorem~\ref{S:surj}, there is a $w\in V(C)$ such that $w^\pi = v' = v^\pi$. Since the sixth
coordinate of any vertex $u$ is uniquely determined by its initial five coordinates and $\vec{a}(u)$,
we have $v=w\in V(C)$.\\
Suppose that the theorem is true for $k' < k$, with $k\ge 7$.

If $k\equiv 2\pmod 4$, choose $v\in V(D(k,q))$ with $\vec{a}(v) = \vec{0}$, and let $v'
= v^\pi \in V(D(k-1,q))$. Then $\vec{a}(v') =\vec{0}$. Let $w$ be any lift of $v'$ to $C$. Then $\vec{a}(w)
= \vec{0} =\vec{a}(v)$ and $w^\pi = v' = v^\pi$. This implies that $v=w$, as in the base case $k=6$. Thus
$v\in V(C)$.

If $k \equiv 0,1,3\pmod 4$, we  show that $\pi_C$ is a $q$-to-1 map. (In the case of
$k\equiv 0,3\pmod 4$, it suffices to show that there is a point $(p')\in V(C')$ which has two lifts to $D(k,q)$
in $V(C)$ by Theorem~\ref{S:pro}).  These are exactly the values
of $k$ for which the invariant vectors of $C$ and $C'$ are the same. Choose $v\in V(D(k,q))$ such that $\vec{a}(v)
=\vec{0}$. Let $v' = v^\pi \in V(D(k-1,q))$. Since $\vec{a}(v) = \vec{a}(v') = \vec{0}$, then $v'\in C'$ by the
induction hypothesis. But then since $\pi_C$ is a $q$-to-1 map, all of the lifts of $v'$, including $v$ itself,
lie in $C$, and we are done. So one can  proceed with these cases.
Since the proof in the  case $k\equiv 0 \pmod 4$ is more involved,
we illustrate our approach with this case only.\\

\noindent \textbf{Case} $k\equiv 0 \pmod 4$. Write $k=4j$, $j\ge 2$. Let
\[
(p') = (0,\ldots,0,1,1,0)\in V(D(k-1,q)).
\]
Clearly $\vec{a}(p') =\vec{0}$, so $(p') \in V(C')$ by the induction hypothesis. Since $\pi_{C}$ is surjective, there
is $(p) \in V(C)$ with $(p)^\pi = (p')$, i.e., for some $y\in\mathbb{F}_q$,
\[
(p) = (0,\ldots,0,1,1,0,y).
\]
First suppose that $y\neq 0$. Then
\[
(p)^{m_{a,b}} = (0,\ldots,0,a^jb^j,a^jb^j,0,a^jb^{j+1}y).
\]
One can always choose $a,b\in \mathbb{F}_q^*$ such that $ab = 1$ but $b\neq 1$. With this choice of $a$
and $b$, we have
\[
(p)^{m_{a,b}} = (0,\ldots,0,1,1,0,by) \in V(C)
\]
by Theorem~\ref{S:stab}.
 Since $y\neq 0$, and $b\neq 1$, $(p')$ has two lifts to $D(k,q)$ in $C$. \\
Now suppose that $y=0$, then
\[
(0) \xrightarrow{t_{4j-3,1}} (0,\ldots,0,1,0,0,0) \xrightarrow{t_{4j-2,1}} (p).
\]
Therefore, $t_{4j-3,1}t_{4j-2,1} \in Stab(C)$ by Theorem~\ref{S:stab}. Now let
\[
(p') = (0,\ldots,0,1,1,0,0,0).
\]
Clearly $\vec{a}(p') = \vec{0}$, so $(p') \in V(C')$ by the induction hypothesis. Since $\pi_C$ is surjective, there
is $(p) \in V(C)$ with $(p)^\pi = (p')$, i.e., for some $y\in \mathbb{F}_q$,
\[
(p) = (0,\ldots,0,1,1,0,0,0,y).
\]
Note that
\[
(p) \xrightarrow{t_{1,-1}} (0,\ldots,0,1,1,-1,-1,0,y+1) \xrightarrow{t_{4j-3,1}t_{4j-2,1}} (0,\ldots,0,1,1,0,0,0,y+1).
\]
Since $t_{1,-1}$, $t_{4j-3,1}t_{4j-2,1} \in Stab(C)$ by Theorem~\ref{S:stab}, all the vertices above are in $V(C)$.
 Hence $(p')$ has two lifts to $D(k,q)$ in $C$.
 \medskip

Cases $k\equiv 1,3 \pmod 4$ can be dealt with similarly,  and the details can
be found in the references.
\end{proof}

\begin{theorem}\label{S:numCompNot4}{\rm{(\cite{LV04})}} Let $q$ be a prime power with $q\neq 4$,
$k\ge 2$ be an integer, and $t = \lfloor \frac{k+2}{4} \rfloor$. Then $c(D(k,q)) = q^{t-1}$.
\end{theorem}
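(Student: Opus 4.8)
The plan is to compute $c(D(k,q))$ exactly by finding the common cardinality of a component and dividing it into $|V(D(k,q))| = 2q^k$. For $k\in\{2,3,4,5\}$ one has $t=\lfloor\frac{k+2}{4}\rfloor = 1$, so $q^{t-1}=1$, and these graphs are connected for $q\neq 4$; this disposes of the base cases, so I would assume $k\ge 6$ and that the invariant vector is defined.

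First I would record that all components have the same size. By Theorem~\ref{S:transitive} the automorphism group is transitive on $\mathcal{P}_k$; since $q\ge 3$ there are no isolated vertices, so every component contains a point, and an automorphism carrying one point to another carries its component onto the other's. Hence $\mathrm{Aut}(D(k,q))$ permutes the components transitively and they have equal size; writing $|C|$ for this common size, $c(D(k,q)) = 2q^k/|C|$. Next I would identify $C = C((0))$ with the zero-set of the invariant vector: by Corollary~\ref{S:RINVA} together with $\vec a((0))=\vec 0$, every $v\in V(C)$ satisfies $\vec a(v)=\vec 0$, while Theorem~\ref{S:invariantInverseNot4} gives the converse. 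Thus $V(C)=\{v : \vec a(v)=\vec 0\}$, and everything reduces to counting this set.

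The heart of the argument is the count $|\{(p) : \vec a((p))=\vec 0\}| = q^{k-t+1}$ (and the same for lines, via the symmetric formula for $a_r([l])$). Here I would exploit the triangular shape of the defining polynomials: for $2\le r\le t$ one has $a_r((p)) = (-1)^{r-1}\bigl(p_{4r-2} + g_r\bigr)$, where $p_{4r-2}$ is the highest-indexed coordinate occurring in $a_r$ and $g_r$ is a polynomial in the remaining coordinates. Calling $p_6,p_{10},\ldots,p_{4t-2}$ the \emph{pivots}, the only pivots that $g_r$ can involve are the lower ones $p_{4r''-2}$ with $r''<r$. Consequently, for each of the $q^{\,k-(t-1)}$ assignments of the non-pivot coordinates, the system $a_2=\cdots=a_t=0$ determines the $t-1$ pivots uniquely and successively (solve for $p_6$, then $p_{10}$, and so on). This yields exactly $q^{k-t+1}$ points, so $|C| = 2q^{k-t+1}$ and $c(D(k,q)) = 2q^k/\bigl(2q^{k-t+1}\bigr) = q^{t-1}$, which simultaneously recovers the lower bound of Theorem~\ref{S:CLB}.

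I expect the main obstacle to be the bookkeeping in the counting step: one must verify carefully that $p_{4r-2}$ appears in $a_r$ with a unit coefficient and that no pivot of index exceeding $4r-2$ intrudes, so that the induced map on the pivot coordinates is genuinely unitriangular and hence a bijection of $\mathbb{F}_q^{t-1}$ for every fixed value of the non-pivot coordinates. The inequality $4t-2\le k$, which holds because $t=\lfloor\frac{k+2}{4}\rfloor$ gives $4t\le k+2$, guarantees that all $t-1$ pivot coordinates actually exist.
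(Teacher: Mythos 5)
Your proposal is correct and takes essentially the same route as the paper: reduce to $k\ge 6$ using connectivity of $D(k,q)$ for $2\le k\le 5$, characterize $V(C)$ as the zero-set of the invariant vector via Theorems~\ref{S:invariantForC} and~\ref{S:invariantInverseNot4}, count $q^{k-t+1}$ solutions of $a_2=\cdots=a_t=0$ by solving successively for the coordinates $p_{4r-2}$ (which appear with unit coefficient and form a triangular system), and divide. The only cosmetic differences are that you divide the total vertex count $2q^k$ by $|V(C)|=2q^{k-t+1}$ where the paper divides the point count $q^k$ by the points per component, and that you derive the equal size of components from point-transitivity rather than edge-transitivity; both variants are valid.
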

\begin{proof} We have already mentioned (see the beginning of the proof of Theorem~\ref{S:invariantInverseNot4})
that for $2\le k \le 5$, and $q\ne 4$, $D(k,q)$ is connected. Hence the statement is correct in these cases. We
also remind the reader that for all $k\ge 2$ and prime powers $q$, $D(k,q)$ is edge-transitive, hence all its components
are isomorphic.

Let $k\ge 6$. Combining Theorem~\ref{S:invariantForC} and Theorem~\ref{S:invariantInverseNot4}, we have that
$v\in V(C)$ if and only if $\vec{a}(v) = \vec{0}$. To determine the number of points in $C$, we need only determine
how many solutions there are to the equation $\vec{a}((p)) = \vec{0}$, or equivalently to the system of equations
$a_r = 0$ for every $r\ge 2$. For $3\le r\le t$, and arbitrary $p_1,\ldots,p_5$, $p_{4r-3}, p_{4r-4},p_{4r-5}$ and $p_{4t-1},
\ldots, p_k$, we can uniquely solve for $p_{4r-2}$ for $2\le r\leq t$. Therefore, there are $q^{5+3(t-2)+k-(4t-2)} = q^{k-t+1}$ points in $C$.

Since the total number of points in $D(k,q)$ is $q^k$, and all its components are isomorphic, we have
\[
c(D(k,q)) = \frac{q^k}{q^{k-t+1}} = q^{t-1}.
\]
\end{proof}
We will show that the invariant vector of a vertex characterizes the component containing the vertex. Let $C(x)$ be the component of $D(k,q)$ containing the vertex $x$.
\begin{corollary}\label{S:character}{\rm{(\cite{LV04})}}
Let $k\ge 6$, and $q\neq 4$. Then $\vec{a}(u) = \vec{a}(v)$ if and only if $C(u) = C(v)$.
\end{corollary}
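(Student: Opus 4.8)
The plan is to derive the statement by a short pigeonhole argument that combines three facts already in hand: the invariant vector is constant on each component (Corollary~\ref{S:RINVA}), every vector of $\mathbb{F}_q^{t-1}$ is realized as an invariant vector (via the construction in the proof of Theorem~\ref{S:CLB}), and the number of components is \emph{exactly} $q^{t-1}$ (Theorem~\ref{S:numCompNot4}). The forward implication needs no work: if $C(u)=C(v)$, then $u$ and $v$ lie in a common component, so $\vec{a}(u)=\vec{a}(v)$ by Corollary~\ref{S:RINVA}.

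For the converse I would first note that, by Corollary~\ref{S:RINVA}, the assignment $u\mapsto\vec{a}(u)$ is constant on each component and therefore descends to a well-defined map $\Phi$ from the set of components of $D(k,q)$ to $\mathbb{F}_q^{t-1}$, sending a component $B$ to the common invariant vector of its vertices. The points $(p)$ exhibited in the proof of Theorem~\ref{S:CLB}, one for each prescribed vector $x=(x_2,\ldots,x_t)\in\mathbb{F}_q^{t-1}$ with $\vec{a}((p))=x$, show that $\Phi$ is surjective. Both the domain and the codomain of $\Phi$ have cardinality $q^{t-1}$: the domain by Theorem~\ref{S:numCompNot4}, and the codomain since $|\mathbb{F}_q^{t-1}|=q^{t-1}$. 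A surjection between finite sets of equal cardinality is a bijection, so $\Phi$ is injective. Injectivity of $\Phi$ states precisely that two components sharing an invariant vector must coincide; hence $\vec{a}(u)=\vec{a}(v)$ forces $C(u)=C(v)$, which completes the argument.

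There is essentially no obstacle here, as all the substance is carried by Theorems~\ref{S:CLB} and~\ref{S:numCompNot4}. The one point that deserves care is that the counting step genuinely requires the \emph{exact} count $c(D(k,q))=q^{t-1}$ and not merely the lower bound of Theorem~\ref{S:CLB}; this exact count is where the hypothesis $q\neq 4$ enters, so the corollary inherits that restriction. I would also remark that one can avoid the counting entirely by reducing to the special case $\vec{a}=\vec{0}$, which is exactly the content of Theorems~\ref{S:invariantForC} and~\ref{S:invariantInverseNot4} (those together say $v\in V(C)$ if and only if $\vec{a}(v)=\vec{0}$); however, that route requires producing, for a given target invariant vector, an automorphism that translates it to $\vec{0}$, whereas the bijectivity argument above is shorter and self-contained.
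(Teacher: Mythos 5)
Your proof is correct and takes essentially the same route as the paper: the paper likewise defines the map from the set of components to $\mathbb{F}_q^{t-1}$ sending $C(v)\mapsto\vec{a}(v)$, notes it is well defined by the invariance of $\vec{a}$ on components, and concludes bijectivity from the exact count $c(D(k,q))=q^{t-1}$ of Theorem~\ref{S:numCompNot4}. If anything, you are slightly more careful than the paper, which passes from equal cardinalities directly to ``$f$ is bijective'' without explicitly invoking the surjectivity supplied by the points constructed in the proof of Theorem~\ref{S:CLB}, a step you spell out.
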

\begin{proof}
Let $t = \lfloor \frac{k+2}{4} \rfloor$.
 Let $X$  be the set of all components of $D(k,q)$
 and we define the mapping
$f: X\to  \mathbb{F}_q^{t-1}$ via $f(C(v)) = \vec{a}(v)$.
From Theorem~\ref{S:INVAR}, we know that $f$ is well defined, i.e., $C(u) = C(v)$ implies $\vec{a}(u) = \vec{a}(v)$.
By Theorem~\ref{S:numCompNot4}, $|X| = q^{t-1}$, so that $f$ is bijective. Thus $C(u) =C(v)$ whenever $\vec{a}(u) = \vec{a}(v)$.
\end{proof}
\medskip

\subsubsection{Exact number of components of $D(k,4)$}
\bigskip

In order to deal with the case $q=4$, we will need an analog of
Theorem~\ref{S:invariantInverseNot4}. We begin by
defining an invariant vector for $D(k,4)$. Its definition is very close to $\vec{a}$
defined before, the only difference being
the presence of an extra coordinate.
For $u\in V(k,4)$, and $t =\lfloor \frac{k+2}{4}\rfloor$, the invariant is given by
\[
\vec{b} = \vec{b}(u) = \langle b_1(u),b_2(u),\ldots,b_t(u)\rangle,
\]
where $b_i = a_i$ for all $i\ge 2$ and
\[
b_1((p)) = p_1p_2 + p_3 + p_4^2, \quad
b_1([l]) = l_1l_2+l_3^2+l_4.
\]

The following three theorems  are analogs to
Theorems \ref{S:invariantForC},
\ref{S:invariantInverseNot4} and \ref{S:numCompNot4} for $q=4$.
Their  proofs are similar to those,  and can be found in
\cite{LV04}, or, using the presentation of $D(k,q)$   adopted   in this survey,
 in \cite{LSW17}.
\begin{theorem}\label{S:invariantCq4}{\rm{(\cite{LV04})}}
 Let $u$ be in the component of $D(k,4)$ containing $(0)$. Then $$\vec{b}(u) = \vec{0}.$$
\end{theorem}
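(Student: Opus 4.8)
The plan is to prove this exactly as Theorem~\ref{S:invariantForC} is proved, by reducing the claim to an edge-local statement and then propagating it along a walk inside the component. Since every vertex $u$ in the component $C$ of $(0)$ is joined to $(0)$ by a walk, and since $\vec{b}((0)) = \vec{0}$ because all coordinates of the zero vector vanish, it suffices to understand how each coordinate of $\vec{b}$ behaves across a single edge $(p)\sim[l]$.

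For the coordinates $b_i = a_i$ with $i\ge 2$ there is nothing new to do: Theorem~\ref{S:INVAR} asserts that $a_i((p)) = a_i([l])$ whenever $(p)\sim[l]$, and that statement holds over every $\F_q$, in particular over $\F_4$. Hence each $a_i$ is constant on $C$, forcing $a_i(u) = a_i((0)) = 0$ for all $i\ge 2$. The only genuinely new ingredient is the extra coordinate $b_1$.

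Here I would establish the edge relation
\[
b_1((p)) = \bigl(b_1([l])\bigr)^2 .
\]
First I would substitute the adjacency relations $p_2 = p_1 l_1 + l_2$, $p_3 = p_1 l_2 + l_3$ and $p_4 = p_2 l_1 + l_4$ (recall $-1 = 1$ in $\F_4$) into $b_1((p)) = p_1p_2 + p_3 + p_4^2$. Expanding in characteristic $2$, so that $(x+y)^2 = x^2 + y^2$, and using the field identity $x^4 = x$ valid for every $x\in\F_4$, the $p_1$-terms cancel in pairs and one is left with $b_1((p)) = l_3 + l_1^2 l_2^2 + l_4^2$. On the other hand $\bigl(b_1([l])\bigr)^2 = (l_1 l_2 + l_3^2 + l_4)^2 = l_1^2 l_2^2 + l_3 + l_4^2$, again invoking $l_3^4 = l_3$, and the two expressions coincide. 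This is the crux: unlike the coordinates $a_i$, the new coordinate $b_1$ is not invariant but only semi-invariant, transforming by the Frobenius map $x\mapsto x^2$. I expect this characteristic-$2$ computation, together with recognizing that the correct relation is a squaring rather than a plain equality, to be the main point of the argument; everything else is bookkeeping. This also explains structurally why $q=4$ requires the extra coordinate $b_1$ absent for other $q$: over $\F_4$ there is an additional Frobenius-semi-invariant constraining the components.

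Finally, since $x\mapsto x^2$ is a bijection of $\F_4$ fixing $0$, the displayed relation gives $b_1((p)) = 0$ if and only if $b_1([l]) = 0$, so the condition $b_1 = 0$ is preserved across every edge in both directions. Starting from $b_1((0)) = 0$ and walking along any path from $(0)$ to $u$ inside $C$ yields $b_1(u) = 0$. Combining this with $a_i(u) = 0$ for all $i\ge 2$ gives $\vec{b}(u) = \vec{0}$, as required.
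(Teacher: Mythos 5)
Your proposal is correct and is essentially the argument the paper has in mind (the survey defers the proof to \cite{LV04,LSW17}, describing it as analogous to Theorem~\ref{S:invariantForC}): the coordinates $b_i=a_i$, $i\ge 2$, propagate unchanged across edges by Theorem~\ref{S:INVAR}, while the extra coordinate transforms by Frobenius, $b_1((p))=\bigl(b_1([l])\bigr)^2$, so vanishing spreads along any walk from $(0)$. Your characteristic-two computation of this relation checks out (using $x^4=x$ in $\F_4$), and it is corroborated by the paper's own remark exhibiting adjacent vertices with $b_1$-values $\omega$ and $\omega^2$.
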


\begin{theorem}\label{S:invInverse4}{\rm{(\cite{LV04})}}
  Let $k\ge 4$. If $v\in V(D(k,4))$ satisfies $\vec{b}(v) = \vec{0}$, then $v\in V(C)$.
\end{theorem}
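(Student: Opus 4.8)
The plan is to imitate the induction on $k$ used for Theorem~\ref{S:invariantInverseNot4}, the one genuinely new feature being that the $q=4$ invariant $\vec b$ carries the extra leading coordinate $b_1$, which is precisely what accounts for the additional factor of $q$ in the component count. Because $D(2,4)$ and $D(3,4)$ are connected while $D(4,4)$ already splits into four components, the natural base of the induction is $k=4$ rather than $k=6$.

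For the base case $k=4$ we have $t=1$ and $\vec b=\langle b_1\rangle$. Given $v\in V(D(4,4))$ with $b_1(v)=0$, set $v'=v^{\pi}\in V(D(3,4))$; since $D(3,4)$ is connected, $v'\in C'$, and by surjectivity of $\pi_C$ (Theorem~\ref{S:surj}) there is $w\in V(C)$ with $w^{\pi}=v'$, where $b_1(w)=0$ by Theorem~\ref{S:invariantCq4}. The point is that the fourth coordinate is recovered from the first three together with the value of $b_1$: for a line $b_1([l])=l_1l_2+l_3^2+l_4$ is linear in $l_4$, and for a point $b_1((p))=p_1p_2+p_3+p_4^2$ is solved for $p_4$ using that $x\mapsto x^2$ is a bijection of $\F_4$ (characteristic $2$). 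Hence $w=v$ and $v\in V(C)$; equivalently $\pi_C$ is a bijection of $C$ onto $D(3,4)$.

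For the inductive step I would assume the statement for all $k'<k$ with $k\ge 5$ and split on $k \bmod 4$. A new invariant coordinate is created exactly when $k\equiv 2 \pmod 4$ (the coordinate $a_t$ with $t=(k+2)/4$), and there $\pi_C$ should be $1$-to-$1$: from $\vec b(v)=\vec 0$ one gets $\vec b(v')=\vec 0$, so $v'\in C'$ by induction, and a lift $w\in C$ of $v'$ satisfies $\vec b(w)=\vec 0=\vec b(v)$ and $w^{\pi}=v^{\pi}$; since $a_t$ is linear in $p_k$ (resp. $l_k$), the top coordinate is forced and $w=v\in C$. In the remaining classes $k\equiv 0,1,3 \pmod 4$ (with $k\ge 8$ when $k\equiv 0$) no new invariant appears, and the goal is to show $\pi_C$ is $q$-to-$1$; then every lift of $v'\in C'$ lies in $C$, and since $\vec b(v')=\vec 0$ places $v'\in C'$ by the induction hypothesis, $v\in C$ follows. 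For $k\equiv 0 \pmod 4$, $k\ge 8$, the construction in Theorem~\ref{S:invariantInverseNot4} carries over: one picks $a,b\in\F_4^{*}$ with $ab=1$ and $b\ne 1$ (possible as $|\F_4^{*}|=3$) and uses $m_{a,b}$ and the $t_{j,x}$ in $Stab(C)$ (Theorem~\ref{S:stab}) to produce a point with two lifts in $C$, which by Theorem~\ref{S:pro} forces $\pi_C$ to be exactly $q$-to-$1$. The cases $k\equiv 1,3 \pmod 4$ would be settled by the analogous stabilizer constructions.

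The hard part will be the characteristic-$2$ bookkeeping in these $q$-to-$1$ constructions. Several additive automorphisms in Table~\ref{table:add1} carry coefficients such as $2p_2x$ that vanish over $\F_4$, so I would need to recheck that the two candidate lifts built from $Stab(C)$ are genuinely distinct and both satisfy $\vec b=\vec 0$, tracking the role of $b_1$ along the relevant orbits; verifying this for $k\equiv 1,3 \pmod 4$ is the crux. Once it is in place, the exact count $c(D(k,4))=q^{t}$ and the characterization of components by $\vec b$ follow as in Theorems~\ref{S:numCompNot4} and~\ref{S:character}, the only change being that solving $\vec b((p))=\vec 0$ now fixes one additional coordinate through $b_1$ and hence multiplies the component count by a further factor of $q$.
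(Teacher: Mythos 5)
Your main argument follows essentially the same route as the paper's proof, which is only sketched in the survey and deferred to \cite{LV04} and \cite{LSW17} with the indication that it parallels the induction of Theorem~\ref{S:invariantInverseNot4}: induction on $k$ with the base case moved down to $k=4$ (correct, since $D(2,4)$ and $D(3,4)$ are connected while $D(4,4)$ is not), the extra coordinate $b_1$ together with the Frobenius bijection $x\mapsto x^2$ of $\F_4$ recovering the fourth coordinate, the $k\equiv 2\pmod 4$ step handled by uniqueness of the lift, and the classes $k\equiv 0,1,3\pmod 4$ reduced to showing that $\pi_C$ is $q$-to-$1$ via Theorems~\ref{S:stab}, \ref{S:pro} and \ref{S:surj}. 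Your base case is complete and correct. Two caveats on the inductive step: the characteristic-$2$ re-verification of the stabilizer constructions that you flag as the crux is precisely the nontrivial content of the omitted details in the references, and for $k\equiv 1\pmod 4$ exhibiting a vertex of $C'$ with two lifts in $C$ does not suffice, because Theorem~\ref{S:pro} yields the dichotomy ``$t>1$ implies $t=q$'' only for $k\equiv 0,3\pmod 4$; in the class $k\equiv 1\pmod 4$ one must establish the $q$-to-$1$ property directly, as is done in the cited proofs.

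The one genuine error is your closing claim that ``the characterization of components by $\vec b$'' follows as in Corollary~\ref{S:character}. It does not: the remark following Theorem~\ref{S:numComp4} states explicitly that the analog of Corollary~\ref{S:character} fails for $q=4$. The reason is that $\vec b$, unlike $\vec a$, is not constant along edges: with $\omega$ a primitive element of $\F_4$, the vertices $(p)=(0,0,\omega,0,\ldots,0)$ and $[l]=[0,0,\omega,0,\ldots,0]$ are adjacent in $D(k,4)$, yet $\vec b((p))=\langle \omega,0,\ldots,0\rangle \neq \langle \omega^2,0,\ldots,0\rangle = \vec b([l])$. Consequently $C(v)\mapsto \vec b(v)$ is not a well-defined map on components, and the counting argument used to prove Corollary~\ref{S:character} collapses. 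This mistake does not propagate back into your proof of the theorem itself, since you only ever invoke the one-directional statement of Theorem~\ref{S:invariantCq4} (namely $v\in V(C)\Rightarrow \vec b(v)=\vec 0$) and never edge-invariance of $\vec b$; your assertion that $c(D(k,4))=4^{t}$ follows is also fine. But the characterization claim should be deleted.
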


Similarly to the proof of Theorem~\ref{S:numCompNot4}, one can show the following result.

\begin{theorem}\label{S:numComp4}{\rm{(\cite{LV04})}}
Let $k\ge 4$ and $t=\lfloor \frac{k+2}{4} \rfloor$. Then $c(D(k,4)) = 4^t$ and  $c(D(2,4)) = c(D(3,4)) = 1$.
 \end{theorem}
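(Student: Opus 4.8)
The plan is to follow the proof of Theorem~\ref{S:numCompNot4} essentially line by line, replacing the invariant $\vec{a}$, which has $t-1$ coordinates, by the enlarged invariant $\vec{b} = \langle b_1, b_2,\ldots, b_t\rangle$ carrying the one extra coordinate $b_1$; that single additional invariant is exactly what upgrades the count $q^{t-1}$ to $4^{t}$. First I would dispose of the small cases $k=2,3$ separately. For these values $t=1$ and the expression $b_1((p)) = p_1p_2 + p_3 + p_4^2$ refers to coordinates that do not exist, so the invariant $\vec{b}$ is not even defined — which is precisely why Theorem~\ref{S:invInverse4} is stated only for $k\ge 4$. In these two cases one instead appeals to the direct verification, recorded in the earlier connectivity discussion, that $D(2,4)$ and $D(3,4)$ are connected, giving $c(D(2,4)) = c(D(3,4)) = 1$.

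For $k\ge 4$ I would argue as follows. Because $D(k,4)$ is edge-transitive (Theorem~\ref{S:transitive}), all of its components are isomorphic, so it suffices to count the points of the single component $C = C((0))$ and then set $c(D(k,4)) = 4^k / |V(C)\cap\mathcal{P}_k|$, using that the total number of points is $4^k$. Combining Theorem~\ref{S:invariantCq4} with Theorem~\ref{S:invInverse4} gives the characterization that a vertex $v$ lies in $C$ if and only if $\vec{b}(v) = \vec{0}$. Hence counting the points of $C$ reduces to counting the point-solutions $(p)$ of the system consisting of $b_1((p)) = 0$ together with $a_r((p)) = 0$ for $2\le r\le t$.

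The heart of the matter is this count, carried out just as in Theorem~\ref{S:numCompNot4} but with the one additional equation. Each equation $a_r((p)) = 0$ (for $2\le r\le t$) contains the coordinate $p_{4r-2}$ linearly with unit coefficient, so it solves uniquely for $p_{4r-2}$ in terms of the remaining coordinates; the coordinates so determined are $p_6, p_{10},\ldots, p_{4t-2}$, whose indices are all $\equiv 2 \pmod 4$. The new equation $b_1((p)) = p_1p_2 + p_3 + p_4^2 = 0$ is linear in $p_3$, and since $3 \equiv 3 \pmod 4$ this coordinate is distinct from all the coordinates forced by the $a_r$; hence $b_1 = 0$ pins down $p_3$ uniquely. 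Altogether exactly $t$ distinct coordinates are determined and the remaining $k-t$ coordinates are free, so $|V(C)\cap\mathcal{P}_k| = 4^{k-t}$ and therefore $c(D(k,4)) = 4^k / 4^{k-t} = 4^t$.

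The main obstacle I anticipate is not the linear algebra of solving for individual coordinates, which is routine, but rather confirming that the $t$ equations impose $t$ genuinely independent conditions, i.e.\ that the coordinate cut out by $b_1 = 0$ is not already among those cut out by the equations $a_r = 0$. Tracking the residues modulo $4$ of the solved-for indices — the $a_r$ produce coordinates with index $\equiv 2 \pmod 4$, while $b_1$ produces $p_3$ with index $\equiv 3 \pmod 4$ — settles this cleanly and guarantees that no coordinate is over-determined, which is exactly what makes the clean power-of-four count valid.
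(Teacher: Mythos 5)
Your proposal is correct and takes essentially the same approach as the paper, which proves this theorem precisely by the argument ``similar to the proof of Theorem~\ref{S:numCompNot4}'' that you spell out: the characterization $v\in V(C)$ if and only if $\vec{b}(v)=\vec{0}$ (Theorems~\ref{S:invariantCq4} and~\ref{S:invInverse4}), the triangular solve in which $b_1=0$ determines $p_3$ and each $a_r=0$ determines $p_{4r-2}$, and the final count $c(D(k,4))=4^k/4^{k-t}=4^t$ via edge-transitivity, with $k=2,3$ handled by the known connectivity of $D(2,4)$ and $D(3,4)$. Your residue-mod-$4$ check that $p_3$ is not among the coordinates $p_6,p_{10},\ldots,p_{4t-2}$ is exactly the point that makes the extra invariant coordinate raise the count from $q^{t-1}$ to $4^t$.
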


 \begin{remark} The analog of Corollary~\ref{S:character} does not hold for $q=4$. The reason for this is
 the special first coordinate of the invariant vector.
 Indeed, let $\omega$ be a primitive element of $\mathbb{F}_4$. Then $$(p) = (0,0,\omega,0,\ldots,0)\sim [0,0,\omega,
 0,\ldots,0] = [l],$$ in $D(k,4),$ but
 \[
 \vec{b}((p)) = \langle \omega, 0, \ldots, 0\rangle \neq \langle \omega^2,0,\ldots,0\rangle = \vec{b}([l]).
 \]
 \end{remark}
\medskip

\subsection{Diameter of $C\!D(k,q)$}
\bigskip

Let $\d(C\!D(k,q))$ denote the diameter of the graph $C\!D(k,q)$.
For small values of $k$ and $q$, we have the following computational results (\cite{Sch94,Tho97,Tho02}).

\noindent For $k=2$, the diameter of $C\!D(2,q)$ is 4 for $3\le q\le  49$.\\
For $k=3$, the diameter of $C\!D(3,q)$ is 6 for $3\le q \le 47$.\\
For $k\ge 4$ and the following pairs $(k,q)$, we have:\\
\vskip 3mm
\begin{table}[H]
\centering
\begin{tabular}{|c|c|c|c|c|c|c|c|}
\toprule
$k$ & 4 & 5,6 & 5, 6 & 7 & 8 & 9,10 & 11,12 \\
\hline
$q$ & 3,[5,23] & 5 & [7,13] & 5,[7,9] & 5,7 &  5 & 5\\
 \hline
diameter & 8 & 12& 10 & 12 & 12 & 14  & 16 \\
\bottomrule
\end{tabular}
\end{table}

\noindent
As mentioned before, $[m,n]$ denotes the set of all  integers $k$ such that $m\le k\le n$.
For $q=3$ and $q=4$, the diameter exhibits different behavior.

\begin{table}[H]
\centering
\begin{tabular}{ |c|cccccccccccccc|}
\toprule
$k$ &2   & 3 & 4   & 5   & 6 & 7     &8    &9     & 10 & 11 & 12 &13  & 14 & 15\\
\hline
diameter & 4 & 6 & 8 & 12 & 12 & 12 & 14 & 17 & 17 & 22 & 22 & 24 & 24 & 26\\
\bottomrule
\end{tabular}
\caption{Diameter of $C\!D(k,3)$ for small $k$ }
\end{table}

\begin{table}[H]
\centering
\begin{tabular}{ |c|ccccccccccc|}
\toprule
$k$ &2   & 3 & 4   & 5   & 6 & 7     &8    &9     & 10 & 11 & 12 \\
\hline
diameter & 4 & 6 & 6 &8 &8 &10 & 12 &16 &16 &16 &18\\
\bottomrule
\end{tabular}
\caption{Diameter of $C\!D(k,4)$ for small $k$ }
\end{table}

\begin{conjecture}{\rm{(\cite{LUW97})}}
There exists a positive constant $C$ such that for all $k\ge 2$, and all prime powers $q$,
\[
\d(C\!D(k,q)) \le (\log_{q-1}q)k+C.
\]
\end{conjecture}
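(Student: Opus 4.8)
The plan is to bound the eccentricity of a single, conveniently chosen vertex and then invoke transitivity. By Theorem~\ref{S:transitive} the group $\mathrm{Aut}(D(k,q))$ is transitive on $\mathcal P_k$, on $\mathcal L_k$, and on edges; hence all points share one eccentricity and all lines share another, so $\d(C\!D(k,q))$ equals the larger of the two eccentricities of the point $(0)$ inside its component $C=C((0))$, measured to points and to lines. It therefore suffices to exhibit, for each $w\in V(C)$, a walk from $(0)$ to $w$ of length at most $(\log_{q-1}q)k+C$; since the graph is bipartite, a walk of the correct parity already bounds the distance.

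The first concrete step is to parameterize walks by their first coordinates. By Theorem~\ref{T:ncc} a walk $(0)=(a^0)\sim[b^1]\sim(a^1)\sim\cdots\sim(a^s)$ of length $2s$ is uniquely determined by the first coordinates $\delta_1,\dots,\delta_{2s}\in\F_q$ of its successive vertices, and each remaining coordinate of the endpoint $(a^s)$ is a fixed polynomial in $\delta_1,\dots,\delta_{2s}$ obtained by iterating the triangular relations~(\ref{Bipmain0}). Thus a walk of length $2s$ from $(0)$ to a prescribed $w$ exists if and only if the system $(a^s)(\delta_1,\dots,\delta_{2s})=w$ is solvable over $\F_q$; its first equation fixes $\delta_{2s}$, leaving $k-1$ bilinear equations in the $2s-1$ unknowns $\delta_1,\dots,\delta_{2s-1}$.

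To keep the number of genuinely binding equations small I would exploit $\mathrm{Stab}(C)$. By Theorem~\ref{S:stab} the automorphisms $t_{0,x},t_{1,x},m_{a,b}$ (and the $t_{j,x}$) stabilize $C$, while by Corollary~\ref{S:character} (and its $q=4$ analogue) the condition $w\in V(C)$ is exactly $\vec a(w)=\vec 0$, which already determines the $t-1$ coordinates $p_{4r-2}$. Moving $w$ into a normal form of minimal support by these stabilizer maps, and then solving the endpoint system block by block along the $i\equiv 0,1,2,3\pmod 4$ pattern of the defining functions, I would try to realize $w$ by a walk whose length is governed by the number of coordinate blocks needing correction, namely $\Theta(k)$ of them, each absorbing a bounded number of extra line--point steps. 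The slope $\log_{q-1}q$ (rather than the naive slope $1$) should emerge from the arithmetic of how many independent field conditions the free parameters $\delta_i$ can satisfy per step, i.e.\ how efficiently they fill the $(q-1)$-ary branching of the tree-cover before the girth forces collisions.

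The decisive obstacle, and the reason the statement is still only conjectural, is precisely the uniform solvability of this bilinear endpoint system over $\F_q$ together with the sharp constant. A soft Moore-type count (the ball of radius $2j$ about $(0)$ holds at most $\sim q(q-1)^{2j-1}$ points, while $C$ has $\sim q^{k-t+1}$ points) yields only the matching lower bound $\gtrsim \tfrac{3}{4}(\log_{q-1}q)k$ and says nothing about an upper bound, because the girth of $D(k,q)$ is only $\approx k$, so beyond radius $\approx k/2$ the balls cease to grow like a tree and the covering argument collapses exactly in the regime that separates slope $\tfrac34$ from slope $1$. I therefore expect the hard part to be controlling this post-girth ball growth uniformly in $k$ and $q$ --- equivalently, proving that the endpoint system stays solvable with only $O(1)$ overhead per coordinate level --- and the most promising route seems to be an induction along the projections $\pi:C\!D(k,q)\to C\!D(k-1,q)$ (Theorems~\ref{S:pro},~\ref{S:surj}): lift a shortest path from level $k-1$, then correct the newly introduced coordinate within a bounded number of steps, and account for the cumulative correction to produce the claimed constant $C$.
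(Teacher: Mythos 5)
You were asked to prove a statement that the paper itself does not prove: it is presented there as an open conjecture (attributed to \cite{LUW97}), supported only by evidence consistent with it --- the computed diameters of $C\!D(k,q)$ for small $k$ and $q$, the partial results on Conjecture~\ref{S:diaExact} from \cite{Sch94}, and lower bounds such as $\d(C\!D(k,q))\ge k+5$ for odd $k\ge 5$ from \cite{Sun17}. So there is no argument in the paper to compare yours against, and your own text correctly concedes that you have not produced one either. The concession is the crux: what you call the ``decisive obstacle'' --- uniform solvability of the bilinear endpoint system, equivalently control of ball growth beyond the girth --- is not a technical step left open, it \emph{is} the conjecture. Reducing ``$\d(C\!D(k,q))\le (\log_{q-1}q)k+C$'' to ``the endpoint system stays solvable with $O(1)$ overhead per coordinate level'' is a reformulation, not progress; the $\delta_i$ enter the $k-1$ endpoint equations through iterated bilinear compositions whose solvability nobody knows how to control uniformly in $k$ and $q$, and your proposal offers no mechanism for doing so.

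Two concrete points where the outlined route, even as a plan, would fail. First, the induction along $\pi:C\!D(k,q)\to C\!D(k-1,q)$ with ``a bounded number of correction steps per level'' cannot produce the conjectured slope: if each level costs at most $c$ extra steps you get $\d\le ck+C$, and matching the conjecture forces $c$ to be, in an amortized sense, $\log_{q-1}q$ --- barely above $1$ for large $q$, and $\log_2 3\approx 1.585$ for $q=3$. The paper's diameter table for $q=3$ (values $4,6,8,12,12,12,14,17,17,22,22,24,24,26$ for $k=2,\dots,15$) shows per-level increments that are irregular, sometimes $0$ and sometimes $5$, so no uniform per-level constant of the right size exists; moreover, for $k\equiv 2\pmod 4$ the restriction $\pi_C$ is a bijection onto $C'$ (the component count jumps by a factor of $q$ at those levels), whereas at other levels it is $q$-to-$1$, so any correct accounting must be global rather than level-by-level. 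Second, two smaller inaccuracies: the Moore-type count gives a lower bound of slope $\tfrac34\log_{q-1}q$, which does not ``match'' the conjectured upper slope $\log_{q-1}q$ (that gap between $\tfrac34$ and $1$ is precisely what is unresolved); and when $k$ and $q$ are both odd, $D(k,q)$ is not vertex-transitive (Theorem~\ref{S:transitive}), so the diameter is the maximum of the point-eccentricity and the line-eccentricity, and bounding the eccentricity of $(0)$ alone does not suffice. In short: your diagnosis of where the difficulty lies is accurate and your framework (walk parameterization via Theorem~\ref{T:ncc}, normalization by $Stab(C)$, projections via Theorems~\ref{S:pro} and~\ref{S:surj}) is reasonable, but nothing in the proposal constitutes a proof, and the statement remains open.
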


The following conjecture was stated in \cite{Sch94}.
\begin{conjecture}\label{S:diaExact}{\rm{(\cite{Sch94})}}
The diameter of $C\!D(3, q)$ is 6 for all prime powers q. The diameter of $C\!D(k, q)$ is $k +5$,
if $k > 3$ is odd, and $k + 4$, if $k$ is even, provided that q is a large enough prime power.
\end{conjecture}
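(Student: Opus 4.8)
The plan is to treat the upper and lower bounds on $\d(C\!D(k,q))$ separately, after using the transitivity in Theorem~\ref{S:transitive} to fix one endpoint. Since the automorphism group is transitive on points and on lines, it suffices to compute the eccentricity of the base point $(0)$ inside its component $C$, i.e.\ to determine $\max_{v\in V(C)}\d((0),v)$ and show it equals the conjectured value. I would first exploit the induction on $k$ furnished by the projection $\pi\colon C\!D(k,q)\to C\!D(k-1,q)$: by Theorem~\ref{T:cover} this is a covering map, and projecting any path upstairs yields a walk of equal length downstairs, so $\d(C\!D(k,q))\ge \d(C\!D(k-1,q))$, giving monotonicity. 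Combined with the surjectivity and exact fibre size of $\pi_C$ (Theorems~\ref{S:surj} and~\ref{S:pro}), this reduces the increment of the diameter when $k$ grows by one to a controlled ``one new coordinate'' problem.

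For the upper bound I would encode a walk of length $m$ from $(0)$ to a target $v$ as a system of equations in the first coordinates of the $m-1$ intermediate vertices. By Theorem~\ref{T:ncc} each step is determined by a single free parameter, and the triangular shape of the relations~(\ref{Bipmain0}) expresses all remaining coordinates of the reached vertex as explicit polynomials in these parameters. Reaching a prescribed $v\in V(C)$ then amounts to solving these equations, where by the invariant-vector characterization of components (Theorem~\ref{S:invariantInverseNot4} and Corollary~\ref{S:character}) the admissible targets are exactly those with $\vec a(v)=\vec 0$. This mirrors the strategy that succeeds for the linearized Wenger graphs in the proof of Theorem~\ref{diameterlinear}, where the analogous system is \emph{linear} and one inverts a nonsingular coefficient matrix; here I would take $m$ equal to $k+5$ (odd $k>3$), $k+4$ (even $k$), or $6$ (for $k=3$) and show the corresponding system is solvable for every admissible $v$ and all sufficiently large $q$.

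For the lower bound I would exhibit a single pair at the conjectured distance, most naturally $(0)$ together with a point whose coordinates are supported on the last few positions, and prove that no walk of smaller length of the correct parity joins them. Since the diameter values are even, the binding constraint is the point-to-point distance within one part. This non-solvability is governed by the \emph{same} bilinear relations that control short cycles, so I would tie it to the girth bound of Theorem~\ref{S:girthLB} and to Taranchuk's covering argument through $A(n,q)$: a too-short connecting walk, once closed up, would produce a cycle shorter than the girth, which is impossible. Here the transition at $k=3$ (diameter $6$, strictly below the girth $8$, reflecting the biaffine part of a generalized quadrangle) and the anomalous small-$q$ values tabulated for $q=3,4$ explain why the hypothesis ``$q$ large enough'' is essential and must be built into the estimates.

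The step I expect to be the main obstacle is the solvability of the bilinear path-equation systems uniformly in $q$. Unlike the Wenger case, the coupling between coordinates in~(\ref{Bipmain0}) for $D(k,q)$ is genuinely nonlinear, so proving that the variety cut out by the walk equations has an $\F_q$-point for every admissible target is not a matter of inverting a matrix; it would require controlling that variety (its dimension and the absolute irreducibility of the relevant components) and invoking a Lang--Weil type count to guarantee points for large $q$. Moreover, because the exact girth is known only for the special families in Conjecture~\ref{S:girthconj}, the lower bound as stated is entangled with the girth, and decoupling the diameter statement from the still-open girth conjecture is likely the deepest difficulty.
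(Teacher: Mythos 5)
You are attempting to prove a statement that the paper itself does not prove: this is Conjecture~\ref{S:diaExact} (Schliep), and it is open. The paper only records partial progress --- Schliep's proof for $k=3$ (all odd prime powers $q$) and for $k=4$ under arithmetic conditions on $q$, and the lower bounds $\d(C\!D(k,q))\ge k+5$ for odd $k\ge 5$ and $\d(C\!D(k,q))\ge k+4$ for even $k\ge 4$, valid for $q\ne 4$, from \cite{Sun17} --- so there is no proof to compare yours against, and what you have written is a research program, not a proof. Its central step, namely that the nonlinear system encoding a walk of length $k+5$ (resp.\ $k+4$, resp.\ $6$) from $(0)$ to an arbitrary admissible target $v$ with $\vec{a}(v)=\vec{0}$ is solvable over $\F_q$ for all large $q$, is precisely the open upper-bound half of the conjecture, and you leave it untouched: a Lang--Weil argument would require establishing the dimension and absolute irreducibility over $\F_q$ of the walk variety, which depends on the target $v$, uniformly over the roughly $q^{k-t+1}$ admissible targets, and neither your sketch nor anything in the paper supplies that. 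The analogy with Theorem~\ref{diameterlinear} is misleading for exactly the reason you concede: there the system is linear and solvability is a nonsingularity statement, while here it is a hard problem about rational points on families of varieties.

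The lower-bound half of your plan contains a step that fails outright. Closing up a hypothetical short walk does not produce a forbidden cycle: if $v$ is joined to $(0)$ both by a path of length $m<k+5$ and by a path of length $k+5$, their union contains only a cycle of length at most $m+(k+5)$, which is never below the girth bound $k+5$ of Theorem~\ref{S:girthLB}, so no contradiction with the girth (or with Taranchuk's covering argument) can arise. In general, girth $g$ forces only $\d\ge\lfloor g/2\rfloor$ (antipodal vertices on a shortest cycle), and even a crude vertex-counting argument in the $q$-regular graph $C\!D(k,q)$, whose component has about $2q^{k-t+1}$ vertices with $t=\lfloor\frac{k+2}{4}\rfloor$, gives a diameter lower bound of only about $\frac{3}{4}k$ --- both roughly half of the conjectured value $k+5$. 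The known lower bounds cited in the paper are obtained by explicit coordinate and invariant computations inside $D(k,q)$ (tracking how many steps are needed before prescribed late coordinates can be altered), not by girth considerations; an argument of that kind is what you would need even to recover the already-known part of the conjecture, and it is independent of, not entangled with, Conjecture~\ref{S:girthconj}.
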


Some parts  of Conjecture~\ref{S:diaExact} were proved in~\cite{Sch94}, namely for $k=3$ and all odd prime powers $q$, and for $k=4$ and prime power $q$ satisfying  the following three conditions:
 $q$ is odd,   $(q -1, 3) = 1$,  and either 5
is a square in $\mathbb{F}_q$ or $z^4 - 4z^2 -z +1 = 0$ has a solution in $\mathbb{F}_q$.
For the lower bound of the diameter, it is shown in \cite{Sch94} that
for all odd $k \ge5$ and all prime powers $q$, $\d(C\!D(k, q)) \ge k+3$.
Then this bound was improved by \cite{Sun17}:  for all prime powers $q\ne 4$,
$\d(C\!D(k,q)) \ge k+5$ for odd $k\ge 5$, and $\d(C\!D(k,q))\ge k+4$ for even $k\ge 4$.
\medskip

\subsection{Spectrum of $D(k,q)$}
\bigskip

We would like to end this section with a problem about the spectra of the graphs $D(k,q)$,  which have the same eigenvalues
as the graphs $C\!D(k,q)$,  but with higher multiplicities.   In particular,  we wish to find the second largest eigenvalue $\lambda_2$ for these graphs,
defined as  the largest eigenvalue smaller than $q$.  Though it is known to have a relation to the  diameter of
$C\!D(k,q)$,  $\lambda_2$  is also related to other properties of these graphs, including
the expansion properties (see Hoory, Linial and Wigderson \cite{HLW06} on such relations).  It is known that for some $(k,q)$, the graphs
$D(k,q)$ are  Ramanujan,  i.e., they have   $\lambda_2\le  2\sqrt{q-1}$. In particular, it is the case for  $k=2,3$, see    Li, Lu and Wang \cite{LLW09}   or \cite{CLL14}, where the whole spectrum was determined.
However,  Reichard \cite{Rei01} and Thomason  \cite{Tho02} independently showed by computer that the graphs $D(4,q)$ are not Ramanujan for certain $q$. This implies  that for the same
$q$, the graphs $D(k,q)$ are not Ramanujan for $k\ge 4$, since the spectrum of $D(4,q)$   is embedded in that of $D(k,q)$, for $k\ge 4$.    Later these computations were extended and confirmed by other researchers.  At the same time, we are not aware of any example of
$D(k,q)$ with $\lambda_2 >  2\sqrt{q-1} +1$.
The following bound  appears in  Ustimenko \cite{Ust03} (see  Moorhouse, Sun and Williford \cite{MSW17} for discussion and details).
\begin{conjecture}\label{Ustconj} \rm{(Ustimenko)}
 For all $(k,q)$,  $C\!D(k,q)$ has second largest eigenvalue less than or equal to $2\sqrt{q}$.
\end{conjecture}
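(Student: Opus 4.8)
The statement is a genuinely open conjecture, so what follows is a plan of attack rather than a complete argument. The plan is to first reduce the spectral question to one about the collinearity (distance-two) graph, and then to analyze that graph as a Cayley graph whose eigenvalues are exponential sums over $\F_q$. First I would pass from $C\!D(k,q)$ to the bipartite graph $D(k,q)$: since all components of $D(k,q)$ are isomorphic by edge-transitivity (Theorem~\ref{S:transitive}), the two graphs share the same set of distinct eigenvalues, so it suffices to bound $\lambda_2(D(k,q))$. Writing the adjacency matrix of $D(k,q)$ as $\left(\begin{smallmatrix} 0 & B \\ B^{\!\top} & 0\end{smallmatrix}\right)$ with $B$ the point--line incidence matrix, its nonzero eigenvalues are $\pm\sqrt{q+\mu}$, where $\mu$ ranges over the eigenvalues of $N = BB^{\!\top}-qI$. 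By Theorem~\ref{T:ncc} every vertex has degree $q$, and by the girth bound (Theorem~\ref{S:girthLB}) there is no $4$-cycle, so any two distinct points share at most one line; hence $N$ is a $0$--$1$ matrix, namely the adjacency matrix of the collinearity graph $H$ on points. The trivial eigenvalue $q$ corresponds to $\mu_{\max}=q(q-1)$, and the conjecture $\lambda_2\le 2\sqrt q$ becomes exactly the assertion
\begin{equation*}
\mu_2(H)\;\le\;3q ,
\end{equation*}
where $\mu_2(H)$ is the second largest eigenvalue of $H$.

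Next I would realize $H$, restricted to the component $C\!D(k,q)$, as a Cayley graph. The additive automorphisms $t_{j,x}$ (Proposition~\ref{S:addiauto}), together with the characterization of components by the vanishing invariant vector (Corollary~\ref{S:character}), should provide a group $G$ acting regularly on the points of $C\!D(k,q)$; realizing $H$ as $\mathrm{Cay}(G,S)$ reduces its spectrum to character sums when $G$ is abelian (as it is for $D(2,q)\cong W_1(q)$ and $D(3,q)\cong W_2(q)$ by Theorem~\ref{CLL14}) and to sums over the irreducible representations of $G$ in general. The connection set $S$ has size $q(q-1)$, its elements being the distance-two difference vectors $p'-p$, parametrized by the first coordinate $l_1$ of the intermediate line and the first coordinate $p_1'$ of the endpoint; the neighbor-complete coloring of Theorem~\ref{T:ncc} guarantees that this parametrization is well defined. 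Summing out the multiplicative parameter, exactly as in the proof of Theorem~\ref{CLL14}, should collapse each eigenvalue of $H$ to a one-variable exponential sum over $\F_q$ whose summand is assembled from the defining functions $f_2,\dots,f_k$ of $D(k,q)$.

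The final and decisive step is to bound these sums by $3q$ for every nontrivial character. For small $k$ the sums are precisely the ones evaluated in Theorem~\ref{CLL14}, giving $\mu_2(H)\in\{0,q\}$, well inside the bound; for general $k$ I would aim to show that the component constraint --- the $t-1$ equations $a_r=0$ that cut out $C\!D(k,q)$ --- forces the relevant sum to be Kloosterman-like, for which Weil's estimate yields exactly the target $2\sqrt q$ on $\lambda_2$. This is where the main obstacle lies. The polynomials produced by iterating $f_2,\dots,f_k$ have degree growing with $k$, so a direct application of the Weil--Deligne bounds degrades like $(\deg)\sqrt q$ and is hopelessly weak. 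Worse, by the embedding of spectra (Theorem~\ref{T:spec}) the value $\lambda_2(D(k,q))$ is non-decreasing in $k$ for fixed $q$, so the bound must be genuinely uniform in $k$ and cannot be deduced from finitely many cases --- this is also why the failure of the Ramanujan property at $k=4$ does not contradict the conjecture. The heart of the problem is therefore to exhibit the special cancellation, presumably reflecting the Lie-algebraic origin of the $f_i$ and the vanishing of the invariant vector, that keeps the contribution of the associated variety bounded independently of $k$; producing this uniform cancellation is exactly what has kept the conjecture open.
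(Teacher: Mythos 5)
You are right to treat this statement as what it is: an open conjecture. The paper contains no proof of it, only pointers to the partial results for $D(4,q)$ (\cite{MSW17}) and for $D(5,q)$ with $q$ odd (\cite{GupTar24}), both obtained via group representations. Your two opening reductions are correct and match remarks made in the paper: since all components of $D(k,q)$ are isomorphic to $C\!D(k,q)$, the two graphs have the same distinct eigenvalues, so it suffices to bound the largest eigenvalue of $D(k,q)$ below $q$; and since $\girth(D(k,q))\ge 6$ for all $k\ge 2$, the matrix $BB^{\top}-qI$ is indeed the adjacency matrix of the distance-two graph $H$ on points, so the conjecture is equivalent to $\mu_2(H)\le 3q$. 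The arithmetic $\sqrt{q+\mu_2}\le 2\sqrt q \Leftrightarrow \mu_2\le 3q$ and the identification $\mu_{\max}=q(q-1)$ are both right.

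The step of your plan that fails is the claim that one can sum out the multiplicative parameter ``exactly as in the proof of Theorem~\ref{CLL14}.'' That proof rests on $H$ being a Cayley graph of the \emph{abelian} group $(\F_q^{m+1},+)$ with a translation-invariant connection set: for Wenger graphs, if $[l]\sim(p)\sim[l']$ then $l'-l=(t,tu,\dots,tu^m)$ with $t=l_1'-l_1$ and $u=p_1$, independently of the base line. For $D(k,q)$ with $k\ge 4$ this breaks down: from $p_4+l_4=p_2l_1$ one gets $l_4'-l_4=(l_1'-l_1)\,p_2$, and $p_2=p_1l_1-l_2$ depends on the coordinates of the base line, so the difference vector is not a function of the parameters $(t,u)$ alone. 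Consequently the distance-two graph of $D(k,q)$ is not an abelian Cayley graph under this parametrization, its eigenvalues are not one-variable character sums, and the mechanism of Theorem~\ref{CLL14} does not transfer; this is precisely why the settled cases $k=4,5$ required non-abelian representation theory rather than character sums, and why no Weil-type estimate has been brought to bear uniformly in $k$. Your proposal is therefore a sensible research outline whose first reductions are sound, but its central engine already fails at $k=4$, and --- as you yourself concede at the end --- it does not constitute a proof; nor does the paper claim to have one.
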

 For $k=4$, this conjecture was proven in \cite{MSW17}, where the whole spectrum  of the graph $D(4,q)$ was determined.    For $k=5$ and odd $q$,  the conjecture was proven by Gupta and Taranchuk \cite{GupTar24}.  Both papers \cite{MSW17} and \cite{GupTar24} utilized group representations.
\begin{problem}
Determine a good upper bound on $\lambda_2 (D(k,q))$ for $k\ge 7$,  or find the spectrum of  $D(k,q)$ for $k\ge 5$.
\end{problem}
\medskip

\subsection{Graphs  $B\Gamma_n$ and $C\!D(k,q)$ as iterated voltage lifts}\label{voltagelift}
\bigskip

Here we describe how one can view the graphs $B\Gamma_n = B\Gamma_n(R; f_2, \ldots , f_n)$ and $C\!D(k,q)$, $n\ge 2$,   as the result of iterated voltage lifts. The connection for  $C\!D(k,q)$ was described  in \cite{Ers17}, and our presentation here is close to that in \cite{Ers17}.

Let $\Gamma$  be an undirected graph,  with  loops and multiple edges  allowed.
We replace each edge of $\Gamma$ by a pair of oppositely directed arcs, and denote the resulting digraph  by $ \overrightarrow{\Gamma} $.   If $e$ is an arc,  then $e^{-1}$  will denote its reverse. If $D( \overrightarrow{\Gamma})$   is the  set of all arcs of $ \overrightarrow{\Gamma}$, then $|D( \overrightarrow{\Gamma})| = 2 |E(\Gamma)|$.

For a finite  group $G$, a mapping $\alpha:  D( \overrightarrow{\Gamma})\to G$ is called a {\it voltage assignment}  on $ \overrightarrow{\Gamma}$  if
$\alpha(e^{-1})  = (\alpha (e))^{-1}$ for all $e\in D( \overrightarrow{\Gamma})$. Given a voltage assignment $\alpha$,  we denote  the
{\it voltage lift of $ \overrightarrow{\Gamma}$ corresponding to $\alpha$} as a digraph  $ \overrightarrow{\Gamma}^{\alpha}$ defined as  follows.

The vertex set $V$ of  $\overrightarrow{\Gamma}^{\alpha}$ is the Cartesian product  $V(\Gamma) \times G$, and the arc set  is
$D(\overrightarrow{\Gamma})\times G$.
Let $e$ be  an arc in $\overrightarrow{\Gamma}$  from vertex $u$ to vertex $v$. We define the arc $(e, g)$ in $\overrightarrow{\Gamma}^{\alpha}$ to have
initial vertex $(u,g)$ and terminal vertex $(v, g\alpha(e))$.      Note that by the definition  of
voltage assignments, arc $(e^{-1},g^{-1})$ in  $\overrightarrow{\Gamma}^{\alpha}$ has  initial vertex $(v, g\alpha(e))$  and terminal vertex $(u,g)$.
Replacing  each of  these pairs of arcs with an undirected edge joining $u$ and $v$,   we obtain an undirected  graph that we denote by ${\Gamma}^{\alpha}$ and call the {\it  voltage lift of $\Gamma$ corresponding to $\alpha$}.

Now we explain how the graph $B\Gamma_n = B\Gamma_n(R; f_2, \ldots , f_n)$  can be viewed as a voltage lift of $B\Gamma_{n-1} = B\Gamma_{n-1}(R; f_2, \ldots , f_{n-1})$.  For this, we  define a voltage assignment $\alpha$  so that $B\Gamma_n  = B\Gamma_{n-1}^\alpha$.   Let
$G = (R, +)$  be the additive group of the ring $R$,  and let us define $B\Gamma_1$ to be the complete bipartite graph $K_{q,q}$.
Vertices in one partition can be denoted by $(p_1)$ (points)  and in the other one by  $[l_1]$ (lines),  where all distinct  $p_1$  and all distinct  $ l_1$  range over $G$.
We can  view  vectors $(a_1,a_2, ..., a_{k-1},a_k)\in R^k$ as ordered pairs  $((a_1,a_2, ..., a_{n-1}), a_n) \in R^{n-1}\times G$.  If   $ (p_1, p_2, ..., p_{n-1})$ and  $[l_1,l_2, ..., l_{n-1}]$ form  an edge $e$  in $B\Gamma_{n-1}$,  and $\alpha (e)$ is defined as $ f_k(p_1,\ldots,p_{n-1}, l_1, \ldots l_{n-1})$,  then
 $(p_1, p_2, ..., p_{n-1},p_n)$  and  $[l_1,l_2, ..., l_{n-1} ,l_n]$ form an edge in $B\Gamma_n$.

Now we explain how the graph $C\!D(k,q)$ can be viewed as a voltage lift of $C\!D(k-1,q)$.   Let   $ G= (\F_q, +)$  be the additive group of the field $\F_q$,  and let us define $C\!D(1,q)$ to be the complete bipartite graph $K_{q,q}$. By  the argument above, we conclude that
if  $2\le k\not=  4n +2$, $n\ge 1$, then  $C\!D(k,q)$ is a voltage lift of $C\!D(k-1,q)$.
If $k= 4n+2 \ge 6$, then we know that $D(k,q)$ has $q$ times more components than $D(k-1,q)$.
Each is denoted by $C\!D(k,q)$ and is isomorphic to $C\!D(k-1,q)$.
\begin{theorem}\label{voltage} \rm(\cite{Ers17})  Let $2\le k\not = 4n+2, n\ge 2$, and $q$ be a prime power.
For $\Gamma = C\!D(k-1,q)$, there exists a voltage assignment $\alpha:  D(\overrightarrow{\Gamma}) \to ( \F_q, +) $  such that the lifted graph $\Gamma^\alpha$ is isomorphic to $C\!D(k, q$).
\end{theorem}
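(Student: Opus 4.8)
The starting point is the general construction described above in this section, which exhibits $B\Gamma_n$ as a voltage lift of $B\Gamma_{n-1}$ over the additive group of the base ring. Specializing to $R=\F_q$ and to the functions defining the $D$-graphs, it gives $D(k,q)\cong D(k-1,q)^{\beta}$ as a \emph{global} voltage lift over $G=(\F_q,+)$, where $\beta$ sends the arc underlying the edge between $(p_1,\dots,p_{k-1})$ and $[l_1,\dots,l_{k-1}]$ to the field element $f_k(p_1,\dots,p_{k-1},l_1,\dots,l_{k-1})$. Under this identification the projection of the lift onto its base is precisely the coordinate-forgetting homomorphism $\pi\colon D(k,q)\to D(k-1,q)$ of Section~\ref{SS:cover}, a vertex $v$ corresponding to the pair $(\pi(v),g)$ with $g$ equal, up to a fixed sign, to the last coordinate of $v$. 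The whole plan is to show that in the stated range this global lift \emph{restricts} to components: that $\pi^{-1}\bigl(C\!D(k-1,q)\bigr)$ is exactly one component $C\!D(k,q)$, whence the restriction $\alpha$ of $\beta$ to the arcs of $C\!D(k-1,q)$ is the desired voltage assignment.

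The heart of the matter is a counting-and-bijection argument at the level of components. First I would note that, since $\pi$ is a graph homomorphism, it maps each connected component $B$ of $D(k,q)$ into a single component of $D(k-1,q)$, inducing a map $\Phi$ on the sets of components; as $\pi$ is surjective on vertices, $\Phi$ is surjective. Next I invoke the exact component counts: writing $t=\lfloor (k+2)/4\rfloor$, Theorem~\ref{S:numCompNot4} gives $c(D(k,q))=q^{t-1}$ for $q\neq 4$, so that $c(D(k,q))=c(D(k-1,q))$ precisely off the ``jump'' values $k=4n+2$ (equivalently when $t_k=t_{k-1}$); the count for $q=4$ is handled by Theorem~\ref{S:numComp4} and discussed below. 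A surjection between finite sets of equal cardinality is a bijection, so $\Phi$ is bijective, and therefore $\pi^{-1}(C\!D(k-1,q))$ equals a single component $C\!D(k,q)$. In particular every fiber $\pi^{-1}(w)$ over a vertex $w$ of $C\!D(k-1,q)$ — which has exactly $q$ points, one per value of the last coordinate — lies entirely in that one component; this is exactly the fullness of fibers that a voltage lift over a group of order $q$ demands, and it also recovers the $q$-to-$1$ and surjectivity statements of Theorems~\ref{S:pro} and~\ref{S:surj}.

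Granting full fibers, the conclusion is formal. Restricting the base of the voltage lift to the subgraph $C\!D(k-1,q)$ and the voltage $\beta$ to the arcs of that subgraph produces, by the purely edge-wise definition of a lift, the graph $C\!D(k-1,q)^{\alpha}$ whose total space is $\pi^{-1}(C\!D(k-1,q))$; by the previous paragraph this total space is $C\!D(k,q)$. Under the identification $V(C\!D(k,q))\cong V(C\!D(k-1,q))\times G$ the defining relation $p_k+l_k=f_k$ becomes exactly the lift adjacency $(w,g)\sim(z,g+\alpha(e))$ for the base edge $e$ joining the point $w$ to the line $z$. Hence $C\!D(k-1,q)^{\alpha}\cong C\!D(k,q)$, as required.

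The single obstacle, and the only place the hypothesis $k\neq 4n+2$ is used, is the fullness of the fibers, i.e. the equality $\pi^{-1}(C\!D(k-1,q))=C\!D(k,q)$; everything else is bookkeeping inherited from the general $B\Gamma$ construction. Conceptually this is controlled by the invariant vector of Theorem~\ref{S:INVAR}: its top entry $a_t$ first involves the new coordinate $p_k$ exactly when $4t-2=k$, that is when $k=4n+2$, and for such $k$ varying $p_k$ changes the invariant vector, so by Corollary~\ref{S:character} a single fiber meets $q$ distinct components and the lift cannot restrict (the component count then jumps by a factor $q$). I would carry the case $q=4$ separately throughout, replacing $\vec{a}$ by the modified invariant $\vec{b}$ and Theorem~\ref{S:numCompNot4} by Theorem~\ref{S:numComp4}; there the admissible set of $k$ is determined by the same principle — that the two component counts agree — and the small-$k$ base cases, where a component is the whole graph and $C\!D$ coincides with $D$, are checked directly against the general construction.
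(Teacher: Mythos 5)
Your proposal is correct and follows essentially the same route as the paper: the identification of $B\Gamma_n$ as a voltage lift of $B\Gamma_{n-1}$ over $(\F_q,+)$, combined with the component counts $c(D(k,q))=q^{t-1}$, $t=\lfloor (k+2)/4\rfloor$, which agree for $k$ and $k-1$ exactly off the values $k\equiv 2 \pmod 4$, so that the global lift restricts to a single component; your surjection-between-equal-finite-sets argument simply makes explicit what the paper leaves implicit in ``by the argument above.'' Note only that your criterion (equality of component counts) correctly excludes $k=6$ as well and, for $q=4$, also $k=4$ --- consistent with the paper's own preceding discussion, which uses $n\ge 1$, and indicating that the hypothesis ``$n\ge 2$'' in the theorem statement is a typo.
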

\medskip

\section{Applications of graphs $D(k,q)$ and $C\!D(k,q)$}\label{appldkq}

\subsection{Bipartite graphs of given bi-degree
and girth}\label{SS:rsg}
\bigskip

A bipartite graph $\Gamma$
with bipartition $V_1 \cup V_2$
is said to
be {\it biregular}
if there exist integers $r$, $s$ such that
deg($x$)=$r$ for all $x\in V_1$
and deg($y$)=$s$ for all $y\in V_2$.
In this case, the pair  $r,s$ is called
the {\it bi-degree} of $\Gamma$.
By an {\it $(r,s,t)$-graph} we shall mean
any biregular graph with bi-degree $r,s$ and girth
exactly $2t$.

For which $r,s,t \ge 2$ do
$(r,s,t)$-graphs exist?
Trivially, $(r,s,2)$-graphs exist
for all $r,s\ge 2$; indeed, these
are the complete bipartite graphs.
For all $r,t \ge 2$,
Sachs~\cite{Sac63}, and Erd\H os and
Sachs~\cite{ES63}, constructed
$r$-regular graphs
with girth $2t$.
From such graphs,
$(r,2,t)$-graphs can be
trivially obtained by
subdividing (i.e.~inserting a new
vertex on) each edge of the original
graph.

By explicit construction, $(r,s,t)$-graphs exist for all $r,s,t\ge 2$, see \cite{FLSUW95}.
The results can be viewed as
biregular versions of the results from
\cite{Sac63} and \cite{ES63}.
The paper \cite{FLSUW95} contains
two constructions: a {\it recursive} one
and an {\it algebraic} one.
The recursive construction
establishes existence for all
$r,s,t\ge 2$, but the
algebraic method works only for
$r,s \ge t$.
However, the graphs obtained by
the algebraic method
are much denser and exhibit
the following nice
property:
one can construct
an $(r,s,t)$-graph $\Gamma$
such that
for all $r\ge r'\ge t\ge 3$ and $s\ge s'\ge t\ge 3$,
$\Gamma$ contains an $(r',s',t)$-graph
$\Gamma'$ as an induced subgraph.
\medskip

\subsection{Cages}\label{SS:cages}
\bigskip

Let $k\ge 2$ and $g\ge3$ be integers.
A $(k,g)$-graph is a $k$-regular graph
with girth
$g$. A $(k,g)$-{\it cage} is a
$(k,g)$-graph of minimum order.
The problem of determining the order
$\nu(k,g)$ of a $(k,g)$-cage is unsolved for
most pairs $(k,g)$ and is extremely hard in the general
case. For the state of the survey on cages,  we refer the reader to
Exoo and  Jajcay \cite{EJ13}.

In \cite{LUW97}, Lazebnik, Ustimenko and Woldar
established general upper bounds for $\nu(k,g)$ which are
roughly the 3/2 power of the lower bounds (the previous results had  upper bounds roughly equal to the squares
of the lower bounds),
and provided explicit constructions for such
$(k,g)$-graphs. The main ingredients of their construction were
the graphs $C\!D(n,q)$ and
certain induced subgraphs of these,
manufactured
by the method described in Section~\ref{SS:indsub}.
The precise result follows.

\begin{theorem}\label{T:cages}\rm{(\cite{LUW97})}
Let  $k\ge 2$ and $g\ge 5$ be
integers, and let
$q$ denote the smallest odd prime power
for which $k\le q$.
Then
   \begin{equation}
       \nu(k,g)\le 2kq^{\frac{3}{4}g - a},\notag
   \end{equation}
where $a = 4$, $\frac{11}{4}$, $\frac{7}{2}$, $\frac{13}{4}$ for
$g\equiv  0, 1, 2, 3 \pmod{4}$,  respectively.
\end{theorem}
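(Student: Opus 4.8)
The plan is to realize a $(k,g)$-graph as an induced subgraph of a single component $C\!D(n,q)$ of $D(n,q)$, for a suitable odd prime power $q\ge k$ and a carefully chosen $n$, and then to count its vertices. Fix $q$ to be the smallest odd prime power with $k\le q$ (so in particular $q\ne 4$), and pick any $A\subseteq\F_q$ with $|A|=k$. Following Section~\ref{SS:indsub}, I would form $H=C\!D(n,q)[A]$, the subgraph of $C\!D(n,q)$ induced on those vertices whose first coordinate lies in $A$. The claimed bound will come from showing that $H$ is $k$-regular of girth at least $g$ and has exactly $2k\,q^{\frac34 g-a}$ vertices.

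First I would verify the two graph-theoretic properties. Regularity is immediate from the neighbor-complete coloring of Theorem~\ref{T:ncc}: every vertex of $C\!D(n,q)$ has, for each $\alpha\in\F_q$, a unique neighbour with first coordinate $\alpha$, and since $C\!D(n,q)$ is an entire component all $q$ of these neighbours lie in it; restricting to $A$ leaves each retained vertex with exactly $|A|=k$ neighbours, so $H$ is $k$-regular. For the girth, any cycle of $H$ is a cycle of $C\!D(n,q)\subseteq D(n,q)$, so by Theorem~\ref{S:girthLB}
\[
\girth(H)\ \ge\ \girth(D(n,q))\ \ge\ \gamma(n),\qquad \gamma(n)=\begin{cases}n+5,& n\text{ odd},\\ n+4,& n\text{ even}.\end{cases}
\]
Thus it suffices to choose $n$ with $\gamma(n)\ge g$.

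Next I would determine the order of $H$ and optimize $n$. Since $q$ is odd and $q\ne4$, Theorem~\ref{S:numCompNot4} gives $c(D(n,q))=q^{t-1}$ with $t=\lfloor(n+2)/4\rfloor$, so $C\!D(n,q)$ contains exactly $q^{n-t+1}$ points and the same number of lines. The additive automorphisms $t_{0,x}$ and $t_{1,x}$ lie in $\mathrm{Stab}(C)$ by Theorem~\ref{S:stab} and translate the first coordinate of points (respectively lines), so within $C\!D(n,q)$ the first coordinate is equidistributed and exactly $q^{n-t}$ points (and $q^{n-t}$ lines) carry each prescribed value. Restricting to $A$ therefore gives
\[
|V(H)| = 2\,|A|\,q^{\,n-t} = 2k\,q^{\,n-\lfloor (n+2)/4\rfloor}.
\]
Because $n-\lfloor(n+2)/4\rfloor$ is weakly increasing in $n$, I would take the least admissible $n$. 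A short computation shows $n-\lfloor(n+2)/4\rfloor$ equals $\tfrac34 n,\ \tfrac34 n+\tfrac14,\ \tfrac34 n-\tfrac12,\ \tfrac34 n-\tfrac14$ for $n\equiv0,1,2,3\pmod4$; substituting the minimal $n$ with $\gamma(n)\ge g$ (taking the odd branch of $\gamma$ when $g\equiv0,3$ and either branch when $g\equiv1,2$) produces exactly $n-t=\tfrac34 g-a$ with $a=4,\tfrac{11}{4},\tfrac72,\tfrac{13}{4}$ for $g\equiv0,1,2,3\pmod4$.

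This completes the construction: $H$ is a $k$-regular graph of girth at least $g$ on $2k\,q^{\frac34 g-a}$ vertices, and (passing to a component if $H$ is disconnected, which only decreases the order while preserving $k$-regularity and girth $\ge g$) this bounds $\nu(k,g)$ as stated. The nontrivial analytic input, Theorem~\ref{S:girthLB}, I am assuming. The step needing the most care is the residue-class bookkeeping matching the minimal $n$ to the target exponent, including the two degenerate small cases $g=5$ and $g=6$, where the odd branch of $\gamma$ formally forces $n=1$ and one must instead use the even branch $n=2$ (which yields the same value of $n-t$). A secondary point is the convention on cages: the construction guarantees girth at least $g$, so I would either invoke the standard convention under which a $k$-regular graph of girth $\ge g$ bounds $\nu(k,g)$, or note that with this extremal choice of $n$ the lower bound $\gamma(n)$ is expected — and in the known cases proven — to be the exact girth.
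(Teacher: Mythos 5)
Your proposal is correct and follows essentially the same route the paper attributes to \cite{LUW97}: you restrict $C\!D(n,q)$ to the vertices whose first coordinate lies in a $k$-subset of $\F_q$ (the induced-subgraph method of Section~\ref{SS:indsub}), obtain $k$-regularity from the neighbor-complete coloring, the girth from Theorem~\ref{S:girthLB}, the vertex count $2kq^{\,n-\lfloor(n+2)/4\rfloor}$ from the component count (Theorem~\ref{S:numCompNot4}) together with the component-stabilizing translations $t_{0,x},t_{1,x}$, and then optimize $n$; your residue-class bookkeeping, including the degenerate cases $g=5,6$, checks out. The one loose end you flag --- that the construction only guarantees girth at least $g$ while the paper defines $\nu(k,g)$ via girth exactly $g$ --- should be closed not by appeal to the (open) girth conjecture but by the known monotonicity of cage orders as a function of the girth (due to Fu, Huang and Rodger), which gives $\nu(k,g)\le \nu(k,g')\le |V(H)|$ when $H$ is $k$-regular with girth $g'\ge g$.
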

\medskip

\subsection{Structure of extremal graphs of large girth}
\label{SS:struc}
\bigskip

Let $n\ge 3$,
and let $\Gamma$ be a graph of order $\nu$ and
girth at least $n+1$
which has the greatest number of edges
possible subject to these requirements
(i.e.~an extremal graph).
Must $\Gamma$ contain an
$(n+1)$-cycle?
In~\cite{LW97}, Lazebnik and Wang
present several results
where  this question is answered affirmatively,
see also Garnick and Nieuwejaar \cite{GN92}.
In particular,  this is always
the case when
$\nu$ is large compared to $n$:
$\nu \ge 2^{a^2 + a +1} n^a$, where $a = n-3 -
\lfloor{\frac{n-2}{4}\rfloor}$, $n\ge 12$.
To obtain this result they used certain
generic properties of
extremal graphs, as well as of
the graphs $C\!D(k,q)$.
\medskip

\section{Applications to coding theory and cryptography}\label{S:codecryp}
\bigskip

Graphs with many edges and without short cycles have been used in coding theory in the construction and analysis of Low-Density Parity-Check (LDPC) codes,
see, e.g.,  Kim, Peled, Pless and Perepelitsa \cite{KPPP02},  Kim, Peled, Pless, Perepelitsa and  Friedland \cite{KPPPF04},  Kim, Mellinger and Storme \cite{KMS07},  Sin and Xiang \cite{SX06},  Pradhan, Thangaraj and Subramanian \cite{PTS16}.  For the last sixteen years,  V. A. Ustimenko and his numerous collaborators and students have been applying algebraically defined graphs and digraphs to coding theory and cryptography.  We mention just a few recent papers,  and many additional references can be found therein:  Klisowski and Ustimenko \cite{KU12},
Wr\'{o}blewska and Ustimenko \cite{WU14} and Ustimenko  \cite{Ust15}.

A preprint by  Chojecki, Erskine, Tuite and Ustimenko \cite{CETU24}  contains
graph constructions similar to $D(k,q)$. The authors present many computational results concerning their parameters, state several  conjectures, and
mention possible applications of
these graphs and related groups to LDPC codes, noncommutative cryptography
and stream ciphers design.

\medskip

\section*{Acknowledgments}
\bigskip

The authors are grateful to Shuying Sun for helpful discussions,
and to Grahame Erskine, Alex Kodess, Brian Kronenthal, Meng Liu, Ben Nassau, Vladislav Taranchuk, James Tuite and Jason Williford for valuable suggestions. This work was partially supported by NSFC ($\#$12471323, Ye Wang).
\medskip

\end{document}